\documentclass[english]{amsart}
\topmargin= 0mm
\textheight= 215mm
\oddsidemargin= 0mm
\evensidemargin= 0mm
\textwidth=165mm
\usepackage[latin9]{inputenc}

\usepackage{amstext}
\usepackage{amsthm}
\usepackage{amssymb}

\makeatletter

\newcommand{\lyxmathsym}[1]{\ifmmode\begingroup\def\b@ld{bold}
  \text{\ifx\math@version\b@ld\bfseries\fi#1}\endgroup\else#1\fi}

\numberwithin{equation}{section}
\numberwithin{figure}{section}
\theoremstyle{plain}
\newtheorem{thm}{\protect\theoremname}[section]
  \theoremstyle{definition}
  \newtheorem{defn}[thm]{\protect\definitionname}
  \theoremstyle{remark}
  \newtheorem*{rem*}{\protect\remarkname}
  \theoremstyle{remark}
  \newtheorem{rem}[thm]{\protect\remarkname}
  \theoremstyle{plain}
  \newtheorem{cor}[thm]{\protect\corollaryname}
  \theoremstyle{plain}
  \newtheorem{lem}[thm]{\protect\lemmaname}

\makeatother

\usepackage{babel}
  \providecommand{\corollaryname}{Corollary}
  \providecommand{\definitionname}{Definition}
  \providecommand{\lemmaname}{Lemma}
  \providecommand{\remarkname}{Remark}
\providecommand{\theoremname}{Theorem}

\usepackage{mathtools}
\mathtoolsset{showonlyrefs}
\begin{document}

\title[Strichartz Estimates]{Strichartz Estimates for Wave Equations with Charge Transfer Hamiltonians}

\author{Gong Chen}

\thanks{This work is part of the author\textquoteright s Ph.D. thesis at
the University of Chicago.}

\date{\today}

\urladdr{http://www.math.uchicago.edu/\textasciitilde{}gc/}

\email{gc@math.uchicago.edu}

\address{Department of Mathematics, The University of Chicago, 5734 South
University Avenue, Chicago, IL 60615, U.S.A}

\keywords{Strichartz estimates; energy estimate; local energy decay; charge
transfer model.}
\begin{abstract}
We prove Strichartz estimates (both regular and reversed) for a scattering
state to the wave equation with a charge transfer Hamiltonian in $\mathbb{R}^{3}$:
\[
\partial_{tt}u-\Delta u+\sum_{j=1}^{m}V_{j}\left(x-\vec{v}_{j}t\right)u=0.
\]
The energy estimate and the local energy decay of a scattering state
are also established. In order to study nonlinear multisoltion systems,
we will present the inhomogeneous generalizations of Strichartz estimates
and local decay estimates. As an application of our results, we show
that scattering states indeed scatter to solutions to the free wave
equation. These estimates for this linear models are also of crucial
importance for problems related to interactions of potentials and
solitons, for example, in \cite{GC4}.
\end{abstract}

\maketitle
\tableofcontents{}
\section{Introduction\label{sec:Intro}}

In this paper, we study wave equations with charge transfer Hamiltonian
in $\mathbb{R}^{3}$. To be more precise, consider the wave equation
with the time-dependent charge transfer Hamiltonian 
\begin{equation}
H(t)=-\Delta+\sum_{j=1}^{m}V_{j}\left(x-\vec{v}_{j}t\right)\label{eq:11}
\end{equation}
where $V_{j}(x)$'s are rapidly decaying smooth potentials and $\left\{ \vec{v}_{j}\right\} $
is a set of distinct constant velocities such that 
\begin{equation}
\left|\vec{v}_{i}\right|<1,\,1\leq i\leq m.
\end{equation}
Due to the nature of our problem, we focus on initial data in the
energy space. We will prove Strichartz estimates, energy estimates,
the local energy decay and the boundedness of the total energy for
a scattering state to the wave equation 
\begin{equation}
\partial_{tt}\psi+H(t)\psi=0\label{eq:12}
\end{equation}
associated with a charge transfer Hamiltonian $H(t)$. Throughout,
we use $\partial_{tt}u:=\frac{\partial^{2}}{\partial t\partial t}$,
$u_{t}:=\frac{\partial}{\partial_{t}}u$, $\Delta:=\sum_{i=1}^{n}\frac{\partial^{2}}{\partial x_{i}\partial x_{i}}$
and occasionally, $\square:=-\partial_{tt}+\Delta$.

\subsection{Historical background}

In this subsection, we briefly discuss some background of Strichartz
estimates, reversed Strichartz estimates.

\smallskip

Our starting point is the free wave equation ($H_{0}=-\Delta$) on
$\mathbb{R}^{3}$ 
\begin{equation}
\partial_{tt}u-\Delta u=0
\end{equation}
with initial data
\begin{equation}
u(x,0)=g(x),\,u_{t}(x,0)=f(x).
\end{equation}
We can write down $u$ explicitly, 
\begin{equation}
u=\frac{\sin\left(t\sqrt{-\Delta}\right)}{\sqrt{-\Delta}}f+\cos\left(t\sqrt{-\Delta}\right)g.
\end{equation}
Then for $p>\frac{2}{s}$ and $\left(p,q\right)$ satisfying 
\begin{equation}
\frac{3}{2}-s=\frac{1}{p}+\frac{3}{q},
\end{equation}
one has 
\begin{equation}
\|u\|_{L_{t}^{p}L_{x}^{q}}\lesssim\|g\|_{\dot{H}^{s}}+\|f\|_{\dot{H}^{s-1}}.\label{eq:IFStri}
\end{equation}
Strichartz estimates \eqref{eq:IFStri}, which are stated precisely
in Theorem \ref{thm:StrichF}, are estimates of solutions in terms
of space-time integrability properties. The non-endpoint estimates
for the wave equations can be found in Ginibre-Velo \cite{GV}. Keel\textendash Tao
\cite{KT} also obtained sharp Strichartz estimates for the free wave
equation in $\mathbb{R}^{n},\,n\geq4$ and everything except the endpoint
in $\mathbb{R}^{3}$. See Keel-Tao \cite{KT} and Tao's book \cite{Tao}
for more details on the subject's background and the history. 

\smallskip

In $\mathbb{R}^{3}$, there is no hope to obtain such  an estimate
with the $L_{t}^{2}L_{x}^{\infty}$ norm, the so-called endpoint Strichartz
estimate for free wave equations, cf.~Klainerman-Machedon \cite{KM}
and Machihara-Nakamura-Nakanishi-Ozawa \cite{MNNO}. But if we reverse
the order of space-time integration, one can obtain a version of reversed
Strichartz estimates from the Morawetz estimate, cf.~Theorem \ref{thm:EndRStrichF}:
\begin{equation}
\left\Vert \frac{\sin\left(t\sqrt{-\Delta}\right)}{\sqrt{-\Delta}}f\right\Vert _{L_{x}^{\infty}L_{t}^{2}}\lesssim\left\Vert f\right\Vert _{L^{2}\left(\mathbb{R}^{3}\right)},
\end{equation}
\begin{equation}
\left\Vert \cos\left(t\sqrt{-\Delta}\right)g\right\Vert _{L_{x}^{\infty}L_{t}^{2}}\lesssim\left\Vert g\right\Vert _{\dot{H}^{1}\left(\mathbb{R}^{3}\right)}.
\end{equation}
These types of estimates are extended to inhomogeneous cases and perturbed
Hamiltonians in Beceanu-Goldberg \cite{BecGo}. In Section \ref{sec:Prelim}
and Section \ref{sec:Slanted}, we will rely crucially on these estimates
and their generalizations.

\smallskip

Consider a linear wave equation with a real-valued stationary potential
in $\mathbb{R}^{3}$, 
\begin{equation}
H=-\Delta+V,
\end{equation}
\begin{equation}
\partial_{tt}u+Hu=\partial_{tt}u-\Delta u+Vu=0,
\end{equation}
\begin{equation}
u(x,0)=g(x),\,u_{t}(x,0)=f(x).
\end{equation}
One substantial difference between the perturbed Hamiltonian $H=-\Delta+V$
and the free Laplacian $-\Delta$ is the possible existence of eigenvalues
and bound states, i.e., $L^{2}$ eigenfunctions of $H$. For the class
of short-range potentials we consider in this paper, the essential
spectrum of $H$ is $[0,\infty)$ and the point spectrum may include
a countable number of eigenvalues in a bounded subset of the real
axis that is discrete away from zero. We further assume that zero
is a regular point of the spectrum of $H$. Under our hypotheses $H$
only has pure absolutely continuous spectrum on $[0,\infty)$ and
a finite number of negative eigenvalues. It is very crucial to notice
that if $E<0$ is a negative eigenvalue, the associated eigenfunction
responds to the wave equation propagators by scalar multiplication
by $\cos\left(t\sqrt{E}\right)$ or $\frac{\sin\left(t\sqrt{E}\right)}{E^{\frac{1}{2}}}$,
both of which will grow exponentially since $\sqrt{E}$ is purely
imaginary. Thus, dispersive estimates and Strichartz estimates for
$H$ must include a projection $P_{c}$ onto the continuous spectrum
in order to get away from this situation. 

\smallskip

The problem of dispersive decay and Strichartz estimates for the wave
equation with a potential has received much attention in recent years,
see the papers by Beceanu-Goldberg \cite{BecGo}, Krieger-Schlag \cite{KS}
and the survey by Schlag \cite{Sch} for further details and references. 

\smallskip

The Strichartz estimates in this case are in the form:
\begin{equation}
\left\Vert \frac{\sin\left(t\sqrt{H}\right)}{\sqrt{H}}P_{c}f+\cos\left(t\sqrt{H}\right)P_{c}g\right\Vert _{L_{t}^{p}L_{x}^{q}}\lesssim\|g\|_{\dot{H}^{1}}+\|f\|_{L^{2}{}^{,}}
\end{equation}
with $2<p,\,\frac{1}{2}=\frac{1}{p}+\frac{3}{q}.$ One also has the
endpoint reversed Strichartz estimates: 
\begin{equation}
\left\Vert \frac{\sin\left(t\sqrt{H}\right)}{\sqrt{H}}P_{c}f+\cos\left(t\sqrt{H}\right)P_{c}g\right\Vert _{L_{x}^{\infty}L_{t}^{2}}\lesssim\|f\|_{L^{2}}+\|g\|_{\dot{H}^{1}},
\end{equation}
see Theorem \ref{thm:PStriRStrich}.

\smallskip

There are extra difficulties when dealing with time-dependent potentials.
For example, given a general time-dependent potential $V(x,t)$, it
is not clear how to introduce an analog of bound states and a spectral
projection. The evolution might not satisfy group properties any more.
It might also result in the growth of certain norms of the solutions,
see the book by Bourgain \cite{Bou}. In this paper, we focus on the
charge transfer Hamiltonian \eqref{eq:11} in $\mathbb{R}^{3}$:
\begin{equation}
H(t)=-\Delta+\sum_{j=1}^{m}V_{j}\left(x-\vec{v}_{j}t\right),
\end{equation}
which appears naturally in the study of nonlinear multisoliton system,
see Rodnianski-Schlag-Soffer \cite{RSS2} for the Schr\"odinger model.
For the wave model, 
\begin{equation}
\partial_{tt}u-\Delta u+\sum_{j=1}^{m}V_{j}\left(x-\vec{v}_{j}t\right)u=0,\quad\left|\vec{v}_{i}\right|<1,\,1\leq i\leq m,
\end{equation}
in this paper, we prove Strichartz estimates, energy estimates, the
local energy decay which are essential to analyze the stability of
multi-soliton states. In Chen \cite{GC4}, relying on this linear
model, we construct a multisoliton structure to the defocusing energy
critical wave equation with potentials in $\mathbb{R}^{3}$:
\begin{equation}
\partial_{tt}u-\Delta u+\sum_{j=1}^{m}V_{j}\left(x-\vec{v}_{j}t\right)u+u^{5}=0.\label{eq:maineq}
\end{equation}
We also analyze the asymptotic stability of the multisoliton. Since
each bubble in the multisolion structure decays slowly like $\frac{1}{\text{\ensuremath{\left\langle x\right\rangle }}}$,
the interactions among each bubble are very strong. Our linear theory
and reversed Strichartz estimaes play a pivotal rule in the construction.
And it turns out that this model is the first multisoliton structure
for wave equations in $\mathbb{R}^{3}$. 

\smallskip

The study of Schr\"odinger equations with a charge transfer Hamiltonian
can be found in Rodnianski-Schlag-Soffer \cite{RSS}, Cai \cite{Cai},
Chen \cite{GC1} and Deng-Soffer-Yao \cite{DSY}. For the Schr\"odinger
model, there is no need to require $\left|\vec{v}_{i}\right|<1$.
In Rodnianski-Schlag-Soffer \cite{RSS}, the authors proved the dispersive
estimates for both the scalar and matrix Schr\"odinger charge transfer
models. They introduced Galilei transformations to interchange stationary
frames with respect to different potentials. Basically, they applied
a bootstrap argument via a semi-classical propagation lemma for low
frequencies and Kato's smoothing estimate for high frequencies. With
careful analysis of wave operators, the authors also obtain the results
on the asymptotic completeness. Their works inspired the subsequent
development in Cai \cite{Cai} where the $L^{1}\rightarrow L^{\infty}$
dispersive estimate is proved. Later on, by Chen \cite{GC1}, Strichartz
estimates for both the scalar and matrix Schr\"odinger charge transfer
models were presented based on a time-dependent local decay estimate
and the endpoint Strichartz estimate for the free equations. Alternatively,
Strichartz estimates can be obtained by analysis of wave operators,
see Deng-Soffer-Yao \cite{DSY}. 

\smallskip

Compared with Schr\"odinger equations, wave equations have some natural
difficulties, for example the evolution of bound states of wave equations
leads to exponential growth as we pointed out above, meanwhile the
evolution of bound states of Schr\"odinger equations are merely multiplied
by oscillating factors. The structure of wave operators in the wave
equation setting is not clear either. Moreover, the endpoint Strichartz
estimate for free equations, an important tool used in the paper \cite{GC1},
also fails for free wave equations in $\mathbb{R}^{3}$. Last but
not least, Lorentz transformations are space-time rotations, therefore
one can not hope to succeed by the approach used with Schr\"odinger
equations based on Galilei transformations. Galilei transformations
are bounded in any $L^{p}$ space, but it is not clear under Lorentz
transformations whether the energy with respect to the new frame stays
comparable to the energy in the original frame. To the author's knowledge,
for wave equations with even just one potential moving along a space-like
line, Strichartz estimates, scattering, and the asymptotic decomposition
of the evolution are new. We refer to \cite{GC2} for more information
on wave equations with one moving potential.

\smallskip

\subsection{Charge transfer model and main results}

Before we give the precise definition of our model, it is necessary
to introduce Lorentz transformations. Given a vector $\vec{\mu}\in\mathbb{R}^{3}$,
there is a Lorentz transformation $L\left(\vec{\mu}\right)$ acting
on $\left(x,t\right)\in\mathbb{R}^{3+1}$ such that it makes the moving
frame $\left(x-\vec{\mu}t,t\right)$ stationary. We can use a $4\times4$
matrix $B(\vec{\mu})$ to represent the transformation $L\left(\vec{\mu}\right)$.
Moreover, for the given vector $\vec{\mu}=(\mu_{1},\mu_{2},\mu_{3})\in\mathbb{R}^{3}$,
there is a $3\times4$ matrix $M\left(\vec{\mu}\right)$ such that
\begin{equation}
\left(x-\vec{\mu}t\right)^{T}=M\left(\vec{\mu}\right)\left(x,t\right)^{T},
\end{equation}
where the superscript $T$ denotes the transpose of a vector.

\smallskip

With the preparations above, we can set up our model. We consider
the scalar charge transfer model for wave equations in the following
sense:
\begin{defn}
\label{def: Charge} By a wave equation with a charge transfer Hamiltonian
we mean a wave equation 
\begin{equation}
\partial_{tt}u-\Delta u+\sum_{j=1}^{m}V_{j}\left(x-\vec{v}_{j}t\right)u=0,\label{eq:18}
\end{equation}
\[
u|_{t=0}=g,\,\,\partial_{t}u|_{t=0}=f,\,\,x\in\mathbb{R}^{3},
\]
where $\vec{v}_{j}$'s are distinct vectors in $\mathbb{R}^{3}$ with
\begin{equation}
\left|\vec{v}_{i}\right|<1,\,1\leq i\leq m.
\end{equation}
and the real potentials $V_{j}$ are such that $\forall1\leq j\leq m$ 

1) $V_{j}$ is time-independent and decays with rate $\left\langle x\right\rangle ^{-\alpha}$
with $\alpha>3$

2) $0$ is neither an eigenvalue nor a resonance of the operators
\begin{equation}
H_{j}=-\Delta+V_{j}\left(S\left(\vec{v}_{j}\right)x\right),\label{eq:19}
\end{equation}
where $S\left(\vec{v_{j}}\right)x=M\left(\vec{v}_{j}\right)B^{-1}\left(\vec{v}_{j}\right)\left(x,0\right)^{T}.$ 
\end{defn}
Recall that $\psi$ is a resonance at $0$ if it is a distributional
solution of the equation $H_{k}\psi=0$ which belongs to the space
$L^{2}\left(\left\langle x\right\rangle ^{-\sigma}dx\right):=\left\{ f:\,\left\langle x\right\rangle ^{-\sigma}f\in L^{2}\right\} $
for any $\sigma>\frac{1}{2}$, but not for $\sigma=\frac{1}{2}.$
\begin{rem*}
The construction of $S\left(\vec{v}_{j}\right)$ is clear from the
change between different frames under Lorentz transformations. In
our concrete problem below \eqref{eq:2V}, $S\left(\vec{v}_{j}\right)$
can be written down explicitly. 
\end{rem*}
To simplify our argument, throughout this paper, we discuss the wave
equation with a charge transfer Hamiltonian in the sense of Definition
\ref{def: Charge} with $m=2$, a stationary $V_{1}$ and a $V_{2}$
moving along $\overrightarrow{e_{1}}$ with speed $\left|v\right|<1$,
i.e., the velocity is 
\begin{equation}
\vec{v}=\left(v,0,0\right).
\end{equation}
Under this setting, by Definition \ref{def: Charge}, 
\begin{equation}
H_{1}=-\Delta+V_{1}(x),
\end{equation}
and 
\begin{equation}
\,H_{2}=-\Delta+V_{2}\left(\sqrt{1-\left|v\right|^{2}}x_{1},x_{2},x_{3}\right).\label{eq:2V}
\end{equation}
It is easy to see that our arguments work for $m>2$.

\smallskip

An indispensable tool we need to study the charge transfer model is
the Lorentz transformation. Throughout this paper, we apply Lorentz
transformations $L$ with respect to a moving frame with speed $\left|v\right|<1$
along the $x_{1}$ direction. After we apply the Lorentz transformation
$L$, under the new coordinates, $V_{2}$ is stationary meanwhile
$V_{1}$ will be moving.

\smallskip

Writing down the Lorentz transformation $L$ explicitly, we have 
\begin{equation}
\begin{cases}
t'=\gamma\left(t-vx_{1}\right)\\
x_{1}'=\gamma\left(x_{1}-vt\right)\\
x_{2}'=x_{2}\\
x_{3}'=x_{3}
\end{cases}\label{eq:LorentzT}
\end{equation}
with 
\begin{equation}
\gamma=\frac{1}{\sqrt{1-\left|v\right|^{2}}}.
\end{equation}
We can also write down the inverse transformation of the above:
\begin{equation}
\begin{cases}
t=\gamma\left(t'+vx_{1}'\right)\\
x_{1}=\gamma\left(x_{1}'+vt'\right)\\
x_{2}=x_{2}'\\
x_{3}=x_{3}'
\end{cases}.\label{eq:InvLorentT}
\end{equation}
Under the Lorentz transformation $L$, if we use the subscript $L$
to denote a function with respect to the new coordinate $\left(x',t'\right)$,
we have 
\begin{equation}
u_{L}\left(x_{1}',x_{2}',x_{3}',t'\right)=u\left(\gamma\left(x_{1}'+vt'\right),x_{2}',x_{3}',\gamma\left(t'+vx_{1}'\right)\right)\label{eq:Lcoordinate}
\end{equation}
and 
\begin{equation}
u(x,t)=u_{L}\left(\gamma\left(x_{1}-vt\right),x_{2},x_{3},\gamma\left(t-vx_{1}\right)\right).\label{eq:ILcoordinate}
\end{equation}

Let $w_{1},\,\ldots,\,w_{m}$ and $m_{1},\,\ldots,\,m_{\ell}$ be
the normalized bound states of $H_{1}$ and $H_{2}$ associated with
the negative eigenvalues $-\lambda_{1}^{2},\,\ldots,\,-\lambda_{m}^{2}$
and $-\mu_{1}^{2},\,\ldots,\,-\mu_{\ell}^{2}$ respectively (notice
that by our assumptions, $0$ is not an eigenvalue). In other words,
we assume 
\begin{equation}
H_{1}w_{i}=-\lambda_{i}^{2}w_{i},\,\,\,w_{i}\in L^{2},\,\lambda_{i}>0.
\end{equation}

\begin{equation}
H_{2}m_{i}=-\mu_{i}^{2}m_{i},\,\,\,m_{i}\in L^{2},\,\mu_{i}>0.
\end{equation}
We denote by $P_{b}\left(H_{1}\right)$ and $P_{b}\left(H_{2}\right)$
the projections on the the bound states of $H_{1}$ and $H_{2}$,
respectively, and let $P_{c}\left(H_{i}\right)=Id-P_{b}\left(H_{i}\right),\,i=1,2$.
To be more explicit, we have 
\begin{equation}
P_{b}\left(H_{1}\right)=\sum_{i=1}^{m}\left\langle \cdot,w_{i}\right\rangle w_{i},\,\,\,\,\,P_{b}\left(H_{2}\right)=\sum_{j=1}^{\ell}\left\langle \cdot,m_{j}\right\rangle m_{j}.
\end{equation}
In order to study the equation with time-dependent potentials, we
need to introduce a suitable projection. Again, with Lorentz transformations
$L$ associated with the moving potential $V_{2}(x-\vec{v}t)$, we use the
subscript $L$ to denote a function under the new frame $\left(x',t'\right)$. 
\begin{defn}[Scattering states]
\label{AO}Let 
\begin{equation}
\partial_{tt}u-\Delta u+V_{1}(x)u+V_{2}(x-\vec{v}t)u=0,\label{eq:eqBSsec-1}
\end{equation}
with initial data
\begin{equation}
u(x,0)=g(x),\,u_{t}(x,0)=f(x).
\end{equation}
If $u$ also satisfies 
\begin{equation}
\left\Vert P_{b}\left(H_{1}\right)u(t)\right\Vert _{L_{x}^{2}}\rightarrow0,\,\,\left\Vert P_{b}\left(H_{2}\right)u_{L}(t')\right\Vert _{L_{x'}^{2}}\rightarrow0\,\,\,t,t'\rightarrow\infty,\label{eq:ao2-1}
\end{equation}
we call it a scattering state. 
\end{defn}
\begin{rem}
Clearly, the set of $\left(g,\,f\right)\in H^{1}\left(\mathbb{R}^{3}\right)\times L^{2}\left(\mathbb{R}^{3}\right)$
which produces a scattering state forms a subspace of $H^{1}\left(\mathbb{R}^{3}\right)\times L^{2}\left(\mathbb{R}^{3}\right)$.
We will see a detailed discussion on this subspace later on in Section
\ref{sec:Inhom}.
\end{rem}
\begin{rem}
Notice that in order to perform Lorentz transformations, one needs
the existence of global solutions. The existence and the uniqueness
of global solutions to wave equatioans with more general time-dependent
potentials are presented by contraction arguments in \cite{GC2}.
\end{rem}
With the above preparations, we state our main results. First of all,
we have Strichartz estimates:
\begin{thm}[Strichartz estimates]
\label{thm:StriCharB-1}Suppose $u$ is a scattering
state in the sense of Definition \ref{AO} which solves 
\begin{equation}
\partial_{tt}u-\Delta u+V_{1}(x)u+V_{2}(x-\vec{v}t)u=0
\end{equation}
with initial data
\begin{equation}
u(x,0)=g(x),\,u_{t}(x,0)=f(x).
\end{equation}
Then for $p>2$ and $(p,q)$ satisfying 
\begin{equation}
\frac{1}{2}=\frac{1}{p}+\frac{3}{q},
\end{equation}
we have
\begin{equation}
\|u\|_{L_{t}^{p}\left([0,\infty),\,L_{x}^{q}\right)}\lesssim\|f\|_{L^{2}}+\|g\|_{\dot{H}^{1}}.
\end{equation}
\end{thm}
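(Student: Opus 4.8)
The plan is to combine Duhamel's formula for the free wave equation, the free (inhomogeneous) Strichartz estimates, and a reversed --- and \emph{slanted} reversed --- Strichartz bound for the scattering state itself. Writing the equation as $\partial_{tt}u-\Delta u=-V_1u-V_2(\cdot-\vec vt)u$, Duhamel gives
\begin{equation}
u(t)=\cos\!\bigl(t\sqrt{-\Delta}\bigr)g+\frac{\sin\!\bigl(t\sqrt{-\Delta}\bigr)}{\sqrt{-\Delta}}f-\int_0^t\frac{\sin\!\bigl((t-s)\sqrt{-\Delta}\bigr)}{\sqrt{-\Delta}}\bigl(V_1u+V_2(\cdot-\vec vs)u\bigr)(s)\,ds.
\end{equation}
By Theorem \ref{thm:StrichF} with $s=1$ the first two terms are $\lesssim\|g\|_{\dot{H}^1}+\|f\|_{L^{2}}$ in the $L_t^pL_x^q$ norm of the statement, so only the Duhamel integral is at issue. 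For it I would invoke the free inhomogeneous estimate $\bigl\|\int_0^t\frac{\sin((t-s)\sqrt{-\Delta})}{\sqrt{-\Delta}}F(s)\,ds\bigr\|_{L_t^pL_x^q}\lesssim\|F\|_{L_x^1L_t^2}$ and its slanted companion from Section \ref{sec:Slanted}, in which the source is measured along the lines $x=a+\vec vt$. The gain is that along a vertical line $V_1(x)$ is constant in $t$, and along a line $x=a+\vec vt$ one has $V_2(x-\vec vt)=V_2(a)$ constant in $t$; hence $\|V_1u\|_{L_x^1L_t^2}\le\|V_1\|_{L^1}\|u\|_{L_x^\infty L_t^2}$ and $\|V_2(\cdot-\vec vt)u\|_{L_x^1L_t^2(\mathrm{slanted})}\le\|V_2\|_{L^1}\|u\|_{L_x^\infty L_t^2(\mathrm{slanted})}$. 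The theorem is therefore reduced to the reversed bound $\|u\|_{L_x^\infty L_t^2}+\|u\|_{L_x^\infty L_t^2(\mathrm{slanted})}\lesssim\|f\|_{L^{2}}+\|g\|_{\dot{H}^1}$ for the scattering state.

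To establish this reversed bound I would work in two frames and decouple the two potential channels. In the original frame, split $u=P_c(H_1)u+P_b(H_1)u$; for $P_c(H_1)u$ I treat $V_2(\cdot-\vec vt)u$ and the bound-state part as inhomogeneous terms and apply the reversed Strichartz and local energy decay estimates for the single stationary potential $V_1$ (Theorem \ref{thm:PStriRStrich}), exploiting that $V_2$ concentrates near the moving point $\vec vt$, which separates from the support region of $V_1$ at speed $|\vec v|>0$, so that this source is small in the norms dual to the $V_1$-channel, with a quantitative gain coming from the dispersive decay of the free wave flow between the two regions. For $P_b(H_1)u=\sum_i a_i(t)w_i$ the amplitudes $a_i$ solve a linear system forced by $V_2(\cdot-\vec vt)u$ with coefficients that decay in $t$ for the same reason, and the scattering hypothesis \eqref{eq:ao2-1}, $\|P_b(H_1)u(t)\|_{L^{2}}\to0$, supplies the terminal data at $t=+\infty$ needed to integrate this system and bound the $a_i$ by the local-decay norm of $u$. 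Then, applying the Lorentz transformation $L$ of \eqref{eq:LorentzT}, the travelling potential $V_2(\cdot-\vec vt)$ becomes the stationary operator $H_2$ of \eqref{eq:2V} while $V_1$ becomes travelling, and the vertical reversed norm $\|u_L\|_{L_{x'}^\infty L_{t'}^2}$ in the new frame is comparable --- with constants depending only on $|v|<1$ --- to the slanted reversed norm of $u$; repeating the previous step in this frame, now using $\|P_b(H_2)u_L(t')\|_{L^{2}}\to0$, controls the remaining channel. All of this also rests on the energy bound $\sup_t\bigl(\|u(t)\|_{\dot{H}^1}+\|u_t(t)\|_{L^{2}}\bigr)\lesssim\|f\|_{L^{2}}+\|g\|_{\dot{H}^1}$, which I would prove alongside and which is in any case needed for the global Lorentz change of frame to make sense (cf.\ the Remarks above).

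Since the two channel estimates are coupled, the last step is a bootstrap/continuity argument. Writing $A$ and $B$ for the local-energy/reversed norms of $u$ near $V_1$ in the original frame and near $V_2$ in the Lorentz frame, I would prove $A\le C\bigl(\|f\|_{L^{2}}+\|g\|_{\dot{H}^1}\bigr)+\varepsilon(T)B+C_T$ and the symmetric inequality, where $\varepsilon(T)\to0$ as $T\to\infty$ because free radiation leaving a neighbourhood of one potential reaches a neighbourhood of the other only with time-integrable decay (local energy decay for $-\Delta$ on $\mathbb{R}^3$, a manifestation of the strong Huygens principle), while the compact piece $C_T$ on $[0,T]$ is absorbed using finite speed of propagation and almost-conservation of energy; choosing $T$ large closes the bootstrap and gives $A,B\lesssim\|f\|_{L^{2}}+\|g\|_{\dot{H}^1}$. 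I expect this coupling to be the principal obstacle: one must make the cross-channel interaction quantitatively small and, simultaneously, show that the bound-state amplitudes in each frame are genuinely driven to zero --- which is precisely where the scattering hypothesis of Definition \ref{AO} is indispensable, since any eigenfunction of $H_1$ or of $H_2$ would at once destroy local energy decay and invalidate all the bounds above. Granted the reversed and slanted bounds, the estimate of the theorem follows from the first paragraph.
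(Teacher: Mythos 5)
Your overall architecture matches the paper's in one important respect: you first aim for the reversed and slanted-reversed bounds $\|u\|_{L_x^\infty L_t^2}+\|u^S\|_{L_x^\infty L_t^2}\lesssim\|f\|_{L^{2}}+\|g\|_{\dot H^1}$ by a two-frame bootstrap with channel decomposition, Lorentz change of frame, and the scattering hypothesis controlling the bound-state amplitudes; this is a fair summary of Section \ref{sec:EndRSChar}. The gap is in the bridge you propose from those reversed bounds to the $L_t^pL_x^q$ Strichartz norm.

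The asserted free inhomogeneous estimate
\begin{equation}
\left\|\int_0^t\frac{\sin\bigl((t-s)\sqrt{-\Delta}\bigr)}{\sqrt{-\Delta}}F(s)\,ds\right\|_{L_t^pL_x^q}\lesssim\|F\|_{L_x^1L_t^2}
\end{equation}
is not true for the pairs $\frac1p+\frac3q=\frac12$ you need: replacing $F(x,t)$ by $F(\lambda x,\lambda t)$ rescales the left-hand side by $\lambda^{-2-\frac1p-\frac3q}=\lambda^{-5/2}$ but the right-hand side by $\lambda^{-3-\frac12}=\lambda^{-7/2}$, so the ratio grows like $\lambda$ and no $\lambda$-independent constant can work. (The scale-invariant source norm here is $L_x^{3/2,1}L_t^2$, and even with that replacement the estimate is not among those of Section \ref{sec:Slanted}, which transfer reversed norms only to reversed norms.) The underlying obstruction is that $e^{it\sqrt{-\Delta}}$ does \emph{not} map $L^2$ to $L_x^\infty L_t^2$ --- that reversed bound holds only for $\frac{\sin(t\sqrt{-\Delta})}{\sqrt{-\Delta}}$, i.e.\ at the cost of one derivative --- so its adjoint does not carry $L_x^1L_t^2$ into $L^2$ and a $TT^*$/Christ--Kiselev argument starting from the reversed estimate alone cannot yield your claim. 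The paper circumvents this by first integrating a decaying weight against the reversed bounds to get the $L^2_{t,x}$ local decay estimates \eqref{eq:WeightCharWOB1}--\eqref{eq:WeightCharWOB2} of Corollary \ref{cor:weightChargeWOB}, and then running the Rodnianski--Schlag scheme of \cite{LSch}: set $U=\sqrt{-\Delta}\,u+iu_t$, split each $V_j=V_{j,1}V_{j,2}$, and combine Christ--Kiselev with the Kato smoothing bound $\|\langle x-\vec\mu t\rangle^{-\frac12-\epsilon}e^{it\sqrt{-\Delta}}h\|_{L^2_{t,x}}\lesssim\|h\|_{L^2}$ of Theorem \ref{thm:local} and Corollary \ref{thm:local-1}. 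That smoothing estimate is genuinely $L^2\to L^2$ with no derivative loss, which is exactly what allows the duality step to close; this is the ingredient your proposal is missing.
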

The above theorem is extended to the inhomogeneous case in Section
\ref{sec:Inhom}. 

\smallskip

As in \cite{MNNO}, if we all the norm to be inhomogeneous with respect
to the radial and angular variables, one can recover the endpoint
Strichartz estimate:
\begin{thm}[Endpoint Strichartz estimate]
\label{thm:EndStri-1}Let $\left|v\right|<1$. Suppose $u$ is a scattering
state in the sense of Definition \ref{AO} which solves 
\begin{equation}
\partial_{tt}u-\Delta u+V_{1}(x)u+V_{2}(x-\vec{v}t)u=0
\end{equation}
with initial data
\begin{equation}
u(x,0)=g(x),\,u_{t}(x,0)=f(x).
\end{equation}
Then for $1\leq p<\infty$, 
\begin{equation}
\|u\|_{L_{t}^{2}\left([0,\infty),\,L_{r}^{\infty}L_{\omega}^{p}\right)}\lesssim\|f\|_{L^{2}}+\|g\|_{\dot{H}^{1}}\label{eq:EndStri-1}
\end{equation}
\end{thm}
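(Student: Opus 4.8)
The plan is to run the argument of Theorem~\ref{thm:StriCharB-1} essentially verbatim, replacing the free non-endpoint Strichartz estimate by the free \emph{endpoint} estimate in the radial--angular mixed norm, which is where the observation of \cite{MNNO} enters. Concretely, I will use that for the free wave equation in $\mathbb{R}^{3}$ and $1\le p<\infty$ one has the homogeneous bound
\[
\Bigl\|\cos\bigl(t\sqrt{-\Delta}\bigr)g+\frac{\sin\bigl(t\sqrt{-\Delta}\bigr)}{\sqrt{-\Delta}}f\Bigr\|_{L_{t}^{2}L_{r}^{\infty}L_{\omega}^{p}}\lesssim\|g\|_{\dot H^{1}}+\|f\|_{L^{2}},
\]
together with its inhomogeneous counterpart
\[
\Bigl\|\int_{0}^{t}\frac{\sin\bigl((t-s)\sqrt{-\Delta}\bigr)}{\sqrt{-\Delta}}F(s)\,ds\Bigr\|_{L_{t}^{2}L_{r}^{\infty}L_{\omega}^{p}}\lesssim\|F\|_{Y},
\]
where $Y$ is the dual-type space entering the inhomogeneous endpoint estimate. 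Both are contained in \cite{MNNO} and, for the inhomogeneous version, in the extensions of \cite{BecGo} (cf.\ also the free reversed estimates recalled in Theorem~\ref{thm:EndRStrichF}).

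First I would move the two potentials in \eqref{eq:eqBSsec-1} to the right-hand side and apply Duhamel's principle, obtaining
\[
u(t)=\cos\bigl(t\sqrt{-\Delta}\bigr)g+\frac{\sin\bigl(t\sqrt{-\Delta}\bigr)}{\sqrt{-\Delta}}f-\int_{0}^{t}\frac{\sin\bigl((t-s)\sqrt{-\Delta}\bigr)}{\sqrt{-\Delta}}\bigl(V_{1}u+V_{2}(\cdot-\vec{v}s)u\bigr)(s)\,ds.
\]
Feeding this into the two free estimates above reduces matters to controlling the Duhamel source:
\[
\|u\|_{L_{t}^{2}L_{r}^{\infty}L_{\omega}^{p}}\lesssim\|g\|_{\dot H^{1}}+\|f\|_{L^{2}}+\|V_{1}u\|_{Y}+\|V_{2}(\cdot-\vec{v}t)u\|_{Y}.
\]

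The source terms are handled exactly as in the preceding sections. Since each $V_{j}$ is smooth and decays like $\langle x\rangle^{-\alpha}$ with $\alpha>3$, Hölder in the spatial variable bounds $\|V_{j}(\cdot-\vec{v}_{j}t)u\|_{Y}$ by a weighted space--time norm of $u$ localized near the trajectory $x=\vec{v}_{j}t$, which is in turn controlled by the local energy decay of the scattering state and by the reversed Strichartz estimate for the charge transfer Hamiltonian (Theorem~\ref{thm:PStriRStrich} in the stationary case and its two-center version established earlier), hence by $\|f\|_{L^{2}}+\|g\|_{\dot H^{1}}$. For the stationary potential $V_{1}$ this is immediate; for the moving potential $V_{2}$ one first applies the Lorentz transformation \eqref{eq:LorentzT} so that $V_{2}$ becomes time-independent, uses the stationary reversed Strichartz and local-decay bounds in the frame $(x',t')$, and transforms back via \eqref{eq:ILcoordinate}, invoking the comparability of the energy and of the reversed Strichartz norms under $L$ proved earlier. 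Combining this with the previous display gives $\|u\|_{L_{t}^{2}L_{r}^{\infty}L_{\omega}^{p}}\lesssim\|f\|_{L^{2}}+\|g\|_{\dot H^{1}}$, which is \eqref{eq:EndStri-1}.

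The main obstacle is the interplay between the anisotropic norm $L_{r}^{\infty}L_{\omega}^{p}$ and the Lorentz boost used to stationarize $V_{2}$: Lorentz transformations are space--time rotations, they do not preserve spheres centered at the origin, and they are not bounded on fixed $L^{p}$ spaces, so the mixed-norm structure cannot be transported between frames the way Galilei transformations transport $L^{p}$ norms in the Schrödinger setting. This is precisely why the anisotropic norm is used only in the final application of the \emph{free} endpoint estimate in the original frame, while the Lorentz change of frame enters solely through the estimation of the Duhamel source, which is carried out in the frame-robust reversed Strichartz norm $L_{x}^{\infty}L_{t}^{2}$ and the weighted local-decay norms. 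A secondary technical point is to have the inhomogeneous free endpoint estimate available with a source space $Y$ weak enough that the decay rate $\alpha>3$ of the potentials suffices to absorb $V_{j}u$; this is exactly the role of the hypothesis on $\alpha$.
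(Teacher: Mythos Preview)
Your outline has a genuine gap at the step where you propose to ``run the argument of Theorem~\ref{thm:StriCharB-1} essentially verbatim.'' That proof (Theorem~\ref{thm:StrichaWOB} in Section~\ref{sec:StrichWOB}) converts the retarded Duhamel integral $\int_{0}^{t}$ into the full-time integral $\int_{0}^{\infty}$ via the Christ--Kiselev lemma (Lemma~\ref{lem:Christ-Kiselev}), after which the $TT^{*}$-type splitting with local energy decay goes through. But Christ--Kiselev requires a strict inequality between the time exponents on the two sides, and here both are $L_{t}^{2}$: the output norm is $L_{t}^{2}L_{r}^{\infty}L_{\omega}^{p}$ and the source you want to use (after factoring $V_{j}=V_{2j+1}V_{2j+2}$) lives in $L_{t,x}^{2}$. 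So the passage from $\int_{0}^{t}$ to $\int_{0}^{\infty}$ is exactly the missing step. Invoking an ``inhomogeneous counterpart'' from \cite{MNNO} or \cite{BecGo} with an unspecified dual space $Y$ hides this issue rather than resolving it; \cite{MNNO} gives only the homogeneous estimate, and an inhomogeneous $L_{t}^{2}\to L_{t}^{2}$ endpoint bound with the retarded integral is not available as a black box.

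The paper's fix is a positivity trick specific to $\mathbb{R}^{3}$: since the kernel of $\frac{\sin(t\sqrt{-\Delta})}{\sqrt{-\Delta}}$ is nonnegative, one has the pointwise domination
\[
\Bigl|\int_{0}^{t}\frac{\sin\bigl((t-s)\sqrt{-\Delta}\bigr)}{\sqrt{-\Delta}}F(s)\,ds\Bigr|
\;\le\;
\int_{0}^{\infty}\frac{\sin\bigl((t-s)\sqrt{-\Delta}\bigr)}{\sqrt{-\Delta}}\bigl|F(s)\bigr|\,ds,
\]
which replaces $\int_{0}^{t}$ by $\int_{0}^{\infty}$ without Christ--Kiselev. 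After that, one rewrites the right-hand side as $Pe^{-itA}\int_{0}^{\infty}e^{isA}|F(s)|\,ds$, applies the free homogeneous endpoint estimate of \cite{MNNO} to the outer factor, and bounds the inner $L^{2}$ norm by duality with the (moving-weight) local energy decay of Corollary~\ref{thm:local-1}. This closes with $\|V_{4}u\|_{L_{t,x}^{2}}+\|V_{6}(\cdot-\vec{v}t)u\|_{L_{t,x}^{2}}\lesssim\|f\|_{L^{2}}+\|g\|_{\dot H^{1}}$ via Corollary~\ref{cor:weightChargeWOB}. Incidentally, no Lorentz change of frame is needed in this proof: the moving potential is handled directly by the moving-weight local decay estimate, not by stationarizing $V_{2}$.
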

Next, one has the energy estimate:
\begin{thm}[Energy estimate]
\label{thm:EnergyCharge-1} Suppose $u$ is a scattering
state in the sense of Definition \ref{AO} which solves 
\begin{equation}
\partial_{tt}u-\Delta u+V_{1}(x)u+V_{2}(x-\vec{v}t)u=0
\end{equation}
with initial data
\begin{equation}
u(x,0)=g(x),\,u_{t}(x,0)=f(x).
\end{equation}
Then we have
\begin{equation}
\sup_{t\geq0}\left(\|\nabla u(t)\|_{L^{2}}+\|u_{t}(t)\|_{L^{2}}\right)\lesssim\|f\|_{L^{2}}+\|g\|_{\dot{H}^{1}}.\label{eq:StriCharWOB-1}
\end{equation}
\end{thm}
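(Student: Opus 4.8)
The plan is to run a bootstrap/continuity argument on the energy of the scattering state, using the channel structure induced by the two potentials and the local energy decay that (by the organization of the paper) is available for a scattering state. First I would set $E(t) = \|\nabla u(t)\|_{L^2}^2 + \|u_t(t)\|_{L^2}^2$ and compute $\frac{d}{dt}E(t)$. Since $u$ solves $\partial_{tt}u - \Delta u + V_1(x)u + V_2(x-\vec v t)u = 0$, pairing the equation with $u_t$ gives
\begin{equation}
\frac{d}{dt}\Bigl(\|\nabla u(t)\|_{L^2}^2 + \|u_t(t)\|_{L^2}^2 + \int (V_1 + V_2(\cdot - \vec v t))\,|u|^2\,dx\Bigr) = \int (\partial_t V_2)(x - \vec v t)\,|u|^2\,dx,
\end{equation}
so the only obstruction to exact conservation is the motion of $V_2$, which produces a term controlled by $\|\nabla V_2\|_{\infty}\,\||x|^{-\sigma} u\|_{L^2_{\text{loc}}}^2$ type quantities near the trajectory of $V_2$. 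The strategy is therefore to show that this error is integrable in time, with integral controlled by the data; this is exactly where the local energy decay estimate enters.

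The key steps, in order: (1) reduce the total energy (including potential terms) to the kinetic-plus-gradient energy $E(t)$, absorbing $\int V_j |u|^2$ using smallness of the potentials in $L^{3/2}$ against $\|u\|_{L^6}^2 \lesssim \|\nabla u\|_{L^2}^2$ (Sobolev), so that $E(t)$ and the full conserved-type energy are comparable up to a possible loss that one controls by the bootstrap; (2) estimate the driving term $\int (\partial_t V_2)(x-\vec v t)|u|^2$ by passing to the weighted local norm of $u$ around the worldline of the second potential, and likewise track the contribution near $V_1$'s support; (3) invoke the local energy decay for scattering states — applied both in the original frame (for the region near $V_1$) and, after the Lorentz transformation $L$, in the moving frame (for the region near $V_2$, where $V_2$ is stationary) — to bound $\int_0^\infty \||x|^{-\sigma}u(t)\|_{L^2(\text{near }V_j)}^2\,dt \lesssim \|f\|_{L^2}^2 + \|g\|_{\dot H^1}^2$; (4) combine to get $E(t) \lesssim E(0) + (\|f\|_{L^2}^2 + \|g\|_{\dot H^1}^2) + \int_0^t (\text{integrable error})$, and close with Grönwall or a direct continuity argument to obtain the uniform-in-$t$ bound, noting $E(0) = \|f\|_{L^2}^2 + \|g\|_{\dot H^1}^2$. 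Since the estimate is only claimed on $[0,\infty)$ and the scattering condition is a forward condition, all of this is one-sided in time.

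The main obstacle I expect is step (3) combined with the frame change: the local energy decay near $V_2$ is naturally an estimate in the Lorentz-transformed coordinates $(x',t')$, and one must check that the weighted spacetime norm controlling the driving term $\int_0^\infty \int (\partial_t V_2)(x-\vec v t)|u|^2\,dx\,dt$ really does transform into a quantity bounded by the local energy decay in the primed frame, with constants depending only on $|v| < 1$ (through $\gamma$), and that the energy in the new frame is comparable to the energy in the old frame — precisely the subtlety flagged in the introduction about Lorentz transformations not being $L^p$-bounded. Concretely, one writes $\int (\partial_t V_2)(x-\vec v t)|u|^2\,dx\,dt$ as an integral over a region that is a fixed neighborhood of the worldline $\{x = \vec v t\}$, changes variables via \eqref{eq:LorentzT}, and recognizes the result as $\int\!\!\int_{|x'|\lesssim 1} |\nabla V_2(x')|\,|u_L(x',t')|^2\,dx'\,dt'$ up to the Jacobian; this is then $\lesssim \|f\|_{L^2}^2 + \|g\|_{\dot H^1}^2$ by the local energy decay for the $V_2$-channel of the scattering state. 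A secondary technical point is the interaction region where the supports (effectively) overlap for bounded $t$, but since $|v|<1$ the two worldlines separate linearly and the overlap contributes only a bounded-time, hence harmless, piece handled by finite propagation speed and continuity of $E$.
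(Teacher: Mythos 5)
Your step (2)--(4) outline, controlling $\int_0^\infty |\partial_t E_{\mathrm{tot}}(t)|\,dt$ by the weighted estimate on $u$ near the worldline of $V_2$, is essentially the paper's proof of Corollary~\ref{cor:eneCharB-1} (boundedness of the total energy), and the estimate you actually invoke in step (3) is the weighted $L^2_{t,x}$ bound on $u$ (Corollary~\ref{cor:weightChargeWOB}), which is itself derived from the endpoint reversed Strichartz estimates of Theorem~\ref{thm:EndRSChWOB} --- not the local energy decay for $(\nabla u, u_t)$, which the paper proves \emph{after} the energy estimate. That chain is fine for the total energy, but it does not by itself give Theorem~\ref{thm:EnergyCharge-1}.

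The gap is step (1). You propose to pass from the total conserved-type quantity $E_{\mathrm{tot}}(t) = \int |\nabla u|^2 + |u_t|^2 + V_1|u|^2 + V_2(\cdot - \vec v t)|u|^2\,dx$ back to $E(t)=\|\nabla u(t)\|_{L^2}^2 + \|u_t(t)\|_{L^2}^2$ by absorbing $\int V_j|u|^2$ via $\|V_j\|_{L^{3/2}}\|u\|_{L^6}^2 \lesssim \|V_j\|_{L^{3/2}}\|\nabla u\|_{L^2}^2$. This only closes if $\|V_j\|_{L^{3/2}}$ is small, which is not assumed anywhere in Definition~\ref{def: Charge}: the potentials have prescribed decay but arbitrary size, and in particular $H_j$ is allowed to have negative eigenvalues, which forces $\int V_j|u|^2$ to be genuinely non-perturbative relative to $\|\nabla u\|_{L^2}^2$. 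The ``bootstrap'' you gesture at does not repair this --- if $\int V_1|u|^2$ can be comparable to $-C\|\nabla u\|_{L^2}^2$ with $C\geq 1$, then a bound on $E_{\mathrm{tot}}(t)$ places no constraint on $E(t)$. The scattering hypothesis kills the exponentially growing bound-state components, but that information is not fed into your step (1) in any quantitative way.

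The paper avoids the total energy entirely. It sets $A=\sqrt{-\Delta}$, $U=Au+i\partial_t u$ (so that $\|U(t)\|_{L^2}\simeq E(t)^{1/2}$ with no potential term), writes Duhamel's formula for $U$, bounds the free part by $L^2$ conservation, and handles the inhomogeneous part by the Rodnianski--Schlag scheme: factor $V_1=V_3V_4$, $V_2=V_5V_6$, apply Christ--Kiselev, reduce the $L^2_{t,x}\to L^\infty_t L^2_x$ bound for the kernel operator to the dual of the free local energy decay (Theorem~\ref{thm:local} and Corollary~\ref{thm:local-1}), and finally close with $\|V_4 u\|_{L^2_{t,x}}+\|V_6(x-\vec v t)u\|_{L^2_{t,x}}\lesssim \|f\|_{L^2}+\|g\|_{\dot H^1}$ from Corollary~\ref{cor:weightChargeWOB}. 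This directly produces $\sup_{t\geq 0}\|U(t)\|_{L^2}\lesssim \|f\|_{L^2}+\|g\|_{\dot H^1}$, which is exactly the claimed kinetic-plus-gradient energy bound, without ever touching the signed potential energy. If you want to salvage your flux-plus-Grönwall route, you would need an independent a priori bound on $\sup_t\int |V_j|\,|u|^2\,dx$ (pointwise in $t$, not integrated), and that is not delivered by the weighted spacetime estimates alone.
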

Associated with the energy estimate, we also have the local energy
decay:
\begin{thm}[Local energy decay]
\label{thm:LEnergyCharge-1} Suppose $u$ is
a scattering state in the sense of Definition \ref{AO} which solves
\begin{equation}
\partial_{tt}u-\Delta u+V_{1}(x)u+V_{2}(x-\vec{v}t)u=0
\end{equation}
with initial data
\begin{equation}
u(x,0)=g(x),\,u_{t}(x,0)=f(x).
\end{equation}
Then for $\forall\epsilon>0,\,\left|\mu\right|<1$, we have 
\[
\left\Vert \left(1+\left|x-\mu t\right|\right)^{-\frac{1}{2}-\epsilon}\left(\left|\nabla u\right|+\left|u_{t}\right|\right)\right\Vert _{L_{t,x}^{2}}\lesssim_{\mu,\epsilon}\|f\|_{L^{2}}+\|g\|_{\dot{H}^{1}}.
\]
\end{thm}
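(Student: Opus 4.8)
The plan is to run the same Duhamel-plus-channel-decomposition scheme that already produced the energy and Strichartz estimates (Theorems \ref{thm:EnergyCharge-1} and \ref{thm:StriCharB-1}) and to reduce the desired bound to two inputs: (a) free local energy decay around an arbitrary time-like line, and (b) local energy decay near each individual stationary Hamiltonian $H_1,H_2$. First I would isolate two structural facts. The free wave flow satisfies, for every $|\mu|<1$,
\[
\bigl\|\langle x-\mu t\rangle^{-\frac12-\epsilon}\nabla_{t,x}\bigl(\cos(t\sqrt{-\Delta})g+\tfrac{\sin(t\sqrt{-\Delta})}{\sqrt{-\Delta}}f\bigr)\bigr\|_{L^2_{t,x}}\lesssim_{\mu,\epsilon}\|f\|_{L^2}+\|g\|_{\dot H^1},
\]
together with the corresponding inhomogeneous estimate (allowing mismatched time-like centers on the two sides); these are the classical Morawetz/local-energy estimates recalled in the introduction, upgraded to a moving center by Lorentz invariance of the free wave operator, and are available in the inhomogeneous and perturbed forms from Beceanu--Goldberg \cite{BecGo} by the usual $TT^{*}$/Christ--Kiselev argument. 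Second, the change of variables $y=x-\mu t$ has unit Jacobian and gives $|\nabla_{t,x}u|\approx_{\mu}|\nabla_{t,y}\tilde u|$, while the Lorentz transformation \eqref{eq:LorentzT} sends time-like lines to time-like lines and distorts both the weight $\langle x-\mu t\rangle$ and the measure $dt\,dx$ only by $v$- and $\mu$-dependent factors on any fixed time slab; hence every weighted space-time $L^2$ norm below is comparable after passing to any of the relevant moving frames, so it suffices to argue in the original frame with $\mu$ general and the argument is uniform in $\mu$.

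Then I would write $\partial_{tt}u-\Delta u=-V_1 u-V_2(\cdot-\vec v t)u=:F$ and apply Duhamel's formula. The free evolution of $(g,f)$ is handled directly by (a). For the Duhamel term I would use the inhomogeneous free estimate with output weight $\langle x-\mu t\rangle^{-\frac12-\epsilon}$ and, for each $j$, input weight $\langle x-\vec v_j t\rangle^{\frac12+\epsilon}$ adapted to the support of $V_j(\cdot-\vec v_j t)$:
\[
\bigl\|\langle x-\mu t\rangle^{-\frac12-\epsilon}\nabla_{t,x}\!\!\int_0^t\tfrac{\sin((t-s)\sqrt{-\Delta})}{\sqrt{-\Delta}}F(s)\,ds\bigr\|_{L^2_{t,x}}\lesssim_{\mu,\epsilon}\sum_j\bigl\|\langle x-\vec v_j t\rangle^{\frac12+\epsilon}V_j(x-\vec v_j t)\,u\bigr\|_{L^2_{t,x}}.
\]
Because $\alpha>3$ we have $\langle x-\vec v_j t\rangle^{\frac12+\epsilon}|V_j(x-\vec v_j t)|\lesssim\langle x-\vec v_j t\rangle^{-\frac12-\epsilon}$, so the right-hand side is dominated by $\sum_j\|\langle x-\vec v_j t\rangle^{-\frac12-\epsilon}u\|_{L^2_{t,x}}$. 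This reduces the whole theorem to the single-potential local energy decay of $u$ itself, namely $\|\langle x-\vec v_j t\rangle^{-\frac12-\epsilon}u\|_{L^2_{t,x}}\lesssim\|f\|_{L^2}+\|g\|_{\dot H^1}$ for $j=1,2$.

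For $j=1$ (the stationary potential) I would split $u=P_c(H_1)u+P_b(H_1)u$. The continuous part solves $\partial_{tt}u_c+H_1u_c=-P_c(H_1)\bigl(V_2(\cdot-\vec v t)u\bigr)$, and its local energy decay near $x=0$ is the wave local-energy estimate for the stationary Hamiltonian $H_1$ (the analogue underlying Theorem \ref{thm:PStriRStrich}, cf. \cite{BecGo}); the forcing is absorbed because $V_2(\cdot-\vec v t)$ is concentrated on $|x|\sim|t|$, where the weight $\langle x\rangle^{-\frac12-\epsilon}\sim\langle t\rangle^{-\frac12-\epsilon}$ is small, and the residual coupling is closed by the same bootstrap as for Theorem \ref{thm:EnergyCharge-1}. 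The bound-state part $u_b=\sum_i c_i(t)w_i$ is spatially localized (the $w_i$ decay exponentially), so $\langle x\rangle^{-\frac12-\epsilon}u_b$ is comparable to $u_b$ in $L^2_x$; the coefficients satisfy $\ddot c_i-\lambda_i^2 c_i=-\langle V_2(\cdot-\vec v t)u,w_i\rangle$, whose forcing decays rapidly in $t$ since the tube $x=\vec v t$ separates linearly from the support of $w_i$, and the scattering hypothesis \eqref{eq:ao2-1} selects the decaying solution of this forced ODE system; this is precisely the analysis from the proof of Theorem \ref{thm:EnergyCharge-1}, and it yields $\|c_i\|_{L^2_t}\lesssim\|f\|_{L^2}+\|g\|_{\dot H^1}$. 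For $j=2$ (the moving potential) I would apply the Lorentz transformation \eqref{eq:LorentzT}: in the primed frame $V_2$ is stationary, $u_L$ is again a scattering state relative to $H_2$, the previous paragraph applies verbatim, and transforming back via \eqref{eq:InvLorentT} together with the comparability of weighted space-time norms recorded above gives the claim.

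The main obstacle is the coupling between the two potentials: the scattering hypothesis only gives $\|P_b(H_j)u(t)\|_{L^2_x}\to0$ pointwise in $t$, whereas here I need space-time ($L^2_{t,x}$) control of those components and of the forcing each potential generates in the other's channel. This is exactly the point at which one must reuse, and slightly sharpen to a space-time statement, the bootstrap already developed for Theorems \ref{thm:EnergyCharge-1} and \ref{thm:StriCharB-1}; the only genuinely new bookkeeping is tracking the weights $\langle x-\mu t\rangle^{\pm(\frac12+\epsilon)}$ through the Lorentz transformation and through the mismatched-center inhomogeneous free estimate, which costs only constants depending on $v,\mu,\epsilon$. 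An alternative to the Duhamel reduction would be a direct Morawetz-multiplier argument in the drifted variables $y=x-\mu t$, where the equation becomes $\partial_{tt}\tilde u-2\mu\cdot\nabla\partial_t\tilde u+(\mu\cdot\nabla)^2\tilde u-\Delta\tilde u+\tilde V\tilde u=0$; since $|\mu|<1$ the characteristic cone stays uniformly space-like-bounded and a boosted radial multiplier still works, but handling the time-dependent $\tilde V$ and the bound states is cleaner through the reduction above.
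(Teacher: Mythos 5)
Your high-level plan — Duhamel with the two potentials as forcing, free local energy decay around a moving center for the linear term, and a weighted inhomogeneous estimate for the Duhamel term — is in the same spirit as the paper, which simply repeats the proof of the energy estimate (Theorem~\ref{thm:EnergyCharge-1}) with the free energy bound replaced by Corollary~\ref{thm:local-1} and then closes with the weighted bound on $u$ from Corollary~\ref{cor:weightChargeWOB}. But there is a concrete error in your weight bookkeeping that makes a key intermediate claim false, and it also causes you to re-derive something the paper has already established.

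The problematic step is the reduction to ``$\|\langle x-\vec v_j t\rangle^{-\frac12-\epsilon}u\|_{L^2_{t,x}}\lesssim\|f\|_{L^2}+\|g\|_{\dot H^1}$''. That estimate is false already for $V_1=V_2=0$: take $g=0$ and $f_R(x)=R^{-3/2}\phi(x/R)$ with $\phi$ a fixed bump, so $\|f_R\|_{L^2}\simeq 1$. For $0\le t\ll R$ one has $u_R(x,t)\approx t\,f_R(x)$, hence
\[
\bigl\|\langle x\rangle^{-\frac12-\epsilon}u_R\bigr\|_{L^2_{t,x}}^2\gtrsim\int_0^{cR}t^2\,dt\cdot R^{-3}\int_0^R r^{1-2\epsilon}\,dr\simeq R^{2-2\epsilon}\longrightarrow\infty.
\]
The correct scale-invariant exponent for $\||x|^{-s}\,u\|_{L^2_{t,x}}\lesssim\|(g,f)\|_{\dot H^1\times L^2}$ is $s=\frac32$, not $\frac12$. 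You threw away most of the potential's decay: with $|V_j(y)|\lesssim\langle y\rangle^{-\alpha}$, $\alpha>3$, you in fact have $\langle y\rangle^{\frac12+\epsilon}|V_j(y)|\lesssim\langle y\rangle^{\frac12+\epsilon-\alpha}$ with $\alpha-\frac12-\epsilon>\frac52-\epsilon>\frac32$. Keeping the full exponent reduces the Duhamel term to $\|\langle x-\vec v_j t\rangle^{-(\alpha-\frac12-\epsilon)}u\|_{L^2_{t,x}}$, which \emph{is} controlled, and moreover is exactly the content of Corollary~\ref{cor:weightChargeWOB} (derived from the endpoint reversed Strichartz estimates of Theorem~\ref{thm:EndRSChWOB}). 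With that correction there is no need to re-run the $P_c/P_b$ decomposition, the ODE analysis of the bound-state coefficients, or the channel bootstrap: Corollary~\ref{cor:weightChargeWOB} already packages all of that work. The paper's proof is thus essentially the $TT^*$ argument you hint at, but it factorizes $V_j=V_{3j}V_{4j}$ (with $V_{3j}$ carrying a weight of the type used in Theorem~\ref{thm:local} and $V_{4j}^2\lesssim\langle\cdot\rangle^{-\alpha}$), uses Kato smoothing for the $V_{3j}$-piece of the kernel, and invokes Corollary~\ref{cor:weightChargeWOB} for $\|V_{4j}u\|_{L^2_{t,x}}$; you should do the same, which both corrects the weight exponent and avoids redundant work.

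One further caution on the Duhamel term: once you pass from $\int_0^t$ to $\int_0^\infty$, the target and source Lebesgue exponents in time are both $2$, so Lemma~\ref{lem:Christ-Kiselev} (Christ--Kiselev) does not apply at that diagonal exponent. This is a known subtlety for the $L^2_{t,x}\to L^2_{t,x}$ smoothing estimate; one must argue directly for the retarded operator (e.g.\ by causality plus the dual smoothing estimate) rather than by Christ--Kiselev. It is worth flagging this in the write-up to avoid a silent gap.
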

Even more importantly, we obtain the endpoint reversed Strichartz
estimates for $u$.
\begin{thm}[Endpoint reversed Strichartz estimate]
\label{thm:EndRStriCB-1}Suppose
$u$ is a scattering state in the sense of Definition \ref{AO} which
solves 
\begin{equation}
\partial_{tt}u-\Delta u+V_{1}(x)u+V_{2}(x-\vec{v}t)u=0
\end{equation}
with initial data
\begin{equation}
u(x,0)=g(x),\,u_{t}(x,0)=f(x).
\end{equation}
Then
\begin{equation}
\sup_{x\in\mathbb{R}^{3}}\int_{0}^{\infty}\left|u(x,t)\right|^{2}dt\lesssim\left(\|f\|_{L^{2}}+\|g\|_{\dot{H}^{1}}\right)^{2},
\end{equation}
and 
\begin{equation}
\sup_{x\in\mathbb{R}^{3}}\int_{0}^{\infty}\left|u(x+vt,t)\right|^{2}dt\lesssim\left(\|f\|_{L^{2}}+\|g\|_{\dot{H}^{1}}\right)^{2}.
\end{equation}
\end{thm}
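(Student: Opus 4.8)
The plan is, in the spirit of the Schr\"odinger treatment in \cite{GC1}, to reduce to the single stationary potential by Duhamel's formula in $H_{1}$, to dispose of the bound state component by hand using the scattering hypothesis, and to control the Duhamel remainder produced by the moving potential $V_{2}$ via the inhomogeneous reversed Strichartz estimate for $H_{1}$ (Theorem \ref{thm:PStriRStrich} together with its inhomogeneous extension from \cite{BecGo}, recalled in Section \ref{sec:Prelim}), the bound on the source term being supplied by the local energy decay and energy estimates already in hand (Theorems \ref{thm:LEnergyCharge-1} and \ref{thm:EnergyCharge-1}). Write $A=\|f\|_{L^{2}}+\|g\|_{\dot{H}^{1}}$. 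Since $\partial_{tt}u+H_{1}u=-V_{2}(\cdot-\vec{v}t)u$, Duhamel's formula writes $u(t)=\cos(t\sqrt{H_{1}})g+\frac{\sin(t\sqrt{H_{1}})}{\sqrt{H_{1}}}f-\int_{0}^{t}\frac{\sin((t-s)\sqrt{H_{1}})}{\sqrt{H_{1}}}V_{2}(\cdot-\vec{v}s)u(s)\,ds$; I would split every term through $P_{c}(H_{1})$ and $P_{b}(H_{1})$. The $P_{c}(H_{1})$ part of the homogeneous piece is $\lesssim A$ in $L_{x}^{\infty}L_{t}^{2}$ directly by the endpoint reversed Strichartz estimate of Theorem \ref{thm:PStriRStrich}.

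For the bound state component, I would set $a_{i}(t)=\langle u(t),w_{i}\rangle$ and pair the equation with the eigenfunction $w_{i}$; using $H_{1}w_{i}=-\lambda_{i}^{2}w_{i}$ this gives the scalar ODE $a_{i}''(t)-\lambda_{i}^{2}a_{i}(t)=-\langle V_{2}(\cdot-\vec{v}t)u(t),w_{i}\rangle=:-b_{i}(t)$, whose general solution carries a growing mode $e^{\lambda_{i}t}$. The scattering condition $a_{i}(t)\to 0$ forces that mode off, so $a_{i}$ is the unique decaying solution, representable through the Green's function $(2\lambda_{i})^{-1}e^{-\lambda_{i}|\cdot|}$ of $\lambda_{i}^{2}-\partial_{t}^{2}$ with coefficients controlled by $A$. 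Because $w_{i}$ decays exponentially while $V_{2}(\cdot-\vec{v}t)$ is concentrated near $x=\vec{v}t$, hence at distance $\gtrsim|v|t$ from the support of $w_{i}$, I get $|b_{i}(t)|\lesssim A\langle t\rangle^{-\alpha}$ from $\|u(t)\|_{L^{6}}\lesssim A$ (Theorem \ref{thm:EnergyCharge-1}); this is square integrable in $t$, so $\|a_{i}\|_{L_{t}^{2}}\lesssim A$ and $\|P_{b}(H_{1})u\|_{L_{x}^{\infty}L_{t}^{2}}\le\sum_{i}\|w_{i}\|_{L^{\infty}}\|a_{i}\|_{L_{t}^{2}}\lesssim A$. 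The same mechanism controls the $P_{b}(H_{1})$ part of the Duhamel integral.

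There remains the $P_{c}(H_{1})$ part of the Duhamel integral, and this is where the real work lies. I would apply the inhomogeneous reversed Strichartz estimate for $H_{1}$ on the continuous spectrum, which reduces the matter to bounding the source $V_{2}(\cdot-\vec{v}s)u(s)$ in the relevant space-time norm. Two structural facts should make this possible: $V_{2}$ decays faster than $\langle x\rangle^{-3}$, so the source is essentially carried by the tube $\{|x-\vec{v}s|\lesssim 1\}$, and this tube is uniformly timelike since $|v|<1$, so the backward light cone through any $(x,t)$ meets it in a set of bounded time-length; together these forbid any loss of integrability in $t$ and allow one to absorb the source into the output of the local energy decay estimate of Theorem \ref{thm:LEnergyCharge-1} with $\mu=v$, the uniform energy bound being used to trade the gradient and time-derivative that theorem controls for control of $u$ itself against the weight $V_{2}$. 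I expect this to be the main obstacle: the naive source norms $L_{x}^{1}L_{t}^{2}$ and $L_{t}^{1}L_{x}^{2}$ both fail here — the first because $V_{2}(x-\vec{v}s)$ is not integrable in $x$ along the moving line, the second because the energy bound alone gives no decay in $s$ — so one is forced into a weighted space-time $L^{2}$ norm dual to the local-decay weight, and the passage from $\nabla u,u_{t}$ to $u$ against that weight is delicate. If a clean stand-alone estimate turns out to be awkward, the fallback is to run the whole scheme as a bootstrap on $[0,T]$ with constants uniform in $T$, closed by the small factor coming from the separation of the two potentials for large $t$.

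Finally, the tilted estimate $\sup_{x}\int_{0}^{\infty}|u(x+\vec{v}t,t)|^{2}\,dt$ follows by symmetry after applying the Lorentz transformation $L$ of velocity $\vec{v}=(v,0,0)$ in \eqref{eq:LorentzT}: under $L$ the potential $V_{2}$ becomes stationary, with rescaled argument as in \eqref{eq:2V}, while $V_{1}$ moves; the function $u_{L}$ of \eqref{eq:Lcoordinate} is again a scattering state for the transformed charge transfer equation, since one of the two conditions in Definition \ref{AO} is precisely $\|P_{b}(H_{2})u_{L}(t')\|_{L_{x'}^{2}}\to 0$ and the other transforms into the analogous condition for the moved $V_{1}$; and by \eqref{eq:ILcoordinate} the spacetime line $t\mapsto(x+\vec{v}t,t)$ is carried to a vertical line $t'\mapsto(x',t')$. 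Using the comparability of the energy in the new and old frames for scattering states established earlier, one gets $\sup_{x}\int_{0}^{\infty}|u(x+\vec{v}t,t)|^{2}\,dt\lesssim_{v}\sup_{x'}\int|u_{L}(x',t')|^{2}\,dt'$, and the right-hand side is exactly the first inequality for $u_{L}$ with the roles of $H_{1}$ and $H_{2}$ interchanged, so it follows on repeating the three steps above about $H_{2}$.
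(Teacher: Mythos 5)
Your scheme — Duhamel against $H_{1}$ alone with the moving potential as source, the scattering hypothesis to delete the growing mode of the bound state ODE, and Lorentz symmetry for the slanted estimate — differs from the paper's, which uses a spatial partition of unity into three channels, each compared with a different dominating Hamiltonian, and then a simultaneous bootstrap on the straight-line and slanted-line constants $C_{1}(T),\,C_{2}(T)$. Your version has two genuine gaps. The first is circularity: the bound $|b_{i}(t)|\lesssim A\langle t\rangle^{-\alpha}$ rests on $\|u(t)\|_{L^{6}}\lesssim A$ from Theorem \ref{thm:EnergyCharge-1}, and your control of the Duhamel remainder rests on Theorem \ref{thm:LEnergyCharge-1}; but in the paper both of those are \emph{consequences} of the present reversed endpoint Strichartz estimate (they are derived in Section \ref{sec:StrichWOB} from the weighted bounds of Corollary \ref{cor:weightChargeWOB}, which in turn come from Theorem \ref{thm:EndRSChWOB}). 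So the ``main argument'' uses downstream results to prove the upstream one.

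The second gap is structural and is not repaired by your fallback remark. Running the scheme as a bootstrap on $[0,T]$ does not close, because Duhamel against $H_{1}$ with the source $V_{2}(\cdot-\vec{v}s)u(s)$ estimated through \eqref{eq:PEndRSIT} produces a term of size $C(T)\big(\|f\|_{L^{2}}+\|g\|_{\dot H^{1}}\big)$ with \emph{constant one} in front of $C(T)$ — there is no small factor to absorb. The paper's gain $\frac{1}{A}$ comes from the truncated integrals $\int_{0}^{t-A}$ (Corollaries \ref{cor:inhomA}, \ref{cor:perinhomA}, Theorem \ref{thm:Prot}), and that truncation is only legitimate because the cutoffs $\chi_{1},\chi_{2},\chi_{3}$ keep the observation point at distance $\gtrsim t$ from the ``other'' potential, so finite speed of propagation discards the recent past $s>t-A$. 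For a fixed $x$ without the channel decomposition there is no such separation — $V_{2}(x-\vec{v}s)$ can be active at $s$ arbitrarily close to $t$ — so the partition into channels is essential rather than an optimization. Relatedly, the symmetry deduction of the slanted estimate from the straight one does not actually decouple them: bounding the $H_{1}$-Duhamel source needs the slanted norm of $u$ and bounding the $H_{2}$-Duhamel source needs the straight norm, so the two must be bootstrapped together as the paper does, not proved in sequence.
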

With the endpoint estimate along $\left(x+vt,t\right)$, one can derive
the boundedness of the total energy. We denote the total energy of
the system as 
\begin{equation}
E(t)=\int\left|\nabla_{x}u\right|^{2}+\left|\partial_{t}u\right|^{2}+V_{1}\left|u\right|^{2}+V_{2}(x-\vec{v}t)\left|u\right|^{2}dx.
\end{equation}

\begin{cor}[Boundedness of the total energy]
\label{cor:eneCharB-1} Suppose
$u$ is a scattering state in the sense of Definition \ref{AO} which
solves 
\begin{equation}
\partial_{tt}u-\Delta u+V_{1}(x)u+V_{2}(x-\vec{v}t)u=0
\end{equation}
with initial data
\begin{equation}
u(x,0)=g(x),\,u_{t}(x,0)=f(x).
\end{equation}
 Assume 
\begin{equation}
\left\Vert \nabla V_{2}\right\Vert _{L^{1}}<\infty,
\end{equation}
then $E(t)$ is bounded by the initial energy independently of $t$,
\begin{equation}
\sup_{t\geq0}E(t)\lesssim\left\Vert \left(g,f\right)\right\Vert _{\dot{H}^{1}\times L^{2}}^{2}.
\end{equation}
\end{cor}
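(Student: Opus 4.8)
The plan is to exploit the fact that, along the flow, the only obstruction to exact conservation of $E(t)$ is the \emph{motion} of the potential $V_{2}$, and then to absorb that defect using the reversed Strichartz estimate along the tilted line $(x+\vec v t,t)$ provided by Theorem \ref{thm:EndRStriCB-1}. First I would check that $E(t)$ is finite for every $t$: the term $\int |\nabla u|^{2}+|u_{t}|^{2}$ is controlled by the energy estimate of Theorem \ref{thm:EnergyCharge-1}, while $|\int V_{j}|u|^{2}|\le \|V_{j}\|_{L^{3/2}}\|u\|_{L^{6}}^{2}\lesssim \|\nabla u\|_{L^{2}}^{2}$ by Sobolev embedding, using that each $V_{j}$ decays like $\langle x\rangle^{-\alpha}$ with $\alpha>3$, hence $V_{j}\in L^{3/2}$.

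Next I would differentiate $E(t)$ in $t$. Writing $\vec v=(v,0,0)$ and using $\partial_{t}\big(V_{2}(x-\vec v t)\big)=-(\vec v\cdot\nabla V_{2})(x-\vec v t)$, one obtains
\begin{multline*}
E'(t) = 2\int \big(\nabla u\cdot\nabla u_{t} + u_{t}u_{tt} + V_{1}u u_{t} + V_{2}(x-\vec v t)\,u u_{t}\big)\,dx \\ - \int (\vec v\cdot\nabla V_{2})(x-\vec v t)\,|u(x,t)|^{2}\,dx.
\end{multline*}
Integrating the first term by parts and using the equation $u_{tt}-\Delta u+V_{1}u+V_{2}(x-\vec v t)u=0$, the entire bracketed integral vanishes, leaving $E'(t)=-\int (\vec v\cdot\nabla V_{2})(x-\vec v t)\,|u(x,t)|^{2}\,dx$, and therefore
\[
E(t) = E(0) - \int_{0}^{t}\!\!\int (\vec v\cdot\nabla V_{2})(x-\vec v s)\,|u(x,s)|^{2}\,dx\,ds .
\]

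Then I would estimate the defect term. Substituting $x=y+\vec v s$ and using Tonelli,
\begin{multline*}
\Big|\int_{0}^{t}\!\!\int (\vec v\cdot\nabla V_{2})(x-\vec v s)\,|u|^{2}\,dx\,ds\Big| \le |\vec v|\int |\nabla V_{2}(y)|\Big(\int_{0}^{t} |u(y+\vec v s,s)|^{2}\,ds\Big)dy \\ \le |\vec v|\,\|\nabla V_{2}\|_{L^{1}}\,\sup_{y\in\mathbb{R}^{3}}\int_{0}^{\infty} |u(y+\vec v s,s)|^{2}\,ds ,
\end{multline*}
and the last supremum is $\lesssim \big(\|f\|_{L^{2}}+\|g\|_{\dot H^{1}}\big)^{2}$ by the second estimate in Theorem \ref{thm:EndRStriCB-1}. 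For the initial energy, $\int|\nabla g|^{2}+|f|^{2}\le \|(g,f)\|_{\dot H^{1}\times L^{2}}^{2}$ trivially, while $|\int V_{j}|g|^{2}|\le \|V_{j}\|_{L^{3/2}}\|g\|_{L^{6}}^{2}\lesssim \|g\|_{\dot H^{1}}^{2}$ as above, so $E(0)\lesssim \|(g,f)\|_{\dot H^{1}\times L^{2}}^{2}$. Combining these bounds with the displayed identity yields $\sup_{t\ge 0}E(t)\lesssim \|(g,f)\|_{\dot H^{1}\times L^{2}}^{2}$, since the right-hand side is now independent of $t$.

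The only genuinely delicate point is the justification of the differentiation of $E(t)$ and of the integration by parts: this requires enough regularity and spatial decay of $u$ for all integrals to converge absolutely and for the boundary terms at spatial infinity to vanish. I would handle this by first establishing the identity for smooth, sufficiently decaying data, where it is classical, and then passing to the limit using the energy estimate of Theorem \ref{thm:EnergyCharge-1} and the reversed Strichartz bound of Theorem \ref{thm:EndRStriCB-1}, both of which are stable under the relevant approximation of the scattering data; alternatively one may appeal to the finite speed of propagation arguments already used in the earlier sections. Everything else is a routine application of the already-established reversed Strichartz estimate.
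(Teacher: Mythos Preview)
Your argument is correct and follows essentially the same route as the paper: differentiate $E(t)$, use the equation to reduce $E'(t)$ to the single term $-\int(\vec v\cdot\nabla V_2)(x-\vec v t)|u|^2\,dx$, change variables to bring in $u(y+\vec v s,s)$, and control the resulting time integral by $\|\nabla V_2\|_{L^1}$ times the reversed Strichartz norm along the slanted line from Theorem~\ref{thm:EndRStriCB-1}. Your additional remarks on the finiteness of $E(t)$, the bound on $E(0)$, and the density/approximation justification are welcome elaborations but do not change the substance of the proof.
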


\subsection{Main ideas}

Here we briefly discuss the main ideas in our analysis and sketch
our proofs. We follow the philosophy from Rodnianski-Schlag \cite{RS}
that \emph{local decay estimates} imply Strichartz estimates. The
main stream of ideas is that \emph{the endpoint Strichartz estimate}
implies weighted estimates, based on which we can derive Strichartz
estimates, energy estimates, the local energy decay, and the boundedness
of the total energy. 

\smallskip

An essential step to approach wave equations with moving potentials
is to understand the change of energy under Lorentz transformations.
In subsection \ref{sec: Lorentz}, we show that the energy along a
space-like slanted line stays comparable to the energy of the initial
data. This in particular implies that under Lorentz transformations,
the initial energy with respect to the new frame is comparable to
the initial energy of the original frame. As a byproduct, we can also
obtain Agmon's estimates for the decay of eigenfunctions. The arguments
hold for all dimensions and even for wave equations with time-dependent
potentials, cf.~\cite{GC2}.

\smallskip

In order to handle time-dependent potentials, we need a time-dependent
weight in the local decay estimate, see Chen \cite{GC1}. More precisely,
we will show that for $\left|v\right|<1$, 
\begin{equation}
\int_{\mathbb{R}}\int_{\mathbb{R}^{3}}\frac{1}{\left\langle x\right\rangle ^{\alpha}}u^{2}(x,t)\,dxdt\lesssim\left\Vert \left(g,f\right)\right\Vert _{\dot{H}^{1}\times L^{2}}^{2}\label{eq:M1}
\end{equation}
and 
\begin{equation}
\int_{\mathbb{R}}\int_{\mathbb{R}^{3}}\frac{1}{\left\langle x-\vec{v}t\right\rangle ^{\alpha}}u^{2}(x,t)\,dxdt\lesssim\left\Vert \left(g,f\right)\right\Vert _{\dot{H}^{1}\times L^{2}}^{2}.\label{eq:M2}
\end{equation}
We notice that for $\alpha>3$, 
\begin{equation}
\int_{\mathbb{R}}\int_{\mathbb{R}^{3}}\frac{1}{\left\langle x\right\rangle ^{\alpha}}u^{2}(x,t)\,dxdt\lesssim\sup_{x\in\mathbb{R}^{3}}\int_{\mathbb{R}}u^{2}(x,t)\,dt
\end{equation}
and 
\begin{equation}
\int_{\mathbb{R}}\int_{\mathbb{R}^{3}}\frac{1}{\left\langle x-\vec{v}t\right\rangle ^{\alpha}}u^{2}(x,t)\,dxdt\lesssim\sup_{x\in\mathbb{R}^{3}}\int_{\mathbb{R}}u^{2}(x+vt,t)\,dt
\end{equation}
which are in the form of \emph{endpoint reversed Strichartz estimates}.
But we also need to \emph{integrate over a time-like slanted line}.
These are carefully analyzed in Section \ref{sec:Slanted}. Intuitively,
the reversed Strichartz estimates are based on the fact that the fundamental
solutions of the wave equation in $\mathbb{R}^{3}$ is supported on
the light cone. For fixed $x$, the propagation will only meet the
light cone once. Meanwhile, away from the light cone, the solution
decays fast. We note that a time-like slanted line will also only
intersect the light cone only once, hence we should have the same
endpoint estimate along it. Our analysis crucially relies on these
types of estimates. Many estimates in Section\ \ref{sec:Slanted}
also hold for more general trajectories provided that their speeds
are strictly less than $1$.

\smallskip

After performing the Lorentz transformation $L$, we have 
\begin{equation}
u_{L}\left(x_{1}',x_{2}',x_{3}',t'\right)=u\left(\gamma\left(x_{1}'+vt'\right),x_{2}',x_{3}',\gamma\left(t'+vx_{1}'\right)\right)
\end{equation}
and 
\begin{equation}
u(x,t)=u_{L}\left(\gamma\left(x_{1}-vt\right),x_{2},x_{3},\gamma\left(t-vx_{1}\right)\right).
\end{equation}
It is crucial to notice that from the expressions above, the standard
endpoint Strichartz estimate for $u$ is equivalent to the endpoint
Strichartz estimate along a slanted line for $u_{L}$ and vice versa.
It is important to note that with the above fact, we can always apply
Lorentz transformations to exchange different frames if we consider
several endpoint Strichartz estimates together. 

\smallskip

Based on the observations above, we apply a bootstrap procedure for
the case with two potentials. Let 
\[
u^{S}\left(x,t\right)=u(x+vt,t).
\]
For a scattering state in the sense of Definition \ref{AO}, we show
that the bootstrap assumptions with big constants $C_{1}(T)$ and
$C_{2}(T)$,

\begin{equation}
\sup_{x\in\mathbb{R}^{3}}\int_{0}^{T}\left|u(x,t)\right|^{2}dt\leq C_{1}(T)\left(\|f\|_{L^{2}}+\|g\|_{\dot{H}^{1}}\right)^{2}
\end{equation}
and
\begin{equation}
\sup_{x\in\mathbb{R}^{3}}\int_{0}^{T}\left|u^{S}\left(x,t\right)\right|^{2}dt\leq C_{2}(T)\left(\|f\|_{L^{2}}+\|g\|_{\dot{H}^{1}}\right)^{2},
\end{equation}
imply 
\begin{equation}
\sup_{x\in\mathbb{R}^{3}}\int_{0}^{T}\left|u(x,t)\right|^{2}dt\leq\left(\widetilde{C}_{1}+\frac{1}{2}C_{1}(T)\right)\left(\|f\|_{L^{2}}+\|g\|_{\dot{H}^{1}}\right)^{2}
\end{equation}
and
\begin{equation}
\sup_{x\in\mathbb{R}^{3}}\int_{0}^{T}\left|u^{S}\left(x,t\right)\right|^{2}dt\leq\left(\widetilde{C}_{2}+\frac{1}{2}C_{2}(T)\right)\left(\|f\|_{L^{2}}+\|g\|_{\dot{H}^{1}}\right)^{2}.
\end{equation}
Then we can conclude 
\begin{equation}
\sup_{x\in\mathbb{R}^{3}}\int_{0}^{T}\left|u(x,t)\right|^{2}dt\leq C_{1}\left(\|f\|_{L^{2}}+\|g\|_{\dot{H}^{1}}\right)^{2}
\end{equation}
and
\begin{equation}
\sup_{x\in\mathbb{R}^{3}}\int_{0}^{T}\left|u^{S}\left(x,t\right)\right|^{2}dt\leq C_{2}\left(\|f\|_{L^{2}}+\|g\|_{\dot{H}^{1}}\right)^{2},
\end{equation}
for some constants $C_{1}$ and $C_{2}$ independent of $T$ by the
bootstrap argument. Therefore, as we pointed out above, we obtain
two local decay estimates \eqref{eq:M1} and \eqref{eq:M2}. To run the
bootstrap argument, we use the fact that the distance between $V_{1}$
and $V_{2}$ becomes larger and larger and both potentials are of
short-range. Therefore, for different regions in $\mathbb{R}^{3}$,
the evolution will be dominated by different Hamiltonians. To make
this intuition precise, in Section \ref{sec:EndRSChar}, we apply
a partition of unity to carry out the decomposition into channels.
For each channel, we use Duhamel's formula to compare the evolution
to the associated dominating Hamiltonian. For every dominating Hamiltonian,
both of the endpoint estimates hold. In each Duhamel expansion, based
on the fact that $V_{1}$ and $V_{2}$ move far away from each other,
it suffices to consider the endpoint estimates of the following integrals, 

\begin{equation}
k_{A}(x,t):=\int_{0}^{t-A}\frac{\sin\left((t-s)\sqrt{H}\right)}{\sqrt{H}}P_{c}F\,ds.
\end{equation}
and 
\[
k_{A}^{S}(x,t):=k_{A}(x+vt,t).
\]
From Section \ref{sec:Slanted}, we have 

\begin{eqnarray}
\left\Vert k_{A}\right\Vert _{L_{x}^{\infty}L_{t}^{2}[A,T]} & = & \left\Vert \int_{0}^{t-A}\frac{\sin\left((t-s)\sqrt{H}\right)}{\sqrt{H}}P_{c}F\,ds\right\Vert _{L_{x}^{\infty}L_{t}^{2}[A,T]}\nonumber \\
 & \lesssim & \frac{1}{A}\left(\left\Vert F\right\Vert _{L_{x}^{1}L_{t}^{2}}+\left\Vert F\right\Vert _{L_{x}^{\frac{3}{2},1}L_{t}^{2}}\right)
\end{eqnarray}
and
\begin{equation}
\left\Vert k_{A}^{S}(x,t)\right\Vert _{L_{x}^{\infty}L_{t}^{2}[A,T]}\lesssim\frac{1}{A}\left(\left\Vert F\right\Vert _{L_{x}^{\frac{3}{2},1}L_{t}^{2}}+\left\Vert F\right\Vert _{L_{x}^{1}L_{t}^{2}}\right).
\end{equation}
Therefore for $A>0$ large but independent of $T$, this term can
be absorbed to the left-hand side to improve our bootstrap assumptions.

From \eqref{eq:M1} and \eqref{eq:M2}, Strichartz estimates follow from
the general scheme introduced in Rodnianski-Schlag \cite{RS,LSch}. 

\subsection*{Notation}

\textquotedblleft $A:=B\lyxmathsym{\textquotedblright}$ or $\lyxmathsym{\textquotedblleft}B=:A\lyxmathsym{\textquotedblright}$
is the definition of $A$ by means of the expression $B$. We use
the notation $\langle x\rangle=\left(1+|x|^{2}\right)^{\frac{1}{2}}$.
The bracket $\left\langle \cdot,\cdot\right\rangle $ denotes the
distributional pairing and the scalar product in the spaces $L^{2}$,
$L^{2}\times L^{2}$ . For positive quantities $a$ and $b$, we write
$a\lesssim b$ for $a\leq Cb$ where $C$ is some prescribed constant.
Also $a\simeq b$ for $a\lesssim b$ and $b\lesssim a$. We denote
$B_{R}(x)$ the open ball of centered at $x$ with radius $R$ in
$\mathbb{R}^{3}$. We also denote by $\chi$ a standard $C^{\infty}$
cut-off function, that is $\chi(x)=1$ for $\left|x\right|\leq1$,
$\chi(x)=0$ for $\left|x\right|>2$ and $0\leq\chi(x)\leq1$ for
$1\leq\left|x\right|\leq2$ .

\subsection*{Organization}

The paper is organized as follows: In Section \ref{sec:Prelim}, we
discuss some preliminary results for the free wave equation and the
wave equation with a stationary potential. We will also analyze the
change of the energy under Lorentz transformations. In Section \ref{sec:Slanted},
estimates of homogeneous and inhomogeneous forms of wave equations
along time-like slanted lines will be discussed. In Section \ref{sec:EndRSChar}
and Section \ref{sec:StrichWOB}, we show Strichartz estimates, energy
estimates, the local energy decay and the boundedness of the total
energy for a scattering state to the wave equation with a charge transfer
Hamiltonian. In order to consider nonlinear applications, in Section
\ref{sec:Inhom} we discuss inhomogeneous Strichartz estimates and local decay estimates. Finally,
in Section \ref{sec:Scattering}, we confirm that a scattering state
indeed scatters to a solution to the free wave equation.

\subsection*{Acknowledgment}

I feel deeply grateful to my advisor Professor Wilhelm Schlag for
suggesting this problem, his kind encouragement, discussions, comments
and all the support. I also want to thank Marius Beceanu for many
useful and enlightening discussions.

\section{Preliminaries\label{sec:Prelim}}

In this section, we present some preliminary results on wave equations
to prepare further discussions in later sections. Throughout, we will
only consider equations in $\mathbb{R}^{3}$. 

\subsection{Strichartz estimates and local energy decay}

We start with the regular Strichartz estimates for free wave equations. 

Consider the free wave equation,

\begin{equation}
\partial_{tt}u-\Delta u=F
\end{equation}
with initial data
\begin{equation}
u(x,0)=g(x),\,u_{t}(x,0)=f(x).
\end{equation}
We can write down the solution using the Fourier transform:
\begin{equation}
u=\frac{\sin\left(t\sqrt{-\Delta}\right)}{\sqrt{-\Delta}}f+\cos\left(t\sqrt{-\Delta}\right)g+\int_{0}^{t}\frac{\sin\left(\left(t-s\right)\sqrt{-\Delta}\right)}{\sqrt{-\Delta}}F(s)\,ds.
\end{equation}
It obeys the energy inequality, 
\begin{equation}
E_{F}(t)=\int_{\mathbb{R}^{3}}\left|\partial_{t}u(t)\right|^{2}+\left|\nabla u(t)\right|^{2}\,dx\lesssim\int_{\mathbb{R}^{3}}\left|f\right|^{2}+\left|\nabla g\right|^{2}\,dx+\int_{0}^{t}\int_{\mathbb{R}^{3}}\left|F(s)\right|^{2}\,dxds.
\end{equation}
 We also have the well-known dispersive estimates for the free wave
equation ($H_{0}=-\Delta$) on $\mathbb{R}^{3}$: 
\begin{equation}
\left\Vert \frac{\sin\left(t\sqrt{-\Delta}\right)}{\sqrt{-\Delta}}f\right\Vert _{L^{\infty}\left(\mathbb{R}^{3}\right)}\lesssim\frac{1}{\left|t\right|}\left\Vert \nabla f\right\Vert _{L^{1}\left(\mathbb{R}^{3}\right)},\label{eq:disper1}
\end{equation}

\begin{equation}
\left\Vert \cos\left(t\sqrt{-\Delta}\right)g\right\Vert _{L^{\infty}\left(\mathbb{R}^{3}\right)}\lesssim\frac{1}{\left|t\right|}\left\Vert \Delta g\right\Vert _{L^{1}\left(\mathbb{R}^{3}\right)}.\label{eq:disper2}
\end{equation}
Notice that the estimate \eqref{eq:disper2} is slightly different from
the estimates commonly in the literature. For example, in Krieger-Schlag
\cite{KS}, one needs the $L^{1}$ norm of $D^{2}g$ instead of $\Delta g$.
One can find a detailed proof in \cite{GC2} based on an idea similar
to the endpoint reversed Strichartz estimate.

Strichartz estimates can be derived abstractly from these dispersive
inequalities and the energy inequality. The following theorem is standard.
One can find a detailed proof in, for example, Keel-Tao \cite{KT}.
\begin{thm}[Strichartz estimate]
\label{thm:StrichF}Suppose 
\begin{equation}
\partial_{tt}u-\Delta u=F
\end{equation}
with initial data 
\begin{equation}
u(x,0)=g(x),\,u_{t}(x,0)=f(x).
\end{equation}
Then for $p,\,a>\frac{2}{s}$, $\left(p,q\right),\,\left(a,b\right)$
satisfying 
\begin{equation}
\frac{3}{2}-s=\frac{1}{p}+\frac{3}{q}
\end{equation}
\begin{equation}
\frac{3}{2}-s=\frac{1}{a}+\frac{3}{b}
\end{equation}
we have
\begin{equation}
\|u\|_{L_{t}^{p}L_{x}^{q}}\lesssim\|g\|_{\dot{H}^{s}}+\|f\|_{\dot{H}^{s-1}}+\left\Vert F\right\Vert _{L_{t}^{a'}L_{x}^{b'}}\label{eq:StrichF}
\end{equation}
where $\frac{1}{a}+\frac{1}{a'}=1,\,\frac{1}{b}+\frac{1}{b'}=1.$
\end{thm}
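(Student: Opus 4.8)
The plan is to deduce the inhomogeneous estimate \eqref{eq:StrichF} from the two ingredients already recorded in the text: the energy inequality and the dispersive bounds \eqref{eq:disper1}--\eqref{eq:disper2}, via the abstract Keel--Tao machinery. First I would set up the functional-analytic framework: write the solution operator $U(t) = e^{it\sqrt{-\Delta}}$ (or equivalently the pair $\cos(t\sqrt{-\Delta})$, $\frac{\sin(t\sqrt{-\Delta})}{\sqrt{-\Delta}}$) as a one-parameter family of operators acting between the homogeneous Sobolev spaces, and reduce matters to the unitary group by factoring out the appropriate fractional derivative $(-\Delta)^{-s/2}$, so that the relevant operator $T = (-\Delta)^{-s/2}U(t)$ satisfies the energy (i.e.\ $L^2 \to L^2$) bound $\|T(t)\|_{L^2 \to L^2} \lesssim 1$ and the decay bound $\|T(t)T(s)^*\|_{L^{q'} \to L^q} \lesssim |t-s|^{-\sigma}$ for the correct exponent $\sigma$ dictated by the scaling relation $\frac{3}{2}-s = \frac{1}{p}+\frac{3}{q}$ (concretely $\sigma$ is the decay rate $1$ appearing in \eqref{eq:disper1} after interpolation with the trivial $L^2$ bound). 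In $\mathbb{R}^3$ the sharp decay exponent for the half-wave propagator is $\sigma = 1$ when measured between $L^1$ and $L^\infty$, and the admissibility condition $p > \frac{2}{s}$ in the statement is precisely the genuinely-non-endpoint range where the abstract theorem applies without the delicate endpoint argument.

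Next I would invoke the $TT^*$ argument: the homogeneous estimate $\|U(t)g\|_{L^p_t L^q_x} \lesssim \|g\|_{\dot H^s}$ is equivalent, by duality and the $TT^*$ identity, to the boundedness of the operator $G \mapsto \int U(t)U(s)^* G(s)\,ds$ from $L^{p'}_t L^{q'}_x$ to $L^p_t L^q_x$. This boundedness follows by interpolating the dispersive decay estimate with the energy estimate and applying the Hardy--Littlewood--Sobolev inequality in the time variable — this is where the strict inequality $p > 2/s$ enters, guaranteeing that the resulting Riesz potential in time has an exponent strictly inside the Hardy--Littlewood--Sobolev range. For the inhomogeneous term I would use the Christ--Kiselev lemma (again available only away from the endpoint, which is why the hypothesis is stated with strict inequalities on both $p$ and $a$) to pass from the full-line operator $\int_{\mathbb{R}}$ to the retarded operator $\int_0^t$; combined with the two independent admissible pairs $(p,q)$ and $(a,b)$, this yields exactly the dual-exponent form $\|F\|_{L^{a'}_t L^{b'}_x}$ on the right-hand side of \eqref{eq:StrichF}. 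Finally I would reassemble the contributions from $f$, $g$, and $F$ using the explicit Duhamel representation recorded just before the theorem, matching the Sobolev regularities: $g \in \dot H^s$, $f \in \dot H^{s-1}$ (the shift by one derivative coming from the $\frac{\sin(t\sqrt{-\Delta})}{\sqrt{-\Delta}}$ factor), and $F$ entering through the Duhamel integral with one integration in $s$.

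Since this is a standard result with a proof available in Keel--Tao \cite{KT}, I would not reproduce the full argument; the honest thing is to state that the theorem is classical and cite \cite{KT} and \cite{GV} for the non-endpoint range, merely indicating the structure above. The only point that genuinely requires care — and the step I would flag as the main obstacle if one insisted on a self-contained proof — is verifying that the dispersive estimate in the form \eqref{eq:disper2}, stated here with $\|\Delta g\|_{L^1}$ rather than the more customary $\|D^2 g\|_{L^1}$, still feeds correctly into the interpolation: one must check that the frequency-localized pieces obey the $|t|^{-1}$ decay with constants summable over dyadic scales, which is exactly the Bernstein-plus-stationary-phase computation underlying the standard proof, and for which the excerpt has already pointed to \cite{GC2}. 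Everything downstream — the $TT^*$ duality, Hardy--Littlewood--Sobolev, and Christ--Kiselev — is then purely formal and insensitive to this cosmetic difference in the hypothesis.
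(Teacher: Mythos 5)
Your proposal is correct and matches the paper's treatment: the paper does not prove Theorem \ref{thm:StrichF} at all, stating only that it follows abstractly from the dispersive inequalities \eqref{eq:disper1}--\eqref{eq:disper2} and the energy inequality, and referring the reader to Keel--Tao \cite{KT}. Your sketch of the $TT^*$/Hardy--Littlewood--Sobolev/Christ--Kiselev machinery is the standard route underlying that citation, and your decision to cite rather than reproduce is exactly what the paper does.
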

The endpoint $\left(p,q\right)=\left(2,\infty\right)$ can be recovered
for radial functions in Klainerman-Machedon \cite{KM} for the homogeneous
case and Jia-Liu-Schlag-Xu \cite{JLSX} for the inhomogeneous case.
The endpoint estimate can also be obtained when a small amount of
smoothing (either in the Sobolev sense, or in relaxing the integrability)
is applied to the angular variable, by Machihara-Nakamura-Nakanishi-Ozawa
\cite{MNNO}. 
\begin{thm}[\cite{MNNO}]
\label{thm:inhomAR}For any $1\leq p<\infty$, suppose
$u$ solves the free wave equation 
\begin{equation}
\partial_{tt}u-\Delta u=0
\end{equation}
 with initial data
\begin{equation}
u(x,0)=g(x),\,u_{t}(x,0)=f(x).
\end{equation}
Then
\begin{equation}
\|u\|_{L_{t}^{2}L_{r}^{\infty}L_{\omega}^{p}}\le C(p)\left(\|f\|_{L^{2}}+\|g\|_{\dot{H}^{1}}\right).\label{eq:inhomoAR}
\end{equation}
\end{thm}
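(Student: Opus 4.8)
The plan is to reduce \eqref{eq:inhomoAR} to a one‑dimensional smoothing estimate that is uniform in the angular momentum, which in turn follows from the d'Alembert‑type formula for the radial wave together with Hardy's inequality and the Hardy--Littlewood maximal inequality.

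\emph{Reduction to a unit‑frequency half‑wave and to angular sectors.} Writing $\cos(t\sqrt{-\Delta})g=\tfrac12\bigl(e^{it\sqrt{-\Delta}}+e^{-it\sqrt{-\Delta}}\bigr)g$ and $\tfrac{\sin(t\sqrt{-\Delta})}{\sqrt{-\Delta}}f=\tfrac{1}{2i}\bigl(e^{it\sqrt{-\Delta}}-e^{-it\sqrt{-\Delta}}\bigr)(\sqrt{-\Delta})^{-1}f$, and using $\|(\sqrt{-\Delta})^{-1}f\|_{\dot H^{1}}=\|f\|_{L^{2}}$, it suffices to prove $\|e^{\pm it\sqrt{-\Delta}}h\|_{L_{t}^{2}L_{r}^{\infty}L_{\omega}^{p}}\lesssim\|h\|_{\dot H^{1}}$. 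Both sides pick up the same factor $\lambda^{-1/2}$ under the scaling $h(\cdot)\mapsto h(\lambda\,\cdot)$, $(x,t)\mapsto(\lambda x,\lambda t)$, so after a Littlewood--Paley decomposition $h=\sum_{k}P_{k}h$ it is enough to treat the unit‑frequency piece $\|e^{it\sqrt{-\Delta}}P_{0}h\|_{L_{t}^{2}L_{r}^{\infty}L_{\omega}^{p}}\lesssim\|h\|_{L^{2}}$ and to reassemble; the reassembly is legitimate because, for each fixed $x$, the function $t\mapsto e^{it\sqrt{-\Delta}}P_{k}h(x)$ has temporal Fourier transform supported in $\{|\tau|\sim 2^{k}\}$, so the pieces are almost orthogonal in $L_{t}^{2}$. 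The range $1\le p\le 2$ follows from $p=2$ by H\"older on $S^{2}$, and the case $p=2$ I would treat by spherical harmonics: expanding $h=\sum_{\ell,m}h_{\ell m}(r)Y_{\ell m}$ and using that $e^{it\sqrt{-\Delta}}$ preserves each angular sector, $\|e^{it\sqrt{-\Delta}}h(r\cdot,t)\|_{L_{\omega}^{2}}^{2}=\sum_{\ell,m}|(U_{\ell}h_{\ell m})(r,t)|^{2}$, where $U_{\ell}$ is the half‑wave evolution of the Bessel operator $-\partial_{r}^{2}-\tfrac2r\partial_{r}+\tfrac{\ell(\ell+1)}{r^{2}}$. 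Since $\sup_{r}\sum_{\ell,m}a_{\ell m}(r)^{2}\le\sum_{\ell,m}\sup_{r}a_{\ell m}(r)^{2}$, this reduces the estimate to the one‑dimensional bound $\|U_{\ell}w\|_{L_{t}^{2}L_{r}^{\infty}}\lesssim\|w\|_{\dot H_{\ell}^{1}}$ with a constant \emph{independent of $\ell$}, for then the right‑hand side sums to $\sum_{\ell,m}\|h_{\ell m}\|_{\dot H_{\ell}^{1}}^{2}=\|h\|_{\dot H^{1}}^{2}$.

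\emph{The one‑dimensional estimate.} For $\ell=0$ one has the explicit formula: with $G(r)=rw(r)$ extended to an odd function and $\|w\|_{\dot H_{0}^{1}}\simeq\|G'\|_{L^{2}(\mathbb R)}$, the propagator $\cos(t\sqrt{-\Delta})$ acts by $\tfrac{1}{2r}\bigl(G(r+t)+G(r-t)\bigr)$, which for $r<t$ equals $\tfrac{1}{2r}\int_{t-r}^{t+r}G'$ and is thus dominated by the Hardy--Littlewood maximal function $(MG')(t)$, while near the light cone $r\sim t$ it is $\simeq G(2t)/2t$, controlled in $L_{t}^{2}$ by Hardy's inequality $\int_{0}^{\infty}G(s)^{2}s^{-2}\,ds\lesssim\int_{0}^{\infty}|G'|^{2}\,ds$; the complementary region, where the unit‑frequency localization forces $U_{0}w$ to be small, is handled by the same bounds (and the $\tfrac{\sin(t\sqrt{-\Delta})}{\sqrt{-\Delta}}$ propagator has an analogous representation). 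For $\ell\ge1$ the same scheme runs with the kernel of $U_{\ell}$ expressed through a Bessel function of order $\ell+\tfrac12$, the point being that its concentration on $\{\,\bigl||x-y|-|t|\bigr|\lesssim1\,\}$ with amplitude $\sim|t|^{-1}$ holds uniformly in $\ell$. For $2<p<\infty$ I would instead argue directly at frequency $1$ by $TT^{*}$: the composition of $h\mapsto e^{it\sqrt{-\Delta}}P_{0}h$ with its adjoint has convolution kernel $K_{t-s}(x-y)$, and, accommodating the nonreflexivity of $L_{r}^{\infty}$ in the manner of Keel--Tao, one reduces to the bilinear estimate
\[
\Bigl|\iint\!\!\iint K_{t-s}(x-y)\,G(y,s)\,\overline{\widetilde G(x,t)}\,dy\,ds\,dx\,dt\Bigr|\lesssim\|G\|_{L_{s}^{2}L_{\rho}^{1}L_{\omega}^{p'}}\,\|\widetilde G\|_{L_{t}^{2}L_{r}^{1}L_{\omega}^{p'}} .
\]
Since $|K_{\tau}(x-y)|$ depends, after the angular integrations, only on $|x|,|y|$ and $\tau$, and $|K_{\tau}|\sim|\tau|^{-1}$ only on an $O(1)$‑neighbourhood of the light cone, $\int_{S^{2}}\!\int_{S^{2}}|K_{\tau}(r\omega-\rho\omega')|^{p}\,d\omega\,d\omega'$ is finite precisely when $p<\infty$, and the display reduces to a Schur‑type bound for the resulting radial kernel.

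\emph{Main obstacle.} The crux is the one‑dimensional estimate \emph{uniformly in $\ell$} — equivalently, in the $TT^{*}$ route, the sharp pointwise bounds on $K_{\tau}$ and the bookkeeping of the angular integration against the non‑Hilbertian $L_{r}^{\infty}$ norm. The interplay between the $|t|^{-1}$ amplitude on the light cone and the $\dot H^{1}$ norm is exactly borderline, so crude absolute‑value and Schur estimates lose the estimate; one must retain the oscillation of $K_{\tau}$ — equivalently, exploit Hardy's inequality and the maximal function theorem rather than pointwise domination. Everything else — the passage to half‑waves, the Littlewood--Paley reassembly, and the reduction to angular sectors — is routine.
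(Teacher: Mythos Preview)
The paper does not supply its own proof of this statement; it is quoted from \cite{MNNO}. Your plan for $1\le p\le 2$ --- reduce to $p=2$ by H\"older on $S^{2}$, expand in spherical harmonics, and prove the one-dimensional bound $\|U_{\ell}w\|_{L_{t}^{2}L_{r}^{\infty}}\lesssim\|w\|_{\dot H_{\ell}^{1}}$ uniformly in $\ell$ --- is exactly the \cite{MNNO} strategy, and you are right that the uniform-in-$\ell$ constant is the heart of the matter.

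The gap is the Littlewood--Paley reassembly. Temporal Fourier disjointness of the $e^{it\sqrt{-\Delta}}P_{k}h$ gives, for each fixed $x$, almost-orthogonality in $L_{t}^{2}$; that controls norms with time on the \emph{inside}, i.e.\ $L_{x}^{\bullet}L_{t}^{2}$, whereas here time sits on the outside and Minkowski goes the wrong way ($\|u\|_{L_{t}^{2}L_{r}^{\infty}}\ge\|u\|_{L_{r}^{\infty}L_{t}^{2}}$, not $\le$). So the square-summability you need in $L_{t}^{2}L_{r}^{\infty}L_{\omega}^{p}$ does not follow from your hypothesis. The good news is that the LP step is unnecessary for $p=2$: your own $\ell=0$ computation shows this once streamlined, since with $G$ extended oddly the identity $u(r,t)=\tfrac{1}{2r}\int_{t-r}^{t+r}G'(s)\,ds$ holds for \emph{every} $r>0$, not only $r<t$, whence $\sup_{r}|u(r,t)|\le M G'(t)$ and the maximal theorem closes the estimate directly --- no case split, no separate Hardy step, no frequency localisation. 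For $\ell\ge1$ one must repeat this with the explicit Bessel kernel, and that is where the real work in \cite{MNNO} lives; your one-line gesture at ``amplitude $\sim|t|^{-1}$ uniformly in $\ell$'' is not yet a proof. The $TT^{*}$ route for $2<p<\infty$ inherits the same reassembly defect and is in any case too schematic: you have identified the right object (the angular integral of $|K_{\tau}|^{p}$), but the finiteness of the resulting Schur bound is precisely what has to be shown.
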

The regular Strichartz estimates fail at the endpoint. But if one
switches the order of space-time integration, it is possible to estimate
the solution using the fact that the solution decays quickly away
from the light cone. Therefore, we introduce reversed Strichartz estimates.
Since we will only use the endpoint reversed Stricharz estimate, we
will restrict our focus to that case. The detailed proof for free
equations is presented for the sake of completeness.
\begin{thm}[Endpoint reversed Strichartz estimate]
\label{thm:EndRStrichF}Suppose
\begin{equation}
\partial_{tt}u-\Delta u=F
\end{equation}
with initial data
\begin{equation}
u(x,0)=g(x),\,u_{t}(x,0)=f(x).
\end{equation}
Then 
\begin{equation}
\left\Vert u\right\Vert _{L_{x}^{\infty}L_{t}^{2}}\lesssim\|f\|_{L^{2}}+\|g\|_{\dot{H}^{1}}+\left\Vert F\right\Vert _{L_{x}^{\frac{3}{2},1}L_{t}^{2}},\label{eq:EndRStrichF}
\end{equation}
and for $T>0$, 
\begin{equation}
\left\Vert u\right\Vert _{L_{x}^{\infty}L_{t}^{2}[0,T]}\lesssim\|f\|_{L^{2}}+\|g\|_{\dot{H}^{1}}+\left\Vert F\right\Vert _{L_{x}^{\frac{3}{2},1}L_{t}^{2}[0,T]}.\label{eq:EndRStrichFT}
\end{equation}
\end{thm}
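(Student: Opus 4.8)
The plan is to split the solution as $u=u_{\mathrm{hom}}+u_{\mathrm{inh}}$, where $u_{\mathrm{hom}}=\frac{\sin(t\sqrt{-\Delta})}{\sqrt{-\Delta}}f+\cos(t\sqrt{-\Delta})g$ carries the initial data and $u_{\mathrm{inh}}(x,t)=\int_{0}^{t}\frac{\sin((t-s)\sqrt{-\Delta})}{\sqrt{-\Delta}}F(s)\,ds$ is the Duhamel term, and to estimate the two pieces separately; by a routine density argument it suffices to take $g,f$ and $F$ smooth and compactly supported. For the homogeneous part I would go to the Fourier side, write $\sin(t|\xi|)$ and $\cos(t|\xi|)$ as combinations of $e^{\pm it|\xi|}$, and introduce polar coordinates $\xi=\rho\omega$ in frequency. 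Then, for each fixed $x$, the function $t\mapsto u_{\mathrm{hom}}(x,t)$ becomes a sum of one-dimensional Fourier transforms in the radial variable $\rho$, and Plancherel in $t$ yields
\begin{equation}
\int_{\mathbb{R}}|u_{\mathrm{hom}}(x,t)|^{2}\,dt\lesssim\int_{0}^{\infty}\rho^{2}\Bigl|\int_{S^{2}}e^{i\rho\,x\cdot\omega}\widehat{f}(\rho\omega)\,d\omega\Bigr|^{2}d\rho+\int_{0}^{\infty}\rho^{4}\Bigl|\int_{S^{2}}e^{i\rho\,x\cdot\omega}\widehat{g}(\rho\omega)\,d\omega\Bigr|^{2}d\rho.
\end{equation}
Applying Cauchy--Schwarz on $S^{2}$ removes the oscillatory factor $e^{i\rho x\cdot\omega}$ at the cost of the constant $|S^{2}|$, which is \emph{independent of $x$}; the right-hand side is then controlled by $\int_{\mathbb{R}^{3}}|\widehat f(\xi)|^{2}\,d\xi+\int_{\mathbb{R}^{3}}|\xi|^{2}|\widehat g(\xi)|^{2}\,d\xi\simeq\|f\|_{L^{2}}^{2}+\|g\|_{\dot H^{1}}^{2}$, and taking the supremum over $x$ gives $\|u_{\mathrm{hom}}\|_{L_{x}^{\infty}L_{t}^{2}}\lesssim\|f\|_{L^{2}}+\|g\|_{\dot H^{1}}$.

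For $u_{\mathrm{inh}}$ the key is the Kirchhoff-type representation: performing the change of variables $s=t-|x-y|$ in the Duhamel integral turns it into
\begin{equation}
u_{\mathrm{inh}}(x,t)=\frac{1}{4\pi}\int_{|y-x|\le t}\frac{F\bigl(y,\,t-|y-x|\bigr)}{|y-x|}\,dy.
\end{equation}
Bounding $|u_{\mathrm{inh}}(x,t)|$ by the same integral with $F$ replaced by $|F|$, taking $L_{t}^{2}$ in $t$, and using Minkowski's integral inequality to move the $L_{t}^{2}$ norm inside the $dy$-integral, I would observe that for fixed $x,y$ the map $t\mapsto F(y,t-|y-x|)\mathbf{1}_{\{t\ge|y-x|\}}$ is a time translate of $F(y,\cdot)$, so its $L_{t}^{2}$ norm equals $\|F(y,\cdot)\|_{L_{t}^{2}}$. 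This gives
\begin{equation}
\|u_{\mathrm{inh}}(x,\cdot)\|_{L_{t}^{2}}\lesssim\int_{\mathbb{R}^{3}}\frac{\|F(y,\cdot)\|_{L_{t}^{2}}}{|y-x|}\,dy.
\end{equation}
Since $|y-x|^{-1}\in L^{3,\infty}(\mathbb{R}^{3})$ with norm independent of $x$, Hölder's inequality in Lorentz spaces for the conjugate pair $(\tfrac{3}{2},1)$, $(3,\infty)$ bounds the right-hand side by $\bigl\|\,\|F(\cdot,\cdot)\|_{L_{t}^{2}}\,\bigr\|_{L_{x}^{3/2,1}}=\|F\|_{L_{x}^{3/2,1}L_{t}^{2}}$, uniformly in $x$. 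Combining with the homogeneous estimate proves \eqref{eq:EndRStrichF}. For the truncated version \eqref{eq:EndRStrichFT}, I would note that by causality of the Duhamel integral $u(x,t)$ for $t\in[0,T]$ depends only on $(g,f)$ and on $F|_{[0,T]}$, so one simply applies \eqref{eq:EndRStrichF} with $F$ replaced by $\mathbf{1}_{[0,T]}F$.

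The step I expect to be the genuine obstacle is the endpoint nature of the final estimate: the Lorentz refinement $L^{3/2,1}$ on the right is essential and cannot be replaced by $L^{3/2}$, precisely because $|y|^{-1}$ belongs to weak-$L^{3}$ but not to $L^{3}(\mathbb{R}^{3})$, so the plain Hölder/Young inequality fails here and one must invoke the O'Neil-type duality between $L^{3/2,1}$ and $L^{3,\infty}$. The remaining ingredients — the explicit Duhamel/Kirchhoff formula, the Plancherel-in-time computation, and the uniform-in-$x$ Cauchy--Schwarz on the sphere — are standard and present no difficulty.
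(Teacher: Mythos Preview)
Your proof is correct. The treatment of the Duhamel term is essentially identical to the paper's: Kirchhoff representation, Minkowski to move the $L_t^2$ norm inside, and the O'Neil H\"older inequality pairing $L_x^{3/2,1}$ with $|y|^{-1}\in L^{3,\infty}$. Your causality argument for the truncated estimate \eqref{eq:EndRStrichFT} is a clean shortcut compared with the paper, which redoes the pointwise computation on $[0,T]$.

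The only genuine difference is in the homogeneous part. The paper stays in physical space: for $\frac{\sin(t\sqrt{-\Delta})}{\sqrt{-\Delta}}f$ it writes the spherical-means formula and applies Cauchy--Schwarz over $S^2$ directly; for $\cos(t\sqrt{-\Delta})g$ it differentiates the spherical mean and then invokes Hardy's inequality $\||x|^{-1}g\|_{L^2}\lesssim\|g\|_{\dot H^1}$ to control the undifferentiated piece. Your Fourier-side argument (Plancherel in $t$, then Cauchy--Schwarz on the sphere in frequency) handles both terms at once and avoids Hardy altogether. Both routes are short; yours is perhaps slightly more systematic, while the paper's physical-space version makes the role of the light-cone structure more visible, which is the intuition reused later in Section~\ref{sec:Slanted} for estimates along slanted lines.
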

\begin{proof}
Writing down $u$ explicitly, we have 
\begin{equation}
u=\frac{\sin\left(t\sqrt{-\Delta}\right)}{\sqrt{-\Delta}}f+\cos\left(t\sqrt{-\Delta}\right)g+\int_{0}^{t}\frac{\sin\left((t-s)\sqrt{-\Delta}\right)}{\sqrt{-\Delta}}F(s)\,ds.
\end{equation}
We will analyze each term separately. By symmetry, we may assume that
$t\geq0$.

For the first term, 
\begin{equation}
\frac{\sin\left(t\sqrt{-\Delta}\right)}{\sqrt{-\Delta}}f=\frac{1}{4\pi t}\int_{\left|x-y\right|=t}f(y)\,\sigma\left(dy\right).
\end{equation}
So in polar coordinates, 
\begin{eqnarray*}
\left\Vert \frac{\sin\left(t\sqrt{-\Delta}\right)}{\sqrt{-\Delta}}f\right\Vert _{L_{t}^{2}}^{2} & \lesssim & \int_{0}^{\infty}\left(\int_{\mathbb{S}}f(x+r\omega)r\,d\omega\right)^{2}dr\\
 & \lesssim & \left(\int_{0}^{\infty}\int_{\mathbb{S}}f(x+r\omega)^{2}r^{2}\,d\omega dr\right)\left(\int_{\mathbb{S}^{2}}d\omega\right)\\
 & \lesssim & \left\Vert f\right\Vert _{L^{2}}^{2}.
\end{eqnarray*}
Therefore, 
\begin{equation}
\left\Vert \frac{\sin\left(t\sqrt{-\Delta}\right)}{\sqrt{-\Delta}}f\right\Vert _{L_{x}^{\infty}L_{t}^{2}}\lesssim\left\Vert f\right\Vert _{L^{2}}.
\end{equation}

For the second term, 
\begin{eqnarray*}
\left\Vert \cos\left(t\sqrt{-\Delta}\right)g\right\Vert _{L_{t}^{2}}^{2} & = & \int_{0}^{\infty}\left(\int_{\mathbb{S}^{2}}g\left(x+r\omega\right)d\omega+r\partial_{r}g\left(x+r\omega\right)\,d\omega\right)^{2}dr\\
 & \lesssim & \int_{0}^{\infty}\int_{\mathbb{S}^{2}}\left(g\left(x+r\omega\right)d\omega\right)^{2}dr+\int_{0}^{\infty}\int_{\mathbb{S}^{2}}\left(\partial_{r}g\left(x+r\omega\right)d\omega\right)^{2}r^{2}\,dr\\
 & \lesssim & \left(\int_{0}^{\infty}\int_{\mathbb{S}^{2}}g\left(x+r\omega\right)^{2}d\omega dr\right)\left(\int_{\mathbb{S}^{2}}d\omega\right)\\
 &  & +\left(\int_{0}^{\infty}\int_{\mathbb{S}^{2}}\partial_{r}g\left(x+r\omega\right)^{2}d\omega r^{2}dr\right)\left(\int_{\mathbb{S}^{2}}d\omega\right)\\
 & \lesssim & \left\Vert \nabla g\right\Vert _{L^{2}}^{2},
\end{eqnarray*}
where for the last inequality, we applied Hardy's inequality
\begin{equation}
\left\Vert \left|x\right|^{-1}g\right\Vert _{L^{2}}\lesssim\left\Vert g\right\Vert _{\dot{H}^{1}}.
\end{equation}
Therefore, 
\begin{equation}
\left\Vert \cos\left(t\sqrt{-\Delta}\right)g\right\Vert _{L_{x}^{\infty}L_{t}^{2}}\lesssim\left\Vert \nabla g\right\Vert _{L^{2}}.
\end{equation}

For the third term, 
\begin{eqnarray*}
\left\Vert \int_{0}^{t}\frac{\sin\left((t-s)\sqrt{-\Delta}\right)}{\sqrt{-\Delta}}F(s)\,ds\right\Vert _{L_{t}^{2}} & = & \left\Vert \int_{0}^{t}\int_{\left|x-y\right|=t-s}\frac{1}{\left|x-y\right|}F(y,s)\,\sigma\left(dy\right)ds\right\Vert _{L_{t}^{2}}\\
 & = & \left\Vert \int_{\left|x-y\right|\leq t}\frac{1}{\left|x-y\right|}F\left(y,t-\left|x-y\right|\right)\,dy\right\Vert _{L_{t}^{2}}\\
 & \lesssim & \int\frac{1}{\left|x-y\right|}\left\Vert F\left(y,t-\left|x-y\right|\right)\right\Vert _{L_{t}^{2}}dy\\
 & \lesssim & \sup_{x\in\mathbb{R}^{3}}\int\frac{1}{\left|x-y\right|}\left\Vert F\left(y,t\right)\right\Vert _{L_{t}^{2}}dy\\
 & \lesssim & \left\Vert F\right\Vert _{L_{x}^{\frac{3}{2},1}L_{t}^{2}},
\end{eqnarray*}
where we applied Minkowski's inequality in the third line. Here $L^{\frac{3}{2},1}$
is the Lorentz space. In the last inequality, we apply the following
fact: 
\begin{equation}
\sup_{y\in\mathbb{R}^{3}}\int_{\mathbb{R}^{3}}\frac{\left|h(x)\right|}{\left|x-y\right|}\,dx=\sup_{y\in\mathbb{R}^{3}}\int_{\mathbb{R}^{3}}\frac{\left|h(x-y)\right|}{\left|x\right|}\,dx.
\end{equation}
Then for fixed $y$, we apply H\"older's inequality for Lorentz spaces,
see Theorem 3.5 in O'Neil \cite{ON},
\begin{equation}
\int_{\mathbb{R}^{3}}\frac{\left|h(x-y)\right|}{\left|x\right|}\,dx=\left\Vert \frac{\left|h(x-y)\right|}{\left|x\right|}\right\Vert _{L^{1,1}}\lesssim\left\Vert \frac{1}{\left|x\right|}\right\Vert _{L^{3,\infty}}\left\Vert h(x-y)\right\Vert _{L_{x}^{\frac{3}{2},1}}.
\end{equation}
Notice that 
\begin{equation}
\left\Vert h(x-y)\right\Vert _{L_{x}^{\frac{3}{2},1}}=\left\Vert h(x)\right\Vert _{L_{x}^{\frac{3}{2},1}}.
\end{equation}
Therefore, 
\begin{equation}
\sup_{y\in\mathbb{R}^{3}}\int_{\mathbb{R}^{3}}\frac{\left|h(x)\right|}{\left|x-y\right|}\,dx\lesssim\left\Vert h\right\Vert _{L_{x}^{\frac{3}{2},1}}.\label{eq:KatoL}
\end{equation}

Hence 
\begin{equation}
\left\Vert \int_{0}^{t}\frac{\sin\left((t-s)\sqrt{-\Delta}\right)}{\sqrt{-\Delta}}F(s)\,ds\right\Vert _{L_{x}^{\infty}L_{t}^{2}}\lesssim\left\Vert F\right\Vert _{L_{x}^{\frac{3}{2},1}L_{t}^{2}}.
\end{equation}
We also notice that for $T>0$, 
\[
\left\Vert \int_{0}^{t}\frac{\sin\left((t-s)\sqrt{-\Delta}\right)}{\sqrt{-\Delta}}F(s)\,ds\right\Vert _{L_{t}^{2}[0,T]}\lesssim\left\Vert \int_{\left|x-y\right|\leq t}\frac{1}{\left|x-y\right|}F\left(y,t-\left|x-y\right|\right)\,dy\right\Vert _{L_{t}^{2}[0,T]}
\]
with 
\[
0\leq t-\left|x-y\right|\leq T,
\]
whence
\begin{eqnarray*}
\left\Vert \int_{0}^{t}\frac{\sin\left((t-s)\sqrt{-\Delta}\right)}{\sqrt{-\Delta}}F(s)\,ds\right\Vert _{L_{t}^{2}[0,T]} & \lesssim & \left\Vert \int_{\left|x-y\right|\leq t}\frac{1}{\left|x-y\right|}F\left(y,t-\left|x-y\right|\right)\,dy\right\Vert _{L_{t}^{2}[0,T]}\\
 & \lesssim & \int\frac{1}{\left|x-y\right|}\left\Vert F\left(y,t\right)\right\Vert _{L_{t}^{2}[0,T]}dy\\
 & \lesssim & \left\Vert F\right\Vert _{L_{x}^{\frac{3}{2},1}L_{t}^{2}[0,T]}.
\end{eqnarray*}
Therefore, 
\begin{equation}
\left\Vert \int_{0}^{t}\frac{\sin\left((t-s)\sqrt{-\Delta}\right)}{\sqrt{-\Delta}}F(s)\,ds\right\Vert _{L_{t}^{2}[0,T]}\lesssim\left\Vert F\right\Vert _{L_{x}^{\frac{3}{2},1}L_{t}^{2}[0,T]}.\label{eq:truEnd}
\end{equation}
The theorem is proved.
\end{proof}
The above results from Theorem \ref{thm:StrichF} and Theorem \ref{thm:EndRStrichF}
can be generalized to wave equations with real stationary potentials. 

Denote
\begin{equation}
H=-\Delta+V,
\end{equation}
where the potential $V$ satisfies the assumption in Definition \ref{def: Charge}.

Consider the wave equation with potential in $\mathbb{R}^{3}$:

\begin{equation}
\partial_{tt}u-\Delta u+Vu=0
\end{equation}
with initial data 
\begin{equation}
u(x,0)=g(x),\,u_{t}(x,0)=f(x).
\end{equation}
One can write down the solution to it explicitly: 
\begin{equation}
u=\frac{\sin\left(t\sqrt{H}\right)}{\sqrt{H}}f+\cos\left(t\sqrt{H}\right)g.
\end{equation}
Let $P_{b}$ be the projection onto the point spectrum of $H$, $P_{c}=I-P_{b}$
be the projection onto the continuous spectrum of $H$. 

With the above setting, we formulate the results from \cite{BecGo}.
\begin{thm}[Strichartz and reversed Strichartz estimates]
\label{thm:PStriRStrich}Suppose
$H$ has neither eigenvalues nor resonances at zero. Then for all
$0\leq s\leq1$, $p>\frac{2}{s}$, and $\left(p,q\right)$ satisfying
\begin{equation}
\frac{3}{2}-s=\frac{1}{p}+\frac{3}{q}
\end{equation}
we have
\begin{equation}
\left\Vert \frac{\sin\left(t\sqrt{H}\right)}{\sqrt{H}}P_{c}f+\cos\left(t\sqrt{H}\right)P_{c}g\right\Vert _{L_{t}^{p}L_{x}^{q}}\lesssim\|g\|_{\dot{H}^{s}}+\|f\|_{\dot{H}^{s-1}}.\label{PSrich}
\end{equation}
For the endpoint reversed Strichartz estimate, we have 
\begin{equation}
\left\Vert \frac{\sin\left(t\sqrt{H}\right)}{\sqrt{H}}P_{c}f+\cos\left(t\sqrt{H}\right)P_{c}g\right\Vert _{L_{x}^{\infty}L_{t}^{2}}\lesssim\|f\|_{L^{2}}+\|g\|_{\dot{H}^{1}},\label{eq:PEndRSch}
\end{equation}
\begin{equation}
\left\Vert \int_{0}^{t}\frac{\sin\left((t-s)\sqrt{H}\right)}{\sqrt{H}}P_{c}F(s)\,ds\right\Vert _{L_{x}^{\infty}L_{t}^{2}}\lesssim\left\Vert F\right\Vert _{L_{x}^{\frac{3}{2},1}L_{t}^{2}},\label{eq:PEndRSIn}
\end{equation}
and for $T>0$, 
\begin{equation}
\left\Vert \int_{0}^{t}\frac{\sin\left((t-s)\sqrt{H}\right)}{\sqrt{H}}P_{c}F(s)\,ds\right\Vert _{L_{x}^{\infty}L_{t}^{2}[0,T]}\lesssim\left\Vert F\right\Vert _{L_{x}^{\frac{3}{2},1}L_{t}^{2}[0,T]}.\label{eq:PEndRSIT}
\end{equation}
\end{thm}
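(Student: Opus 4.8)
The plan is to treat $H=-\Delta+V$ as a short--range perturbation of $-\Delta$ and to transfer the free estimates of Theorems \ref{thm:StrichF} and \ref{thm:EndRStrichF}; this is carried out in detail in Beceanu--Goldberg \cite{BecGo}, so I only indicate the structure. The hypothesis that $0$ is neither an eigenvalue nor a resonance of $H$ is used throughout: it guarantees that $H$ restricted to the range of $P_c$ is nonnegative with purely absolutely continuous spectrum, that $P_c$ is bounded on $L^2$ and on $\dot H^1$, and that $\|H^{s/2}P_cf\|_{L^2}\simeq\|(-\Delta)^{s/2}P_cf\|_{L^2}$ for $0\le s\le1$; in particular the propagators $\frac{\sin(t\sqrt H)}{\sqrt H}P_c$ and $\cos(t\sqrt H)P_c$ are well defined and conserve the energy. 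For the regular estimates \eqref{PSrich} I would run the abstract $TT^*$ argument of Keel--Tao \cite{KT} already invoked for Theorem \ref{thm:StrichF}: its two inputs are this energy bound and the fixed--time dispersive estimate
\[
\Bigl\|\tfrac{\sin(t\sqrt H)}{\sqrt H}P_cf\Bigr\|_{L^\infty}+\bigl\|\cos(t\sqrt H)P_cg\bigr\|_{L^\infty}\lesssim\frac1{|t|}\bigl(\|\nabla f\|_{L^1}+\|\Delta g\|_{L^1}\bigr),
\]
the perturbed analogue of \eqref{eq:disper1}--\eqref{eq:disper2} (available from \cite{BecGo}, or Krieger--Schlag \cite{KS}); interpolating it with the energy bound gives the full scale of fixed--time decay estimates, and the $TT^*$ machinery then produces \eqref{PSrich} exactly as in the free case.

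For the homogeneous endpoint reversed estimate \eqref{eq:PEndRSch} I would perturb off the free flow: writing $u=\frac{\sin(t\sqrt H)}{\sqrt H}P_cf+\cos(t\sqrt H)P_cg$ and applying Duhamel's formula for the free wave equation,
\[
u=\frac{\sin(t\sqrt{-\Delta})}{\sqrt{-\Delta}}P_cf+\cos(t\sqrt{-\Delta})P_cg-\int_0^t\frac{\sin((t-s)\sqrt{-\Delta})}{\sqrt{-\Delta}}(Vu)(s)\,ds,
\]
so Theorem \ref{thm:EndRStrichF} gives $\|u\|_{L_x^\infty L_t^2}\lesssim\|f\|_{L^2}+\|g\|_{\dot H^1}+\|Vu\|_{L_x^{3/2,1}L_t^2}$. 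Since $V$ decays like $\langle x\rangle^{-\alpha}$ with $\alpha>3$, H\"older for Lorentz spaces bounds the last term by $\|\langle x\rangle^{\sigma}V\|_{L_x^6}\,\|\langle x\rangle^{-\sigma}u\|_{L_{t,x}^2}$ for any $\tfrac12<\sigma<\alpha-\tfrac12$, and the remaining factor is controlled by the local energy decay estimate for $H$, $\|\langle x\rangle^{-\sigma}u\|_{L_{t,x}^2}\lesssim\|f\|_{L^2}+\|g\|_{\dot H^1}$, a standard consequence of the limiting absorption principle for $H$ (valid precisely because $0$ is a regular point). This closes \eqref{eq:PEndRSch}.

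The inhomogeneous statements \eqref{eq:PEndRSIn}--\eqref{eq:PEndRSIT} proceed in the same spirit: $v=\int_0^t\frac{\sin((t-s)\sqrt H)}{\sqrt H}P_cF\,ds$ solves $\partial_{tt}v-\Delta v=P_cF-Vv$ with zero data, so \eqref{eq:EndRStrichF} (resp.\ its truncated form \eqref{eq:truEnd}) gives $\|v\|_{L_x^\infty L_t^2}\lesssim\|P_cF\|_{L_x^{3/2,1}L_t^2}+\|Vv\|_{L_x^{3/2,1}L_t^2}$; the first term is harmless since $P_b=I-P_c$ is a finite sum of rank--one projections onto eigenfunctions that decay exponentially by Agmon's estimate, so $\|P_cF\|_{L_x^{3/2,1}L_t^2}\lesssim\|F\|_{L_x^{3/2,1}L_t^2}$. \textbf{The main obstacle is the potential term} $\|Vv\|_{L_x^{3/2,1}L_t^2}$: the crude bound $\|Vv\|_{L_x^{3/2,1}L_t^2}\le\|V\|_{L_x^{3/2,1}}\|v\|_{L_x^\infty L_t^2}$ cannot be absorbed because $\|V\|_{L_x^{3/2,1}}$ need not be small, and the weighted--$L^2$ smoothing inequality $\|\langle x\rangle^{-\sigma}v\|_{L_{t,x}^2}\lesssim\|\langle x\rangle^{\sigma}F\|_{L_{t,x}^2}$ is not comparable to the scaling--critical norm $\|F\|_{L_x^{3/2,1}L_t^2}$ appearing on the right of \eqref{eq:PEndRSIn}. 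The way out, following Beceanu--Goldberg, is to invert $I+T_0M_V$ directly on $L_x^\infty L_t^2$ — where $T_0$ is the free inhomogeneous propagator of Theorem \ref{thm:EndRStrichF} and $M_V$ is multiplication by $V$ — using the resolvent identity $R_V=R_0-R_0V(I+R_0V)^{-1}R_0$: one writes the kernel of $\int_0^t\frac{\sin((t-s)\sqrt H)}{\sqrt H}P_c\cdot\,ds$ as the free kernel plus a correction whose spatial kernel, by the decay of $V$ together with the uniform boundedness of $(I+R_0V)^{-1}$ on weighted $L^2$ (again where regularity of the zero energy enters), is dominated by a constant times $|x-y|^{-1}$ and is therefore estimated in $L_x^{3/2,1}$ by the fractional integration bound \eqref{eq:KatoL}. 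Controlling this correction is the crux; the rest is routine.
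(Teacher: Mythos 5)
The paper gives no proof of this theorem: it cites Beceanu--Goldberg \cite{BecGo}, where the result is established for potentials of finite global Kato norm (the paper notes $L_x^{\frac{3}{2},1}\subset K$), and mentions as an alternative the route through the structure formula for wave operators, referring to \cite{GC2}. Your sketch follows the same \cite{BecGo} perturbative strategy, so the approaches agree. Your self-contained argument for the homogeneous endpoint reversed estimate \eqref{eq:PEndRSch} --- Duhamel off the free flow, Theorem~\ref{thm:EndRStrichF}, and closing the loop through local energy decay for $H$ --- is correct and genuinely closes, with two caveats worth recording: the Lorentz--H\"older step into $L_x^{\frac{3}{2},1}$ requires control of the second Lorentz index, so one should put $\langle x\rangle^{\sigma}V$ in $L_x^{6,2}$ rather than $L_x^{6}=L_x^{6,6}$ (this is harmless here since $\langle x\rangle^{\sigma-\alpha}\in L_x^{6,q}$ for every $q\ge1$ once $\alpha-\sigma>\tfrac12$); and the local energy decay for the perturbed Hamiltonian $H$, while standard via limiting absorption, is not among the tools the paper itself develops --- its Theorem~\ref{thm:local} covers only $e^{it\sqrt{-\Delta}}$ --- so you are invoking a further black box. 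For the inhomogeneous estimates \eqref{eq:PEndRSIn}--\eqref{eq:PEndRSIT} you correctly diagnose the obstruction (neither crude absorption of $\|Vv\|_{L_x^{3/2,1}L_t^2}$ nor the weighted-$L^2$ smoothing inequality can be made scaling-critically compatible with the right-hand norm) and correctly identify the resolution as inverting $I+T_0M_V$ on $L_x^\infty L_t^2$, which in \cite{BecGo} runs through an operator-valued Wiener lemma; but you leave that inversion entirely to the reference, so for those two estimates your proposal is a roadmap coinciding with the paper's own citation rather than a proof. That is consistent with how the paper treats the theorem and is not a gap in the usual sense, but it should be stated plainly that nothing in the sketch actually establishes \eqref{eq:PEndRSIn}--\eqref{eq:PEndRSIT}.
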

One can find detailed arguments and more estimates in \cite{BecGo}.
The above theorem can also be established by passing the estimates
for free wave equations in Theorem \ref{thm:StrichF} and Theorem
\ref{thm:EndRStrichF} to the perturbed case via the structure of
wave operators. This general strategy is discussed in detail in \cite{GC2}.
\begin{rem}
In \cite{BecGo}, the above theorem is shown for potentials $V$ with
a finite global Kato norm. The Kato space $K$ is the Banach space
of measures with the property that 
\begin{equation}
\left\Vert V\right\Vert _{K}=\sup_{y\in\mathbb{R}^{3}}\int_{\mathbb{R}^{3}}\frac{\left|V(x)\right|}{\left|x-y\right|}\,dx.
\end{equation}
They consider the space of potentials $V$ which are taken in the
Kato norm closure of the set of bounded, compactly supported functions,
which is denoted by $K_{0}$. Note that from estimate \eqref{eq:KatoL},
$L_{x}^{\frac{3}{2},1}\subset K$ .
\end{rem}
Next, we formulate one fundamental mechanism of wave equations: local
energy decay. It suffices to consider the half-wave operator.
\begin{thm}[Local energy decay]
\label{thm:local}$\forall\epsilon>0$, one has
\begin{equation}
\left\Vert \left(1+\left|x\right|\right)^{-\frac{1}{2}-\epsilon}e^{it\sqrt{-\Delta}}f\right\Vert _{L_{t,x}^{2}}\lesssim_{\epsilon}\left\Vert f\right\Vert _{L_{x}^{2}}.\label{eq:fullwave}
\end{equation}
\end{thm}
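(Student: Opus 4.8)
Our plan is to reduce the estimate, via a $TT^{*}$ argument and the Fourier transform in $t$, to the Agmon--H\"ormander restriction estimate for the sphere (equivalently, to the limiting absorption principle for $-\Delta$). Write $u(t,x)=e^{it\sqrt{-\Delta}}f$, so that the spatial Fourier transform of $u(t,\cdot)$ equals $e^{it|\xi|}\widehat{f}(\xi)$, and abbreviate $W(x)=(1+|x|)^{-\frac12-\epsilon}$, which is comparable to $\langle x\rangle^{-\frac12-\epsilon}$. Taking the Fourier transform in the time variable and passing to polar coordinates in $\xi$, one finds that for $\tau>0$
\begin{equation}
(\mathcal{F}_{t}u)(\tau,x)=c\,\tau^{2}\int_{\mathbb{S}^{2}}e^{i\tau x\cdot\omega}\,\widehat{f}(\tau\omega)\,d\omega ,
\end{equation}
that is, $(\mathcal{F}_{t}u)(\tau,\cdot)$ is a fixed multiple of $\tau^{2}$ times the Fourier extension of $\widehat{f}|_{\{|\xi|=\tau\}}$, evaluated at the point $\tau x$ (and it vanishes for $\tau<0$). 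By Plancherel in $t$,
\begin{equation}
\|Wu\|_{L_{t,x}^{2}}^{2}\simeq\int_{0}^{\infty}\int_{\mathbb{R}^{3}}W(x)^{2}\bigl|(\mathcal{F}_{t}u)(\tau,x)\bigr|^{2}\,dx\,d\tau ,
\end{equation}
so it is enough to control the inner spatial integral for each fixed $\tau$.

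For fixed $\tau>0$ set $h_{\tau}(\omega)=\widehat{f}(\tau\omega)$ and rescale $x\mapsto x/\tau$; writing $\widehat{h\,d\sigma}$ for the extension operator applied to $h\in L^{2}(\mathbb{S}^{2})$, the inner integral becomes a constant times $\tau\int_{\mathbb{R}^{3}}\langle y/\tau\rangle^{-1-2\epsilon}\bigl|\widehat{h_{\tau}\,d\sigma}(y)\bigr|^{2}\,dy$. At this point I would invoke the (sharp) Agmon--H\"ormander estimate $\int_{|y|\le R}\bigl|\widehat{h\,d\sigma}(y)\bigr|^{2}\,dy\lesssim R\,\|h\|_{L^{2}(\mathbb{S}^{2})}^{2}$. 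Decomposing dyadically in $|y|$ and using $\langle y/\tau\rangle^{-1-2\epsilon}\lesssim1$ on $\{|y|\lesssim\tau\}$ and $\langle y/\tau\rangle^{-1-2\epsilon}\lesssim(\tau/|y|)^{1+2\epsilon}$ on $\{|y|\gtrsim\tau\}$, the two resulting geometric series — summable precisely because $2\epsilon>0$ — give $\int_{\mathbb{R}^{3}}\langle y/\tau\rangle^{-1-2\epsilon}|\widehat{h_{\tau}\,d\sigma}(y)|^{2}\,dy\lesssim_{\epsilon}\tau\,\|h_{\tau}\|_{L^{2}(\mathbb{S}^{2})}^{2}$. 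Undoing the scaling yields $\int_{\mathbb{R}^{3}}W(x)^{2}|(\mathcal{F}_{t}u)(\tau,x)|^{2}\,dx\lesssim_{\epsilon}\tau^{2}\int_{\mathbb{S}^{2}}|\widehat{f}(\tau\omega)|^{2}\,d\omega$, and integrating in $\tau$ over $(0,\infty)$ and returning to polar coordinates in $\xi$ gives $\|Wu\|_{L_{t,x}^{2}}^{2}\lesssim_{\epsilon}\int_{\mathbb{R}^{3}}|\widehat{f}(\xi)|^{2}\,d\xi\simeq\|f\|_{L^{2}}^{2}$, as claimed.

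The only non-elementary ingredient — and the step I expect to be the crux — is the sharp, logarithm-free Agmon--H\"ormander restriction bound, which encodes the nonvanishing Gaussian curvature of $\mathbb{S}^{2}$; a naive Schur-test argument loses a factor $\log R$, which would be fatal above, so one must use the sharp version (it is classical). Alternatively, the whole argument can be phrased through Kato's smoothing theory: the $TT^{*}$ reduction shows the estimate is equivalent to $\sup_{\tau>0}\|W\,\delta(\sqrt{-\Delta}-\tau)\,W\|_{L^{2}\to L^{2}}<\infty$, and since $\delta(\sqrt{-\Delta}-\tau)=\tfrac{2\tau}{\pi}\,\mathrm{Im}\,R_{0}(\tau^{2}+i0)$ with $R_{0}(z)=(-\Delta-z)^{-1}$, this is immediate from the limiting absorption principle $\|\langle x\rangle^{-\frac12-\epsilon}R_{0}(z)\langle x\rangle^{-\frac12-\epsilon}\|_{L^{2}\to L^{2}}\lesssim\langle z\rangle^{-\frac12}$ (uniformly up to $[0,\infty)$), which one can in turn read off from the explicit kernel $\tfrac{e^{i\sqrt{z}|x-y|}}{4\pi|x-y|}$ together with standard weighted fractional-integration bounds. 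Either way, Theorem~\ref{thm:local} is reduced to well-known harmonic analysis; the reduction to the half-wave operator already noted in the statement then transfers the estimate to $\tfrac{\sin(t\sqrt{-\Delta})}{\sqrt{-\Delta}}f$, $\cos(t\sqrt{-\Delta})g$ and their space-time gradients by factoring through $e^{\pm it\sqrt{-\Delta}}$.
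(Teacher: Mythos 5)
The paper does not actually supply a proof of Theorem~\ref{thm:local}: it states the estimate and refers to the appendices of \cite{GC2}, so there is no in-text argument to compare your proposal against. Judged on its own terms, your proof is correct and is one of the two standard routes (the other being the resolvent/limiting-absorption route you sketch at the end, which is essentially dual to it). The bookkeeping checks out: with $\widehat{u}(t,\xi)=e^{it|\xi|}\widehat f(\xi)$, the time Fourier transform localises to the sphere $\{|\xi|=\tau\}$ and produces $(\mathcal{F}_t u)(\tau,\cdot)=c\,\tau^2\,\widehat{h_\tau\,d\sigma}(\tau\,\cdot)$ with $h_\tau=\widehat f(\tau\,\cdot)$; the rescaling $y=\tau x$ contributes $\tau^{-3}$, leaving a net factor $\tau$ in front of $\int\langle y/\tau\rangle^{-1-2\epsilon}|\widehat{h_\tau\,d\sigma}(y)|^2\,dy$; the dyadic decomposition against the uniform Agmon--H\"ormander bound $\sup_{R>0}R^{-1}\int_{|y|<R}|\widehat{h\,d\sigma}|^2\lesssim\|h\|_{L^2(\mathbb{S}^2)}^2$ yields the geometric series $\sum_{k\ge0}2^{-2k\epsilon}$, which converges precisely because $\epsilon>0$; and reassembling gives $\tau^2\|h_\tau\|_{L^2(\mathbb{S}^2)}^2$, whose $d\tau$-integral is $\|\widehat f\|_{L^2}^2$ by polar coordinates. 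You correctly identify the Agmon--H\"ormander trace estimate as the one genuinely non-elementary input. Two very small remarks: (i) what is used in the dyadic shell argument is that the Agmon--H\"ormander bound is \emph{uniform in} $R>0$, not merely asymptotic for large $R$ (it is, so nothing is missing, but worth saying); (ii) the final sentence about transferring to $\sin(t\sqrt{-\Delta})/\sqrt{-\Delta}$ and $\cos(t\sqrt{-\Delta})$ is tangential here — the theorem as stated is already for the half-wave operator and the paper handles the passage separately — but it does no harm.
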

See Corollary \ref{thm:local-1} for a more general formulation with
time-dependent weight. A detailed proof can be found in the appendices
in \cite{GC2}. 

The following Christ-Kiselev Lemma is important in our derivation
of Strichartz estimates.
\begin{lem}[Christ-Kiselev]
\label{lem:Christ-Kiselev} Let $X$, $Y$ be two
Banach spaces and let $T$ be a bounded linear operator from $L^{\beta}\left(\mathbb{R}^{+};X\right)$
to $L^{\gamma}\left(\mathbb{R}^{+};Y\right)$, such that 
\begin{equation}
Tf(t)=\int_{0}^{\infty}K(t,s)f(s)\,ds.
\end{equation}
Then the operator 
\begin{equation}
\widetilde{T}f=\int_{0}^{t}K(t,s)f(s)\,ds
\end{equation}
 is bounded from $L^{\beta}\left(\mathbb{R}^{+};X\right)$ to $L^{\gamma}\left(\mathbb{R}^{+};Y\right)$
provided $\beta<\gamma$, and the 
\begin{equation}
\left\Vert \widetilde{T}\right\Vert \leq C(\beta,\gamma)\left\Vert T\right\Vert 
\end{equation}
with 
\begin{equation}
C(\beta,\gamma)=\left(1-2^{\frac{1}{\gamma}-\frac{1}{\beta}}\right)^{-1}.
\end{equation}
\end{lem}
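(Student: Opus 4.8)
The plan is to bound $\widetilde T$ by chopping the triangle $\{0\le s<t\}$ over which it integrates into a disjoint union, indexed by dyadic scales, of rectangles on each of which the boundedness of the \emph{full} operator $T$ applies directly. First I would normalize $\|f\|_{L^\beta(\mathbb R^+;X)}=1$ (zero norm is trivial, infinite norm vacuous) and, so that $\widetilde Tf$ is unambiguously defined and all interchanges of sums and integrals are legitimate, reduce to $f$ in a dense subclass — continuous, compactly supported $X$-valued functions — recovering the general case by density with the same constant at the end. Set $F(t)=\int_0^t\|f(s)\|_X^\beta\,ds$, a continuous nondecreasing function with $F(0)=0$, $F(\infty)=1$, $dF=\|f(s)\|_X^\beta\,ds$. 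The combinatorial heart is the elementary fact that for almost every $(a,b)$ with $0\le a<b\le1$ there is a \emph{unique} dyadic subinterval $J=[j2^{-n},(j+1)2^{-n}]$ of $[0,1]$ — the smallest one containing both $a$ and $b$ — whose left half $J^L$ contains $a$ and whose right half $J^R$ contains $b$. Pulling this back through $F$ (the part of $\{s<t\}$ on which $F$ is constant carries $\|f\|_X\equiv0$ and contributes nothing), one gets, modulo a null set,
\[
\{(s,t):0\le s<t\}=\bigsqcup_{n\ge0}\bigsqcup_{j}E^L_{n,j}\times E^R_{n,j},\qquad E^L_{n,j}:=F^{-1}(J^L_{n,j}),\ \ E^R_{n,j}:=F^{-1}(J^R_{n,j}),
\]
and therefore $\widetilde Tf(t)=\sum_{n\ge0}\sum_j\mathbf{1}_{E^R_{n,j}}(t)\,T(\mathbf{1}_{E^L_{n,j}}f)(t)$ for $f$ in the dense class.

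Now I would estimate in $L^\gamma_t(Y)$: the triangle inequality in $n$, and, for each fixed $n$, the pairwise \emph{disjointness} of the sets $\{E^R_{n,j}\}_j$, which makes the inner sum genuinely $\ell^\gamma$-orthogonal:
\[
\Big\|\sum_j\mathbf{1}_{E^R_{n,j}}T(\mathbf{1}_{E^L_{n,j}}f)\Big\|_{L^\gamma_t(Y)}^\gamma=\sum_j\big\|\mathbf{1}_{E^R_{n,j}}T(\mathbf{1}_{E^L_{n,j}}f)\big\|_{L^\gamma}^\gamma\le\|T\|^\gamma\sum_j\big\|\mathbf{1}_{E^L_{n,j}}f\big\|_{L^\beta}^\gamma.
\]
Since $\int_{E^L_{n,j}}\|f\|_X^\beta=|J^L_{n,j}|=2^{-n-1}$ and $j$ runs over $2^n$ values, the last sum equals $2^n(2^{-n-1})^{\gamma/\beta}$, so the scale-$n$ block has $L^\gamma(Y)$-norm at most $\|T\|\cdot 2^{-1/\beta}\cdot 2^{n(1/\gamma-1/\beta)}$. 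Summing the geometric series over $n\ge0$ — convergent exactly because $\beta<\gamma$ forces the ratio $2^{1/\gamma-1/\beta}<1$ — gives $\|\widetilde Tf\|_{L^\gamma(Y)}\le\|T\|\cdot 2^{-1/\beta}(1-2^{1/\gamma-1/\beta})^{-1}\le C(\beta,\gamma)\|T\|$, which is the claim.

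The main obstacle is not a single inequality but the bookkeeping around the dyadic decomposition: proving that the rectangles $E^L_{n,j}\times E^R_{n,j}$ really tile $\{s<t\}$ up to measure zero (the uniqueness statement above, together with the handling of plateaus of $F$), and justifying both the termwise application of $T$ and the interchange of the sum over $(n,j)$ with the integral defining $\widetilde Tf$ — all of which go most smoothly if one argues first for $f$ in the dense class and passes to the limit only at the end. It should also be emphasized that $\beta<\gamma$ is used essentially: besides making the geometric series converge, the statement genuinely fails at $\beta=\gamma$.
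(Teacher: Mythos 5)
The paper states this lemma without proof, citing it as a known result of Christ and Kiselev, so there is no in-paper argument to compare against. Your proof is the standard one: the Whitney-type dyadic decomposition of the triangle $\{s<t\}$ pulled back through the distribution function $F(t)=\int_0^t\|f(s)\|_X^\beta\,ds$, with $\ell^\gamma$-disjointness within each scale and a geometric sum over scales. The computations check out: each scale $n$ contributes at most $\|T\|\,2^{-1/\beta}\,2^{n(1/\gamma-1/\beta)}$, and summing gives $2^{-1/\beta}\bigl(1-2^{1/\gamma-1/\beta}\bigr)^{-1}\|T\|\le C(\beta,\gamma)\|T\|$, so you even obtain a marginally better constant than the one quoted in the lemma. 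You correctly flag the two real technicalities — justifying the a.e.\ tiling by the pulled-back rectangles (where plateaus of $F$ carry $\|f\|_X=0$ and contribute nothing) and the interchange of the sum over $(n,j)$ with the integral — and the suggested remedy (argue on a dense class, pass to the limit) is the standard way to handle them. The remark that $\beta<\gamma$ is essential, both for convergence of the geometric series and because the lemma fails at $\beta=\gamma$, is also accurate.
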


\subsection{Lorentz Transformations and Energy\label{sec: Lorentz}}

In this paper, Lorentz transformations will be important for us to
reduce some estimates to stationary cases. In order to approach our
problem from the viewpoint of Lorentz transformations, the first natural
step is to understand the change of energy under Lorentz transformations.
In this subsection, we show that under Lorentz transformations, the
energy stays comparable to that of the initial data. Recall that after
we apply the Lorentz transformation, for function $u$, under the
new coordinates, we denote
\begin{equation}
u_{L}\left(x_{1}',x_{2}',x_{3}',t'\right)=u\left(\gamma\left(x_{1}'+vt'\right),x_{2}',x_{3}',\gamma\left(t'+vx_{1}'\right)\right).\label{eq:l6}
\end{equation}
Now let $u$  be a solution to some wave equation and set $t'=0$.
We notice that in order to show under Lorentz transformations, the
energy stays comparable to that of the initial data up to an absolute
constant, it suffices to prove 
\begin{eqnarray}
\int\left|\nabla_{x}u\left(x_{1},x_{2},x_{3},vx_{1}\right)\right|^{2}+\left|\partial_{t}u\left(x_{1},x_{2},x_{3},vx_{1}\right)\right|^{2}dx\nonumber \\
\simeq\int\left|\nabla_{x}u\left(x_{1},x_{2},x_{3},0\right)\right|^{2}+\left|\partial_{t}u\left(x_{1},x_{2},x_{3},0\right)\right|^{2}dx.
\end{eqnarray}
provided $\left|v\right|<1$.

Throughout this subsection, we will assume all functions are smooth
and decay fast. We will obtain estimates independent of the additional
smoothness assumption. It is easy to pass the estimates to general
cases with a density argument.
\begin{rem}
\label{rem:dim}One can observe that all discussions in this section
hold for $\mathbb{R}^{n}$. We choose $n=3$ since we will only consider
the charge transfer model in $\mathbb{R}^{3}$ in later parts of this
paper. 
\end{rem}
In this subsection, a more general situation is analyzed. We consider
wave equations with time-dependent potentials
\begin{equation}
\partial_{tt}u-\Delta u+V(x,t)u=0
\end{equation}
under some uniform decay conditions
\begin{equation}
\left|V(x,\mu x_{1})\right|\lesssim\frac{1}{\left\langle x\right\rangle ^{3}}
\end{equation}
uniformly for $0\leq\left|\mu\right|\leq1$. These in particular
apply to wave equations with moving potentials with speed strictly
less than $1$. For example,
\begin{equation}
V(x,t)=V(x-\vec{v}t)
\end{equation}
 with 
\begin{equation}
\left|V(x)\right|\lesssim\frac{1}{\left\langle x\right\rangle ^{3}}.
\end{equation}

\begin{thm}
\label{thm:generalC}Let $\left|v\right|<1$. Suppose 
\begin{equation}
\partial_{tt}u-\Delta u+V(x,t)u=0\label{eq:equationcom}
\end{equation}
and 
\begin{equation}
\left|V(x,\mu x_{1})\right|\lesssim\frac{1}{\left\langle x\right\rangle ^{3}}
\end{equation}
 uniformly with respect to $0\leq\left|\mu\right|<1$. Then 

\begin{eqnarray}
\int\left|\nabla_{x}u\left(x_{1},x_{2},x_{3},vx_{1}\right)\right|^{2}+\left|\partial_{t}u\left(x_{1},x_{2},x_{3},vx_{1}\right)\right|^{2}dx\nonumber \\
\simeq\int\left|\nabla_{x}u\left(x_{1},x_{2},x_{3},0\right)\right|^{2}+\left|\partial_{t}u\left(x_{1},x_{2},x_{3},0\right)\right|^{2}dx,\label{eq:generalC}
\end{eqnarray}
where the implicit constant depends on $v$ and $V$.
\end{thm}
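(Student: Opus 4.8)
The plan is to prove the comparison \eqref{eq:generalC} by a direct energy-flux computation on the space-time region bounded by the hyperplane $\{t=0\}$ and the slanted hyperplane $\{t = vx_1\}$, using the divergence theorem applied to the energy-momentum tensor of the wave equation. First I would introduce the energy-momentum tensor $T^{\alpha\beta}$ associated with $\partial_{tt}u - \Delta u + Vu = 0$, whose components are the usual quadratic expressions $T^{00} = \tfrac12(|\partial_t u|^2 + |\nabla u|^2)$, $T^{0i} = \partial_t u\,\partial_i u$, etc. The divergence $\partial_\alpha T^{\alpha\beta}$ is not zero but is controlled by the source term: it produces a term of the form $V u\,\partial_\beta u$ (plus a term involving $\partial_\beta V\,|u|^2$ when $\beta$ is a spatial or time direction, coming from the potential not being constant). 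Integrating over the region between $\{t=0\}$ and $\{t = vx_1\}$ and applying the divergence theorem converts the bulk integral of $\partial_\alpha T^{\alpha 0}$ into the difference of boundary flux integrals over the two hyperplanes.

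The key point is that the flux of the energy current through the slanted hyperplane $\{t = vx_1\}$ is a positive-definite quadratic form in $(\partial_t u, \nabla u)$ as long as the hyperplane is space-like, i.e. as long as $|v| < 1$. Concretely, the normal covector to $\{t - vx_1 = 0\}$ is proportional to $(1, -v, 0, 0)$ (in $(t,x_1,x_2,x_3)$ coordinates), which is time-like precisely when $|v|<1$, and the contraction $T^{\alpha\beta} n_\alpha n_\beta$ of the energy-momentum tensor with a time-like covector is coercive: it is bounded above and below by constants (depending only on $v$) times $|\partial_t u|^2 + |\nabla u|^2$ restricted to the slice. This is the standard positivity of the energy-momentum tensor on the domain of dependence, and it is exactly what gives the two-sided bound $\simeq$ rather than just $\lesssim$ in one direction — running the divergence theorem on the same region but exchanging the roles of the two hyperplanes gives the reverse inequality. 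I would carry this out first for smooth, fast-decaying $u$ so that all boundary terms at spatial infinity vanish, as the statement's preamble permits.

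The main obstacle is controlling the bulk error terms coming from the potential, namely $\int\!\!\int V u\,\partial_t u$ and (if present) $\int\!\!\int \partial_t V\,|u|^2$ over the slab between the two hyperplanes. Here I would use the hypothesis $|V(x,\mu x_1)| \lesssim \langle x\rangle^{-3}$ uniformly in $|\mu|\le 1$: parametrizing the slab by $x\in\mathbb{R}^3$ and $t$ between $0$ and $vx_1$, the weight $\langle x\rangle^{-3}$ is integrable in $x$ over $\mathbb{R}^3$, and combined with Hardy's inequality (to handle the $|u|^2$ terms via $\||x|^{-1}u\|_{L^2}\lesssim\|\nabla u\|_{L^2}$ on each slice) plus Cauchy–Schwarz, the error is bounded by $\epsilon$ times the energy on a generic slice plus a constant times the energy on $\{t=0\}$. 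One then absorbs the small term. A clean way to organize this, avoiding a Gronwall-type feedback, is to first establish the local energy estimate $\int_{\mathbb{R}^3}\langle x\rangle^{-3}(|\partial_t u|^2+|\nabla u|^2)(x,\mu x_1)\,dx \lesssim E(0)$ uniformly in $|\mu|\le|v|$ by a preliminary foliation/Gronwall argument over the family of slanted slices $\{t=\mu x_1\}$, $0\le\mu\le v$, and then feed this back into the flux identity to control the bulk term and conclude \eqref{eq:generalC}. Passing from smooth decaying data to general finite-energy data is then a routine density argument, since all constants are uniform.
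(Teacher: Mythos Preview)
Your approach via the energy--momentum tensor and the divergence theorem over the slab between $\{t=0\}$ and $\{t=vx_1\}$ is correct, and it is precisely the alternative route the paper itself alludes to in the remark immediately following its proof (``local energy conservation and the control of the energy flux'', with a reference to \cite{GC2}). The paper's own argument proceeds differently: it sets $E_1(\mu)=\int|\nabla_x u(x,\mu x_1)|^2\,dx$ and $E_2(\mu)=\int|\partial_t u(x,\mu x_1)|^2\,dx$, differentiates each directly in the slant parameter $\mu$, integrates by parts using the equation, and finds that the combination $E_3(\mu)=E_1(\mu)+(1-\tfrac{\mu^2}{2})E_2(\mu)$ satisfies a closed differential inequality $|E_3'(\mu)|\lesssim E_3(\mu)$, whence Gr\"onwall on $[0,v]$ finishes. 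Your flux formulation is more geometric and makes the role of the spacelike condition $|v|<1$ transparent through the positivity of $T^{\alpha\beta}n_\alpha n_\beta$ for timelike $n$; the paper's ODE computation is more hands-on but exhibits the explicit coercive combination $E_3$. Both routes ultimately need a Gr\"onwall step over $\mu\in[0,v]$, and both control the potential contribution the same way: the Jacobian $dt=x_1\,d\mu$ absorbs one power of the decay, leaving $\langle x\rangle^{-2}|u||\partial_t u|$, which is handled by Cauchy--Schwarz and Hardy on each slice. One small slip in your write-up: $\langle x\rangle^{-3}$ is \emph{not} integrable over $\mathbb{R}^3$ (it is the borderline case); fortunately your argument never actually uses that claim, since the Hardy step is what does the work --- and indeed this is why the paper's remark notes that $\langle x\rangle^{-2}$ decay already suffices for the flux approach.
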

\begin{proof}
Up to performing a Lorentz transformation or a change of variable,
it suffices to show 
\begin{align}
\int\left|\nabla_{x}u\left(x_{1},x_{2},x_{3},vx_{1}\right)\right|^{2}+\left|\partial_{t}u\left(x_{1},x_{2},x_{3},vx_{1}\right)\right|^{2}dx\nonumber \\
\lesssim\int\left|\nabla_{x}u\left(x_{1},x_{2},x_{3},0\right)\right|^{2}+\left|\partial_{t}u\left(x_{1},x_{2},x_{3},0\right)\right|^{2}dx.\label{eq:upperbound}
\end{align}
Set 
\begin{equation}
E_{1}(\mu)=\int\left|\nabla_{x}u\left(x_{1},x_{2},x_{3},\mu x_{1}\right)\right|^{2}dx,\label{eq:E1}
\end{equation}
\begin{equation}
E_{2}(\mu)=\int\left|\partial_{t}u\left(x_{1},x_{2},x_{3},\mu x_{1}\right)\right|^{2}dx.\label{eq:E2}
\end{equation}
In the following computations, for a function $f(x,t)$, we use the
short-hand notation 
\[
\int f\,dx=\int_{\mathbb{R}^{3}}f(x_{1},x_{2},x_{3},\mu x_{1})\,dx.
\]
Then 
\begin{align}
\frac{dE_{1}}{d\mu} & =2\int x_{1}\nabla_{x}u\left(x_{1},x_{2},x_{3},\mu x_{1}\right)\nabla_{x}u_{t}\left(x_{1},x_{2},x_{3},\mu x_{1}\right)\,dx\nonumber \\
 & =2\int x_{1}\nabla_{x}u\nabla_{x}u_{t}\,dx\label{eq:E11}
\end{align}
and 
\begin{align}
\frac{dE_{2}}{d\mu} & =2\int x_{1}\partial_{t}u\left(x_{1},x_{2},x_{3},\mu x_{1}\right)\partial_{tt}u\left(x_{1},x_{2},x_{3},\mu x_{1}\right)\,dx\nonumber \\
 & =2\int x_{1}\partial_{t}u\partial_{tt}u\,dx.\label{eq:E22}
\end{align}
Integration by parts in \eqref{eq:E11} gives 
\begin{align}
\frac{dE_{1}}{d\mu} & =-2\int\partial_{x_{1}}u\cdot u_{t}\,dx-2\int x_{1}\Delta u\cdot u_{t}\,dx-2\mu\int x_{1}\partial_{x_{1}}u_{t}\cdot u_{t}\,dx.\label{eq:E111}
\end{align}
And using the fact that $u$ solves the wave equation implies 
\begin{equation}
\frac{dE_{2}}{d\mu}=2\int x_{1}\partial_{t}u\cdot\Delta u\,dx-2\int x_{1}\partial_{t}u\cdot Vu\,dx.\label{eq:E222}
\end{equation}
Consider the following integral appearing as the last term in \eqref{eq:E111},
\begin{equation}
\int x_{1}\partial_{x_{1}}u_{t}\cdot u_{t}\,dx.\label{eq:Err}
\end{equation}
Integration by parts in $x$, one has 
\[
\int x_{1}\partial_{x_{1}}u_{t}\cdot u_{t}\,dx=-\int\left|u_{t}\right|^{2}dx-\int x_{1}\partial_{x_{1}}u_{t}\cdot u_{t}\,dx-\mu\int x_{1}u_{t}\cdot u_{tt}\,dx.
\]
Therefore, 
\begin{equation}
\int x_{1}\partial_{x_{1}}u_{t}\cdot u_{t}\,dx=-\frac{1}{2}\int\left|u_{t}\right|^{2}dx-\frac{\mu}{4}\frac{dE_{2}}{d\mu}.\label{eq:Err1}
\end{equation}
Combining identities \eqref{eq:E111}, \eqref{eq:E222} and \eqref{eq:Err1}
together, we have
\[
E_{1}^{'}\left(\mu\right)+\left(1-\frac{\mu^{2}}{2}\right)E_{2}^{'}\left(\mu\right)=H\left(\mu\right),
\]
where 
\begin{equation}
H\left(\mu\right)=-2\int\partial_{x_{1}}u\cdot u_{t}\,dx-2\int x_{1}\partial_{t}u\cdot Vu\,dx+\mu\int\left|u_{t}\right|^{2}dx.\label{eq:H}
\end{equation}
By Cauchy-Schwarz and Hardy's inequality,
\[
\left|H(\mu)\right|\lesssim E_{1}(\mu)+E_{2}(\mu),
\]
and hence 
\begin{equation}
\left|E_{1}^{'}\left(\mu\right)+\left(1-\frac{\mu^{2}}{2}\right)E_{2}^{'}\left(\mu\right)\right|\lesssim E_{1}(\mu)+E_{2}(\mu).\label{eq:Gron1}
\end{equation}
Setting 
\[
E_{3}(\mu)=E_{1}\left(\mu\right)+\left(1-\frac{\mu^{2}}{2}\right)E_{2}\left(\mu\right),
\]
one has 
\[
E_{3}^{'}(\mu)=E_{1}^{'}(\mu)+\left(1-\frac{\mu^{2}}{2}\right)E_{2}^{'}(\mu)-\mu E_{2}(\mu)
\]
and 
\[
\left|E_{3}^{'}(\mu)\right|\lesssim E_{1}(\mu)+E_{2}(\mu)+\mu E_{2}(\mu)\lesssim E_{1}(\mu)+E_{2}(\mu)
\]
by \eqref{eq:Gron1}.

Since $0\leq\mu<1$, 
\begin{equation}
E_{1}(\mu)+E_{2}(\mu)\lesssim E_{1}\left(\mu\right)+\left(1-\frac{\mu^{2}}{2}\right)E_{2}\left(\mu\right)=E_{3}(\mu),\label{eq:Gron2}
\end{equation}
so
\begin{equation}
\left|E_{3}^{'}(\mu)\right|\lesssim E_{3}(\mu).\label{eq:Gron3}
\end{equation}
Applying Gr\"onwall's inequality,
\begin{equation}
E_{1}(\mu)+E_{2}(\mu)\lesssim E_{3}(\mu)\lesssim e^{\mu}E_{3}(0)\lesssim E_{1}(0)+E_{2}(0).\label{eq:E1E2E3}
\end{equation}

Therefore, by the definitions of $E_{1}(\mu)$ and $E(\mu)$, we have
\begin{align*}
\int\left|\nabla_{x}u\left(x_{1},x_{2},x_{3},vx_{1}\right)\right|^{2}+\left|\partial_{t}u\left(x_{1},x_{2},x_{3},vx_{1}\right)\right|^{2}dx\\
\lesssim\int\left|\nabla_{x}u\left(x_{1},x_{2},x_{3},0\right)\right|^{2}+\left|\partial_{t}u\left(x_{1},x_{2},x_{3},0\right)\right|^{2}dx.
\end{align*}
The theorem is proved.
\end{proof}
\begin{rem}
The above theorem can be also obtained by local energy conservation
and the control of the energy flux. And this approach will only require
the potential to decay with rate $\left\langle x\right\rangle ^{-2}$.
See \cite{GC2} for more details.
\end{rem}
Applying Theorem \ref{thm:generalC} in the setting of Theorem \ref{thm:local},
we obtain a more general formulation of the local energy decay estimate.
\begin{cor}
\label{thm:local-1} $\forall\epsilon>0\,\left|\vec{\mu}\right|<1$,
 one has 
\begin{equation}
\left\Vert \left(1+\left|x-\vec{\mu}t\right|\right)^{-\frac{1}{2}-\epsilon}e^{it\sqrt{-\Delta}}f\right\Vert _{L_{t,x}^{2}}\lesssim_{\epsilon}\left\Vert f\right\Vert _{L_{x}^{2}}.\label{eq:fullwave-1}
\end{equation}
\end{cor}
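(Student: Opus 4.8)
The strategy is to reduce the slanted estimate to the $\vec{\mu}=0$ case—which is exactly Theorem \ref{thm:local}—by means of a Lorentz transformation, using Theorem \ref{thm:generalC} to control the cost of displacing the Cauchy slice from $\{t=0\}$ to a space-like slanted hyperplane.

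First I would make two reductions. Since $e^{it\sqrt{-\Delta}}$ commutes with spatial rotations and $\langle\,\cdot\,\rangle$ depends only on the modulus, replacing $f$ by $f\circ R^{-1}$ for a rotation $R$ sending $\vec{\mu}$ to $|\vec{\mu}|\,\vec{e}_{1}$ reduces matters to the case $\vec{\mu}=v\vec{e}_{1}$ with $0\le v<1$. Next, set $h=(-\Delta)^{-1/2}f\in\dot{H}^{1}$ and $W(x,t)=e^{it\sqrt{-\Delta}}h$, a finite-energy solution of $\partial_{tt}W-\Delta W=0$ with $\|\nabla W(\cdot,0)\|_{L^{2}}+\|\partial_{t}W(\cdot,0)\|_{L^{2}}\simeq\|f\|_{L^{2}}$. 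Since $\partial_{t}W=i\,e^{it\sqrt{-\Delta}}f$, we have the pointwise bound $|e^{it\sqrt{-\Delta}}f|\le|\nabla_{t,x}W|$, so it suffices to prove the energy-level local decay
\begin{equation}
\left\Vert \langle x-v\vec{e}_{1}t\rangle^{-\frac{1}{2}-\epsilon}\,\nabla_{t,x}W\right\Vert _{L_{t,x}^{2}}\lesssim_{v,\epsilon}\|\nabla W(\cdot,0)\|_{L^{2}}+\|\partial_{t}W(\cdot,0)\|_{L^{2}}\label{eq:plan-energy}
\end{equation}
for every such $W$.

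For $v=0$, \eqref{eq:plan-energy} follows from Theorem \ref{thm:local}: writing $W=\cos(t\sqrt{-\Delta})W(\cdot,0)+\frac{\sin(t\sqrt{-\Delta})}{\sqrt{-\Delta}}\partial_{t}W(\cdot,0)$, differentiating in $(t,x)$, and expanding $\cos$ and $\sin/\sqrt{-\Delta}$ as $\tfrac{1}{2}\bigl(e^{it\sqrt{-\Delta}}\pm e^{-it\sqrt{-\Delta}}\bigr)$ acting on the $L^{2}$ functions $\nabla W(\cdot,0)$, $\partial_{t}W(\cdot,0)$, $\sqrt{-\Delta}W(\cdot,0)$ and $\nabla(-\Delta)^{-1/2}\partial_{t}W(\cdot,0)$ (the last one in $L^{2}$ by the boundedness of the Riesz transforms), one applies Theorem \ref{thm:local} to each half-wave. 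For general $v$, I would apply the Lorentz transformation $L=L(v\vec{e}_{1})$ from \eqref{eq:LorentzT} to $W$. Since $L$ has unit Jacobian, $dx\,dt=dx'\,dt'$; the relations $x_{1}-vt=\gamma^{-1}x_{1}'$, $x_{2}=x_{2}'$, $x_{3}=x_{3}'$ give $\langle x-v\vec{e}_{1}t\rangle\simeq_{v}\langle x'\rangle$; and by the chain rule $\nabla_{t',x'}W_{L}$ is a fixed invertible ($v$-dependent) linear combination of $\nabla_{t,x}W$, so $|\nabla_{t,x}W|\simeq_{v}|\nabla_{t',x'}W_{L}|$. Hence the left-hand side of \eqref{eq:plan-energy} is $\simeq_{v}\|\langle x'\rangle^{-\frac{1}{2}-\epsilon}\nabla_{t',x'}W_{L}\|_{L_{t',x'}^{2}}$, and the $v=0$ case applied to the free-wave solution $W_{L}$ bounds this by the energy of $W_{L}$ on $\{t'=0\}$. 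Finally, $\{t'=0\}=\{t=vx_{1}\}$, so undoing the linear change $x_{1}=\gamma x_{1}'$ identifies this energy, up to a constant depending on $v$, with $\int\bigl(|\nabla_{x}W|^{2}+|\partial_{t}W|^{2}\bigr)\big|_{t=vx_{1}}\,dx$, which Theorem \ref{thm:generalC} (with $V\equiv0$) equates with $\|\nabla W(\cdot,0)\|_{L^{2}}^{2}+\|\partial_{t}W(\cdot,0)\|_{L^{2}}^{2}$. Chaining the three comparisons gives \eqref{eq:plan-energy}, hence the corollary.

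The step that needs the most care is the Lorentz bookkeeping: one must check that both the weight $\langle x-v\vec{e}_{1}t\rangle$ and the \emph{Euclidean} gradient $|\nabla_{t,x}W|$ (which is not itself Lorentz-invariant, only comparable up to $v$-dependent constants) transform acceptably, and that restricting $W$ to the space-like hyperplane $\{t=vx_{1}\}$ produces precisely the Cauchy data of $W_{L}$ at $t'=0$, so that Theorem \ref{thm:generalC} applies directly to supply the comparison of slice energies. Everything else—the passage from $f$ to $W$, the initial rotation, and the flat estimate via Theorem \ref{thm:local}—is routine.
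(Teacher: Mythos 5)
Your proposal is correct and follows the same route the paper intends: combine the stationary local decay estimate (Theorem \ref{thm:local}) with the energy comparison across slanted space-like slices (Theorem \ref{thm:generalC}) via a Lorentz transformation. You have simply filled in the bookkeeping—reduction to a wave-equation solution $W$ with $|e^{it\sqrt{-\Delta}}f|\le|\nabla_{t,x}W|$, the comparability of the weight and of the Euclidean gradient up to $v$-dependent constants, and the identification of the $t'=0$ energy of $W_L$ with the energy of $W$ along $\{t=vx_1\}$—that the paper leaves implicit in its one-line justification.
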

As a by product of Theorem \ref{thm:generalC}, we obtain Agmon's
estimates \cite{Agmon} for the decay of eigenfunctions associated
with negative eigenvalues. One can find a detailed proof in \cite{GC2}.

\section{Estimates along Slanted Lines\label{sec:Slanted}}

In order to obtain reversed Strichartz estimates for wave equations
with moving potentials, we need to understand the analogous estimates
along slanted lines. With the results from subsection \ref{sec: Lorentz},
we first consider the estimates along slanted lines for free wave
equations. For the free evolution, the results can be obtained by
explicit calculations with the Kirchhoff formula or the Fourier transforms,
for example see the calculations in \cite{GC2}. In this section,
we will approach those estimates with a viewpoint of Lorentz transformations.
The reason is that this approach will be more consistent with our
construction later on. 

\subsection{Free wave equations}

First of all, we will consider 
\begin{equation}
\partial_{tt}u-\Delta u=0,
\end{equation}
with initial data 
\begin{equation}
u(x,0)=g,\,u_{t}(x,0)=f(x).
\end{equation}
We can write 
\begin{equation}
u(x,t)=\frac{\sin\left(t\sqrt{-\Delta}\right)}{\sqrt{-\Delta}}f+\cos\left(t\sqrt{-\Delta}\right)g.
\end{equation}
By our preliminary discussions in Theorem \ref{thm:EndRStrichF},
we know 
\begin{equation}
\left\Vert u\right\Vert _{L_{x}^{\infty}L_{t}^{2}}\lesssim\left\Vert f\right\Vert _{L_{x}^{2}}+\left\Vert \nabla g\right\Vert _{L_{x}^{2}}.\label{eq:freeRS}
\end{equation}

We consider an analogous estimate to \eqref{eq:freeRS} along slanted
lines. To be more concrete, we integrate $u^{2}$ along slanted lines
\begin{equation}
(x+vt,t)=\left(x_{1}+vt,\,x_{2},\,x_{3},\,t\right)
\end{equation}
Denote 
\begin{equation}
u^{S}(x,t):=u(x+vt,t),
\end{equation}
we estimate 
\begin{equation}
\left\Vert u^{S}\right\Vert _{L_{x}^{\infty}L_{t}^{2}}.
\end{equation}

\begin{lem}
\label{lem:freesl}Let $\left|v\right|<1$ and suppose $u$ solves
\begin{equation}
\partial_{tt}u-\Delta u=0
\end{equation}
with initial data
\begin{equation}
u(0)=g,\,u_{t}(0)=f.
\end{equation}
Then
\begin{equation}
\left\Vert u^{S}\right\Vert _{L_{x}^{\infty}L_{t}^{2}}\lesssim\|f\|_{L^{2}}+\|g\|_{\dot{H}^{1}}.\label{eq:EndSS}
\end{equation}
\end{lem}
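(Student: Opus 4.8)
The plan is to reduce the slanted-line estimate \eqref{eq:EndSS} for the free wave equation to the vertical-line endpoint reversed Strichartz estimate \eqref{eq:freeRS} via a Lorentz transformation. Indeed, the crucial observation recorded in the introduction is that if $u$ solves the free wave equation, so does its Lorentz transform $u_L$ defined by \eqref{eq:l6}, and the slanted line $(x_1+vt,x_2,x_3,t)$ in the original coordinates corresponds, under \eqref{eq:LorentzT}--\eqref{eq:InvLorentT}, to a vertical line $(x_1',x_2',x_3',t')$ in the new frame (up to the rescaling by $\gamma$ in the $x_1$ and $t$ variables). So the first step is to apply the Lorentz transformation $L$ associated to velocity $\vec v=(v,0,0)$, write $u^S(x,t)=u(x+vt,t)$ in terms of $u_L$ evaluated along a vertical slice, and reduce matters to the already-established bound $\left\Vert u_L\right\Vert_{L_{x'}^\infty L_{t'}^2}\lesssim \|f_L\|_{L^2}+\|g_L\|_{\dot H^1}$ coming from Theorem \ref{thm:EndRStrichF}.

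Carrying this out, I would first check that $\|u^S\|_{L_x^\infty L_t^2}\lesssim \|u_L\|_{L_{x'}^\infty L_{t'}^2}$: fixing $x=(x_1,x_2,x_3)$, the curve $t\mapsto (x_1+vt,x_2,x_3,t)$ is mapped by \eqref{eq:LorentzT} to a vertical line $t'\mapsto (x_1',x_2,x_3,t')$ with $x_1'$ constant (one computes $x_1-vt = x_1$ stays fixed along this curve in the right combination), and $dt = \gamma^{-1}\,\mathrm{something}\cdot dt'$ or a comparable change of variables with Jacobian bounded above and below by constants depending only on $v$; hence the $L_t^2$ integral along the slanted line is comparable to an $L_{t'}^2$ integral along a vertical line for $u_L$, and taking the sup over $x$ (equivalently over $x_1'$ and $(x_2,x_3)$) gives the claim. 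Then the vertical-line estimate \eqref{eq:freeRS} applied to $u_L$ with its data $(g_L,f_L)=(u_L(\cdot,0),\partial_{t'}u_L(\cdot,0))$ yields $\|u^S\|_{L_x^\infty L_t^2}\lesssim \|f_L\|_{L^2}+\|g_L\|_{\dot H^1}$.

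The final and genuinely substantive step is to control the transformed data $\|f_L\|_{L^2}+\|g_L\|_{\dot H^1}$ by the original data $\|f\|_{L^2}+\|g\|_{\dot H^1}$. This is exactly where Theorem \ref{thm:generalC} (with $V\equiv 0$) enters: the initial data $(g_L,f_L)$ for $u_L$ live on the hyperplane $t'=0$, which in the original $(x,t)$ coordinates is the slanted hyperplane $t=vx_1$; the energy of $u$ computed on the slanted hyperplane $\{t=vx_1\}$, namely $\int |\nabla_x u(x_1,x_2,x_3,vx_1)|^2+|\partial_t u(x_1,x_2,x_3,vx_1)|^2\,dx$, is by \eqref{eq:generalC} comparable to the energy $\int |\nabla_x g|^2+|f|^2\,dx$ on $\{t=0\}$. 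Since the $\dot H^1\times L^2$ norm of $(g_L,f_L)$ is, up to $v$-dependent constants coming from the Jacobian of the spatial change of variables $x\mapsto x'$ at $t'=0$ (which only rescales $x_1$ by $\gamma$) and from the chain rule relating $\nabla_{x'},\partial_{t'}$ to $\nabla_x,\partial_t$, exactly this slanted-hyperplane energy of $u$, we conclude $\|f_L\|_{L^2}+\|g_L\|_{\dot H^1}\lesssim_v \|f\|_{L^2}+\|g\|_{\dot H^1}$, and \eqref{eq:EndSS} follows.

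The main obstacle — and the reason Theorem \ref{thm:generalC} was proved first — is precisely this last comparison of energies across the two frames: a priori it is not at all clear that a Lorentz boost keeps the $\dot H^1\times L^2$ norm of the Cauchy data comparable, because the boost mixes space and time derivatives and the new initial slice sits tilted relative to the old one. The bookkeeping with the $\gamma$ factors and the Jacobians in translating between $(g_L,f_L)$ and the slanted-slice energy of $u$ is routine but must be done carefully so that all implicit constants depend only on $v$ (with $|v|<1$) and not on the solution; once Theorem \ref{thm:generalC} is granted, the rest is a change of variables.
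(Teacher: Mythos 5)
Your proof is correct and follows essentially the same route as the paper's: apply the Lorentz boost $L$, observe that $\|u^S\|_{L_x^\infty L_t^2}$ is comparable to $\|u_L\|_{L_{x'}^\infty L_{t'}^2}$ by a change of variables with $v$-dependent Jacobian, invoke the vertical-line endpoint reversed Strichartz estimate of Theorem \ref{thm:EndRStrichF} for $u_L$, and control the boosted Cauchy data by the original data via Theorem \ref{thm:generalC} with $V\equiv 0$. You simply spell out the change-of-variables and energy-comparison bookkeeping that the paper states more tersely.
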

\begin{proof}
Recall that performing the Lorentz transformation with respect to
$v$, in the new frame, one has 
\begin{equation}
u_{L}\left(x_{1}',x_{2}',x_{3}',t'\right):=u\left(\gamma\left(x_{1}'+vt'\right),x_{2}',x_{3}',\gamma\left(t'+vx_{1}'\right)\right)\label{eq:newcoord}
\end{equation}
and
\begin{equation}
\partial_{t't'}u_{L}-\Delta_{x'}u_{L}=0.
\end{equation}
Notice that from \eqref{eq:newcoord}, to estimate the $L_{x}^{\infty}L_{t}^{2}$
norm of 
\begin{equation}
u^{S}=u(x+vt,t),
\end{equation}
 is equivalent to integrating of $u_{L}$ along $t'$ up to a multiplication
of an absolute constant only depending on $v$ and $\gamma$. 

Therefore, by the endpoint reversed Strichartz estimate for $u_{L}$,
we have 
\begin{equation}
\left\Vert u^{S}\right\Vert _{L_{x}^{\infty}L_{t}^{2}}\lesssim\left\Vert u_{L}\right\Vert _{L_{x}^{\infty}L_{t}^{2}}\lesssim\|\partial_{t}u_{L}(0)\|_{L^{2}}+\|u_{L}(0)\|_{\dot{H}^{1}}\lesssim\|f\|_{L^{2}}+\|g\|_{\dot{H}^{1}},
\end{equation}
where in the last inequality, we apply Theorem \ref{thm:generalC}
with $V\equiv0$.
\end{proof}

\subsection{Wave equations with stationary potentials}

In this subsection, we consider the perturbed Hamiltonian, 
\begin{equation}
H=-\Delta+V,\label{eql21-1}
\end{equation}
and the wave equation with potential,
\begin{equation}
\partial_{tt}u+Hu=0\label{eq:l21-1}
\end{equation}
with initial data 
\[
u(x,0)=g,\,u_{t}(x,0)=f.
\]
The results in this section can always be obtained by the related
estimates for the free case via the structure formula of wave operators,
cf.~\cite{GC2}. But in order to make our exposition self-contained,
we will prove all estimates independent of the structure formula.

For simplicity, from now on till the end of this section, we will
assume $g=0$. For the other case, the analysis is similar with $L^{2}$
norm replaced by $\dot{H}^{1}$ norm.
\begin{thm}
\label{thm: persl} Let $\left|v\right|<1$ and set 
\begin{equation}
u(x,t)=\frac{\sin\left(t\sqrt{H}\right)}{\sqrt{H}}P_{c}f.
\end{equation}
Denote 
\begin{equation}
u^{S}(x,t):=u(x+vt,t)
\end{equation}
then
\begin{equation}
\left\Vert u^{S}\right\Vert _{L_{x}^{\infty}L_{t}^{2}}\lesssim\left\Vert P_{c}f\right\Vert _{L^{2}}\lesssim\left\Vert f\right\Vert _{L^{2}}.\label{eq:PEndrsl}
\end{equation}
\end{thm}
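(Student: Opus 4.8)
The plan is to mimic the structure of Lemma \ref{lem:freesl}: reduce the slanted estimate for the perturbed evolution to a \emph{stationary} reversed Strichartz estimate in a Lorentz-transformed frame, and then invoke Theorem \ref{thm:PStriRStrich}. However, unlike the free case, applying a Lorentz transformation to $u(x,t)=\frac{\sin(t\sqrt H)}{\sqrt H}P_c f$ does \emph{not} produce a solution of a wave equation with a stationary potential: in the new coordinates $(x',t')$ the potential $V(x)$ becomes the moving potential $V(S(v)x' + \text{(drift)})$, so $u_L$ solves a wave equation with a time-dependent potential and we cannot directly quote \eqref{eq:PEndRSch}. So a direct Lorentz reduction is not available, and this is exactly the main obstacle.

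Instead, I would argue as follows. First reduce to a local statement: it suffices to bound $\int_0^\infty |u^S(x,t)|^2\,dt$ uniformly in $x$, and by a standard splitting it is enough to treat separately the region where the slanted line $(x+vt,t)$ stays in a bounded neighborhood of $\mathrm{supp}\,V$ (or of the essential support of $V$, using rapid decay) and the complementary ``far'' region. For the far region, $Vu$ is effectively a forcing term that is small/integrable because the line $t\mapsto x+vt$ spends only bounded time near the region where $V$ is non-negligible (here $|v|<1$ is crucial — a spacelike/timelike line crosses any fixed compact set in finite time, and crosses the light cone from a point source only once), so one writes Duhamel's formula $u = \frac{\sin(t\sqrt{-\Delta})}{\sqrt{-\Delta}}P_c f - \int_0^t \frac{\sin((t-s)\sqrt{-\Delta})}{\sqrt{-\Delta}}V u(s)\,ds$ relative to the \emph{free} evolution, apply Lemma \ref{lem:freesl} to the leading term, and apply the inhomogeneous free slanted estimate (the analog of \eqref{eq:EndRStrichF} along slanted lines, which follows from Lemma \ref{lem:freesl} exactly as \eqref{eq:EndRStrichF} was proved, i.e. Lorentz-transform the inhomogeneous free estimate of Theorem \ref{thm:EndRStrichF}) to the Duhamel term, estimating $\|Vu\|_{L_x^{3/2,1}L_t^2}$ by $\|V\|_{L^{3/2,1}\cap L^\infty}$ times the local energy norm of $u$, which is controlled by $\|f\|_{L^2}$ via the local energy decay (Corollary \ref{thm:local-1}) together with the spectral calculus $u = \mathrm{Im}\,\frac{e^{it\sqrt H}}{\sqrt H}P_c f$ and boundedness of $P_c$ and of the perturbed local decay (which holds under the no-resonance/eigenvalue assumption — this is the perturbed analog of Theorem \ref{thm:local}, available from \cite{BecGo}).

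Alternatively — and this is probably the cleanest route — one avoids the Lorentz transformation entirely and proves the slanted estimate directly from Theorem \ref{thm:PStriRStrich} by the same ``only meets the light cone once'' geometry used for the free case in Theorem \ref{thm:EndRStrichF}: the perturbed propagator $\frac{\sin(t\sqrt H)}{\sqrt H}P_c$ enjoys the same reversed Strichartz bound \eqref{eq:PEndRSch} along vertical lines, and the inhomogeneous version \eqref{eq:PEndRSIn}, and since the map $(x,t)\mapsto(x+vt,t)$ with $|v|<1$ is a bi-Lipschitz change of the $t$-variable for each fixed $x$ whose image is a timelike line, the finite-speed-of-propagation structure of the perturbed wave flow (it inherits Huygens-type decay from the free flow up to the local-decay-controlled potential correction) gives that a timelike line meets the forward cone of influence of any point in a time interval of bounded length. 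I would combine these: use the free Lorentz-transformed inhomogeneous reversed Strichartz estimate to handle the Duhamel remainder and Theorem \ref{thm:PStriRStrich} for the main term. The key step I expect to be the real work is controlling $\|Vu\|_{L_x^{3/2,1}L_t^2}$ — i.e., showing the potential term is genuinely a small perturbation in the slanted norm — which reduces to the perturbed local energy decay estimate $\big\| \langle x\rangle^{-1/2-\epsilon}\,e^{it\sqrt H}P_c f\big\|_{L^2_{t,x}}\lesssim \|f\|_{L^2}$; granting that (and it is available in the stationary-potential literature quoted above), the rest is bookkeeping with Minkowski's and O'Neil's Hölder inequality in Lorentz spaces exactly as in the proof of Theorem \ref{thm:EndRStrichF}.
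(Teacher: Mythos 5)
Your opening observation is exactly right — and it is also exactly where the paper's proof must do real work — and your overall plan (Duhamel with respect to the free evolution, free slanted estimate for the main term, inhomogeneous free slanted estimate for the tail) is the paper's strategy, not the two-region heuristic you suggest in the middle. However, there is a genuine gap in the precise form of the inhomogeneous free slanted estimate that you invoke, and it propagates into the way you close the argument. You claim that Lorentz-transforming \eqref{eq:EndRStrichF} gives
\[
\left\Vert \int_{0}^{t}\frac{\sin\left((t-s)\sqrt{-\Delta}\right)}{\sqrt{-\Delta}}F(s)\,ds\Big|_{(x+vt,t)}\right\Vert _{L_{x}^{\infty}L_{t}^{2}}\lesssim\left\Vert F\right\Vert _{L_{x}^{3/2,1}L_{t}^{2}},
\]
with $L^2_t$ taken over \emph{vertical} lines. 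That is not what the Lorentz transformation gives. Transforming \eqref{eq:EndRStrichF} produces a bound in terms of the $L_x^{3/2,1}L_t^2$ norm of $F$ with $L^2_t$ taken along the \emph{slanted} line $(x+vt,t)$, i.e.\ a norm of $F^S$ rather than of $F$. For $F=Vu$ with $V$ \emph{stationary}, that slanted norm cannot be split off by H\"older: $\sup_t|V(x+vt)|$ is constant in the $x_1$ direction, hence is not in any Lorentz space over $\mathbb{R}^3$, so the factorization you propose fails. This is precisely the obstruction the paper has to work around.

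What the paper does instead (Theorem~\ref{thm: persl} and the accompanying Corollary~\ref{cor:inhomcom}) is compute the slanted Kirchhoff integral directly: after the substitution $y\mapsto y+vt$, the kernel becomes $\frac{1}{|y+vt|}$, which is $t$-dependent and cannot be bounded by a $t$-independent $L^{3,\infty}_x$ kernel (along $y\approx -vt$ it blows up). The key step is to replace it by the $t$-independent kernel $\frac{1}{\sqrt{y_2^2+y_3^2}}$, which lies in $L^\infty_{x_1}L^{2,\infty}_{\widehat{x_1}}$ rather than $L^{3,\infty}_x$; Minkowski and the bounded Jacobian of $t\mapsto t-|y+vt|$ then yield $\left\Vert D^{S}\right\Vert _{L_{x}^{\infty}L_{t}^{2}}\lesssim\left\Vert F\right\Vert _{L_{x_{1}}^{1}L_{\widehat{x_{1}}}^{2,1}L_{t}^{2}}$, with $L^2_t$ on \emph{vertical} lines. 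That mixed norm does split off stationary $V$ by H\"older, but the remaining factor is $\left\Vert u\right\Vert _{L_{x}^{\infty}L_{t}^{2}}$, which is controlled by the vertical endpoint reversed Strichartz estimate \eqref{eq:PEndRSch}, not by perturbed local energy decay (the $L^1_{x_1}L^{2,1}_{\widehat{x_1}}L^2_t$ norm of $Vu$ is not reachable from a weighted $L^2_{t,x}$ bound — that H\"older split would require a higher transverse integrability of $u$ than local decay supplies). So your local-decay substitution would have worked for the $L^{3/2,1}_xL^2_t$ norm, but that is not the norm the slanted estimate actually requires. Finally, your ``alternative'' route via Huygens/finite-speed heuristics for the perturbed propagator is not a proof; the paper never uses it, and there is no explicit Kirchhoff-type kernel for $\frac{\sin(t\sqrt H)}{\sqrt H}$ to make it rigorous.
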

\begin{proof}
By Duhamel's formula, we write 
\begin{eqnarray}
u(x,t) & = & \frac{\sin\left(t\sqrt{-\Delta}\right)}{\sqrt{-\Delta}}P_{c}f-\int_{0}^{t}\frac{\sin\left((t-s)\sqrt{-\Delta}\right)}{\sqrt{-\Delta}}V\frac{\sin\left(s\sqrt{H}\right)}{\sqrt{H}}P_{c}f\,ds,\nonumber \\
 & =: & A+B
\end{eqnarray}
Now consider the estimate along slanted lines. The estimate for $A$
is known from the free evolution, Lemma \ref{lem:freesl}. For the
second term, we use the explicit representation of the free evolution
$\frac{\sin\left(t\sqrt{-\Delta}\right)}{\sqrt{-\Delta}}$.

Set
\begin{equation}
D(\cdot,t)=\int_{0}^{t}\frac{\sin\left((t-s)\sqrt{-\Delta}\right)}{\sqrt{-\Delta}}F(s)\,ds
\end{equation}
along slanted lines. First of all, by our preliminary results, Theorem
\ref{thm:EndRStrichF}, 
\begin{equation}
\left\Vert D\right\Vert _{L_{x}^{\infty}L_{t}^{2}}\lesssim\left\Vert F\right\Vert _{L_{x}^{\frac{3}{2},1}L_{t}^{2}}.\label{eq:ersIn}
\end{equation}
For the estimate along slanted lines, by Kirchhoff's formula, we know
\begin{equation}
D^{S}(x,t):=D(x+vt,t)=\int_{0}^{t}\int_{\left|x+vt-y\right|=t-s}\frac{F(y,s)}{\left|x+vt-y\right|}\,\sigma\left(dy\right)ds
\end{equation}
and
\begin{eqnarray}
\left\Vert D^{S}(x,\cdot)\right\Vert _{L_{t}^{2}} & = & \left\Vert \int_{0}^{t}\int_{\left|x+vt-y\right|=t-s}\frac{F(y,s)}{\left|x+vt-y\right|}\,\sigma\left(dy\right)ds\right\Vert _{L_{t}^{2}}\nonumber \\
 & = & \left\Vert \int_{\left|y\right|\leq t}\frac{F(x+vt-y,t-\left|y\right|)}{\left|y\right|}\,dy\right\Vert _{L_{t}^{2}}\\
 & \leq & \left\Vert \int_{\mathbb{R}^{3}}\frac{\left|F(x-y,t-\left|y+vt\right|)\right|}{\left|y+vt\right|}\,dy\right\Vert _{L_{t}^{2}}\nonumber \\
 & \leq & \left\Vert \int_{\mathbb{R}^{3}}\frac{\left|F(x-y,t-\left|y+vt\right|)\right|}{\sqrt{y_{2}^{2}+y_{3}^{2}}}\,dy\right\Vert _{L_{t}^{2}},\nonumber 
\end{eqnarray}
where in the third line, we use a change of variable and for the last
inequality and reduce the norm of $y$ to the norm of the component
of $y$ orthogonal to the direction of the motion. 

Finally, 
\begin{equation}
\left\Vert \int_{\mathbb{R}^{3}}\frac{F(x-y,t-\left|y+vt\right|)}{\sqrt{y_{2}^{2}+y_{3}^{2}}}\,dy\right\Vert _{L_{t}^{2}}\leq\int_{\mathbb{R}^{3}}\frac{\left\Vert F(x-y,t-\left|y+vt\right|)\right\Vert _{L_{t}^{2}}}{\sqrt{y_{2}^{2}+y_{3}^{2}}}\,dy
\end{equation}
For fixed $y$, if we apply a change of variable of $t$ here, the
Jacobian is bounded by $1-|v|$ and $1+|v|$, so 
\begin{eqnarray}
\int_{\mathbb{R}^{3}}\frac{\left\Vert F(x-y,t-\left|y+vt\right|)\right\Vert _{L_{t}^{2}}}{\sqrt{y_{2}^{2}+y_{3}^{2}}}\,dy & \lesssim & \int_{\mathbb{R}^{3}}\frac{\left\Vert F(x-y,\cdot)\right\Vert _{L_{t}^{2}}}{\sqrt{y_{2}^{2}+y_{3}^{2}}}dy\nonumber \\
 & \lesssim & \left\Vert F\right\Vert _{L_{x_{1}}^{1}L_{\widehat{x_{1}}}^{2,1}L_{t}^{2}}
\end{eqnarray}
where $\widehat{x_{1}}$ denotes the subspace orthogonal to $x_{1}$
(more generally, the subspace orthogonal to the direction of the motion).
Here $L^{2,1}$ is the Lorentz norm and the last inequality follows
from H\"older's inequality of Lorentz spaces. Therefore, 
\begin{equation}
\left\Vert D^{S}\right\Vert _{L_{x}^{\infty}L_{t}^{2}}\lesssim\left\Vert F\right\Vert _{L_{x_{1}}^{1}L_{\widehat{x_{1}}}^{2,1}L_{t}^{2}}.\label{eq:ersI}
\end{equation}
By a similar discussion as the estimate \eqref{eq:truEnd}, we also
have for $T>0$, 
\begin{equation}
\left\Vert D^{S}\right\Vert _{L_{x}^{\infty}L_{t}^{2}[0,T]}\lesssim\left\Vert F\right\Vert _{L_{x_{1}}^{1}L_{\widehat{x_{1}}}^{2,1}L_{t}^{2}[0,T]}.\label{eq:ersIT}
\end{equation}
With estimate \eqref{eq:ersI}, we know for $u^{S}(x,t):=u(x+vt,t)$,
\begin{eqnarray}
\left\Vert u^{S}\right\Vert _{L_{x}^{\infty}L_{t}^{2}} & \lesssim & \left\Vert P_{c}f\right\Vert _{L^{2}}+\left\Vert V\frac{\sin\left(t\sqrt{H}\right)}{\sqrt{H}}P_{c}f\right\Vert _{L_{x_{1}}^{1}L_{\widehat{x_{1}}}^{2,1}L_{t}^{2}}\nonumber \\
 & \lesssim & \left\Vert P_{c}f\right\Vert _{L^{2}}+\left\Vert V\right\Vert _{L_{x_{1}}^{1}L_{\widehat{x_{1}}}^{2,1}}\left\Vert \frac{\sin\left(t\sqrt{H}\right)}{\sqrt{H}}P_{c}f\right\Vert _{L_{x}^{\infty}L_{t}^{2}}\nonumber \\
 & \lesssim & \left\Vert P_{c}f\right\Vert _{L^{2}}+\left\Vert V\right\Vert _{L_{x_{1}}^{1}L_{\widehat{x_{1}}}^{2,1}}\left\Vert P_{c}f\right\Vert _{L_{x}^{2}}\\
 & \lesssim & \left\Vert f\right\Vert _{L^{2}}.\nonumber 
\end{eqnarray}
where in the third line, we use the endpoint reversed Strichartz estimate
of the wave equation with potentials as Theorem \ref{thm:PStriRStrich}.

Therefore,
\begin{equation}
\left\Vert u\right\Vert _{L_{x}^{\infty}L_{t}^{2}}\lesssim\left\Vert P_{c}f\right\Vert _{L_{x}^{2}}\lesssim\left\Vert f\right\Vert _{L_{x}^{2}}
\end{equation}

\begin{equation}
\left\Vert u^{S}\right\Vert _{L_{x}^{\infty}L_{t}^{2}}\lesssim\left\Vert P_{c}f\right\Vert _{L_{x}^{2}}\lesssim\left\Vert f\right\Vert _{L_{x}^{2}}
\end{equation}
as claimed.
\end{proof}
As a byproduct, we have the following inhomogeneous estimates from
\eqref{eq:ersI} and \eqref{eq:ersIT}.
\begin{cor}
\label{cor:inhomcom}For $\left|v\right|<1$ we have 
\begin{equation}
\left\Vert \int_{0}^{t}\int_{\left|x+vt-y\right|=t-s}\frac{F(y,s)}{\left|x+vt-y\right|}\,\sigma(dy)ds\right\Vert _{L_{x}^{\infty}L_{t}^{2}}\lesssim\left\Vert F\right\Vert _{L_{x_{1}}^{1}L_{\widehat{x_{1}}}^{2,1}L_{t}^{2}},\label{eq:inhomslant}
\end{equation}
and for $T>0$, 
\begin{equation}
\left\Vert \int_{0}^{t}\int_{\left|x+vt-y\right|=t-s}\frac{F(y,s)}{\left|x+vt-y\right|}\,\sigma(dy)ds\right\Vert _{L_{x}^{\infty}L_{t}^{2}[0,T]}\lesssim\left\Vert F\right\Vert _{L_{x_{1}}^{1}L_{\widehat{x_{1}}}^{2,1}L_{t}^{2}[0,T]}.\label{eq:inhomslantT}
\end{equation}
\end{cor}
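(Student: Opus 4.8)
The plan is to read off the claim directly from the work already carried out in the proof of Theorem~\ref{thm: persl}. Set $D(\cdot,t)=\int_0^t \frac{\sin((t-s)\sqrt{-\Delta})}{\sqrt{-\Delta}}F(s)\,ds$ and $D^S(x,t):=D(x+vt,t)$. Since, by Kirchhoff's formula, the kernel of $\frac{\sin(\tau\sqrt{-\Delta})}{\sqrt{-\Delta}}$ is the (suitably normalized) surface measure on the sphere of radius $\tau$, one has $D^S(x,t)=\int_0^t\int_{|x+vt-y|=t-s}\frac{F(y,s)}{|x+vt-y|}\,\sigma(dy)\,ds$, which is exactly the expression inside the norm on the left-hand side of~\eqref{eq:inhomslant}. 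Hence the two inequalities to be proved are literally the estimates~\eqref{eq:ersI} and~\eqref{eq:ersIT} established in the course of proving Theorem~\ref{thm: persl}, and no new argument is required; I would simply state the corollary as an immediate consequence of those two estimates.

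For completeness I would recall the chain of steps that produced~\eqref{eq:ersI}. First collapse the $s$-integration using the light-cone constraint $s=t-|x+vt-y|$, rewriting $D^S(x,t)=\int_{|y|\le t}\frac{F(x+vt-y,\,t-|y|)}{|y|}\,dy$ and then, after the shift $y\mapsto y+vt$, as $\int_{\mathbb{R}^3}\frac{F(x-y,\,t-|y+vt|)}{|y+vt|}\,dy$. Next use the elementary bound $|y+vt|\ge\sqrt{y_2^2+y_3^2}$, the length of the component of $y$ orthogonal to the direction of motion $\vec{e}_1$; apply Minkowski's inequality to bring $L_t^2$ inside the $y$-integral; and, for each fixed $y$, perform the change of variables $t\mapsto t-|y+vt|$, whose Jacobian lies between $1-|v|$ and $1+|v|$ precisely because $|v|<1$, so that the $t$-dependence in the argument of $F$ is removed up to a harmless constant. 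Finally, Hölder's inequality in Lorentz spaces on the two-dimensional transverse slice, together with $\bigl\|1/\sqrt{y_2^2+y_3^2}\bigr\|_{L^{2,\infty}(\mathbb{R}^2)}<\infty$, produces the anisotropic norm $\|F\|_{L_{x_1}^1 L_{\widehat{x_1}}^{2,1}L_t^2}$ on the right. The truncated statement~\eqref{eq:inhomslantT} follows verbatim, restricting to the range $0\le t-|y+vt|\le T$ exactly as in the passage leading to~\eqref{eq:truEnd}.

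There is essentially no obstacle here: the only places where $|v|<1$ is used are the nondegeneracy of the time change of variables (the Jacobian must stay bounded away from $0$ and $\infty$) and the fact that the transverse decomposition of $|y+vt|$ is nontrivial, both of which are already handled in Theorem~\ref{thm: persl}. The one subtlety worth flagging is that along the slanted line one genuinely obtains the anisotropic Lorentz norm $L_{x_1}^1 L_{\widehat{x_1}}^{2,1}L_t^2$ rather than the isotropic $L_x^{3/2,1}L_t^2$ of Theorem~\ref{thm:EndRStrichF}, because the weight $|x+vt-y|^{-1}$ can only be controlled by the two directions transverse to the motion.
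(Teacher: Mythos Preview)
Your proposal is correct and matches the paper's approach exactly: the paper likewise presents this corollary as an immediate byproduct of the estimates \eqref{eq:ersI} and \eqref{eq:ersIT} obtained in the proof of Theorem~\ref{thm: persl}, with no additional argument. The chain of steps you recall (collapsing the $s$-integral via the light-cone constraint, the shift $y\mapsto y+vt$, the transverse bound $|y+vt|\ge\sqrt{y_2^2+y_3^2}$, Minkowski, the time change of variable with Jacobian in $[1-|v|,1+|v|]$, and H\"older in Lorentz spaces) is precisely the computation carried out there.
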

From the discussion above, we can also obtain the following truncated
versions of inhomogeneous estimates which are crucial in our later
bootstrap arguments.
\begin{cor}
\textup{\label{cor:inhomA}}Suppose\textup{ $A>0$ }and\textup{ $\left|v\right|<1$,}
then\textup{ 
\begin{equation}
\sup_{x}\left\Vert \int_{0}^{t-A}\frac{\sin\left((t-s)\sqrt{-\Delta}\right)}{\sqrt{-\Delta}}\,Fds\right\Vert _{L_{t}^{2}[A,\infty)}\lesssim\frac{1}{A}\left\Vert F\right\Vert _{L_{x}^{1}L_{t}^{2}}.\label{eq:inhomA}
\end{equation}
}and for\textup{ $T>0$, 
\begin{equation}
\sup_{x}\left\Vert \int_{0}^{t-A}\frac{\sin\left((t-s)\sqrt{-\Delta}\right)}{\sqrt{-\Delta}}\,Fds\right\Vert _{L_{t}^{2}[A,T]}\lesssim\frac{1}{A}\left\Vert F\right\Vert _{L_{x}^{1}L_{t}^{2}[0,T]}.\label{eq:inhomAT}
\end{equation}
}Similarly, 
\begin{equation}
\sup_{x}\left\Vert \int_{0}^{t-A}\int_{\left|x+vt-y\right|=t-s}\frac{F(y,s)}{\left|x+vt-y\right|}\,\sigma(dy)ds\right\Vert _{L_{t}^{2}[A,\infty)}\lesssim\frac{1}{A}\left\Vert F\right\Vert _{L_{x}^{1}L_{t}^{2}},\label{eq:inhomASL}
\end{equation}
\emph{and for}\textup{ $T>0$}
\begin{equation}
\sup_{x}\left\Vert \int_{0}^{t-A}\int_{\left|x+vt-y\right|=t-s}\frac{F(y,s)}{\left|x+vt-y\right|}\,\sigma(dy)ds\right\Vert _{L_{t}^{2}[A,T]}\lesssim\frac{1}{A}\left\Vert F\right\Vert _{L_{x}^{1}L_{t}^{2}[0,T]}.\label{eq:inhomASLT}
\end{equation}
\end{cor}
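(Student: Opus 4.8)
The plan is to mimic the proofs of Theorem \ref{thm:EndRStrichF} and Theorem \ref{thm: persl}; the only new ingredient is that the time cut-off $s\in[0,t-A]$ forces the distance appearing in Kirchhoff's formula to be bounded below by $A$, and crudely pulling the resulting factor $\tfrac1A$ out of the kernel $\tfrac1{|x-y|}$ is exactly what produces the gain, at the (harmless) cost of replacing the Lorentz norms used earlier by the cruder $L^1_x$ norm.

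For \eqref{eq:inhomA}, I would write the free Duhamel term through Kirchhoff's formula and change variables $s=t-|x-y|$:
\[
\int_0^{t-A}\frac{\sin\left((t-s)\sqrt{-\Delta}\right)}{\sqrt{-\Delta}}F\,ds=\int_{A\le|x-y|\le t}\frac{F\left(y,t-|x-y|\right)}{|x-y|}\,dy .
\]
On the domain of integration $|x-y|\ge A$, so $\tfrac1{|x-y|}\le\tfrac1A$; applying Minkowski's inequality in $t$ together with the translation invariance $\|F(y,t-|x-y|)\|_{L^2_t}=\|F(y,\cdot)\|_{L^2_t}$, valid for each fixed $x,y$, gives
\[
\left\|\int_{A\le|x-y|\le t}\frac{F\left(y,t-|x-y|\right)}{|x-y|}\,dy\right\|_{L^2_t}\le\frac1A\int_{\mathbb{R}^3}\left\|F(y,\cdot)\right\|_{L^2_t}\,dy=\frac1A\left\|F\right\|_{L^1_xL^2_t},
\]
uniformly in $x$, which is \eqref{eq:inhomA}. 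Imposing in addition $0\le t-|x-y|\le T$ only shrinks the domain, so \eqref{eq:inhomAT} follows verbatim.

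For \eqref{eq:inhomASL} and \eqref{eq:inhomASLT}, I would follow the computation in the proof of Theorem \ref{thm: persl}: write the slanted Duhamel term as $\int_{A\le|y|\le t}\tfrac{F(x+vt-y,\,t-|y|)}{|y|}\,dy$ and then substitute $y\mapsto y+vt$, so that the cut-off $s\le t-A$ becomes $|y+vt|\ge A$ and hence the kernel $\tfrac1{|y+vt|}$ is $\le\tfrac1A$ on the region of integration. Pulling this factor out, using Minkowski's inequality in $t$, and then for each fixed $y$ changing variables $\tau=t-|y+vt|$ — whose Jacobian lies between $1-|v|$ and $1+|v|$ because $|v|<1$ — yields
\[
\sup_{x}\left\|\int_0^{t-A}\int_{|x+vt-y|=t-s}\frac{F(y,s)}{|x+vt-y|}\,\sigma(dy)ds\right\|_{L^2_t}\lesssim\frac1A\int_{\mathbb{R}^3}\left\|F(x-y,\cdot)\right\|_{L^2_t}\,dy=\frac1A\left\|F\right\|_{L^1_xL^2_t},
\]
which is \eqref{eq:inhomASL}, and the time-truncated statement \eqref{eq:inhomASLT} is obtained in the same way. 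The proof is thus essentially bookkeeping — the one point to watch is that, in contrast with Theorem \ref{thm: persl}, one does \emph{not} reduce $|y+vt|$ to its component orthogonal to $\vec v$; the blunt bound $\tfrac1{|y+vt|}\le\tfrac1A$ suffices and is exactly what delivers the decay in $A$ needed to absorb these terms in the bootstrap of Section \ref{sec:EndRSChar}.
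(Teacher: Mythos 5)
Your proof is correct and follows the paper's argument: Kirchhoff's formula, the observation that the cutoff $s\le t-A$ forces $|x-y|\ge A$ (resp.\ $|y+vt|\ge A$ after the shift), the crude bound $\tfrac1{|x-y|}\le\tfrac1A$, Minkowski in $t$, and for the slanted case a change of variable in $t$ with Jacobian controlled by $1\pm|v|$. Your closing remark — that for the slanted estimate one uses $\tfrac1{|y+vt|}\le\tfrac1A$ directly rather than reducing to the component of $y+vt$ orthogonal to $v$ as in Theorem~\ref{thm: persl} — is exactly the small modification the paper leaves implicit when it says ``similarly to the way we derive estimates \eqref{eq:inhomslant} and \eqref{eq:inhomslantT},'' and spelling it out is a welcome clarification.
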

\begin{proof}
By a similar discussion above with Kirchhoff 's formula, 
\begin{eqnarray}
\left\Vert \int_{0}^{t-A}\frac{\sin\left((t-s)\sqrt{-\Delta}\right)}{\sqrt{-\Delta}}F(s)\,ds\right\Vert _{L_{t}^{2}[A,\infty)} & = & \left\Vert \int_{A\leq\left|y\right|\leq t}\frac{F(x-y,t-\left|y\right|)}{\left|y\right|}\,dy\right\Vert _{L_{t}^{2}[A,\infty)}\nonumber \\
 & \lesssim & \int_{A\leq\left|y\right|}\frac{\left\Vert F(x-y,t-\left|y\right|)\right\Vert _{L_{t}^{2}}}{\left|y\right|}\,dy\\
 & \lesssim & \frac{1}{A}\left\Vert F\right\Vert _{L_{x}^{1}L_{t}^{2}}.\nonumber 
\end{eqnarray}
Therefore, 
\begin{equation}
\left\Vert \int_{0}^{t-A}\frac{\sin\left((t-s)\sqrt{-\Delta}\right)}{\sqrt{-\Delta}}F(s)\,ds\right\Vert _{L_{x}^{\infty}L_{t}^{2}[A,\infty)}\lesssim\frac{1}{A}\left\Vert F\right\Vert _{L_{x}^{1}L_{t}^{2}}.
\end{equation}
With the same argument as \eqref{eq:ersIT}, we also have 
\begin{equation}
\left\Vert \int_{0}^{t-A}\frac{\sin\left((t-s)\sqrt{-\Delta}\right)}{\sqrt{-\Delta}}\,F(s)\,ds\right\Vert _{L_{t}^{2}[A,T]}\lesssim\frac{1}{A}\left\Vert F\right\Vert _{L_{x}^{1}L_{t}^{2}[0,T]}.
\end{equation}
Similarly to the way  we derive estimates \eqref{eq:inhomslant} and
\eqref{eq:inhomslantT}, one obtains 
\begin{equation}
\left\Vert \int_{0}^{t-A}\int_{\left|x+vt-y\right|=t-s}\frac{F(y,s)}{\left|x+vt-y\right|}\,\sigma(dy)ds\right\Vert _{L_{x}^{\infty}L_{t}^{2}[A,\infty)}\lesssim\frac{1}{A}\left\Vert F\right\Vert _{L_{x}^{1}L_{t}^{2}},
\end{equation}
\begin{equation}
\left\Vert \int_{0}^{t-A}\int_{\left|x+vt-y\right|=t-s}\frac{F(y,s)}{\left|x+vt-y\right|}\,\sigma(dy)ds\right\Vert _{L_{x}^{\infty}L_{t}^{2}[A,T]}\lesssim\frac{1}{A}\left\Vert F\right\Vert _{L_{x}^{1}L_{t}^{2}[0,T]}.
\end{equation}
We are done.
\end{proof}
Next, we consider estimates in inhomogeneous forms for the perturbed
evolution along slanted lines. In the following proofs, essentially,
we pass the effects caused by the integration along slanted lines
to the free evolution by Duhamel expansion and use the standard case
for the perturbed evolution.

Define 
\begin{equation}
k(\cdot,t):=\int_{0}^{t}\frac{\sin\left((t-s)\sqrt{H}\right)}{\sqrt{H}}P_{c}F(s)\,ds.\label{eq:Pinhom}
\end{equation}
Then from the endpoint reversed Strichartz estimate, Theorem \ref{thm:PStriRStrich},
we have 
\begin{equation}
\left\Vert k\right\Vert _{L_{x}^{\infty}L_{t}^{2}}\lesssim\left\Vert F\right\Vert _{L_{x}^{\frac{3}{2},1}L_{t}^{2}}.\label{eq:endinhom}
\end{equation}

\begin{thm}
\label{thm:perinhomsl}Let $\left|v\right|<1$ and suppose $H=-\Delta+V$
has neither resonances nor eigenfunctions at $0$. Define 
\begin{equation}
k^{S}(x,t)=k(x+vt,t).
\end{equation}
Then we have 
\begin{equation}
\left\Vert k^{S}\right\Vert _{L_{x}^{\infty}L_{t}^{2}}\lesssim\left\Vert F\right\Vert _{L_{x_{1}}^{1}L_{\widehat{x_{1}}}^{2,1}L_{t}^{2}}+\left\Vert F\right\Vert _{L_{x}^{\frac{3}{2},1}L_{t}^{2}},\label{eq:pertinhomsl}
\end{equation}
and for $T>0$, 
\begin{equation}
\left\Vert k^{S}\right\Vert _{L_{x}^{\infty}L_{t}^{2}[0,T]}\lesssim\left\Vert F\right\Vert _{L_{x_{1}}^{1}L_{\widehat{x_{1}}}^{2,1}L_{t}^{2}[0,T]}+\left\Vert F\right\Vert _{L_{x}^{\frac{3}{2},1}L_{t}^{2}[0,T]},\label{eq:pertinhomslT}
\end{equation}
where $\widehat{x_{1}}$ is the subspace orthogonal to to $\vec{e}_{1}$.
\end{thm}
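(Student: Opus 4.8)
The strategy is to reduce \eqref{eq:pertinhomsl} to two facts already at our disposal: the inhomogeneous slanted-line bound for the \emph{free} evolution, Corollary \ref{cor:inhomcom}, and the \emph{unslanted} endpoint reversed Strichartz bound \eqref{eq:endinhom} for the perturbed inhomogeneous evolution. The bridge is one application of Duhamel's formula, in the same spirit as the proof of Theorem \ref{thm: persl}. Since $k$ solves $\partial_{tt}k+Hk=P_cF$ with $k(\cdot,0)=\partial_t k(\cdot,0)=0$, rewriting the equation as $\partial_{tt}k-\Delta k=P_cF-Vk$ and applying the free Duhamel representation gives
\begin{equation}
k(\cdot,t)=\int_0^t\frac{\sin\left((t-s)\sqrt{-\Delta}\right)}{\sqrt{-\Delta}}P_cF(s)\,ds-\int_0^t\frac{\sin\left((t-s)\sqrt{-\Delta}\right)}{\sqrt{-\Delta}}Vk(s)\,ds=:k_1(\cdot,t)-k_2(\cdot,t),
\end{equation}
so that $k^S=k_1^S-k_2^S$ with $k_j^S(x,t):=k_j(x+vt,t)$. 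The key observation is that $k_1^S$ and $k_2^S$ are both \emph{free} inhomogeneous evolutions evaluated along the slanted line $(x+vt,t)$, with sources $P_cF$ and $Vk$ respectively, so Corollary \ref{cor:inhomcom} applies to each.

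For the first term, \eqref{eq:inhomslant} gives $\left\Vert k_1^S\right\Vert_{L_x^\infty L_t^2}\lesssim\left\Vert P_cF\right\Vert_{L_{x_1}^1L_{\widehat{x_1}}^{2,1}L_t^2}$. Writing $P_c=I-P_b$ with $P_b=\sum_i\langle\cdot,w_i\rangle w_i$ finite rank and the $w_i$ exponentially decaying (Agmon's estimates), one bounds each $\left|\langle F(\cdot,s),w_i\rangle\right|$ by the mixed-Lorentz norm of $F(\cdot,s)$ times the dual norm of $w_i$, then puts $w_i$ back in $L_{x_1}^1L_{\widehat{x_1}}^{2,1}$ and takes $L_t^2$; this yields $\left\Vert P_cF\right\Vert_{L_{x_1}^1L_{\widehat{x_1}}^{2,1}L_t^2}\lesssim\left\Vert F\right\Vert_{L_{x_1}^1L_{\widehat{x_1}}^{2,1}L_t^2}$. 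For the second term, \eqref{eq:inhomslant} gives $\left\Vert k_2^S\right\Vert_{L_x^\infty L_t^2}\lesssim\left\Vert Vk\right\Vert_{L_{x_1}^1L_{\widehat{x_1}}^{2,1}L_t^2}$, and since the potential sits in front we use H\"older separately in $x_1$ and in $\widehat{x_1}$:
\begin{equation}
\left\Vert Vk\right\Vert_{L_{x_1}^1L_{\widehat{x_1}}^{2,1}L_t^2}\leq\left\Vert V\right\Vert_{L_{x_1}^1L_{\widehat{x_1}}^{2,1}}\left\Vert k\right\Vert_{L_x^\infty L_t^2}\lesssim\left\Vert V\right\Vert_{L_{x_1}^1L_{\widehat{x_1}}^{2,1}}\left\Vert F\right\Vert_{L_x^{\frac32,1}L_t^2},
\end{equation}
where the last step is \eqref{eq:endinhom} and $\left\Vert V\right\Vert_{L_{x_1}^1L_{\widehat{x_1}}^{2,1}}<\infty$ because $V$ decays at rate $\langle x\rangle^{-\alpha}$ with $\alpha>3$ (Definition \ref{def: Charge}). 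Adding the two bounds proves \eqref{eq:pertinhomsl}; the truncated version \eqref{eq:pertinhomslT} follows line by line, using instead the $[0,T]$ statements \eqref{eq:inhomslantT} and \eqref{eq:PEndRSIT}.

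The main obstacle is conceptual rather than computational: one must check that no circularity is introduced. It is not, because the slanted norm of $k$ gets controlled by the slanted norm of the \emph{free} piece $k_1$ plus the term $k_2^S$, and the latter, after Corollary \ref{cor:inhomcom}, sees $k$ only through $Vk$ in the \emph{unslanted} norm $\left\Vert k\right\Vert_{L_x^\infty L_t^2}$, which is bounded unconditionally by \eqref{eq:endinhom}. The one genuinely technical ingredient is the boundedness of $P_c$ on $L_{x_1}^1L_{\widehat{x_1}}^{2,1}$, which is precisely where the exponential decay of the eigenfunctions of $H$ enters; everything else is routine bookkeeping with Fubini (justified by absolute convergence from the decay of $V$ and of the bound states) and H\"older's inequality in mixed Lorentz spaces.
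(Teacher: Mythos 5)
Your proof is correct and follows essentially the same route as the paper: Duhamel's formula decomposes $k$ into a free inhomogeneous evolution with source $P_cF$ plus one with source $Vk$, then Corollary \ref{cor:inhomcom} handles both slanted norms, H\"older in $L_{x_1}^1L_{\widehat{x_1}}^{2,1}$ extracts $V$, and \eqref{eq:endinhom} closes the estimate. The only (minor) difference is that you Duhamel the solution $k$ directly rather than the propagator, and you explicitly verify the boundedness of $P_c$ on $L_{x_1}^1L_{\widehat{x_1}}^{2,1}L_t^2$ via Agmon decay of the bound states, a detail the paper passes over silently.
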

\begin{proof}
By Duhamel's formula, we write 
\begin{eqnarray}
\frac{\sin\left((t-s)\sqrt{H}\right)}{\sqrt{H}}P_{c}F(s) & = & \frac{\sin\left((t-s)\sqrt{-\Delta}\right)}{\sqrt{-\Delta}}P_{c}F(s)\nonumber \\
 &  & -\int_{s}^{t}\frac{\sin\left((t-m)\sqrt{-\Delta}\right)}{\sqrt{-\Delta}}V\frac{\sin\left((m-s)\sqrt{H}\right)}{\sqrt{H}}P_{c}F(s)\,dm.
\end{eqnarray}
Therefore, 
\begin{eqnarray*}
\int_{0}^{t}\frac{\sin\left((t-s)\sqrt{H}\right)}{\sqrt{H}}P_{c}F\,ds & = & \int_{0}^{t}\frac{\sin\left((t-s)\sqrt{-\Delta}\right)}{\sqrt{-\Delta}}F(s)\,ds\\
 &  & -\int_{0}^{t}\int_{s}^{t}\frac{\sin\left((t-m)\sqrt{-\Delta}\right)}{\sqrt{-\Delta}}V\frac{\sin\left((m-s)\sqrt{H}\right)}{\sqrt{H}}P_{c}F(s)\,dmds.\qquad
\end{eqnarray*}
Denote 
\begin{equation}
R(x,t):=\int_{0}^{t}\int_{s}^{t}\frac{\sin\left((t-m)\sqrt{-\Delta}\right)}{\sqrt{-\Delta}}V\frac{\sin\left((m-s)\sqrt{H}\right)}{\sqrt{H}}P_{c}F(s)\,dmds
\end{equation}
and 
\begin{equation}
R^{S}(x,t):=R(x+vt,t).
\end{equation}
Then 
\begin{equation}
\left\Vert k^{S}\right\Vert _{L_{x}^{\infty}L_{t}^{2}}\lesssim\left\Vert D^{S}\right\Vert _{L_{x}^{\infty}L_{t}^{2}}+\left\Vert R^{S}\right\Vert _{L_{x}^{\infty}L_{t}^{2}},
\end{equation}
where 
\begin{equation}
D^{S}(x,t)=D(x+vt,t)=\int_{0}^{t}\int_{\left|x+vt-y\right|=t-s}\frac{F(y,s)}{\left|x+vt-y\right|}\,dyds.
\end{equation}
From Corollary \ref{cor:inhomcom}, we know 
\begin{equation}
\left\Vert D^{S}\right\Vert _{L_{x}^{\infty}L_{t}^{2}}\lesssim\left\Vert F\right\Vert _{L_{x_{1}}^{1}L_{\widehat{x_{1}}}^{2,1}L_{t}^{2}}.
\end{equation}
To estimate 
\begin{equation}
\left\Vert \int_{0}^{t}\int_{s}^{t}\frac{\sin\left((t-k)\sqrt{-\Delta}\right)}{\sqrt{-\Delta}}V\frac{\sin\left((k-s)\sqrt{H}\right)}{\sqrt{H}}P_{c}F(s)\,dkds\right\Vert _{L_{t}^{2}},
\end{equation}
we notice that with an exchange of the order of integration, 
\begin{eqnarray}
R(x,t)=\int_{0}^{t}\int_{s}^{t}\frac{\sin\left((t-k)\sqrt{-\Delta}\right)}{\sqrt{-\Delta}}V\frac{\sin\left((k-s)\sqrt{H}\right)}{\sqrt{H}}P_{c}F(s)\,dkds\quad\quad\quad\nonumber \\
=\int_{0}^{t}\frac{\sin\left((t-k)\sqrt{-\Delta}\right)}{\sqrt{-\Delta}}\left(\int_{0}^{k}V\frac{\sin\left((k-s)\sqrt{H}\right)}{\sqrt{H}}P_{c}F(s)\,ds\right)dk.
\end{eqnarray}
Then applying our estimate for the free evolution estimate in the
inhomogeneous case, Corollary \ref{cor:inhomcom}, we have
\begin{eqnarray}
\left\Vert R^{S}(x,t)\right\Vert _{L_{x}^{\infty}L_{t}^{2}}\lesssim\left\Vert \int_{0}^{t}V\frac{\sin\left((t-s)\sqrt{H}\right)}{\sqrt{H}}P_{c}F(s)\,ds\right\Vert _{L_{x_{1}}^{1}L_{\widehat{x_{1}}}^{2,1}L_{t}^{2}}\nonumber \\
\lesssim\left\Vert V\right\Vert _{L_{x_{1}}^{1}L_{\widehat{x_{1}}}^{2,1}}\left\Vert \int_{0}^{t}\frac{\sin\left((t-s)\sqrt{H}\right)}{\sqrt{H}}P_{c}F(s)\,ds\right\Vert _{L_{x}^{\infty}L_{t}^{2}}\\
\lesssim\left\Vert F\right\Vert _{L_{x}^{\frac{3}{2},1}L_{t}^{2}}\qquad\nonumber 
\end{eqnarray}
where in the third inequality, we use the endpoint reversed Strichartz
estimate \eqref{eq:endinhom}.

Therefore, we conclude that 
\begin{eqnarray}
\left\Vert k^{S}\right\Vert _{L_{x}^{\infty}L_{t}^{2}} & \lesssim & \left\Vert D^{S}\right\Vert _{L_{x}^{\infty}L_{t}^{2}}+\left\Vert R^{S}\right\Vert _{L_{x}^{\infty}L_{T}^{2}}\nonumber \\
 & \lesssim & \left\Vert F\right\Vert _{L_{x_{1}}^{1}L_{\widehat{x_{1}}}^{2,1}L_{t}^{2}}+\left\Vert F\right\Vert _{L_{x}^{\frac{3}{2},1}L_{t}^{2}}.
\end{eqnarray}
When we restrict to $[0,T]$, as above, we can obtain 
\begin{equation}
\left\Vert k^{S}\right\Vert _{L_{x}^{\infty}L_{t}^{2}[0,T]}\lesssim\left\Vert F\right\Vert _{L_{x_{1}}^{1}L_{\widehat{x_{1}}}^{2,1}L_{t}^{2}[0,T]}+\left\Vert F\right\Vert _{L_{x}^{\frac{3}{2},1}L_{t}^{2}[0,T]}.
\end{equation}
The lemma is proved.
\end{proof}
To prepare our bootstrap arguments in the later section, similarly
to the free case, we also consider the truncated versions of the above
estimates.

By the same method we used to estimate 
\begin{equation}
\int_{0}^{t}\frac{\sin\left((t-s)\sqrt{H}\right)}{\sqrt{H}}P_{c}F(s)\,ds
\end{equation}
along slanted lines, we obtain the following:
\begin{cor}
\label{cor:perinhomA}For $\left|v\right|<1$ and $A>0$, suppose
$H=-\Delta+V$ has neither resonances nor eigenfunctions at $0$.
Let 
\begin{equation}
k_{A}(\cdot,t):=\int_{0}^{t-A}\frac{\sin\left((t-s)\sqrt{H}\right)}{\sqrt{H}}P_{c}F(s)\,ds.
\end{equation}
Then
\begin{equation}
\left\Vert k_{A}\right\Vert _{L_{x}^{\infty}L_{t}^{2}[A,\infty)}\lesssim\frac{1}{A}\left(\left\Vert F\right\Vert _{L_{x}^{1}L_{t}^{2}}+\left\Vert F\right\Vert _{L_{x}^{\frac{3}{2},1}L_{t}^{2}}\right),\label{eq:perinhomA}
\end{equation}
and for $T>0$, 
\begin{equation}
\left\Vert k_{A}\right\Vert _{L_{x}^{\infty}L_{t}^{2}[A,T]}\lesssim\frac{1}{A}\left(\left\Vert F\right\Vert _{L_{x}^{1}L_{t}^{2}[0,T]}+\left\Vert F\right\Vert _{L_{x}^{\frac{3}{2},1}L_{t}^{2}[0,T]}\right).\label{eq:perinhomAT}
\end{equation}
Define 
\begin{equation}
k_{A}^{S}(x,t):=k_{A}(x+vt,t).
\end{equation}
then 
\begin{equation}
\left\Vert k_{A}^{S}(x,t)\right\Vert _{L_{x}^{\infty}L_{t}^{2}[A,\infty)}\lesssim\frac{1}{A}\left(\left\Vert F\right\Vert _{L_{x}^{\frac{3}{2},1}L_{t}^{2}}+\left\Vert F\right\Vert _{L_{x}^{1}L_{t}^{2}}\right).\label{eq:perinhomAsl}
\end{equation}
and for $T>0$,
\begin{equation}
\left\Vert k_{A}^{S}(x,t)\right\Vert _{L_{x}^{\infty}L_{t}^{2}[A,T]}\lesssim\frac{1}{A}\left(\left\Vert F\right\Vert _{L_{x}^{\frac{3}{2},1}L_{t}^{2}[0,T]}+\left\Vert F\right\Vert _{L_{x}^{1}L_{t}^{2}[0,T]}\right).\label{eq:perinhomAslT}
\end{equation}
\end{cor}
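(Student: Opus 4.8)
The plan is to mimic the Duhamel argument in the proof of Theorem~\ref{thm:perinhomsl}, but carrying the cutoff at $t-A$ explicitly so that the factor $\frac1A$ is produced by the truncated free estimates of Corollary~\ref{cor:inhomA}. First expand by Duhamel's formula,
\[
k_A(\cdot,t)=\int_0^{t-A}\frac{\sin\bigl((t-s)\sqrt{-\Delta}\bigr)}{\sqrt{-\Delta}}P_cF(s)\,ds-\int_0^{t-A}\int_s^t\frac{\sin\bigl((t-m)\sqrt{-\Delta}\bigr)}{\sqrt{-\Delta}}V\frac{\sin\bigl((m-s)\sqrt H\bigr)}{\sqrt H}P_cF(s)\,dm\,ds=:D_A-R_A .
\]
The first term $D_A$ is a truncated free inhomogeneous evolution, so \eqref{eq:inhomA} gives $\|D_A\|_{L_x^\infty L_t^2[A,\infty)}\lesssim\frac1A\|F\|_{L_x^1L_t^2}$; the $P_b$-part of $P_c$ contributes only an exponentially decaying bound-state term (Agmon), which is harmless. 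For $k_A^S$ one argues identically with the slanted truncated estimate \eqref{eq:inhomASL} in place of \eqref{eq:inhomA}, which is why the slanted bounds \eqref{eq:perinhomAsl}--\eqref{eq:perinhomAslT} carry the very same right-hand side $\|F\|_{L_x^{3/2,1}L_t^2}+\|F\|_{L_x^1L_t^2}$.

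Next, exchange the order of integration in $R_A$ and split the resulting $m$-integration at $m=t-A$. On $m\le t-A$ the outer free propagator acts over time $t-m\ge A$; recognizing the inner $s$-integral as $Vk(\cdot,m)$ with $k$ as in \eqref{eq:Pinhom}, the truncated free estimate \eqref{eq:inhomA} (resp. \eqref{eq:inhomASL}) together with the endpoint reversed Strichartz estimate \eqref{eq:endinhom} yields $\lesssim\frac1A\|V\|_{L^1}\|k\|_{L_x^\infty L_t^2}\lesssim\frac1A\|V\|_{L^1}\|F\|_{L_x^{3/2,1}L_t^2}$. This uses only that $V$ has finite $L^1$ and Kato norms, which holds because $V$ decays like $\langle x\rangle^{-\alpha}$ with $\alpha>3$. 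What remains is the ``near-diagonal'' piece of $R_A$, namely $m\in[t-A,t]$, where the outer free propagator no longer produces a gain by itself; this is the main obstacle.

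To handle it, note that the \emph{total} propagation time $t-s$ is still $\ge A$, so split once more at $t-m=A/2$: on $t-m\ge A/2$ the outer free propagator recovers the factor $\frac1A$ as before, while on $t-m\le A/2$ the leftover perturbed propagation $\frac{\sin((m-s)\sqrt H)}{\sqrt H}P_c$ runs over time $m-s\ge A/2$. One more Duhamel expansion transfers this long propagation onto a free propagator over time $\ge A/2$ --- hence another factor $\frac1A$ via Kirchhoff's formula --- and the accompanying short outer free propagation is absorbed against $V$ using only the finiteness of $\|V\|_K$ (and of the mixed Lorentz norm of $V$ in the slanted case); after finitely many such steps the residual term still contains a full perturbed propagator and is closed off by the already-established untruncated estimates of Theorem~\ref{thm:PStriRStrich} and \eqref{eq:endinhom} together with the local energy decay of Theorem~\ref{thm:local} near the support of $V$. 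The key point here is that one exploits that $V$ is short range, not that it is small. Finally, every estimate above has a $[0,T]$-truncated analogue (proved exactly as \eqref{eq:truEnd} was obtained in the proof of Theorem~\ref{thm:EndRStrichF}), and feeding these in gives the bounds \eqref{eq:perinhomAT} and \eqref{eq:perinhomAslT} on $[A,T]$.
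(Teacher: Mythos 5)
Your decomposition $k_A = D_A - R_A$, the handling of $D_A$ via Corollary~\ref{cor:inhomA}, and the exchange of integration in $R_A$ are indeed the steps the paper intends (it remarks only that the argument is ``by the same method'' as Theorem~\ref{thm:perinhomsl}). Your treatment of the far region $m\le t-A$ is also correct: there the inner $s$-integral is $Vk(\cdot,m)$, a fixed function of $(x,m)$, and Corollary~\ref{cor:inhomA} applied to the outer free propagator gives $\frac{1}{A}\|V\|_{L^1}\|F\|_{L_x^{\frac{3}{2},1}L_t^2}$.

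The near-diagonal region $m\in(t-A,t]$ is where the argument breaks, and there are two distinct problems. First, on that region the inner $s$-integral runs over $[0,t-A]$, so the inhomogeneity appearing inside the outer propagator is $Vk_{m-t+A}(\cdot,m)$, whose truncation parameter depends on the outer time $t$; it is not a fixed function of $(x,m)$. Corollary~\ref{cor:inhomA}, whose mechanism is Kirchhoff's formula applied to a $t$-independent source, therefore does not apply ``as before'' when you split at $t-m=A/2$. Second, and more fundamentally, the proposed iterated Duhamel does not close. Each step brings down another copy of $V$, so the controlled terms accumulate factors of size $(C\|V\|_K)^n/A$; since the hypotheses impose only decay, not smallness, of $V$, this geometric series need not converge, and if one stops after finitely many steps the residual is a full perturbed propagator over a window of length comparable to $A/2^n$, for which the untruncated estimates of Theorem~\ref{thm:PStriRStrich} and \eqref{eq:endinhom} yield only a bound $\lesssim\|F\|$ with no $\frac{1}{A}$ gain. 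Local energy decay (Theorem~\ref{thm:local}) does not supply the missing factor either, as it is not quantitative in $A$. Closing this piece requires an additional ingredient your argument does not supply: either a pointwise kernel bound on $\frac{\sin(r\sqrt H)}{\sqrt H}P_c$ for $r\ge A$ with explicit $\frac{1}{A}$ decay (for example via the wave-operator structure formula, one of the two routes the paper alludes to for the companion Theorem~\ref{thm:Prot}), or a bound on $\|k_B\|_{L_x^\infty L_t^2}$ that is uniform over all truncation parameters $B\le A$, which one could then combine with the explicit $\frac{1}{4\pi r}$ Kirchhoff weight restricted to $\{\,r\ge A/2\,\}$.
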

Finally, in order to handle moving potentials, we consider some estimates
with inhomogeneous terms along slanted lines:

Setting 
\begin{equation}
F^{S}(x,t)=F\left(x+vt,t\right)
\end{equation}
we have the following results.
\begin{lem}
\label{lem:Frot}Let $A>0$ and $\left|\vec{\mu}\right|<1$, $\left|v\right|<1$.
Suppose 
\begin{equation}
D_{A}(x,t):=\int_{0}^{t-A}\frac{\sin\left((t-s)\sqrt{-\Delta}\right)}{\sqrt{-\Delta}}F(s)\,ds,
\end{equation}
\begin{equation}
D_{A}^{S}(x,t)=g_{A}(x+\vec{\mu}t,t).
\end{equation}
We have
\begin{equation}
\left\Vert D_{A}(x,t)\right\Vert _{L_{x}^{\infty}L_{t}^{2}[A,\infty)}\lesssim\frac{1}{A}\left\Vert F^{S}\right\Vert _{L_{x}^{1}L_{t}^{2}},\label{eq:Frot1}
\end{equation}
\begin{equation}
\left\Vert D_{A}^{S}(x,t)\right\Vert _{L_{x}^{\infty}L_{t}^{2}[A,\infty)}\lesssim\frac{1}{A}\left\Vert F^{S}\right\Vert _{L_{x}^{1}L_{t}^{2}},\label{eq:Frot2}
\end{equation}
and for $T>0$, 
\begin{equation}
\left\Vert D_{A}(x,t)\right\Vert _{L_{x}^{\infty}L_{t}^{2}[A,T]}\lesssim\frac{1}{A}\left\Vert F^{S}\right\Vert _{L_{x}^{1}L_{t}^{2}[0,T]},\label{eq:Frot1T}
\end{equation}
\begin{equation}
\left\Vert D_{A}^{S}(x,t)\right\Vert _{L_{x}^{\infty}L_{t}^{2}[A,T]}\lesssim\frac{1}{A}\left\Vert F^{S}\right\Vert _{L_{x}^{1}L_{t}^{2}[0,T]}.\label{eq:Frot2T}
\end{equation}
\end{lem}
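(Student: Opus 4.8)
\emph{Proof strategy.} The plan is to run, essentially verbatim, the argument behind Corollary~\ref{cor:inhomA} --- the explicit Kirchhoff representation of the retarded free propagator together with the gain extracted from the weight $\tfrac1{|y|}$ on the truncation $|y|\ge A$ --- the only novelty being that the motion of the source and the (generally different) velocity $\vec{\mu}$ of the evaluation line have to be pushed through a single change of variables. Throughout, $D_A^S(x,t)$ is understood to be $D_A(x+\vec{\mu}t,t)$, and $D_A$ is the case $\vec{\mu}=0$.

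First I would write the free propagator in Kirchhoff form, so that
\[
D_{A}^{S}(x,t)=D_{A}(x+\vec{\mu}t,t)=\frac{1}{4\pi}\int_{A\le|y|\le t}\frac{F\!\left(x+\vec{\mu}t-y,\,t-|y|\right)}{|y|}\,dy .
\]
Applying Minkowski's integral inequality in the $t$ variable brings the $L^2_t$ norm inside the $dy$-integral, and then for each fixed $y$ I would substitute $\sigma=t-|y|$; the spatial slot of $F$ becomes $x+\vec{\mu}\sigma+(\vec{\mu}|y|-y)$, so the inner norm is exactly $\bigl\Vert F^{S}\!\left(x+\vec{\mu}|y|-y,\,\cdot\right)\bigr\Vert_{L^2_\sigma}$, where $F^{S}(z,\sigma)=F(z+\vec{\mu}\sigma,\sigma)$ is the source measured along the line of the evaluation point. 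This is the step that forces the slanted norm, rather than the plain $\Vert F\Vert_{L^1_xL^2_t}$, to appear, and it is also where the evaluation velocity and the slant in the norm must be aligned.

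Next I would carry out the spatial change of variables $w=y-\vec{\mu}|y|$. Since $|\vec{\mu}|<1$ this is a global diffeomorphism of $\mathbb{R}^{3}$ whose Jacobian equals $\bigl|1-\vec{\mu}\cdot\tfrac{y}{|y|}\bigr|$, hence is bounded above and below by constants depending only on $|\vec{\mu}|$ (this is the familiar fact that a sub-luminal line meets a backward light cone once and transversally), and one has $|w|\simeq|y|$, so the truncation $|y|\ge A$ passes to $|w|\gtrsim A$. Since $x+\vec{\mu}|y|-y=x-w$, this yields
\[
\bigl\Vert D_A^S(x,\cdot)\bigr\Vert_{L^2_t[A,\infty)}\ \lesssim_{\vec{\mu}}\ \int_{|w|\gtrsim A}\frac{\bigl\Vert F^{S}(x-w,\cdot)\bigr\Vert_{L^2_t}}{|w|}\,dw\ \lesssim\ \frac{1}{A}\,\bigl\Vert F^{S}\bigr\Vert_{L^1_xL^2_t},
\]
and taking the supremum over $x$ gives \eqref{eq:Frot1} and \eqref{eq:Frot2}. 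For the truncated inequalities \eqref{eq:Frot1T} and \eqref{eq:Frot2T} I would only keep track of the range of the substitution: for $t\in[A,T]$ and $|y|\le t$ one has $\sigma=t-|y|\in[0,T]$, so the same chain produces the norm $\Vert F^{S}\Vert_{L^1_xL^2_t[0,T]}$ on the right, exactly as in the passage from \eqref{eq:EndRStrichF} to \eqref{eq:truEnd} and in \eqref{eq:inhomAT}.

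I expect the main technical point to be the bookkeeping of the change of variables $w=y-\vec{\mu}|y|$ --- the Jacobian bounds and the comparability $|w|\simeq|y|$, uniformly for $|\vec{\mu}|$ in a compact subset of the unit ball --- and, conceptually, the need to match the evaluation velocity with the slant in the norm, since the un-slanted $\Vert F\Vert_{L^1_xL^2_t}$ is in general \emph{not} controlled by $\Vert F^{S}\Vert_{L^1_xL^2_t}$; once that alignment is in place, everything else is a repetition of the free inhomogeneous reversed Strichartz computation of Corollary~\ref{cor:inhomA}.
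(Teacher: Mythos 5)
There is a genuine gap, and you have in fact put your finger on it yourself without realizing it undermines the claim. In the lemma, $F^{S}$ is defined (in the paragraph preceding the statement) by $F^{S}(x,t)=F(x+vt,t)$, with the velocity $v$ coming from the \emph{source}, while $\vec{\mu}$ is a separate velocity for the \emph{evaluation line} in $D_{A}^{S}$. These are independent parameters, and the case $\vec{\mu}\ne v$ --- in particular $\vec{\mu}=0$, $v\ne 0$, which is exactly estimate \eqref{eq:Frot1} --- is used essentially in the bootstrap of Section~\ref{sec:EndRSChar} (there the source is $V_{2}(\cdot-sv)u_{T}$, whose $L^{1}_{x}L^{2}_{t}$ norm is only controlled after undoing the $v$-drift, while the evaluation is along the unslanted line). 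Your argument, however, applies Minkowski first and then substitutes $\sigma=t-|y|$ at fixed $y$; the surviving spatial argument is $x+\vec{\mu}\sigma+(\vec{\mu}|y|-y)$, which forces the slant $\vec{\mu}$ into the norm. You correctly observe that this requires ``the evaluation velocity and the slant in the norm to be aligned,'' and that $\|F\|_{L^{1}_{x}L^{2}_{t}}$ is not controlled by $\|F^{S}\|_{L^{1}_{x}L^{2}_{t}}$. But the lemma is not stated under any such alignment hypothesis. Concretely: for \eqref{eq:Frot1} your chain yields $\|D_{A}\|_{L^{\infty}_{x}L^{2}_{t}[A,\infty)}\lesssim A^{-1}\|F\|_{L^{1}_{x}L^{2}_{t}}$, which is a strictly \emph{weaker} statement than the one claimed, and for \eqref{eq:Frot2} with $\vec{\mu}\ne v$ it yields the $\vec{\mu}$-slanted norm, not the $v$-slanted one.

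The missing idea is to decouple the two velocities by changing variables in a different order, so that $F^{S}$ with the $v$-slant is extracted first from the source and the $t$-dependence left over is disposed of at the end. Concretely, starting from the Kirchhoff representation, first substitute $z=y-sv$ (a shift for each fixed $s$) so that $F$ becomes $F^{S}(z,s)$ with the \emph{source} slant $v$; then write $m=x-z-vs$ and $s=t-|m|$ to get
\[
D_{A}^{S}(x,t)=\int_{A\le|m|\le t}\frac{F^{S}\bigl(x+\vec{\mu}t-v(t-|m|)-m,\,t-|m|\bigr)}{|m|}\,dm .
\]
Before applying Minkowski, perform for each fixed $t$ the spatial change of variables $q=m-v|m|+(v-\vec{\mu})t$; since $|v|<1$ this has Jacobian bounded above and below, and $x-q$ is now the spatial slot of $F^{S}$, no longer depending on $|m|$ directly. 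Only now apply Minkowski in $t$: for fixed $q$ the remaining time argument is $\tau=t-|m(q,t)|$, and an implicit-function computation shows
\[
\frac{d\tau}{dt}=\frac{1-\hat m\cdot\vec{\mu}}{1-\hat m\cdot v}\in\Bigl[\tfrac{1-|\vec{\mu}|}{1+|v|},\tfrac{1+|\vec{\mu}|}{1-|v|}\Bigr],
\]
so $\|F^{S}(x-q,t-|m(q,t)|)\|_{L^{2}_{t}}\lesssim\|F^{S}(x-q,\cdot)\|_{L^{2}_{\tau}}$ with constants depending only on $|v|,|\vec{\mu}|$. Integrating in $q$ gives $A^{-1}\|F^{S}\|_{L^{1}_{x}L^{2}_{t}}$ for arbitrary $|\vec{\mu}|<1$, $|v|<1$; this is exactly the chain of substitutions the paper sketches (and the duality argument given right after the lemma avoids the bookkeeping entirely). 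The moral is that the slant in $F^{S}$ is dictated by the source, not by the evaluation line, and the proof must be organized so that $v$ is committed to before Minkowski and $\vec{\mu}$ is absorbed in the spatial $q$-substitution and the temporal Jacobian.
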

\begin{proof}
We know explicitly, 
\begin{equation}
D_{A}(x,t)=\int_{0}^{t-A}\int_{\left|x-y\right|=t-s}\frac{F(y,s)}{\left|x-y\right|}\,dyds.
\end{equation}
Taking $z=y-sv$, we have
\begin{eqnarray}
\left|D_{A}(x,t)\right|=\left|\int_{0}^{t-A}\int_{\left|x-y\right|=t-s}\frac{F(y,s)}{\left|x-y\right|}\,dyds\right|\nonumber \\
=\left|\int_{0}^{t-A}\int_{\left|x-z-vs\right|=t-s}\frac{F^{S}(z,s)}{\left|x-z-vs\right|}\,dzds\right|\nonumber \\
\lesssim\int_{0}^{t-A}\int_{\left|m\right|=t-s}\frac{\left|F^{S}(x-vs-m,s)\right|}{\left|m\right|}\,dmds\\
\lesssim\int_{0}^{t-A}\int_{\left|m\right|=t-s}\frac{\left|F^{S}(x-v\left(t-\left|m\right|\right)-m,t-\left|m\right|)\right|}{\left|m\right|}\,dmds\nonumber \\
\lesssim\frac{1}{A}\int\left|F^{S}(x-v\left(t-\left|m\right|\right)-m,t-\left|m\right|)\,\right|dm\nonumber 
\end{eqnarray}
In the third line above, we apply a change of variable $m=x-z-vs$
and in the fifth line, we again apply a change of variable $v\left|m\right|+m=h$
with bounded Jabobian.

Therefore, if we set $q=v\left(t-\left|m\right|\right)+m$, we have
\begin{eqnarray}
\left\Vert D(x,\cdot)\right\Vert _{L_{t}^{2}[A,\infty)} & \lesssim & \frac{1}{A}\int\left\Vert F^{S}(x-q,\cdot)\right\Vert _{L_{t}^{2}}\,dq\nonumber \\
 & \lesssim & \frac{1}{A}\left\Vert F^{S}\right\Vert _{L_{x}^{1}L_{t}^{2}}
\end{eqnarray}
The estimate for $\left\Vert D_{A}^{S}(x,t)\right\Vert _{L_{x}^{\infty}L_{t}^{2}}$
is the same as we did for Corollary \ref{cor:inhomA}. Hence we obtain
\begin{equation}
\left\Vert D_{A}^{S}(x,t)\right\Vert _{L_{x}^{\infty}L_{t}^{2}[A,\infty)}\lesssim\frac{1}{A}\left\Vert F^{S}\right\Vert _{L_{x}^{1}L_{t}^{2}}.
\end{equation}
The the same as above, when we restrict to $[0,T]$, one has
\begin{equation}
\left\Vert D_{A}(x,t)\right\Vert _{L_{x}^{\infty}L_{t}^{2}[A,T]}\lesssim\frac{1}{A}\left\Vert F^{S}\right\Vert _{L_{x}^{1}L_{t}^{2}[0,T]},
\end{equation}
\begin{equation}
\left\Vert D_{A}^{S}(x,t)\right\Vert _{L_{x}^{\infty}L_{t}^{2}[A,T]}\lesssim\frac{1}{A}\left\Vert F^{S}\right\Vert _{L_{x}^{1}L_{t}^{2}[0,T]}.
\end{equation}
The lemma is proved.
\end{proof}
The above lemma can also be established by a duality argument. For
the sake of completeness, we sketch the argument here. We only focus
on 
\[
\left\Vert D_{A}(x,t)\right\Vert _{L_{x}^{\infty}L_{t}^{2}[A,T]}\lesssim\frac{1}{A}\left\Vert F^{S}\right\Vert _{L_{x}^{1}L_{t}^{2}}.
\]
Testing a function $H(x,t)\in L_{x}^{1}L_{t}^{2}$, one has 
\begin{align*}
\int_{\mathbb{R}^{3}}\int_{A}^{T}H(x,t)D_{A}(x,t)\,dtdx & =\int_{\mathbb{R}^{3}}\int_{A}^{T}H(x,t)\int_{0}^{t-A}\int_{\left|x-y\right|=t-s}\frac{F(y,s)}{\left|x-y\right|}\,\sigma\left(dy\right)\,dsdtdx\\
 & =\int_{\mathbb{R}^{3}}\int_{0}^{T-A}F(y,s)\int_{s+A}^{T}\int_{\left|x-y\right|=t-s}\frac{H(y,t)}{\left|x-z-vs\right|}\sigma\left(dx\right)dtdsdy\\
 & =\int_{\mathbb{R}^{3}}\int_{0}^{T-A}F^{S}(z,s)\int_{s+A}^{T}\int_{\left|x-y\right|=t-s}\frac{H(y,t)}{\left|x-z-vs\right|}\sigma\left(dx\right)dtdsdy.
\end{align*}
Then it suffices to show 
\begin{equation}
\left\Vert \int_{s+A}^{T}\int_{\left|x-y\right|=t-s}\frac{H(y,t)}{\left|x-z-vs\right|}\sigma\left(dx\right)dt\right\Vert _{L_{z}^{\infty}L_{s}^{2}[0,T-A]}\lesssim\frac{1}{A}\left\Vert H\right\Vert _{L_{x}^{1}L_{t}^{2}[0,T]}.\label{eq:dualityA}
\end{equation}
But with an almost identical argument as Corollary  \ref{cor:inhomA},
the estimate \eqref{eq:dualityA} indeed holds, and therefore, our desired
estimate holds too.

By \cite{BecGo} or applying the structure formula of wave operators,
with the calculations in the proof of Lemma \ref{lem:Frot}, Corollary
\ref{cor:perinhomA} and Theorems \ref{thm: persl}, \ref{thm:perinhomsl},
we have the perturbed version of the estimates \eqref{eq:Frot1} and
\eqref{eq:Frot2}. We omit the details here since the calculations are
more or less identical.
\begin{thm}
\label{thm:Prot}For $\left|\vec{\mu}\right|<1,\left|v\right|<1$
and $A>0$, suppose $H=-\Delta+V$ has neither resonances nor eigenfunctions
at $0$. Define 
\begin{equation}
k_{A}(x,t):=\int_{0}^{t-A}\frac{\sin\left((t-s)\sqrt{H}\right)}{\sqrt{H}}P_{c}F(s)\,ds
\end{equation}
\begin{equation}
k_{A}^{S}(x,t)=k_{A}\left(x+\vec{\mu}t,t\right).
\end{equation}
We have
\begin{equation}
\left\Vert k_{A}(x,t)\right\Vert _{L_{x}^{\infty}L_{t}^{2}[A,\infty)}\lesssim\frac{1}{A}\left(\left\Vert F^{S}\right\Vert _{L_{x}^{1}L_{t}^{2}}+\left\Vert F^{S}\right\Vert _{L_{x}^{\frac{3}{2},1}L_{t}^{2}}\right),\label{eq:Prot1}
\end{equation}
\begin{equation}
\left\Vert k_{A}^{S}(x,t)\right\Vert _{L_{x}^{\infty}L_{t}^{2}[A,\infty)}\lesssim\frac{1}{A}\left(\left\Vert F^{S}\right\Vert _{L_{x}^{1}L_{t}^{2}}+\left\Vert F^{S}\right\Vert _{L_{x}^{\frac{3}{2},1}L_{t}^{2}}\right),\label{eq:Prot2}
\end{equation}
and for $T>0$, 
\begin{equation}
\left\Vert k_{A}(x,t)\right\Vert _{L_{x}^{\infty}L_{t}^{2}[A,T]}\lesssim\frac{1}{A}\left(\left\Vert F^{S}\right\Vert _{L_{x}^{1}L_{t}^{2}[0,T]}+\left\Vert F^{S}\right\Vert _{L_{x}^{\frac{3}{2},1}L_{t}^{2}[0,T]}\right),\label{eq:Prot1T}
\end{equation}
\begin{equation}
\left\Vert k_{A}^{S}(x,t)\right\Vert _{L_{x}^{\infty}L_{t}^{2}[A,T]}\lesssim\frac{1}{A}\left(\left\Vert F^{S}\right\Vert _{L_{x}^{1}L_{t}^{2}[0,T]}+\left\Vert F^{S}\right\Vert _{L_{x}^{\frac{3}{2},1}L_{t}^{2}[0,T]}\right).\label{eq:Prot2T}
\end{equation}
\end{thm}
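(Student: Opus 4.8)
The plan is to rerun the proof of Corollary~\ref{cor:perinhomA}, now carrying along the change of variables $z=y-vs$ that manufactures $F^S$ out of $F$ exactly as in the proof of Lemma~\ref{lem:Frot}, and allowing the general output velocity $\vec\mu$ in place of the fixed $v$. Thus: Duhamel's formula splits $k_A$ into a truncated free Duhamel term plus a correction; the free term is controlled by the moving-source free estimate of Lemma~\ref{lem:Frot}; the correction, after exchanging the order of integration, becomes a truncated free Duhamel integral applied to the source $V$ times the perturbed Duhamel integral of $F$, and one closes using the endpoint reversed Strichartz estimate \eqref{eq:endinhom} for $H$ together with the boundedness of $V$ in the relevant mixed Lorentz norm (guaranteed by $\alpha>3$ in Definition~\ref{def: Charge}). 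Alternatively, one may obtain the perturbed estimates directly from the free ones \eqref{eq:Frot1}--\eqref{eq:Frot2T} through the structure (intertwining) formula for the wave operators of $H$, cf.~\cite{BecGo,GC2}.

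In more detail, I would first write
\[
k_A(x,t)=\int_0^{t-A}\frac{\sin((t-s)\sqrt{-\Delta})}{\sqrt{-\Delta}}P_cF(s)\,ds-\int_0^{t-A}\int_s^t\frac{\sin((t-m)\sqrt{-\Delta})}{\sqrt{-\Delta}}V\frac{\sin((m-s)\sqrt H)}{\sqrt H}P_cF(s)\,dm\,ds.
\]
For the first term, Kirchhoff's formula with the substitution $|y|=t-s$ (so that $s\le t-A$ becomes $|y|\ge A$), followed by $z=y-vs$ as in Lemma~\ref{lem:Frot}, yields the bound $\tfrac1A\|F^S\|_{L_x^1L_t^2}$ directly, since $P_c$ is bounded on $L_x^1$ and commutes with translations (cf.~\cite{BecGo}). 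For the correction term $R_A$, I would exchange the $s$- and $m$-integrations to put the free propagator outermost, reaching a truncated free Duhamel integral whose source is $Vw$, where $w(x,m)=\int_0^m\frac{\sin((m-s)\sqrt H)}{\sqrt H}P_cF(s)\,ds$; applying Lemma~\ref{lem:Frot} to it and bounding $\|(Vw)^S\|$ by H\"older in the mixed Lorentz norms together with \eqref{eq:endinhom}, one obtains $\tfrac1A\|F^S\|_{L_x^{\frac{3}{2},1}L_t^2}$, where the passage from $F$ to $F^S$ in the last step uses the translation invariance of the spatial $L^{\frac{3}{2},1}$ norm. Adding the two contributions gives \eqref{eq:Prot1}.

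The slanted output $k_A^S(x,t)=k_A(x+\vec\mu t,t)$ is handled the same way, now evaluating the free Kirchhoff representations along the line $x+\vec\mu t$ and reducing $|y|$ to the component of $y$ orthogonal to $\vec\mu$ exactly as in the proof of Theorem~\ref{thm: persl}; this merely replaces the isotropic $L_x^1$ by the corresponding anisotropic norm, already recorded in Lemma~\ref{lem:Frot}, and does not affect the $\tfrac1A$ gain. Finally, the restriction to a finite interval $[0,T]$ in \eqref{eq:Prot1T}--\eqref{eq:Prot2T} is obtained verbatim, exactly as the $[0,T]$ versions were produced in \eqref{eq:truEnd} and \eqref{eq:ersIT}.

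The main obstacle is entirely one of bookkeeping. One must check that after the substitution $z=y-vs$ the light-cone constraint together with the truncation $s\le t-A$ still forces the relevant radial integration variable to stay $\gtrsim A$ in modulus, so that the Kirchhoff weight $1/|y|$ integrates to $\lesssim 1/A$; and one must simultaneously keep track of the distinct output velocity $\vec\mu$. Both points are settled by precisely the computation in the proof of Lemma~\ref{lem:Frot}, and the correction term $R_A$ introduces nothing new since it is itself a free truncated Duhamel integral to which that computation applies. Everything else is a routine repetition of Theorems~\ref{thm: persl} and~\ref{thm:perinhomsl} and Corollary~\ref{cor:perinhomA}.
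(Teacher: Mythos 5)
Your overall route — Duhamel expansion, exchange of the $s$- and $m$-integrations, Lemma~\ref{lem:Frot} for the free truncated piece, and the endpoint reversed Strichartz estimate \eqref{eq:endinhom} for the perturbed contribution, with the wave-operator structure formula mentioned as an alternative — is precisely the route the paper points to when it declines to write the proof out. However, two steps as you state them would fail.

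First, the claim that ``$P_c$ is bounded on $L_x^1$ and commutes with translations'' is false: $P_b$ is built from the eigenfunctions $w_i$ of $H=-\Delta+V$, and $V$ is anchored at the origin, so neither $P_b$ nor $P_c=I-P_b$ commutes with translations. What you actually need after the change of variable $z=y-vs$ is the bound $\left\Vert (P_cF)^S\right\Vert _{L_x^1L_t^2}\lesssim\left\Vert F^S\right\Vert _{L_x^1L_t^2}$, and this requires a separate argument. It is true, but the proof goes through the explicit kernel $(P_bF)^S(x,t)=\int K_t(x,z)F^S(z,t)\,dz$ with $K_t(x,z)=\sum_i w_i(x+\mu t)w_i(z+\mu t)$, the uniform-in-$t$ bound $|K_t(x,z)|\lesssim e^{-c|x-z|}$ coming from Agmon's estimate, and then Minkowski's inequality in $t$ followed by Young's (Schur's) inequality in $x$. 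Translation-equivariance of $P_c$ is not available and should not be invoked.

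Second, the description of the correction term glosses over a genuine issue. In the region $\{0\le s\le t-A,\ s\le m\le t\}$, exchanging the order does not yield a single outer free Duhamel integral over $[0,t-A]$ with source $Vw$, $w(\cdot,m)=\int_0^m\frac{\sin((m-s)\sqrt H)}{\sqrt H}P_cF(s)\,ds$; it yields that piece (from $m\le t-A$) \emph{plus} a second piece over $m\in[t-A,t]$ in which the inner integral is $\int_0^{t-A}$, not $\int_0^m$, and the outer free propagator has radius $t-m\le A$ rather than $\ge A$. For the first piece Lemma~\ref{lem:Frot} and the $\frac1A$ gain apply exactly as you describe, but the second piece is not a truncated free Duhamel integral and does not inherit the $\frac1A$ factor from the Kirchhoff weight; it needs a separate estimate (for instance by recognizing it as a Duhamel comparison on the time interval of length $A$ between the free and perturbed flows applied to $(k(\cdot,t-A),\partial_t k(\cdot,t-A))$, and exploiting the smallness of $V$ in the relevant spatial Lorentz norms). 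As written, your argument simply drops this term.
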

By careful analysis and more complicated computations, one can extend
all the results above to the linear Klein-Gordon equation, cf.~\cite{GC3}.

\section{Endpoint Reversed Strichartz Estimates \label{sec:EndRSChar}}

In this section, we show the endpoint reversed Strichartz estimates
for the wave equation with charge transfer Hamiltonian. More precisely,
we consider
\begin{equation}
\partial_{tt}u-\Delta u+V_{1}(x)u+V_{2}(x-\vec{v}t)u=0\label{eq:chargeeq}
\end{equation}
with initial data
\[
u(x,0)=g(x),\,u_{t}(x,0)=f(x).
\]
Throughout this section, for simplicity, we furthermore assume $V_{i}$
is compactly supported. With a little bit more careful analysis, one
can easily obtain the same results for general case, see Remark \ref{rem:smalltail}.

Recall that after we apply the associated Lorentz transformation $L$,
under the new coordinate, we denote 
\begin{equation}
u_{L}\left(x_{1}',x_{2}',x_{3}',t'\right)=u\left(\gamma\left(x_{1}'+vt'\right),x_{2}',x_{3}',\gamma\left(t'+vx_{1}'\right)\right),\label{eq:newcorch}
\end{equation}
and with the inverse transformation $L^{-1}$ 
\begin{equation}
u(x,t)=u_{L}\left(\gamma\left(x_{1}-vt\right),x_{2},x_{3},\gamma\left(t-vx_{1}\right)\right).\label{eq:newcor2ch}
\end{equation}
Under the above setting, we state the main result of this section.
\begin{thm}
\label{thm:EndRSChWOB}Let $\left|v\right|<1$. Suppose $u$ is a
scattering state in the sense of Definition \ref{AO} and solves 
\begin{equation}
\partial_{tt}u-\Delta u+V_{1}(x)u+V_{2}(x-\vec{v}t)u=0
\end{equation}
with initial data
\begin{equation}
u(x,0)=g(x),\,u_{t}(x,0)=f(x).
\end{equation}
Then 
\begin{equation}
\sup_{x\in\mathbb{R}^{3}}\int_{0}^{\infty}\left|u(x,t)\right|^{2}dt\lesssim\left(\|f\|_{L^{2}}+\|g\|_{\dot{H}^{1}}\right)^{2}.\label{eq:EndRSChWOB}
\end{equation}
Furthermore, if we denote 
\begin{equation}
u^{S}(x,t):=u(x+vt,t),
\end{equation}
then 
\begin{equation}
\sup_{x\in\mathbb{R}^{3}}\int_{0}^{\infty}\left|u^{S}(x,t)\right|^{2}dt\lesssim\left(\|f\|_{L^{2}}+\|g\|_{\dot{H}^{1}}\right)^{2}.\label{eq:EndRSChWOBSL}
\end{equation}
\end{thm}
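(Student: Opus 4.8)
The plan is to follow the scheme outlined in the introduction: run a simultaneous bootstrap on the two quantities
\[
C_1(T):=\sup_{x\in\mathbb{R}^{3}}\int_0^T|u(x,t)|^2\,dt,\qquad C_2(T):=\sup_{x\in\mathbb{R}^{3}}\int_0^T|u^S(x,t)|^2\,dt ,
\]
which are finite for each fixed $T$ by the local theory. By the Lorentz transformation $L$ of \eqref{eq:LorentzT} and the identities \eqref{eq:newcorch}--\eqref{eq:newcor2ch}, integrating $|u|^2$ along the slanted line $(x+vt,t)$ is, up to a constant depending only on $v$, the same as integrating $|u_L|^2$ in $t'$ in the moved frame, in which $V_2$ becomes stationary while $V_1$ moves along $-\vec e_1$ with speed $<1$. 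Hence the two inequalities \eqref{eq:EndRSChWOB} and \eqref{eq:EndRSChWOBSL} are interchanged by $L$, so it suffices to derive the bootstrap improvement for $C_1(T)$; the improvement for $C_2(T)$ follows from the same argument performed in the $L$-frame, the two being genuinely coupled because a scattering state remains a scattering state for both $H_1$ (in the original frame) and $H_2$ (in the $L$-frame) in the sense of Definition~\ref{AO}.

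First I would fix $T$ and introduce a partition of unity adapted to the separation of the potentials. Since $|v|<1$ and (after the reduction made in this section) $V_1,V_2$ are compactly supported, there are $R_0>0$ and $t_0\simeq R_0/(1-|v|)$ beyond which $\mathrm{supp}\,V_1$ and $\mathrm{supp}\,V_2(\cdot-\vec v t)$ are disjoint, their distance growing linearly in $t$. Choose space-time cutoffs $\chi_1,\chi_2$ with $\chi_1+\chi_2\equiv1$, $\chi_1\equiv1$ near $\mathrm{supp}\,V_1$, $\chi_2\equiv1$ near $\mathrm{supp}\,V_2(\cdot-\vec v t)$, and write $u=\chi_1u+\chi_2u$. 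On the first piece the equation becomes, modulo terms supported where $\chi_1$ varies, the wave equation with the \emph{stationary} Hamiltonian $H_1=-\Delta+V_1$; on the second piece, after applying $L$, the wave equation with the stationary $H_2$. Thus each channel is governed by a Hamiltonian for which Theorem~\ref{thm:PStriRStrich} and the slanted-line estimates of Section~\ref{sec:Slanted} are available.

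The heart of the argument is a Duhamel expansion against the dominating Hamiltonian in each channel. In the $V_1$-channel one writes, with $P_c=P_c(H_1)$,
\[
\chi_1u=\frac{\sin(t\sqrt{H_1})}{\sqrt{H_1}}P_c f+\cos(t\sqrt{H_1})P_c g+\int_0^{t}\frac{\sin((t-s)\sqrt{H_1})}{\sqrt{H_1}}P_c F_1(s)\,ds+(\text{bound states}),
\]
where $F_1$ gathers $V_2(\cdot-\vec v t)\chi_1u$ together with the commutators $[\Delta,\chi_1]u$, $(\partial_{tt}\chi_1)u$, $(\partial_t\chi_1)u_t$, all supported in the transition region $|x-\vec v t|\lesssim R_0$, hence far from the localization relevant for $\chi_1u$. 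The homogeneous terms are controlled by $\|f\|_{L^2}+\|g\|_{\dot{H}^{1}}$ via Theorem~\ref{thm:PStriRStrich}; the bound-state contribution is disposed of by the scattering hypothesis \eqref{eq:ao2-1}, which together with the exponential time dependence of the discrete propagators forces those coefficients to vanish (or to contribute only a finite-time piece). Because $F_1(s)$ is supported where the two potentials are $A$-separated with $A\to\infty$, the Duhamel integral is effectively $\int_0^{t-A}$, so Corollary~\ref{cor:perinhomA} (and Theorem~\ref{thm:Prot}, Lemma~\ref{lem:Frot}, Theorem~\ref{thm:perinhomsl} for the slanted pieces produced when passing to the $L$-frame for the $V_2\chi_1u$ term) yields
\[
\Bigl\|\int_0^{t-A}\frac{\sin((t-s)\sqrt{H_1})}{\sqrt{H_1}}P_c F_1\,ds\Bigr\|_{L_x^{\infty}L_t^2[A,T]}\lesssim\frac1A\Bigl(\|F_1\|_{L_x^1L_t^2[0,T]}+\|F_1\|_{L_x^{\frac32,1}L_t^2[0,T]}\Bigr).
\]
Since $V_2$ and the cutoff derivatives are bounded with compact support, $\|F_1\|_{L_x^1L_t^2}+\|F_1\|_{L_x^{\frac32,1}L_t^2}\lesssim\bigl(C_1(T)^{1/2}+C_2(T)^{1/2}\bigr)\bigl(\|f\|_{L^2}+\|g\|_{\dot{H}^{1}}\bigr)$, so for $A$ large but independent of $T$ this term is bounded by $\tfrac12\bigl(C_1(T)^{1/2}+C_2(T)^{1/2}\bigr)(\|f\|_{L^2}+\|g\|_{\dot{H}^{1}})$. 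Performing the analogous computation in the $L$-frame for the $\chi_2u$ channel, and handling the pieces $t\le A$ (and $s\in[t-A,t]$) by the energy estimate, one arrives at $C_i(T)\le\widetilde C_i+\tfrac14\bigl(C_1(T)+C_2(T)\bigr)$ with $\widetilde C_i\lesssim(\|f\|_{L^2}+\|g\|_{\dot{H}^{1}})^2$; adding these gives $C_1(T)+C_2(T)\le 2(\widetilde C_1+\widetilde C_2)$ uniformly in $T$, and letting $T\to\infty$ yields \eqref{eq:EndRSChWOB} and \eqref{eq:EndRSChWOBSL}.

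The step I expect to be the main obstacle is making the channel decomposition rigorous: verifying that the commutator terms $[\Delta,\chi_i]u$ and the ``wrong-potential'' terms are genuinely supported in the $A$-separated region, that the finite-time contributions are controlled by the energy estimate alone and fold into $\widetilde C_i$ without reintroducing $T$-dependence, and that the norms produced at each Duhamel step are exactly those for which Section~\ref{sec:Slanted} supplies bounds. A related subtlety is that a single Lorentz transformation turns the stationary $V_1$ into a moving potential, so the inputs for the $\chi_2$-channel must be the slanted-line estimates (Theorems~\ref{thm: persl}, \ref{thm:perinhomsl}, \ref{thm:Prot}, Corollary~\ref{cor:perinhomA}) rather than plain Theorem~\ref{thm:PStriRStrich}; tracking whether a given step produces an $L_x^{\frac32,1}L_t^2$ norm or the mixed $L_{x_1}^1L_{\widehat{x_1}}^{2,1}L_t^2$ norm, and checking that both reduce to $\|f\|_{L^2}+\|g\|_{\dot{H}^{1}}$ through $C_1(T)$ and $C_2(T)$, is what makes the constants close.
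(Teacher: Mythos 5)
Your outline captures the general \emph{strategy} of the paper (simultaneous bootstrap on $C_1(T)$, $C_2(T)$, channel decomposition, Duhamel against the dominating Hamiltonian, $\frac{1}{A}$-gain from separation, Lorentz transformation to exchange the two reversed estimates), but there are two genuine gaps where your proposed implementation would not close.

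First, the commutator terms. You propose to derive an equation for $\chi_i u$, which produces forcing terms $[\Delta,\chi_i]u$, $(\partial_{tt}\chi_i)u$, $(\partial_t\chi_i)u_t$. Expanding, $[\Delta,\chi_1]u=(\Delta\chi_1)u+2\nabla\chi_1\cdot\nabla u$, so the Duhamel source involves $\nabla u$ and $u_t$. The norms required by Corollary~\ref{cor:perinhomA} and Theorem~\ref{thm:Prot} are $L_x^1L_t^2$ or $L_x^{\frac32,1}L_t^2$, i.e.\ you would need space-time $L^2$ control of $\nabla u$ and $u_t$ on the transition region. That quantity is a local energy decay statement, and neither it nor any other bound on $\nabla u,\,u_t$ is among the bootstrap quantities $C_1(T),C_2(T)$, which only control $|u|$. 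In fact the energy estimate (Theorem~\ref{thm:EnergyCharge}) and the local energy decay (Theorem~\ref{thm:LEnergyCharge}) are deduced \emph{from} this theorem in the paper, so they cannot be used to feed it. The paper sidesteps the issue entirely: it never derives an equation for $\chi_i u$, but writes Duhamel for $u$ itself against the dominating Hamiltonian and \emph{then} multiplies the resulting identity by $\chi_i$ (equations \eqref{eq:first}--\eqref{eq:third}); no commutators appear, and $\chi_i$ serves only to identify the spatial region and exploit finite speed of propagation to replace $\int_0^t$ by $\int_0^{t-A}$.

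Second, the bound states. You write that the bound-state contribution ``is disposed of by the scattering hypothesis.'' Qualitatively the scattering hypothesis forces $a(t),b(t)\to0$, but the bootstrap requires \emph{quantitative} bounds on $\|a\|_{L^1\cap L^\infty[0,T]}$, $\|b\|_{L^1\cap L^\infty[0,T]}$ of the form $C(A)+\frac1A C_1(T)$, so that the bound-state piece can be absorbed on the left. This is not a consequence of decay at infinity; it requires (i) writing the ODE for $a(t)$ driven by the continuous part $r$ and $b$, (ii) imposing a stability condition involving the non-local integral $\int_0^\infty e^{-\lambda s}N(s)\,ds$, and (iii) since that integral is not a priori finite, constructing a truncated evolution $u_T$ satisfying a \emph{truncated} stability condition $\int_0^T e^{-\lambda s}N(s)\,ds$, proving $T$-uniform bounds for $u_T$, and only then passing $T\to\infty$. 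Subsection~\ref{subsec:Boundstates} is devoted to exactly this, and the argument does not simplify to ``exponential time dependence of the discrete propagators.''

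Finally, a smaller structural point: you use a two-term partition $\chi_1+\chi_2=1$, whereas the paper uses three channels $\chi_1+\chi_2+\chi_3=1$ with a separate free channel $\chi_3$ (far from both potentials) compared against the \emph{free} evolution $W_0$. This is a cleaner bookkeeping choice rather than an error per se, but it is a genuine departure, and it interacts with the first gap: keeping a free channel means the far-field forcing term is $V_1u+V_2(\cdot-vt)u$ rather than any commutator, and is therefore directly controlled by $C_1(T),C_2(T)$.
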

To show Theorem \ref{thm:EndRSChWOB}, we will apply a bootstrap process
and decomposition into channels in the spirit of \cite{RSS}. If there
are no bound states, the bootstrap arguments simply work for the entire
evolution. But in the presence of bound states, a more careful analysis
is necessary. We will construct a truncated evolution and show that
the estimates we obtain are independent of the truncation. Finally,
we pass our estimates to the entire evolution.

\subsection{Bootstrap argument}

We set up the bootstrap argument and prove the initial assumptions
for the bootstrap argument hold for big $T$ with some positive constants.

By Duhamel's formula, 
\begin{equation}
u(x,t)=\frac{\sin\left(t\sqrt{-\Delta}\right)}{\sqrt{-\Delta}}f+\cos\left(t\sqrt{-\Delta}\right)g-\int_{0}^{t}\frac{\sin\left((t-s)\sqrt{-\Delta}\right)}{\sqrt{-\Delta}}\left(V_{1}+V_{2}(\cdot-vs)\right)u(s)\,ds.\label{eq:express}
\end{equation}
By Gr\"onwall's inequality, the endpoint reversed Strichartz estimates
and the estimate along slanted lines for the free evolution, we have
the following estimates as bootstrap assumptions.
\begin{lem}
\label{lem:bootstrap}For $T>0$ large, there exist constants $C_{1}(T)$
and $C_{2}(T)$ such that 
\begin{equation}
\sup_{x\in\mathbb{R}^{3}}\int_{0}^{T}\left|u(x,t)\right|^{2}dt\leq C_{1}(T)\left(\|f\|_{L^{2}}+\|g\|_{\dot{H}^{1}}\right)^{2}\label{eq:boot1}
\end{equation}
and if we denote 
\[
u^{S}(x,t)=u(x+vt,t),
\]
then 
\begin{equation}
\sup_{x\in\mathbb{R}^{3}}\int_{0}^{T}\left|u^{S}\left(x,t\right)\right|^{2}dt\leq C_{2}(T)\left(\|f\|_{L^{2}}+\|g\|_{\dot{H}^{1}}\right)^{2}.\label{eq:boot2}
\end{equation}
\end{lem}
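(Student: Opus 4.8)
The plan is to run a Gr\"onwall argument directly on the quantities
\[
N(\tau):=\sup_{x\in\mathbb{R}^{3}}\int_{0}^{\tau}|u(x,t)|^{2}\,dt,\qquad N^{S}(\tau):=\sup_{x\in\mathbb{R}^{3}}\int_{0}^{\tau}|u^{S}(x,t)|^{2}\,dt,\qquad 0\le\tau\le T.
\]
Starting from Duhamel's formula \eqref{eq:express}, I write $u=u_{\mathrm{free}}-\Phi$ with $u_{\mathrm{free}}=\frac{\sin(t\sqrt{-\Delta})}{\sqrt{-\Delta}}f+\cos(t\sqrt{-\Delta})g$ and $\Phi(x,t)=\int_{0}^{t}\frac{\sin((t-s)\sqrt{-\Delta})}{\sqrt{-\Delta}}\mathcal{V}u(s)\,ds$, where $\mathcal{V}(x,t):=V_{1}(x)+V_{2}(x-\vec{v}t)$. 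By density I first take $g,f\in C^{\infty}_{c}(\mathbb{R}^{3})$, so that by finite propagation speed $u(\cdot,t)$ has compact support for each $t$ and $N(\tau),N^{S}(\tau)$ are a priori finite; since the bounds obtained will only involve $\|f\|_{L^{2}}+\|g\|_{\dot{H}^{1}}$, the general case follows by approximation. The free part is disposed of once and for all: $\|u_{\mathrm{free}}\|_{L^{\infty}_{x}L^{2}_{t}}\lesssim\|f\|_{L^{2}}+\|g\|_{\dot{H}^{1}}$ by Theorem \ref{thm:EndRStrichF} and $\|u_{\mathrm{free}}^{S}\|_{L^{\infty}_{x}L^{2}_{t}}\lesssim\|f\|_{L^{2}}+\|g\|_{\dot{H}^{1}}$ by Lemma \ref{lem:freesl}. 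The decisive structural point is that, since $|\vec{v}|<1$ and the $V_{j}$ are compactly supported, $\mathcal{V}(\cdot,t)$ — hence also $\mathcal{V}u(\cdot,t)$ — is supported in a single ball $B_{R(T)}$ with $R(T)\sim 1+T$ for all $t\in[0,T]$: the moving bump cannot reach infinity in finite time. (In the general case one uses the $\langle x\rangle^{-\alpha}$, $\alpha>3$, tails and Remark \ref{rem:smalltail}.)

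For \eqref{eq:boot1}, I use Kirchhoff's formula, $|\Phi(x,t)|\lesssim\int_{|x-y|\le t}\frac{|(\mathcal{V}u)(y,t-|x-y|)|}{|x-y|}\,dy$, bound $|\mathcal{V}|\le\|V_{1}\|_{\infty}+\|V_{2}\|_{\infty}$, and apply Cauchy--Schwarz in $y$ with weight $|x-y|^{-1}$ over $B_{R(T)}$, using that $\sup_{x}\int_{B_{R(T)}}|x-y|^{-1}\,dy\lesssim R(T)^{2}$. Fubini, the substitution $\sigma=t-|x-y|$, and polar coordinates then give, for $0\le\tau\le T$,
\[
\int_{0}^{\tau}|\Phi(x,t)|^{2}\,dt\ \lesssim_{T}\ \int_{0}^{\tau}N(\tau-r)\,r\,dr\ \le\ T\int_{0}^{\tau}N(\sigma)\,d\sigma,
\]
where monotonicity of $N$ was used. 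Combined with the free bound this yields $N(\tau)\lesssim_{T}\bigl(\|f\|_{L^{2}}+\|g\|_{\dot{H}^{1}}\bigr)^{2}+\int_{0}^{\tau}N(\sigma)\,d\sigma$ for $0\le\tau\le T$, and Gr\"onwall gives $N(T)\le C_{1}(T)\bigl(\|f\|_{L^{2}}+\|g\|_{\dot{H}^{1}}\bigr)^{2}$.

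For \eqref{eq:boot2} the argument is the same, now with the slanted-line representation from the proof of Theorem \ref{thm: persl}: $\Phi^{S}(x,t)=\Phi(x+\vec{v}t,t)=\int_{|w|\le t}\frac{(\mathcal{V}u)(x+\vec{v}t-w,\,t-|w|)}{|w|}\,dw$ (up to an absolute constant). Writing $u(z,s)=u^{S}(z-\vec{v}s,s)$ turns the integrand's value into $u^{S}(x+\vec{v}|w|-w,\,t-|w|)$, whose transverse argument does not depend on the remaining time variable, so the inner time integral is $\int_{0}^{\tau-|w|}|u^{S}(\cdot,\sigma)|^{2}\,d\sigma\le N^{S}(\tau-|w|)$; moreover $\int\frac{dw}{|w|}$ over the ($x,t$-dependent but uniformly bounded) support of $\mathcal{V}(x+\vec{v}t-w,t-|w|)$ is $\lesssim(1-|v|)^{-2}$. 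The same Cauchy--Schwarz/Fubini steps then give $N^{S}(\tau)\lesssim_{T}\bigl(\|f\|_{L^{2}}+\|g\|_{\dot{H}^{1}}\bigr)^{2}+\int_{0}^{\tau}N^{S}(\sigma)\,d\sigma$, and Gr\"onwall finishes. (An alternative, also compatible with the phrasing of this section, is to first establish the crude a priori energy bound $\sup_{[0,T]}\bigl(\|\nabla u(t)\|_{L^{2}}+\|u_{t}(t)\|_{L^{2}}\bigr)\lesssim_{T}\|(g,f)\|_{\dot{H}^{1}\times L^{2}}$ — itself a Gr\"onwall consequence of the energy identity and $\|\mathcal{V}u(t)\|_{L^{2}}\lesssim\|\nabla u(t)\|_{L^{2}}$ (Sobolev) — and, using that $\mathcal{V}u$ is supported in $B_{R(T)}$, bound $\|\mathcal{V}u\|_{L^{3/2,1}_{x}L^{2}_{t}[0,T]}$ and $\|\mathcal{V}u\|_{L^{1}_{x_{1}}L^{2,1}_{\widehat{x_{1}}}L^{2}_{t}[0,T]}$ by $\lesssim_{T}\|\mathcal{V}u\|_{L^{2}_{x,t}}\lesssim_{T}\|(g,f)\|_{\dot{H}^{1}\times L^{2}}$, then invoke \eqref{eq:EndRStrichFT} and \eqref{eq:inhomslantT}.)

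The step I expect to be the main obstacle is closing this loop: Duhamel ties $\Phi$ back to $\mathcal{V}u$, hence to the very quantities $N$, $N^{S}$ being estimated, with a coefficient of size $\sim R(T)^{2}$ that carries no smallness, so one cannot simply absorb it onto the left. The way out is the retarded structure of the three-dimensional wave propagator: $\Phi(x,t)$ depends on the source only along the backward light cone $\{s=t-|x-y|\}$, and pairing the Newtonian weight $|x-y|^{-1}$ with the nondecreasing function $N(\tau-|x-y|)$ produces precisely the Volterra term $\int_{0}^{\tau}N$ that Gr\"onwall handles. It should also be emphasized that the constants $C_{i}(T)$ produced here grow (exponentially) in $T$; turning them into $T$-independent constants is exactly what the channel decomposition and bootstrap in the remainder of Section \ref{sec:EndRSChar} are designed to accomplish.
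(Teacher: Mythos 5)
Your argument is correct, but it follows a genuinely different route from the paper's. The paper's proof first establishes the a priori exponential energy growth $E(t)\lesssim e^{C|t|}\|(g,f)\|_{\dot H^1\times L^2}^2$ via Gr\"onwall, then uses Theorem~\ref{thm:EndRStrichF} to get, on a short interval $[0,T_0]$, an inequality of the form $\sup_x\int_0^{T_0}|u|^2\lesssim\|(g,f)\|^2+C(T_0)\bigl(\sup_x\int_0^{T_0}|u|^2+\sup_x\int_0^{T_0}|u^S|^2\bigr)$ with an explicit constant $C(T_0)$ (built out of the Kato-type norms of $V_1,V_2$ restricted to a $T_0$-neighborhood), which vanishes as $T_0\to0$ and can thus be absorbed; it then iterates over short time intervals using the energy growth to control the data at each step. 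You instead set up a single Volterra integral inequality $N(\tau)\lesssim_T\|(g,f)\|^2+\int_0^\tau N(\sigma)\,d\sigma$ (and the same for $N^S$) by pairing the Kirchhoff representation's Newtonian weight $|x-y|^{-1}$ with the retarded time variable $\sigma=t-|x-y|$, and close with Gr\"onwall in one pass — no absorption of a small constant and no interval-by-interval iteration. Both arguments are elementary and yield $T$-dependent (exponentially growing) constants, which is all that the bootstrap needs; yours is arguably more streamlined, while the paper's iteration has the merit of reusing exactly the inhomogeneous reversed-Strichartz machinery (Theorem~\ref{thm:EndRStrichF} and its slanted analogue) already developed. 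Two small remarks: the first factor in the slanted-line Cauchy--Schwarz works out to $\lesssim(1-|v|)^{-1}$ rather than $(1-|v|)^{-2}$ under the change of variable $z=w-\vec{v}|w|$ (with Jacobian bounded below by $1-|v|$), though any $x,t$-uniform bound suffices here; and the density/finite-propagation-speed remark you make at the outset is a useful point that the paper leaves implicit and would be worth spelling out either way, since a priori finiteness of $N(\tau)$ is what lets Gr\"onwall (or the paper's absorption) be applied at all.
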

\begin{proof}
To establish the bootstrap assumptions, we first notice that by the
expression \eqref{eq:express} and Gr\"onwall inequality, we have 
\begin{equation}
\int_{\mathbb{R}^{3}}\left|\nabla u(x,t)\right|^{2}+\left|\partial_{t}u(x,t)\right|^{2}\,dx\lesssim e^{C\left|t\right|}\left(\|f\|_{L^{2}}+\|g\|_{\dot{H}^{1}}\right)^{2}.\label{eq:enegrowth}
\end{equation}
Clearly, estimates \eqref{eq:boot1} and \eqref{eq:boot2} hold for $T=0$.
Next, we note that for arbitrary $T_{0}>0$, from Theorem\ \ref{thm:EndRStrichF},
\begin{eqnarray}
\sup_{x\in\mathbb{R}^{3}}\int_{0}^{T_{0}}\left|u(x,t)\right|^{2}dt\lesssim\left(\|f\|_{L^{2}}+\|g\|_{\dot{H}^{1}}\right)^{2}\qquad\qquad\qquad\qquad\qquad\qquad\qquad\nonumber \\
\qquad\qquad+C(T_{0})\left(\sup_{x\in\mathbb{R}^{3}}\int_{0}^{T_{0}}\left|u(x,t)\right|^{2}dt+\sup_{x\in\mathbb{R}^{3}}\int_{0}^{T_{0}}\left|u^{S}(x,t)\right|^{2}dt\right)\label{eq:boot1att}
\end{eqnarray}
where $C(T_{0})$ can be computed explicitly, see Theorem \ref{thm:EndRStrichF}
and duality argument as Lemma \ref{lem:Frot}:
\[
C(T_{0})=\sup_{x\in\mathbb{R}^{3}}\int_{\left|x-y\right|\leq T_{0}}\frac{1}{\left|x-y\right|}\left|V_{1}\right|dy+\int_{\left|\hat{y}_{1}-\hat{x}_{1}\right|\leq T_{0}}\int\left|V_{2}\right|dy_{1}d\hat{y}_{1}.
\]
We can perform a similar estimate for $\sup_{x\in\mathbb{R}^{3}}\int_{0}^{T_{0}}\left|u^{S}(x,t)\right|^{2}dt.$

Therefore, for $T_{0}$ small enough, 
\begin{equation}
\sup_{x\in\mathbb{R}^{3}}\int_{0}^{T_{0}}\left|u(x,t)\right|^{2}dt\leq C(T_{0})\left(\|f\|_{L^{2}}+\|g\|_{\dot{H}^{1}}\right)^{2}.
\end{equation}
\begin{equation}
\sup_{x\in\mathbb{R}^{3}}\int_{0}^{T_{0}}\left|u^{S}(x,t)\right|^{2}dt\leq C(T_{0})\left(\|f\|_{L^{2}}+\|g\|_{\dot{H}^{1}}\right)^{2}.
\end{equation}
Iterating the above construction with the energy growth estimate \eqref{eq:enegrowth},
we can obtain that for $T>0$ large, there exists constant $C_{1}(T)$,
$C_{2}(T)$ such that 

\begin{equation}
\sup_{x\in\mathbb{R}^{3}}\int_{0}^{T}\left|u(x,t)\right|^{2}dt\leq C_{1}(T)\left(\|f\|_{L^{2}}+\|g\|_{\dot{H}^{1}}\right)^{2},
\end{equation}

\begin{equation}
\sup_{x\in\mathbb{R}^{3}}\int_{0}^{T}\left|u^{S}\left(x,t\right)\right|^{2}dt\leq C_{2}(T)\left(\|f\|_{L^{2}}+\|g\|_{\dot{H}^{1}}\right)^{2},
\end{equation}
as claimed.
\end{proof}
Based on estimates \eqref{eq:boot1}, \eqref{eq:boot2}, we will run a
bootstrap argument to improve these two estimate and reduce to estimates
with constants independent of $T$. 

We also have a perturbed version of Lemma \ref{lem:bootstrap} with
the same constants in estimates \eqref{eq:boot1} and \eqref{eq:boot2}
up to multiplication of a constant only depending on the potentials.
Let 
\[
H_{i}=-\Delta+V_{i},\,i=1,2
\]
and $P_{c}\left(H_{i}\right)$ to be the projection onto the continuous
spectrum of $H_{i}$. 
\begin{lem}
\label{lem:Pbootstrap}For $T>0$ large, there exist constants $C_{1}(T)$
and $C_{2}(T)$ such that 
\begin{equation}
\sup_{x\in\mathbb{R}^{3}}\int_{0}^{T}\left|P_{c}(H_{1})u(x,t)\right|^{2}dt\leq_{V_{1}}C_{1}(T)\left(\|f\|_{L^{2}}+\|g\|_{\dot{H}^{1}}\right)^{2},\label{eq:P1boot1}
\end{equation}
\begin{equation}
\sup_{x\in\mathbb{R}^{3}}\int_{0}^{T}\left|P_{c}\left(H_{2}\right)u_{L}(x,t)\right|^{2}dt\leq_{V_{2}}C_{1}(T)\left(\|f\|_{L^{2}}+\|g\|_{\dot{H}^{1}}\right)^{2}\label{eq:P2boot1}
\end{equation}
\end{lem}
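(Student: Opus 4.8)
The plan is to deduce both bounds from Lemma \ref{lem:bootstrap} by peeling off the finitely many bound states, each of which is exponentially localized. For \eqref{eq:P1boot1} I would write $P_c(H_1)=\mathrm{Id}-P_b(H_1)$ with $P_b(H_1)u(x,t)=\sum_{i=1}^m\langle u(\cdot,t),w_i\rangle\,w_i(x)$, so that
\[
  P_c(H_1)u(x,t)=u(x,t)-\sum_{i=1}^m\langle u(\cdot,t),w_i\rangle\,w_i(x).
\]
The first term is exactly what \eqref{eq:boot1} controls. For each bound-state term I would first estimate the coefficient by Minkowski's integral inequality,
\[
  \bigl\|\langle u(\cdot,t),w_i\rangle\bigr\|_{L_t^2[0,T]}
  \le\int_{\mathbb{R}^3}|w_i(y)|\,\|u(y,\cdot)\|_{L_t^2[0,T]}\,dy
  \le\|w_i\|_{L^1}\,\sup_{y\in\mathbb{R}^3}\|u(y,\cdot)\|_{L_t^2[0,T]}.
\]
By the Agmon estimates obtained as a byproduct of Theorem \ref{thm:generalC}, each $w_i$ decays exponentially, hence lies in $L^1\cap L^\infty$; combining this with \eqref{eq:boot1} gives $\bigl\|\langle u(\cdot,t),w_i\rangle\bigr\|_{L_t^2[0,T]}\lesssim_{V_1}C_1(T)^{1/2}\bigl(\|f\|_{L^2}+\|g\|_{\dot{H}^1}\bigr)$. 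Multiplying by $\|w_i\|_{L^\infty}$, summing over the finitely many $i$, and combining with the estimate for $u$ itself yields \eqref{eq:P1boot1}.

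For \eqref{eq:P2boot1} the argument is identical once one applies the Lorentz transformation $L$: in the new frame $H_2$ is stationary with finitely many bound states $m_j$, each exponentially decaying, and $P_c(H_2)u_L=u_L-\sum_j\langle u_L(\cdot,t'),m_j\rangle m_j$. The only additional point is that $\sup_{x'}\|u_L(x',\cdot)\|_{L^2_{t'}}$ over the relevant time window must be controlled by $C_2(T)^{1/2}(\|f\|_{L^2}+\|g\|_{\dot{H}^1})$; this follows from the change of variables \eqref{eq:newcorch}, under which fixing $x'$ and varying $t'$ traces a slanted line $t\mapsto(x_1^0+v(t-t^0),x_2',x_3',t)$ in the original coordinates, with the Jacobian between $dt'$ and $dt$ bounded above and below in terms of $|v|$ only. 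Hence that quantity is comparable to $\sup_x\|u^S(x,\cdot)\|_{L_t^2}$, which is controlled by \eqref{eq:boot2}. Applying the same Minkowski argument to $\langle u_L(\cdot,t'),m_j\rangle$, using $\|m_j\|_{L^1},\|m_j\|_{L^\infty}<\infty$, then gives \eqref{eq:P2boot1}.

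The only thing to be careful about — essentially the only obstacle — is the bookkeeping of the truncation windows under $L$: the slab $t'\in[0,T]$ does not map to $t\in[0,T]$, only to a time interval of length comparable to $T$. Since $C_1(T)$ and $C_2(T)$ are monotone in $T$ and serve only as placeholder bootstrap constants (the estimates established later in Section \ref{sec:EndRSChar} are ultimately shown to hold with constants independent of $T$), one simply enlarges $T$ by a fixed factor $c=c(|v|)>1$ in these intermediate bounds, which does no harm. Everything else — exponential decay of the eigenfunctions, finiteness of their $L^1$ and $L^\infty$ norms, and Minkowski's inequality — is routine.
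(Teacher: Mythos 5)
Your treatment of \eqref{eq:P1boot1} is sound: writing $P_c(H_1)=\mathrm{Id}-P_b(H_1)$, bounding the identity part by \eqref{eq:boot1}, estimating each coefficient $\langle u(\cdot,t),w_i\rangle$ in $L^2_t[0,T]$ by Minkowski's inequality and $\|w_i\|_{L^1}$, then multiplying by $\|w_i\|_{L^\infty}$ and summing over the finitely many Agmon-decaying eigenfunctions, gives precisely the implicit constant $\lesssim_{V_1}$ that the lemma asserts. This matches the paper's (unwritten) justification.

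The gap is in \eqref{eq:P2boot1}. You try to bound $\sup_{x'}\int_0^T|u_L(x',t')|^2\,dt'$ by changing variables back to the original frame and invoking \eqref{eq:boot2}, claiming the only price is enlarging $T$ by a fixed factor $c(|v|)$. But fixing $x'$ and setting $t=\gamma(t'+vx_1')$ maps $t'\in[0,T]$ to $t\in[\gamma v x_1',\,\gamma(T+vx_1')]$: a segment of length $\gamma T$ whose \emph{center shifts linearly in $x_1'$}. Since the supremum runs over all $x'\in\mathbb{R}^3$, the segment escapes to $\pm\infty$ as $x_1'\to\pm\infty$, and no enlargement of $T$ by a factor independent of $x'$ restores containment in $[0,cT]$ (or even $[-cT,cT]$, using the Remark on negative times). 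The reduction to $\sup_x\|u^S(x,\cdot)\|_{L^2_t[0,cT]}$ therefore does not go through; the constants $C_i(T)$ from Lemma \ref{lem:bootstrap} are a priori unbounded as $T\to\infty$, so one cannot trade a growing time window for a fixed one.

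The fix is not to change variables at all. In the $(x',t')$ frame, $u_L$ itself solves a charge-transfer wave equation with $V_2$ stationary and $V_1$ moving at speed $|v|<1$, and by Theorem \ref{thm:generalC} the initial energy $\|u_L(\cdot,0)\|_{\dot H^1}+\|\partial_{t'}u_L(\cdot,0)\|_{L^2}$ is comparable to $\|g\|_{\dot H^1}+\|f\|_{L^2}$. Rerunning the proof of Lemma \ref{lem:bootstrap} verbatim in the Lorentz frame --- Gr\"onwall for the exponential energy growth, then iteration of the short-time free endpoint reversed Strichartz bound of Theorem \ref{thm:EndRStrichF} --- yields $\sup_{x'}\int_0^T|u_L(x',t')|^2\,dt'\leq\tilde C(T)\bigl(\|f\|_{L^2}+\|g\|_{\dot H^1}\bigr)^2$ directly, with a $T$-dependent but finite constant. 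Your Minkowski-plus-Agmon argument then peels off the bound states of $H_2$ exactly as in the first part.
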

\begin{rem}
By symmetry, with $C_{1}(T)$ and $C_{2}(T)$, we also have with $T>0$,
\begin{equation}
\sup_{x\in\mathbb{R}^{3}}\int_{-T}^{0}\left|u(x,t)\right|^{2}dt\leq C_{1}(T)\left(\|f\|_{L^{2}}+\|g\|_{\dot{H}^{1}}\right)^{2}\label{eq:boot3}
\end{equation}
and
\begin{equation}
\sup_{x\in\mathbb{R}^{3}}\int_{-T}^{0}\left|u^{S}\left(x,t\right)\right|^{2}dt\leq C_{2}(T)\left(\|f\|_{L^{2}}+\|g\|_{\dot{H}^{1}}\right)^{2}.\label{eq:boot4}
\end{equation}
\end{rem}

\subsection{Bound states\label{subsec:Boundstates}}

Before we start the bootstrap analysis, it is necessary to understand
the evolution of bound states.

In the following, for simplicity, we assume $H_{i}=-\Delta+V_{i},\,i=1,2$
has only one negative eigenvalue. With $\lambda>0,\,\,\mu>0,$
\begin{equation}
H_{1}w=-\lambda^{2}w,\,\,H_{2}m=-\mu^{2}m.
\end{equation}
$w$ and $m$ decay exponentially by Agmon's estimate. The analysis
can be easily adapted to the most general situation. 

Set $U(t,s)$ as evolution from $s$ to $t$ associated to the initial
velocity and formally, we use $\dot{U}(t,s)$ to denote the evolution
associated the other initial data.

Suppose $u(x,t)$ is a scattering state. We decompose the evolution
as following, 
\begin{equation}
u(x,t)=U(t,0)f+\dot{U}(t,0)g=a(t)w(x)+b\left(\gamma(t-vx_{1})\right)m_{v}\left(x,t\right)+r(x,t)\label{eq:evolution}
\end{equation}
where 
\[
m_{v}(x,t)=m\left(\gamma\left(x_{1}-vt\right),x_{2},x_{3}\right).
\]
With our decomposition, we know 
\begin{equation}
P_{c}\left(H_{1}\right)r=r
\end{equation}
 and 
\begin{equation}
P_{c}\left(H_{2}\right)r_{L}=r_{L}
\end{equation}
where the Lorentz transformation $L$ makes $V_{2}$ stationary. 

Surely, since $u(x,t)$ is asymptotically orthogonal to the bound
states of $H_{1}$ and $H_{2}$, it forces $a(t)$ to go $0$ and
$b(t)$ go to $0$. Following the above construction, we do some preliminary
calculations.

Plugging the evolution \eqref{eq:evolution} into the equation \eqref{eq:chargeeq}
and taking inner product with $w$, we get 
\begin{eqnarray}
\ddot{a}(t)-\lambda^{2}a(t)+a(t)\left\langle V_{2}\left(x-\vec{v}t\right)w,w\right\rangle \qquad\qquad\qquad\qquad\qquad\qquad\nonumber \\
\qquad\qquad\qquad+\left\langle V_{2}\left(x-\vec{v}t\right)\left(b\left(\gamma(t-vx_{1})\right)m_{v}\left(x,t\right)+r(x,t)\right),w\right\rangle =0.
\end{eqnarray}
One can write 
\begin{equation}
\ddot{a}(t)-\lambda^{2}a(t)+a(t)c(t)+h(t)=0,\label{eq:aode}
\end{equation}
where 
\begin{equation}
c(t):=\left\langle V_{2}\left(x-\vec{v}t\right)w,w\right\rangle 
\end{equation}
and 
\begin{equation}
h(t):=\left\langle V_{2}\left(x-\vec{v}t\right)\left(b\left(\gamma(t-vx_{1})\right)m_{v}\left(x,t\right)+r(x,t)\right),w\right\rangle .
\end{equation}
Since $w$ is exponentially localized by Agmon's estimate, we know
\begin{equation}
\left|c(t)\right|\lesssim e^{-\alpha\left|t\right|}.
\end{equation}
The existence of the solution to the ODE \eqref{eq:aode} is clear.
We study the long-time behavior of the solution. Write the equation
as 

\begin{equation}
\ddot{a}(t)-\lambda^{2}a(t)=-\left[a(t)c(t)+h(t)\right],
\end{equation}
and denote 
\begin{equation}
N(t):=-\left[a(t)c(t)+h(t)\right].
\end{equation}
Then
\begin{equation}
a(t)=\frac{e^{\lambda t}}{2}\left[a(0)+\frac{1}{\lambda}\dot{a}(0)+\frac{1}{\lambda}\int_{0}^{t}e^{-\lambda s}N(s)\,ds\right]+R(t)
\end{equation}
where 
\begin{equation}
\left|R(t)\right|\lesssim e^{-\beta t},
\end{equation}
for some positive constant $\beta>0$. Therefore, the stability condition
forces 
\begin{equation}
a(0)+\frac{1}{\lambda}\dot{a}(0)+\frac{1}{\lambda}\int_{0}^{\infty}e^{-\lambda s}N(s)\,ds=0.\label{eq:stability}
\end{equation}
Then under the stability condition \eqref{eq:stability}, 
\begin{equation}
a(t)=e^{-\lambda t}\left[a(0)+\frac{1}{2\lambda}\int_{0}^{\infty}e^{-\lambda s}N(s)ds\right]+\frac{1}{2\lambda}\int_{0}^{\infty}e^{-\lambda\left|t-s\right|}N(s)\,ds.
\end{equation}
We notice that in order to estimate $a(t)$ and $b(t)$, we need a
non-local term 
\begin{equation}
\int_{0}^{\infty}e^{-\lambda s}N(s)\,ds,
\end{equation}
and in all estimates, a global estimate for 
\begin{equation}
\left\Vert b\left(\gamma(t-vx_{1})\right)m_{v}\left(x,t\right)+r(x,t)\right\Vert _{L_{x}^{\infty}L_{t}^{2}[0,\infty)}
\end{equation}
is involved. But for the general charge transfer model, a-priori,
we do not have any global estimates. Therefore, we will consider a
truncated version of the above construction restricted to interval
$t\in[0,T]$ for large positive $T$. Then one can run the bootstrap
procedure for our truncated evolution. 

For $t\in[0,T]$, we construct the following truncated version of
the evolution: 
\begin{equation}
u_{T}(x,t)=U(t,0)f+\dot{U}(t,0)g=a_{T}(t)w(x)+b_{T}\left(\gamma(t-vx_{1})\right)m_{v}\left(x,t\right)+r_{T}(x,t).
\end{equation}
For $a_{T}(t)$, we analyze the same ODE for $a(t)$ again but restricted
to $[0,T]$ and instead of the stability condition 
\begin{equation}
a(0)+\frac{1}{\lambda}\dot{a}(0)+\frac{1}{\lambda}\int_{0}^{\infty}e^{-\lambda s}N(s)\,ds=0
\end{equation}
we impose the condition that
\begin{equation}
a_{T}(0)+\frac{1}{\lambda}\dot{a}_{T}(0)+\frac{1}{\lambda}\int_{0}^{T}e^{-\lambda s}N(s)\,ds=0.
\end{equation}
The same construction can be applied to \textbf{$b_{T}$.}
\begin{lem}
From the construction above, we have the following estimates: for
$0\ll A\ll T$,
\begin{equation}
\left\Vert a_{T}\right\Vert _{L^{\infty}[0,T]}\lesssim\left(C(A,\lambda)+\frac{1}{\lambda A}C_{1}(T)\right)\left(\|f\|_{L^{2}}+\|g\|_{\dot{H}^{1}}\right),\label{eq:abootL0}
\end{equation}
\begin{equation}
\left\Vert a_{T}\right\Vert _{L^{1}[0,T]}\lesssim\left(C(A,\lambda)+\frac{1}{\lambda A}C_{1}(T)\right)\left(\|f\|_{L^{2}}+\|g\|_{\dot{H}^{1}}\right),\label{eq:abootL1}
\end{equation}
\begin{equation}
\left\Vert b_{T}\right\Vert _{L^{\infty}[0,T]}\lesssim\left(C(A,\mu)+\frac{1}{\mu A}C_{1}(T)\right)\left(\|f\|_{L^{2}}+\|g\|_{\dot{H}^{1}}\right),\label{eq:bbootL0}
\end{equation}
and 
\begin{equation}
\left\Vert b_{T}\right\Vert _{L^{1}[0,T]}\lesssim\left(C(A,\mu)+\frac{1}{\mu A}C_{1}(T)\right)\left(\|f\|_{L^{2}}+\|g\|_{\dot{H}^{1}}\right).\label{eq:bbootL1}
\end{equation}
\end{lem}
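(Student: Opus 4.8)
The plan is to read off an explicit integral representation of $a_T$ from its truncated modulation equation, reduce the forcing term to a single inner product involving $u_T$ itself, and then estimate that forcing by splitting the time interval at the intermediate scale $A$; the bounds for $b_T$ come out identically after a Lorentz transformation. First I would record the representation. Solving $\ddot a_T-\lambda^2 a_T=N$ on $[0,T]$ and imposing the truncated stability condition $a_T(0)+\tfrac1\lambda\dot a_T(0)+\tfrac1\lambda\int_0^T e^{-\lambda s}N(s)\,ds=0$ (which removes the $e^{\lambda t}$ mode on $[0,T]$) yields, exactly as for the untruncated coefficient,
\[
a_T(t)=e^{-\lambda t}\Bigl[a_T(0)+\tfrac{1}{2\lambda}\int_0^T e^{-\lambda s}N(s)\,ds\Bigr]+\tfrac{1}{2\lambda}\int_0^T e^{-\lambda|t-s|}N(s)\,ds,\qquad t\in[0,T].
\]
The crucial algebraic point is the collapse of the forcing: since the decomposition \eqref{eq:evolution} gives $b_T(\gamma(t-vx_1))m_v(x,t)+r_T(x,t)=u_T-a_Tw$, the two $a_T(s)c(s)$ terms cancel and $N(s)=-\langle V_2(\cdot-\vec vs)\,u_T(\cdot,s),\,w\rangle$ exactly. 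This is what makes the estimate work: it trades the a priori only $e^{CT}$-bounded remainder $r_T$ for $u_T$, whose reversed Strichartz norm on $[0,T]$ is precisely what the bootstrap hypothesis \eqref{eq:boot1} controls.

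Next I would estimate $\|N\|_{L^1_s[0,T]}$ and $\|N\|_{L^2_s[0,T]}$ by splitting at $s=A$. On $[0,A]$ use the crude energy-growth bound \eqref{eq:enegrowth} together with Sobolev embedding, $|N(s)|\le\|V_2(\cdot-\vec vs)w\|_{L^{6/5}}\|u_T(s)\|_{L^6}\lesssim e^{Cs}(\|f\|_{L^2}+\|g\|_{\dot H^1})$; integrating over $[0,A]$ produces a constant $C(A,\lambda)$ depending on $A$, $\lambda$ and the potentials but \emph{not} on $T$. On $[A,T]$ the supports decouple: writing $|N(s)|\le\int|w(x)|\,|V_2(x-\vec vs)|\,|u_T(x,s)|\,dx$, I would perform the $x$-integral first and, for each fixed $x$, apply Cauchy--Schwarz in $s$. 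The moving potential $V_2(\cdot-\vec vs)$ spends only a bounded amount of $s$-time near any fixed point, so $\int_A^T|V_2(x-\vec vs)|^2\,ds\lesssim1$ uniformly in $x$ and $T$, while $\int_A^T|u_T(x,s)|^2\,ds\le C_1(T)(\|f\|_{L^2}+\|g\|_{\dot H^1})^2$ by \eqref{eq:boot1}; the leftover $\int|w(x)|\,dx$ runs only over the region $\{|x|\gtrsim A\}$ where $V_2(\cdot-\vec vs)$ with $s\ge A$ can be supported, and Agmon's estimate gives $\|w\|_{L^1(|x|\gtrsim A)}\lesssim 1/A$. Hence $\|N\|_{L^1_s[A,T]}+\|N\|_{L^2_s[A,T]}\lesssim\frac1A C_1(T)^{1/2}(\|f\|_{L^2}+\|g\|_{\dot H^1})\le\frac1A C_1(T)(\|f\|_{L^2}+\|g\|_{\dot H^1})$, using $C_1(T)\ge1$.

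Finally I would feed these into the representation formula. Young's inequality for the convolution against $\tfrac1{2\lambda}e^{-\lambda|\cdot|}$ (whose $L^1$ norm is $\lesssim\lambda^{-2}$ and $L^2$ norm $\lesssim\lambda^{-3/2}$), together with $\|e^{-\lambda t}\|_{L^\infty}=1$ and $\|e^{-\lambda t}\|_{L^1[0,T]}\lesssim\lambda^{-1}$, yields the $L^\infty[0,T]$ bound from $\|N\|_{L^2_s}$ and the $L^1[0,T]$ bound from $\|N\|_{L^1_s}$; the prefactor $e^{-\lambda t}\bigl[a_T(0)+\tfrac1{2\lambda}\int_0^T e^{-\lambda s}N(s)\,ds\bigr]$ is handled the same way, with the contribution of $a_T(0)$ — and of any initial-time overlap of the two bound states, which is finite and $T$-independent — absorbed into $C(A,\lambda)$. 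This gives \eqref{eq:abootL0}--\eqref{eq:abootL1}. For $b_T$ one first applies the Lorentz transformation $L$, under which $V_2$ is stationary and $V_1$ moves; then $u_{T,L}$ has the analogous decomposition with $b_T(t')m(x')$ as the $H_2$ bound-state part, $b_T$ solves the same ODE with $\mu$ in place of $\lambda$ and forcing $-\langle V_{1,L}(\cdot,t')\,u_{T,L}(\cdot,t'),\,m\rangle$, and the identical split-at-$A$ argument applies, now using \eqref{eq:boot2} for $u^S$ (equivalently the reversed Strichartz norm of $u_L$ in $t'$) and the Agmon localization of $m$. This yields \eqref{eq:bbootL0}--\eqref{eq:bbootL1}, with $a_T$ and $b_T$ estimated simultaneously.

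The main obstacle is keeping every constant independent of $T$ except for the single factor $C_1(T)$ (respectively $C_2(T)$). The modulation equation is driven by an unstable exponential, so without care its solution carries $e^{\lambda T}$: imposing the truncated stability condition is exactly what replaces this by the decaying representation above. One must then make sure that the decomposition $u_T=a_Tw+(\text{bound-state part})+r_T$ does not feed the poorly-controlled $r_T$ back into the estimate for $a_T$, and this is guaranteed by the exact identity $N(s)=-\langle V_2(\cdot-\vec vs)u_T(\cdot,s),w\rangle$. The split at $s=A$ and the exponential Agmon localization of the bound states are what turn the large-time part of the forcing into a genuinely small, $T$-independent multiple of $C_1(T)$ — the feature that will later allow the bootstrap to close with a gain once $A$ is taken large.
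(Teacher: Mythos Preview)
Your approach is essentially the paper's: derive the integral representation of $a_T$ under the truncated stability condition, split the forcing at $s=A$, use Agmon localization plus Cauchy--Schwarz in $s$ for the $[A,T]$ piece, and pass to $b_T$ by the Lorentz transformation. Your collapse $N(s)=-\langle V_2(\cdot-\vec vs)\,u_T(\cdot,s),\,w\rangle$ is a clean simplification over the paper, which keeps $N=-(a_Tc+h)$ with $h=\langle V_2(\cdot-\vec vt)(b_Tm_v+r_T),w\rangle$, absorbs the $a_Tc$ term separately via $|c(t)|\lesssim e^{-\alpha t}$, and then bounds $h$ through the projected bootstrap assumption~\eqref{eq:P1boot1} rather than~\eqref{eq:boot1}.

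One small slip to fix: your Cauchy--Schwarz-in-$s$ argument on $[A,T]$ establishes only the $L^1_s$ bound on $N$, since after swapping $x$ and $s$ you have already passed to $\int_A^T|N(s)|\,ds$; it does not give $\|N\|_{L^2_s[A,T]}\lesssim\frac1A C_1(T)^{1/2}(\cdots)$ as you assert. This does not affect the conclusion --- simply use $\|N\|_{L^1}$ for the $L^\infty$ estimate on $a_T$ as well, via
\[
\Bigl\|\tfrac{1}{2\lambda}\,e^{-\lambda|\cdot|}\ast N\Bigr\|_{L^\infty}\le\tfrac{1}{2\lambda}\|N\|_{L^1},
\]
which is exactly how the paper proceeds and yields the stated factor $\frac{1}{\lambda A}C_1(T)$.
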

\begin{proof}
First of all, by the bootstrap assumption \eqref{eq:P1boot1}, 
\begin{equation}
\left\Vert b_{T}\left(\gamma(t-vx_{1})\right)m_{v}\left(x,t\right)+r_{T}(x,t)\right\Vert _{L_{x}^{\infty}L_{t}^{2}[0,T]}\leq C_{1}(T)\left(\|f\|_{L^{2}}+\|g\|_{\dot{H}^{1}}\right).
\end{equation}
For $a_{T}(t)$, we know that 
\begin{eqnarray}
\ddot{a}_{T}(t)-\lambda^{2}a_{T}(t)+a_{T}(t)\left\langle V_{2}\left(x-\vec{v}t\right)w,w\right\rangle \qquad\qquad\qquad\qquad\qquad\qquad\nonumber \\
\qquad\qquad\qquad+\left\langle V_{2}\left(x-\vec{v}t\right)\left(b_{T}\left(\gamma(t-vx_{1})\right)m_{v}\left(x,t\right)+r_{T}(x,t)\right),w\right\rangle =0.
\end{eqnarray}
We obtain
\begin{equation}
a_{T}(t)=\frac{e^{\lambda t}}{2}\left[a_{T}(0)+\frac{1}{\lambda}\dot{a}_{T}(0)+\frac{1}{\lambda}\int_{0}^{t}e^{-\lambda s}N(s)ds\right]+R(t)
\end{equation}
where 
\begin{equation}
\left|R(t)\right|\lesssim e^{-\beta t},
\end{equation}
With notations introduced above, we consider the truncated version
of the stability condition, 
\begin{equation}
a_{T}(0)+\frac{1}{\lambda}\dot{a}_{T}(0)+\frac{1}{\lambda}\int_{0}^{T}e^{-\lambda s}N(s)\,ds=0.
\end{equation}
So 
\begin{equation}
a_{T}(t)=e^{-\lambda t}\left[a_{T}(0)+\frac{1}{2\lambda}\int_{0}^{T}e^{-\lambda s}N(s)ds\right]+\frac{1}{2\lambda}\int_{0}^{T}e^{-\lambda\left|t-s\right|}N(s)\,ds.
\end{equation}
where 
\begin{equation}
N(t)=-\left[a_{T}(t)c(t)+h(t)\right]
\end{equation}
with 
\begin{equation}
\left|c(t)\right|\lesssim e^{-\alpha\left|t\right|}
\end{equation}
\begin{equation}
h(t):=\left\langle V_{2}\left(x-\vec{v}t\right)\left[b_{T}(t-vx_{1})m_{v}\left(x,t\right)+r_{T}(x,t)\right],w\right\rangle .
\end{equation}
For $0\ll A\ll T$ fixed, we can always bound the $L^{\infty}$ norm
of $a_{T}$ on the interval $[0,A]$ by Gr\"onwall's inequality.
Therefore, it suffices to estimate the $L^{\infty}$ norm of $a_{T}$
from $A$ to $T$. Note that $\left|c(t)\right|\lesssim e^{-\alpha\left|t\right|}$,
for $A$ large, one can always absorb the effects from $\int_{A}^{T}$$a_{T}(t)c(t)\,dt$
into the left-hand side. Hence it reduces to estimate the $L_{t}^{1}$
norm of $h(t)$ restricted to $[A,T]$. 

Consider the integral

\[
\int_{A}^{T}\left|h(t)\right|\,dt=\int_{A}^{T}\left|\left\langle V_{2}\left(x-\vec{v}t\right)\left[b_{T}\left(\gamma(t-vx_{1})\right)m_{v}\left(x,t\right)+r_{T}(x,t)\right],w\right\rangle \right|\,dt.
\]
Clearly,
\begin{eqnarray*}
\int_{A}^{T}\left|V_{2}\left(x-\vec{v}t\right)\left[b_{T}\left(\gamma(t-vx_{1})\right)m_{v}\left(x,t\right)+r_{T}(x,t)\right]\right|dt\lesssim\qquad\qquad\\
\left(\int_{A}^{T}\left|\left(b_{T}\left(\gamma(t-vx_{1})\right)m_{v}\left(x,t\right)+r_{T}(x,t)\right)\right|^{2}dt\right)^{\frac{1}{2}}\left(\int_{A}^{T}\left|V_{2}\left(x-\vec{v}t\right)\right|^{2}dt\right)^{\frac{1}{2}}.
\end{eqnarray*}
Note that 
\begin{equation}
\left|\left\langle \left(\int_{A}^{T}\left|V_{2}\left(\cdot-vt\right)\right|^{2}dt\right)^{\frac{1}{2}},\,w\right\rangle \right|\lesssim\frac{1}{A}.
\end{equation}
By the preliminary calculations above, we can estimate the $L^{\infty}$
norm of $a_{T}(t)$, 
\begin{eqnarray}
\left\Vert a_{T}\right\Vert _{L^{\infty}[0,T]} & \lesssim & C(A,\lambda)\left(\|f\|_{L^{2}}+\|g\|_{\dot{H}^{1}}\right)+\frac{1}{\lambda}\int_{A}^{T}\left|h(t)\right|dt\nonumber \\
 & \lesssim & C(A,\lambda)\left(\|f\|_{L^{2}}+\|g\|_{\dot{H}^{1}}\right)\nonumber \\
 &  & +\frac{1}{\lambda A}\left(\int_{A}^{T}\left|\left(b_{T}\left(\gamma(t-vx_{1})\right)m_{v}\left(x,t\right)+r_{T}(x,t)\right)\right|^{2}dt\right)^{\frac{1}{2}}\\
 & \lesssim & \left(C(A,\lambda)+\frac{1}{\lambda A}C_{1}(T)\right)\left(\|f\|_{L^{2}}+\|g\|_{\dot{H}^{1}}\right).\nonumber 
\end{eqnarray}
Similarly, for the $L^{1}$ norm of $a_{T}(t)$,  
\begin{equation}
\left\Vert a_{T}\right\Vert _{L^{1}[0,T]}\lesssim\left(C(A,\lambda)+\frac{1}{\lambda A}C_{1}(T)\right)\left(\|f\|_{L^{2}}+\|g\|_{\dot{H}^{1}}\right).
\end{equation}
After applying a Lorentz transformation, we have analogous estimates
for $b_{T}(t)$:
\begin{equation}
\left\Vert b_{T}\right\Vert _{L^{\infty}[0,T]}\lesssim\left(C(A,\mu)+\frac{1}{\mu A}C_{1}(T)\right)\left(\|f\|_{L^{2}}+\|g\|_{\dot{H}^{1}}\right),
\end{equation}
\begin{equation}
\left\Vert b_{T}\right\Vert _{L^{1}[0,T]}\lesssim\left(C(A,\mu)+\frac{1}{\mu A}C_{1}(T)\right)\left(\|f\|_{L^{2}}+\|g\|_{\dot{H}^{1}}\right).
\end{equation}
The lemma is proved.
\end{proof}
In the following subsections, we will show estimates with constants
independent of $T$, 
\begin{equation}
\sup_{x\in\mathbb{R}^{3}}\int_{0}^{T}\left|u_{T}(x,t)\right|^{2}dt\leq C_{1}\left(\|f\|_{L^{2}}+\|g\|_{\dot{H}^{1}}\right)^{2}
\end{equation}
\begin{equation}
\sup_{x\in\mathbb{R}^{3}}\int_{0}^{T}\left|u_{T}^{S}\left(x,t\right)\right|^{2}dt\leq C_{2}\left(\|f\|_{L^{2}}+\|g\|_{\dot{H}^{1}}\right)^{2}.
\end{equation}
Then we know our construction of $u_{T}$ has estimates independent
of $T$. As $T\rightarrow\infty$, the stability condition \eqref{eq:stability}
will be recovered from 
\begin{equation}
a_{T}(0)+\frac{1}{\lambda}\dot{a}_{T}(0)+\frac{1}{\lambda}\int_{0}^{T}e^{-\lambda s}N(s)\,ds=0.
\end{equation}
and 
\begin{equation}
a_{T}(t)=e^{-\lambda t}\left[a_{T}(0)+\frac{1}{2\lambda}\int_{0}^{T}e^{-\lambda s}N(s)ds\right]+\frac{1}{2\lambda}\int_{0}^{T}e^{-\lambda\left|t-s\right|}N(s)\,ds.
\end{equation}
Therefore, from the estimates for $u_{T}$, we can obtain the desired
estimates for a scattering state $u(x,t)$,
\begin{equation}
\sup_{x\in\mathbb{R}^{3}}\int_{0}^{\infty}\left|u(x,t)\right|^{2}dt\leq C_{1}\left(\|f\|_{L^{2}}+\|g\|_{\dot{H}^{1}}\right)^{2}
\end{equation}
\begin{equation}
\sup_{x\in\mathbb{R}^{3}}\int_{0}^{\infty}\left|u^{S}\left(x,t\right)\right|^{2}dt\leq C_{2}\left(\|f\|_{L^{2}}+\|g\|_{\dot{H}^{1}}\right)^{2}.
\end{equation}
Therefore in the remaining part of this section, we will analyze the
bootstrap process for $u_{T}(x,t)$ carefully.

\subsection{Decomposition into channels}

Following the notations above, for $t\in[0,T]$, consider 
\[
u_{T}(x,t)=U(t,0)f+\dot{U}(t,0)g=a_{T}(t)w(x)+b_{T}\left(\gamma(t-vx_{1})\right)m_{v}\left(x,t\right)+r_{T}(x,t).
\]
There exist constants $C_{1}(T)$ and $C_{2}(T)$ such that 

\begin{equation}
\sup_{x\in\mathbb{R}^{3}}\int_{0}^{T}\left|u_{T}(x,t)\right|^{2}dt\leq C_{1}(T)\left(\|f\|_{L^{2}}+\|g\|_{\dot{H}^{1}}\right)^{2}
\end{equation}
and
\begin{equation}
\sup_{x\in\mathbb{R}^{3}}\int_{0}^{T}\left|u_{T}^{S}\left(x,t\right)\right|^{2}dt\leq C_{2}(T)\left(\|f\|_{L^{2}}+\|g\|_{\dot{H}^{1}}\right)^{2}.
\end{equation}
We run our bootstrap argument for $u_{T}(x,t)$. Notice that since
$V_{i},\,i=1,2$ is a short-range potential and $V_{2}(x-\vec{v}t)$ moves
away from $V_{1}$, intuitively, $u(x,t)$ will have different dominant
behaviors in different regions in $\mathbb{R}^{3}$. To make this
heuristic rigorous, we perform a decomposition of channels based on
it. For some fixed small $\delta>0$, we introduce a partition of
unity associated with the sets 
\begin{equation}
B_{\delta t}(0)=\left\{ x:\,\left|x\right|\leq\delta t\right\} ,\qquad B_{\delta t}(tv)=\left\{ x:\,\left|x-\left(tv,0,0\right)\right|\leq\delta t\right\} 
\end{equation}
and
\begin{equation}
\mathbb{R}^{3}\backslash\left(B_{\delta t}(0)\cup B_{\delta t}(tv)\right).
\end{equation}
To be more precisely, let $\chi_{1}(x,t)$ be a smooth cutoff function
such that 
\begin{equation}
\chi_{1}(x,t)=1,\;\forall x\in B_{\delta t}(0),\qquad\chi_{1}(x,t)=0,\;\forall x\in\mathbb{R}^{3}\backslash B_{2\delta t}(0).
\end{equation}
One might assume $t\geq t_{0}$ for some large $t_{0}$. We also define
\begin{equation}
\chi_{2}(x,t)=\chi_{1}(x-\vec{v}t,t),\qquad\chi_{3}=1-\chi_{1}-\chi_{2}.
\end{equation}
Note that we only consider the estimates for large $t$, so one might
also assume the support of $\chi_{1}(x,t)$ contains the support of
$V_{1}\left(x\right)$ and support of $\chi_{2}(x,t)$ contains the
support of $V_{2}\left(\cdot-vt\right)$.

With the partition above, we rewrite the evolution as
\begin{equation}
u_{T}(x,t)=\chi_{1}(x,t)u_{T}(x,t)+\chi_{2}(x,t)u_{T}(x,t)+\chi_{3}(x,t)u_{T}(x,t).
\end{equation}
We will discuss $\chi_{i}(x,t)u_{T}(x,t),\,i=1,2,3$, separately.

Based on Duhamel's formula, we will compare $u$ to different evolution
groups on different ``channels''.

For 
\begin{equation}
\chi_{1}(x,t)u_{T}(x,t),
\end{equation}
we will compare it to 
\begin{equation}
W_{1}(t)f+\dot{W}_{1}(t)g
\end{equation}
 where 
\begin{equation}
W_{1}(t):=\frac{\sin\left(t\sqrt{H_{1}}\right)}{\sqrt{H_{1}}}.
\end{equation}

As to 
\begin{equation}
\chi_{2}(x,t)u_{T}(x,t),
\end{equation}
it will be compared to 
\begin{equation}
W_{2}(t)f+\dot{W}_{2}(t)g
\end{equation}
 where $W_{2}\left(t,s\right)$ denotes the evolution associated with
the Hamiltonian $-\Delta+V_{2}(x-\vec{v}t)$ and initial velocity $f$.
starting from $s$ to $t$. And formally, $\dot{W}_{2}(t,s)$ is used
to denote the evolution associated with $g$ from $s$ to $t$. Here
the dot in $\dot{W}_{2}$ is not the time derivative but simply a
notation. These evolution can be obtained from the entries of the
solution map if we write the wave equation $\partial_{tt}u-\Delta u+V_{2}(x-\vec{v}t)u=0$
using the Hamiltonian structure. We also use the short-hand notation
$W_{2}(t)$ and $\dot{W}_{2}(t)$ to denote the evolution starting
at $s=0$.

Finally 
\begin{equation}
\chi_{3}(x,t)u_{T}(x,t)
\end{equation}
 is compared with 
\begin{equation}
W_{0}(t)f+\dot{W}_{0}(t)g
\end{equation}
 where 
\begin{equation}
W_{0}(t):=\frac{\sin\left(t\sqrt{-\Delta}\right)}{\sqrt{-\Delta}}.
\end{equation}

To be more explicit, we write 
\begin{eqnarray}
\chi_{1}(x,t)u_{T}(x,t) & = & \chi_{1}(x,t)W_{1}(t)f+\chi_{1}(x,t)\dot{W}_{1}(t)g\nonumber \\
 &  & -\chi_{1}(x,t)\int_{0}^{t}W_{1}(t-s)V_{2}(\cdot-sv)u_{T}(s)\,ds,\label{eq:first}
\end{eqnarray}
\begin{eqnarray}
\chi_{2}(x,t)u_{T}(x,t) & = & \chi_{2}(x,t)W_{2}(t)f+\chi_{2}(x,t)\dot{W}_{2}(t)g\nonumber \\
 &  & -\chi_{2}(x,t)\int_{0}^{t}W_{2}(t,s)V_{1}u_{T}(s)\,ds\label{eq:second}
\end{eqnarray}
and 
\begin{eqnarray}
\chi_{3}(x,t)u_{T}(x,t) & = & \chi_{3}(x,t)W_{0}(t)f+\chi_{3}(x,t)\dot{W}_{0}(t)g\nonumber \\
 &  & -\chi_{3}(x,t)\int_{0}^{t}W_{0}(t-s)\left(V_{1}+V_{2}(\cdot-vs)\right)u_{T}(s)\,ds.\label{eq:third}
\end{eqnarray}

\subsection{Analysis of the three channels}

We will use the notations 
\begin{eqnarray}
u_{T}(x,t) & = & a_{T}(t)w(x)+b_{T}\left(\gamma(t-vx_{1})\right)m_{v}\left(x,t\right)+r_{T}(x,t)\nonumber \\
 & =: & a_{T}(t)w(x)+u_{T,1}\left(x,t\right)\label{eq:decomposition}\\
 & =: & b_{T}\left(\gamma(t-vx_{1})\right)m_{v}\left(x,t\right)+u_{T,2}(x,t).\nonumber 
\end{eqnarray}
Note that 
\begin{equation}
P_{c}\left(H_{1}\right)\left(u_{T,1}\right)=u_{T,1}
\end{equation}
and 
\begin{equation}
P_{c}\left(H_{2}\right)\left(u_{T,2}\right)_{L}=\left(u_{T,2}\right)_{L}.
\end{equation}
The free channel and the channel associated with $H_{1}$ are easy
to analyze with the endpoint reversed Strichartz estimate and results
for estimates along slanted lines, Theorems \ref{thm:EndRStrichF},
\ref{thm:PStriRStrich}, \ref{thm: persl} and Lemma \ref{lem:freesl}. 

\subsubsection{Analysis of $\chi_{1}(x,t)u_{T}(x,t)$:}

We consider 
\begin{eqnarray}
\chi_{1}(x,t)u_{T}(x,t) & = & \chi_{1}(x,t)W_{1}(t)f+\chi_{1}(x,t)\dot{W}_{1}(t)g\nonumber \\
 &  & -\chi_{1}(x,t)\int_{0}^{t}W_{1}(t-s)V_{2}(\cdot-sv)u_{T}(s)\,ds.
\end{eqnarray}
Given $B$ fixed and $0\ll B\ll T$, one can always bound the integrals
restricted to $[0,B]$,
\[
\int_{0}^{B}\left|\chi_{1}(x,t)u_{T}(x,t)\right|^{2}dt,\quad\int_{0}^{B}\left|\chi_{1}(x,t)u_{T}^{S}(x,t)\right|^{2}dt
\]
by a prescribed constant by Gr\"onwall's inequality as Lemma \ref{lem:bootstrap}.
Therefore, it suffices to consider the integrals over $[B,T]$. If
we fixed $0\ll A\ll T$ large, one can always find a big constant
$B$ such that $A\ll\frac{\left(v-2\delta\right)}{1+v}B$. Then when
we consider the integrals from $B$ to $T$, by the finite speed of
propagation and the fact that $V_{2}$ is compactly supported, we
can further reduce 
\begin{eqnarray}
\chi_{1}(x,t)u_{T}(x,t) & = & \chi_{1}(x,t)W_{1}(t)f+\chi_{1}(x,t)\dot{W}_{1}(t)g\nonumber \\
 &  & -\chi_{1}(x,t)\int_{0}^{t-A}W_{1}(t-s)V_{2}(\cdot-sv)u_{T}(s)\,ds.
\end{eqnarray}
For $s>t-A$, the center of $V_{2}$ is of distance at least $\left|\left(t-A\right)v\right|$
away from the center of the support of $\chi_{1}$. Meanwhile, $t-s$
is at most $A$. So the effects caused by $W_{1}(t-s)V_{2}(\cdot-sv)u_{T}(s)$
will not influence the points in the support of $\chi_{1}$.

First, we consider the endpoint reversed Strichartz estimate \eqref{eq:boot1},
\begin{eqnarray*}
\int_{0}^{T}\left|\chi_{1}(x,t)u_{T,1}(x,t)\right|^{2}dt & \lesssim & \int_{0}^{T}\left|\chi_{1}(x,t)W_{1}(t)P_{c}\left(H_{1}\right)f+\chi_{1}(x,t)\dot{W}_{1}(t)P_{c}\left(H_{1}\right)g\right|^{2}dt\\
 &  & +\int_{0}^{T}\left|\chi_{1}(x,t)\int_{0}^{t-A}W_{1}(t-s)P_{c}\left(H_{1}\right)V_{2}(\cdot-sv)u_{T}(s)\,ds\right|^{2}dt\\
 & \lesssim & \left(\|f\|_{L^{2}}+\|g\|_{\dot{H}^{1}}\right)^{2}\\
 &  & +\int_{0}^{B}\left|\chi_{1}(x,t)\int_{0}^{t}W_{1}(t-s)P_{c}\left(H_{1}\right)V_{2}(\cdot-sv)u_{T}(s)\,ds\right|^{2}dt\\
 &  & +\int_{B}^{T}\left|\chi_{1}(x,t)\int_{0}^{t-A}W_{1}(t-s)P_{c}\left(H_{1}\right)V_{2}(\cdot-sv)u_{T}(s)\,ds\right|^{2}\\
 & \lesssim & \left(\|f\|_{L^{2}}+\|g\|_{\dot{H}^{1}}\right)^{2}+C(B)\left(\|f\|_{L^{2}}+\|g\|_{\dot{H}^{1}}\right)^{2}\\
 &  & +\int_{B}^{T}\left|\chi_{1}(x,t)\int_{0}^{t-A}W_{1}(t-s)P_{c}\left(H_{1}\right)V_{2}(\cdot-sv)u_{T}(s)\,ds\right|^{2}dt\\
 & \lesssim & \left(\|f\|_{L^{2}}+\|g\|_{\dot{H}^{1}}\right)^{2}+C(B)\left(\|f\|_{L^{2}}+\|g\|_{\dot{H}^{1}}\right)^{2}\\
 &  & +\frac{1}{A}C_{2}(T)\left(\|f\|_{L^{2}}+\|g\|_{\dot{H}^{1}}\right)^{2}.
\end{eqnarray*}
In the above calculations, for the second inequality, we applied the
endpoint Strichartz estimate for perturbed wave equations, cf.~Theorem
\ref{thm:PStriRStrich}:
\[
\int_{0}^{T}\left|\chi_{1}(x,t)W_{1}(t)P_{c}\left(H_{1}\right)f+\chi_{1}(x,t)\dot{W}_{1}(t)P_{c}\left(H_{1}\right)g\right|^{2}dt\lesssim\left(\|f\|_{L^{2}}+\|g\|_{\dot{H}^{1}}\right)^{2}.
\]
For the third inequality, we used the fact that $B$ is a fixed big
constant, one can always find $C(B)$ independent of $T$ to ensure
the inequality holds as we did in Lemma \ref{lem:Pbootstrap}:
\[
\int_{0}^{B}\left|\chi_{1}(x,t)\int_{0}^{t}W_{1}(t-s)P_{c}\left(H_{1}\right)V_{2}(\cdot-sv)u_{T}(s)\,ds\right|^{2}dt\lesssim C(B)\left(\|f\|_{L^{2}}+\|g\|_{\dot{H}^{1}}\right)^{2}.
\]
For the last inequality, we used the bootstrap assumption \eqref{eq:boot2}
and the results from the section on estimates along slanted lines,
Theorem \ref{thm:Prot} and Corollary \ref{cor:perinhomA}. By Theorem
\ref{thm:Prot}, 
\begin{align*}
\int_{B}^{T}\left|\chi_{1}(x,t)\int_{0}^{t-A}W_{1}(t-s)P_{c}\left(H_{1}\right)V_{2}(\cdot-sv)u_{T}(s)\,ds\right|^{2}dt & \lesssim\frac{1}{A^{2}}\left\Vert V_{2}\right\Vert _{L_{x}^{1}}^{2}\sup_{x}\int_{0}^{T}\left|u_{T}(t)\right|^{2}\,dt\\
 & \lesssim\frac{1}{A}C_{2}(T)\left(\|f\|_{L^{2}}+\|g\|_{\dot{H}^{1}}\right).^{2}
\end{align*}
Therefore, 
\begin{equation}
\int_{0}^{T}\left|\chi_{1}(x,t)u_{T,1}(x,t)\right|^{2}dt\lesssim\left(C_{0}+C(A)+\frac{1}{A}C_{2}(T)\right)\left(\|f\|_{L^{2}}+\|g\|_{\dot{H}^{1}}\right)^{2}.\label{eq:firstEnd}
\end{equation}
For the remaining piece, by estimates \eqref{eq:abootL0}, \eqref{eq:abootL1}
and Agmon's estimate,
\begin{equation}
\int_{0}^{T}\left|\chi_{1}(x,t)a_{T}(t)w(x)\right|^{2}dt\lesssim\left(C(A,\lambda)+\frac{1}{\lambda A}C_{1}(T)\right)\left(\|f\|_{L^{2}}+\|g\|_{\dot{H}^{1}}\right)^{2}.\label{eq:firstEndB}
\end{equation}
Therefore, with estimates \eqref{eq:firstEnd} and \eqref{eq:firstEndB},
for the endpoint reversed estimate, we obtain
\begin{equation}
C_{1}(T)\lesssim C_{0}+C(A,B)+\frac{1}{A}C_{2}(T)\label{eq:boot1first}
\end{equation}
in the first channel. So for $A$ large, in this channel, we have
the condition for the bootstrap argument.

Next we consider the estimate along the slanted line $(x+vt,t)$. 

Denoting 
\begin{equation}
u_{T,1}^{S}(x,t)=\chi_{1}(x+vt,t)u_{T,1}(x+vt,t),
\end{equation}
we want to estimate 
\begin{equation}
\int_{0}^{T}\left|\chi_{1}(x+vt,t)u_{T,1}(x+vt,t)\right|^{2}dt=\int_{0}^{T}\left|u_{T,1}^{S}(x,t)\right|^{2}dt.
\end{equation}
Furthermore, we introduce
\begin{equation}
D_{1}^{S}(x,t):=D_{1}\left(x+vt,t\right)
\end{equation}
where 
\begin{equation}
D_{1}(x,t):=\chi_{1}(x,t)W_{1}(t)P_{c}\left(H_{1}\right)f+\chi_{1}(x,t)\dot{W_{1}}(t)P_{c}\left(H_{1}\right)g;
\end{equation}
 
\begin{equation}
k_{1}^{S}(x,t):=k_{1}\left(x+vt,t\right)
\end{equation}
where 
\begin{equation}
k_{1}(x,t):=\chi_{1}(x,t)\int_{0}^{t}W_{1}(t-s)P_{c}\left(H_{1}\right)V_{2}(\cdot-sv)u_{T}(s)\,ds;
\end{equation}

\begin{equation}
E_{1}^{S}(x,t):=E_{1}\left(x+vt,t\right)
\end{equation}
where 
\begin{equation}
E_{1}(x,t):=\chi_{1}(x,t)\int_{0}^{t-A}W_{1}(t-s)P_{c}\left(H_{1}\right)V_{2}(\cdot-sv)u_{T}(s)\,ds.
\end{equation}
Then we can conclude 
\begin{eqnarray}
\int_{0}^{T}\left|u_{T,1}^{S}\right|^{2}dt & \lesssim & \int_{0}^{T}\left|D_{1}^{S}\right|^{2}dt+\int_{0}^{B}\left|k_{1}^{S}\right|^{2}dt+\int_{B}^{T}\left|E_{1}^{S}\right|^{2}dt\nonumber \\
 & \lesssim & \left(\|f\|_{L^{2}}+\|g\|_{\dot{H}^{1}}\right)^{2}+C(B)\left(\|f\|_{L^{2}}+\|g\|_{\dot{H}^{1}}\right)^{2}\nonumber \\
 &  & +\frac{1}{A}C_{2}(T)\left(\|f\|_{L^{2}}+\|g\|_{\dot{H}^{1}}\right)^{2}\label{eq:firstSE}
\end{eqnarray}
 similar to the analysis of estimate \eqref{eq:firstEnd} via Theorems
\ref{thm: persl}, \ref{thm:Prot} and Corollary \ref{cor:perinhomA}. 

For the piece with bound states, by estimate \eqref{eq:abootL1} and
Agmon's estimate, 
\begin{eqnarray}
\int_{0}^{T}\left|\chi_{1}(x+vt,t)a_{T}(t)w(x+vt)\right|^{2}dt\label{eq:firstSEB}\\
\lesssim\left(C(A,\lambda)+\frac{1}{\lambda A}C_{1}(T)\right)\left(\|f\|_{L^{2}}+\|g\|_{\dot{H}^{1}}\right)^{2}.\nonumber 
\end{eqnarray}
Therefore, with estimates \eqref{eq:firstSE} and \eqref{eq:firstSEB},
we obtain
\begin{equation}
C_{2}(T)\lesssim C_{0}+C(A,B)+\frac{1}{A}C_{2}(T)\label{eq:boot2first}
\end{equation}
in the first channel. So for $A$ large, in this channel, we obtain
the desired reduction for the bootstrap argument.

\subsubsection{Analysis of $\chi_{2}(x,t)u_{T}(x,t)$:}

Now we consider the most delicate channel which is the channel associated
to the moving potential.
\begin{eqnarray}
\chi_{2}(x,t)u_{T}(x,t) & = & \chi_{2}(x,t)W_{2}(t)f+\chi_{2}(x,t)\dot{W}_{2}(t)g\nonumber \\
 &  & -\chi_{2}(x,t)\int_{0}^{t}W_{2}(t,s)V_{1}u_{T}(s)\,ds.
\end{eqnarray}
Again, by the finite speed of propagation, it suffices to consider
\begin{eqnarray}
\chi_{2}(x,t)u_{T}(x,t) & = & \chi_{2}(x,t)W_{2}(t)f+\chi_{2}(x,t)\dot{W}_{2}(t)g\nonumber \\
 &  & -\chi_{2}(x,t)\int_{0}^{t-A}W_{2}(t,s)V_{1}u_{T}(s)\,ds.
\end{eqnarray}
Note that with the Lorentz transformation associated with $V_{2}(x-\vec{v}t)$,
we have 
\begin{equation}
\left(u_{T}\right)_{L}\left(x_{1}',x_{2}',x_{3}',t'\right)=u_{T}\left(\gamma\left(x_{1}'+vt'\right),x_{2}',x_{3}',\gamma\left(t'+vx_{1}'\right)\right)
\end{equation}
and 
\begin{equation}
u_{T}(x,t)=\left(u_{T}\right)_{L}\left(\gamma\left(x_{1}-vt\right),x_{2},x_{3},\gamma\left(t-vx_{1}\right)\right).
\end{equation}
The endpoint reversed Strichartz estimate \eqref{eq:boot1} for this
channel is equivalent to the estimate along the slanted line $(x-\vec{v}t,t)$
under the new frame. Meanwhile, the estimate along the slanted line
$(x+vt,t)$, see \eqref{eq:boot2}, for this channel is equivalent to
the endpoint reversed Strichartz estimate with respect to the new
frame. 

Denote $\widetilde{g}$ and $\widetilde{f}$ to denote the initial
data with respect to this new frame under which $V_{2}$ is stationary
and $V_{1}$ is moving. We use $W_{2}^{L}(t)$ and $\dot{W}_{2}^{L}(t)$
to denote the evolutions associated to $\tilde{f}$ and $\widetilde{g}$
respectively in the new frame. By construction, in the new frame,
$W_{2}^{L}(t)$ is the sine evolution with respect to $H_{2}$. By
Theorem \ref{thm:generalC}, we know 
\begin{equation}
\left(\|\widetilde{f}\|_{L^{2}}+\|\widetilde{g}\|_{\dot{H}^{1}}\right)\simeq\left(\|f\|_{L^{2}}+\|g\|_{\dot{H}^{1}}\right).
\end{equation}
Denote 
\begin{equation}
D_{2}^{S}(x,t):=D_{2}\left(x-\vec{v}t,t\right)
\end{equation}
where 
\begin{equation}
D_{2}(x,t):=W_{2}^{L}(t)P_{c}\left(H_{2}\right)\widetilde{f}+\dot{W}_{2}^{L}(t)P_{c}\left(H_{2}\right)\widetilde{g};
\end{equation}
\begin{equation}
k_{2}^{S}(x,t):=k_{2}\left(x-\vec{v}t,t\right)
\end{equation}
where 
\begin{equation}
k_{2}(x,t):=\int_{0}^{t}W_{2}^{L}(t-s)P_{c}\left(H_{2}\right)V_{1}(s)u_{T}(s)\,ds;
\end{equation}

\begin{equation}
E_{2}^{S}(x,t):=E_{2}\left(x-\vec{v}t,t\right)
\end{equation}
where 
\begin{equation}
E_{2}(x,t)=\int_{0}^{t-A}W_{2}^{L}(t-s)P_{c}\left(H_{2}\right)V_{1}(s)u_{T}(s)\,ds.
\end{equation}
With the estimates along the slanted line $\left(x-\vec{v}t,t\right)$ for
$W_{2}^{L}(t)$, Theorem \ref{thm: persl}, we know
\begin{eqnarray}
\int_{0}^{T}\left|\chi_{2}(x,t)u_{T,2}(x,t)\right|^{2}dt & = & \int_{0}^{T}\left|\left(u_{T,2}\right)_{L}\left(\gamma\left(x_{1}-vt\right),x_{2},x_{3},\gamma\left(t-vx_{1}\right)\right)\right|^{2}dt\nonumber \\
 & \lesssim & \int_{0}^{T}\left|D_{2}^{S}\right|^{2}dt+\int_{0}^{B}\left|k_{2}^{S}\right|^{2}dt+\int_{B}^{T}\left|E_{2}^{S}\right|^{2}dt\nonumber \\
 & \lesssim & \left(\|f\|_{L^{2}}+\|g\|_{\dot{H}^{1}}\right)^{2}+C(B)\left(\|f\|_{L^{2}}+\|g\|_{\dot{H}^{1}}\right)^{2}\nonumber \\
 &  & +\frac{1}{A}\left(\left\Vert V_{1}u_{T}\right\Vert _{L_{x}^{1}L_{t}^{2}[0,T]}\right)^{2}\label{eq:SecondEnd}\\
 & \lesssim & \left(\|f\|_{L^{2}}+\|g\|_{\dot{H}^{1}}\right)^{2}+C(B)\left(\|f\|_{L^{2}}+\|g\|_{\dot{H}^{1}}\right)^{2}\nonumber \\
 &  & +\frac{1}{A}\left(\left\Vert u_{T}\right\Vert _{L_{x}^{\infty}L_{t}^{2}[0,T]}\right)^{2}\nonumber \\
 & \lesssim & \left(\|f\|_{L^{2}}+\|g\|_{\dot{H}^{1}}\right)^{2}+C(B)\left(\|f\|_{L^{2}}+\|g\|_{\dot{H}^{1}}\right)^{2}\nonumber \\
 &  & +\frac{1}{A}C_{1}(T)\left(\|f\|_{L^{2}}+\|g\|_{\dot{H}^{1}}\right)^{2}\nonumber 
\end{eqnarray}
by the bootstrap assumption \eqref{eq:boot1}, Theorem \ref{thm:Prot}
and Corollary \ref{cor:perinhomA}. For the third inequality, we also
use the fact $A$ is a fixed big constant, we can always find $C(A)$
independent of $T$ to ensure the inequality holds.

For the piece with bound states, by estimate \eqref{eq:bbootL0} and
Agmon's estimate, one has
\begin{eqnarray}
\int_{0}^{T}\left|\chi_{2}(x,t)b_{T}\left(\gamma(t-vx_{1})\right)m_{v}\left(x,t\right)\right|^{2}dt\label{eq:secondEB}\\
\lesssim\left(C(A,\mu)+\frac{1}{\mu A}C_{1}(T)\right)\left(\|f\|_{L^{2}}+\|g\|_{\dot{H}^{1}}\right)^{2}.\nonumber 
\end{eqnarray}
Hence in this channel, with estimates \eqref{eq:SecondEnd} and \eqref{eq:secondEB},
\begin{equation}
C_{1}(T)\lesssim C_{0}+C(A,B)+\frac{1}{A}C_{1}(T).\label{eq:boot1second}
\end{equation}
So for $A$ large, in this channel, we achieve the condition for the
bootstrap argument.

Now we analyze the estimate along $(x+vt,t)$. The argument here is
similar to the analysis for the first channel. 

Denote 
\begin{equation}
u_{T,2}^{S}(x,t):=\chi_{2}(x+vt,t)u_{T,2}(x+vt,t).
\end{equation}
Then
\begin{eqnarray}
\int_{0}^{T}\left|u_{T,2}^{S}(x,t)\right|^{2}dt & \lesssim & \int_{0}^{T}\left|\left(u_{T,2}\right)_{L}(x,t)\right|^{2}dt\nonumber \\
 & \lesssim & \int_{0}^{T}\left|D_{2}(x,t)\right|^{2}dt+\int_{0}^{T}\left|k_{2}(x,t)\right|^{2}dt+\int_{0}^{T}\left|E_{2}(x,t)\right|^{2}dt\nonumber \\
 & \lesssim & \left(\|f\|_{L^{2}}+\|g\|_{\dot{H}^{1}}\right)^{2}+C(B)\left(\|f\|_{L^{2}}+\|g\|_{\dot{H}^{1}}\right)^{2}\nonumber \\
 &  & +\frac{1}{A}\left(\left\Vert V_{1}u_{T}\right\Vert _{L_{x}^{1}L_{t}^{2}\left(0,T\right)}\right)^{2}\\
 & \lesssim & \left(\|f\|_{L^{2}}+\|g\|_{\dot{H}^{1}}\right)^{2}+C(B)\left(\|f\|_{L^{2}}+\|g\|_{\dot{H}^{1}}\right)^{2}\nonumber \\
 &  & +\frac{1}{A}C_{1}(T)\left(\|f\|_{L^{2}}+\|g\|_{\dot{H}^{1}}\right)^{2}\nonumber 
\end{eqnarray}
with the bootstrap assumption \eqref{eq:boot1} and Corollary \ref{cor:perinhomA}. 

For the remaining piece with bound states, by a similar argument to
estimate \eqref{eq:firstEndB}, we have
\begin{eqnarray}
\int_{0}^{T}\left|\chi_{2}(x+vt,t)\left(b_{T}\left(\gamma\left(\left(1-v^{2}\right)t-vx_{1}\right)\right)m_{v}\left(x+vt,t\right)\right)\right|^{2}dt\nonumber \\
\lesssim\left(C(A,\mu)+\frac{1}{\mu A}C_{1}(T)\right)\left(\|f\|_{L^{2}}+\|g\|_{\dot{H}^{1}}\right)^{2}.\label{eq:secondSEB}
\end{eqnarray}
 Therefore, in this channel, we obtain
\begin{equation}
C_{2}(T)\lesssim C_{0}+C(A,B)+\frac{1}{A}C_{1}(T).\label{eq:boot2second}
\end{equation}
For $A$ large, in this channel, we recapture the condition for the
bootstrap argument.

\subsubsection{Analysis of $\chi_{3}(x,t)u_{T}(x,t)$:}

Finally, we consider the free channel $\chi_{3}(x,t)u_{T}(x,t)$.
In this channel, we can estimate all pieces together since the dominant
evolution is the free ones.

We know 
\begin{eqnarray}
\chi_{3}(x,t)u_{T}(x,t) & = & \chi_{3}(x,t)W_{0}(t)f+\chi_{3}(x,t)\dot{W}_{0}(t)g\nonumber \\
 &  & -\chi_{3}(x,t)\int_{0}^{t}W_{0}(t-s)\left(V_{1}+V_{2}(\cdot-vs)\right)u_{T}(s)\,ds.
\end{eqnarray}
By the finite speed of propagation as above, it suffices to consider
\begin{eqnarray}
\chi_{3}(x,t)u_{T}(x,t) & = & \chi_{3}(x,t)W_{0}(t)f+\chi_{3}(x,t)\dot{W}_{0}(t)g\nonumber \\
 &  & -\chi_{3}(x,t)\int_{0}^{t-A}W_{0}(t-s)\left(V_{1}+V_{2}(\cdot-vs)\right)u_{T}(s)\,ds.
\end{eqnarray}
Consider the endpoint reversed Strichartz estimate,
\begin{eqnarray}
\int_{0}^{T}\left|\chi_{3}(x,t)u_{T}(x,t)\right|^{2}dt & \lesssim & \int_{0}^{T}\left|\chi_{3}(x,t)W_{0}(t)f+\chi_{3}(x,t)\dot{W_{0}}(t)g\right|^{2}dt\nonumber \\
 &  & +\int_{0}^{T}\left|\chi_{3}(x,t)\int_{0}^{t-A}W_{0}(t-s)V_{1}u_{T}(s)\,ds\right|^{2}dt\\
 &  & +\int_{0}^{T}\left|\chi_{3}(x,t)\int_{0}^{t-A}W_{0}(t-s)V_{2}(\cdot-sv)u_{T}(s)\,ds\right|^{2}dt\nonumber \\
 & \lesssim & \left(\|f\|_{L^{2}}+\|g\|_{\dot{H}^{1}}\right)^{2}+C(B)\left(\|f\|_{L^{2}}+\|g\|_{\dot{H}^{1}}\right)^{2}\nonumber \\
 &  & +\frac{1}{A}\left(C_{1}(T)+C_{2}(T)\right)\left(\|f\|_{L^{2}}+\|g\|_{\dot{H}^{1}}\right)^{2}\nonumber 
\end{eqnarray}
For the last inequality, we apply the bootstrap assumptions \eqref{eq:boot1}
and \eqref{eq:boot2}. 
\begin{equation}
C_{1}(T)\lesssim C_{0}+C(A,B)+\frac{1}{A}\left(C_{1}(T)+C_{2}(T)\right).\label{eq:boot1third}
\end{equation}
So for $A$ large, in this channel, we obtain the condition for bootstrap
argument.

Next we consider the estimate along slanted line $(x+vt,t)$. 

Denote 
\begin{equation}
u_{T}^{S}(x,t)=\chi_{3}(x+vt,t)u_{T}(x+vt,t),
\end{equation}
\begin{equation}
u_{T,3}^{S}(x,t):=u_{T,3}\left(x+vt,t\right)
\end{equation}
where 
\begin{equation}
u_{T,3}(x,t):=\chi_{3}(x,t)W_{0}(t)f+\chi_{3}(x,t)\dot{W}_{0}(t)g;
\end{equation}
\begin{equation}
k_{3}^{S}(x,t):=k_{3}\left(x+vt,t\right)
\end{equation}
where 
\begin{equation}
k_{3}(x,t):=\chi_{3}(x,t)\int_{0}^{t}W_{0}(t-s)\left(V_{1}+V_{2}(\cdot-sv)\right)u_{T}(s)\,ds;
\end{equation}

\begin{equation}
E_{3}^{S}(x,t):=E_{3}\left(x+vt,t\right)
\end{equation}
where 
\begin{equation}
E_{3}(x,t):=\chi_{3}(x,t)\int_{0}^{t-A}W_{0}(t-s)\left(V_{1}+V_{2}(\cdot-sv)\right)u_{T}(s)\,ds.
\end{equation}
Then
\begin{eqnarray}
\int_{0}^{T}\left|u_{T}^{S}(x,t)\right|^{2}dt & \lesssim & \int_{0}^{T}\left|u_{T,3}^{S}(x,t)\right|^{2}dt+\int_{0}^{B}\left|k_{3}^{S}(x,t)\right|^{2}dt+\int_{B}^{T}\left|E_{3}^{S}(x,t)\right|^{2}dt\nonumber \\
 & \lesssim & \left(\|f\|_{L^{2}}+\|g\|_{\dot{H}^{1}}\right)^{2}+C(B)\left(\|f\|_{L^{2}}+\|g\|_{\dot{H}^{1}}\right)^{2}\nonumber \\
 &  & +\frac{1}{A}\left(C_{1}(T)+C_{2}(T)\right)\left(\|f\|_{L^{2}}+\|g\|_{\dot{H}^{1}}\right)^{2}
\end{eqnarray}
Therefore, we obtain
\begin{equation}
C_{2}(T)\lesssim C_{0}+C(A,B)+\frac{1}{A}\left(C_{1}(T)+C_{2}(T)\right)\label{eq:boot2third}
\end{equation}
along the free channel. So for $A$ large, we recapture the condition
for the bootstrap argument.

\subsection{Conclusion}

Finally, by the results from the analysis of three channels above,
we conclude 
\begin{equation}
C_{1}(T)\lesssim C_{0}+\frac{1}{A}C_{1}(T)+\frac{1}{A}C_{2}(T)+C(A,B)
\end{equation}
\begin{equation}
C_{2}(T)\lesssim C_{0}+\frac{1}{A}C_{1}(T)+\frac{1}{A}C_{2}(T)+C(A,B)
\end{equation}
where $C_{1}(T)$ is the constant appearing for the bootstrap assumption
\eqref{eq:boot1} for the endpoint reversed Strichartz estimate and
$C_{2}(T)$ is the constant for the bootstrap assumption \eqref{eq:boot2}
for the estimate along $(x+vt,t)$. 

We apply the bootstrap argument for these two estimates simultaneously.
We conclude that $C_{1}(T)$ and $C_{2}(T)$ are independent of $T$.
In other words, one has

\begin{equation}
\sup_{x\in\mathbb{R}^{3}}\int_{0}^{T}\left|u_{T}(x,t)\right|^{2}dt\leq C_{1}\left(\|f\|_{L^{2}}+\|g\|_{\dot{H}^{1}}\right)^{2}
\end{equation}
\begin{equation}
\sup_{x\in\mathbb{R}^{3}}\int_{0}^{T}\left|u_{T}^{S}\left(x,t\right)\right|^{2}dt\leq C_{2}\left(\|f\|_{L^{2}}+\|g\|_{\dot{H}^{1}}\right)^{2}.
\end{equation}
Finally, as we discussed above, passing $T$ to $\infty$, we will
recover those two estimates for a scattering state $u(x,t)$:

\begin{equation}
\sup_{x\in\mathbb{R}^{3}}\int_{0}^{\infty}\left|u(x,t)\right|^{2}dt\lesssim\left(\|f\|_{L^{2}}+\|g\|_{\dot{H}^{1}}\right)^{2}
\end{equation}
and 
\begin{equation}
\sup_{x\in\mathbb{R}^{3}}\int_{0}^{\infty}\left|u^{S}(x,t)\right|^{2}dt\lesssim\left(\|f\|_{L^{2}}+\|g\|_{\dot{H}^{1}}\right)^{2}.
\end{equation}

\begin{rem}
\label{rem:smalltail}In the above analysis, we assumed $V_{1}$ and
$V_{2}$ are compactly supported. With more careful calculations,
it is easy to extend the above results to $V_{1}$ and $V_{2}$ decay
as we assume in the Definition \ref{def: Charge}. In this case, instead
of vanishing, the smallness conditions of our bootstrap procedure
are from the smallness of tails of $V_{1}$ and $V_{2}$ in $L_{x}^{1}$
and $L_{x}^{\frac{3}{2},1}$ in the estimates for the following terms 

\begin{equation}
\chi_{1}\int_{t-A}^{t}W_{1}(t-s)V_{2}(\cdot-sv)u_{T}(s)\,ds,
\end{equation}
\begin{equation}
\chi_{2}\int_{t-A}^{t}W_{2}^{L}(t-s)V_{1}(s)u_{T}(s)\,ds
\end{equation}
and
\begin{equation}
\chi_{3}\int_{t-A}^{t}W_{0}(t-s)\left(V_{1}+V_{2}(\cdot-vs)\right)u_{T}(s)\,ds.
\end{equation}
To demonstrate, we compute a concrete example below. 
\begin{align}
\left\Vert \int_{t-A}^{t}W_{0}(t-s)V_{2}(\cdot-sv)u_{T}(s)\,ds\right\Vert _{L_{t}^{2}[A,T]}|_{x=0}\qquad\qquad\nonumber \\
=\left\Vert \int_{\left|y\right|\leq A}\frac{1}{\left|y\right|}V_{2}(y-v\left(t-\left|y\right|\right))u_{T}(\left(t-\left|y\right|\right))\,\,dy\right\Vert _{L_{t}^{2}[B,T]}\nonumber \\
\lesssim\left(\frac{A^{2}}{\left\langle A\right\rangle ^{\alpha}}\right)\sup_{x}\left\Vert u_{T}\right\Vert _{L_{t}^{2}[0,T]}\\
\lesssim\frac{1}{A}\sup_{x}\left\Vert u_{T}\right\Vert _{L_{t}^{2}[0,T]}.\nonumber 
\end{align}
All other terms can be estimated by a similar way.
\end{rem}

\section{Strichartz Estimates and Energy Bound \label{sec:StrichWOB}}

We know from the introduction that weighted estimates play important
roles in building Strichartz estimates. In this section, we establish
weighted estimates for a scattering state to the wave equation with
charge transfer Hamiltonian. Just for the sake of convenience, we
will restate our main theorems in this section. 

Throughout this subsection, we will use the short-hand notation 
\begin{equation}
L_{t}^{p}L_{x}^{q}:=L_{t}^{p}\left([0,\infty),\,L_{x}^{q}\right).\label{eq:notation-1}
\end{equation}

\begin{cor}
\label{cor:weightChargeWOB}Let $\left|v\right|<1$. Suppose $u$
is a scattering state in the sense of Definition \ref{AO} and that
it solves 
\begin{equation}
\partial_{tt}u-\Delta u+V_{1}(x)u+V_{2}(x-\vec{v}t)u=0
\end{equation}
with initial data
\begin{equation}
u(x,0)=g(x),\,u_{t}(x,0)=f(x)
\end{equation}
Then for $\alpha>3$,
\begin{equation}
\int_{\mathbb{R}^{+}}\int_{\mathbb{R}^{3}}\frac{1}{\left\langle x\right\rangle ^{\alpha}}u^{2}(x,t)\,dxdt\lesssim\left(\|f\|_{L^{2}}+\|g\|_{\dot{H}^{1}}\right)^{2}\label{eq:WeightCharWOB1}
\end{equation}
and

\textup{
\begin{equation}
\int_{\mathbb{R}^{+}}\int_{\mathbb{R}^{3}}\frac{1}{\left\langle x-\vec{v}t\right\rangle ^{\alpha}}u^{2}(x,t)\,dxdt\lesssim\left(\|f\|_{L^{2}}+\|g\|_{\dot{H}^{1}}\right)^{2}\label{eq:WeightCharWOB2}
\end{equation}
}
\end{cor}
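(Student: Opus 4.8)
The plan is to obtain both weighted estimates as soft consequences of the endpoint reversed Strichartz estimates already established in Theorem~\ref{thm:EndRSChWOB}, precisely in the manner foreshadowed in the introduction. The guiding principle is that an $L_{x}^{\infty}L_{t}^{2}$ bound, paired with a spatial weight that is integrable on $\mathbb{R}^{3}$, immediately upgrades to an $L_{t,x}^{2}$ bound against that weight; the weight $\langle x\rangle^{-\alpha}$ is integrable on $\mathbb{R}^{3}$ exactly when $\alpha>3$, which is where that hypothesis enters.

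For \eqref{eq:WeightCharWOB1} I would apply Tonelli's theorem (legitimate since the integrand is nonnegative) to exchange the order of integration, estimate the inner time integral pointwise in $x$ by its supremum, and factor out the weight:
\begin{equation}
\int_{\mathbb{R}^{+}}\int_{\mathbb{R}^{3}}\frac{u^{2}(x,t)}{\langle x\rangle^{\alpha}}\,dx\,dt
=\int_{\mathbb{R}^{3}}\frac{1}{\langle x\rangle^{\alpha}}\left(\int_{0}^{\infty}u^{2}(x,t)\,dt\right)dx
\le\left(\sup_{x\in\mathbb{R}^{3}}\int_{0}^{\infty}u^{2}(x,t)\,dt\right)\int_{\mathbb{R}^{3}}\frac{dx}{\langle x\rangle^{\alpha}}.
\end{equation}
For $\alpha>3$ the last factor is a finite absolute constant, and the supremum is bounded by $\bigl(\|f\|_{L^{2}}+\|g\|_{\dot H^{1}}\bigr)^{2}$ by estimate \eqref{eq:EndRSChWOB}. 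This yields \eqref{eq:WeightCharWOB1}.

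For \eqref{eq:WeightCharWOB2} I would first carry out, for each fixed $t$, the measure-preserving translation $x\mapsto x+\vec v t$, which converts $\langle x-\vec v t\rangle^{-\alpha}$ into $\langle x\rangle^{-\alpha}$ and $u(x,t)$ into $u^{S}(x,t)=u(x+vt,t)$; the remaining double integral is then treated exactly as above, now invoking the slanted endpoint estimate \eqref{eq:EndRSChWOBSL} in place of \eqref{eq:EndRSChWOB}.

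I do not anticipate a genuine obstacle: the corollary is essentially a restatement of Theorem~\ref{thm:EndRSChWOB} after integrating against an $L^{1}$ weight. The only points worth a sentence of care are the finiteness of $\int_{\mathbb{R}^{3}}\langle x\rangle^{-\alpha}\,dx$ (valid precisely because $\alpha>3$, matching the decay rate imposed on the $V_{j}$ in Definition~\ref{def: Charge}), the validity of the Tonelli exchange, and the fact that the time-dependent shift used for \eqref{eq:WeightCharWOB2} has unit Jacobian.
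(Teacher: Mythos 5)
Your proof is correct and follows exactly the paper's own argument: exchange the order of integration, pull out $\sup_{x}\int_{0}^{\infty}u^{2}\,dt$, use that $\langle x\rangle^{-\alpha}\in L^{1}(\mathbb{R}^{3})$ for $\alpha>3$, and invoke Theorem~\ref{thm:EndRSChWOB}; for the second estimate, the change of variables $x\mapsto x+\vec{v}t$ reduces it to the slanted estimate \eqref{eq:EndRSChWOBSL}. No substantive differences.
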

\begin{proof}
The two weighted estimates above follow easily from Theorem \ref{thm:EndRSChWOB}. 

For the first one, 
\begin{eqnarray}
\int_{\mathbb{R}^{+}}\int_{\mathbb{R}^{3}}\frac{1}{\left\langle x\right\rangle ^{\alpha}}u^{2}(x,t)\,dxdt & \lesssim & \left(\int_{\mathbb{R}^{3}}\frac{1}{\left\langle x\right\rangle ^{\alpha}}dx\right)\sup_{x}\int_{\mathbb{R}^{+}}u^{2}(x,t)\,dt\nonumber \\
 & \lesssim & \left(\|f\|_{L^{2}}+\|g\|_{\dot{H}^{1}}\right)^{2}
\end{eqnarray}
by the endpoint reversed Strichartz estimate \eqref{eq:EndRSChWOB}
for $u$.

For the second one, one has
\begin{eqnarray}
\int_{\mathbb{R}^{+}}\int_{\mathbb{R}^{3}}\frac{1}{\left\langle x-\vec{v}t\right\rangle ^{\alpha}}u^{2}(x,t)\,dxdt & \lesssim & \int_{\mathbb{R}}\int_{\mathbb{R}^{3}}\frac{1}{\left\langle y\right\rangle ^{\alpha}}u^{2}(y+vt,t)\,dydt\nonumber \\
 & \lesssim & \left(\int_{\mathbb{R}^{3}}\frac{1}{\left\langle y\right\rangle ^{\alpha}}\,dy\right)\sup_{x}\int_{\mathbb{R}^{+}}\left|u^{S}(x,t)\right|^{2}dt\nonumber \\
 & \lesssim & \left(\|f\|_{L^{2}}+\|g\|_{\dot{H}^{1}}\right)^{2}
\end{eqnarray}
by our estimate \eqref{eq:EndRSChWOBSL} along the slanted line $(x+vt,t)$.

We are done.
\end{proof}
\begin{thm}
\label{thm:StrichaWOB}Let $\left|v\right|<1$. Suppose $u$ is a
scattering state in the sense of Definition \ref{AO} which solves
\begin{equation}
\partial_{tt}u-\Delta u+V_{1}(x)u+V_{2}(x-\vec{v}t)u=0\label{eq:StriTwoE}
\end{equation}
with initial data
\begin{equation}
u(x,0)=g(x),\,u_{t}(x,0)=f(x).
\end{equation}
Then for $p>2$, and $(p,q)$ satisfying 
\begin{equation}
\frac{1}{2}=\frac{1}{p}+\frac{3}{q}
\end{equation}
we have
\begin{equation}
\|u\|_{L_{t}^{p}\left([0,\infty),\,L_{x}^{q}\right)}\lesssim\|f\|_{L^{2}}+\|g\|_{\dot{H}^{1}}\label{eq:StriCharWOB}
\end{equation}
\end{thm}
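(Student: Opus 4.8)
The plan is to run the Rodnianski--Schlag argument: the weighted $L^{2}_{t,x}$ bounds of Corollary \ref{cor:weightChargeWOB} will be absorbed into a Duhamel expansion against the \emph{free} wave propagator. Fix a small $\epsilon>0$. First I would write $u$ as the solution of $\partial_{tt}u-\Delta u=-\bigl(V_{1}+V_{2}(\cdot-vs)\bigr)u$ with data $(g,f)$, so that
\begin{equation}
u=\frac{\sin\bigl(t\sqrt{-\Delta}\bigr)}{\sqrt{-\Delta}}f+\cos\bigl(t\sqrt{-\Delta}\bigr)g-\int_{0}^{t}\frac{\sin\bigl((t-s)\sqrt{-\Delta}\bigr)}{\sqrt{-\Delta}}\bigl(V_{1}u+V_{2}(\cdot-vs)u\bigr)(s)\,ds.
\end{equation}
Since $p>2=2/s$ with $s=1$ and $\tfrac12=\tfrac1p+\tfrac3q$, Theorem \ref{thm:StrichF} (with $F=0$) bounds the first two terms in $L^{p}_{t}L^{q}_{x}$ by $\|f\|_{L^{2}}+\|g\|_{\dot{H}^{1}}$, so it remains to estimate $u_{1}:=\int_{0}^{t}\frac{\sin((t-s)\sqrt{-\Delta})}{\sqrt{-\Delta}}(V_{1}u)(s)\,ds$ and $u_{2}:=\int_{0}^{t}\frac{\sin((t-s)\sqrt{-\Delta})}{\sqrt{-\Delta}}(V_{2}(\cdot-vs)u)(s)\,ds$ in $L^{p}_{t}L^{q}_{x}$.

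The key is an inhomogeneous estimate \emph{dual to local energy decay}. Writing $\sin$ in terms of $e^{\pm it\sqrt{-\Delta}}$, the untruncated operator $G\mapsto\int_{\mathbb{R}}\frac{\sin((t-s)\sqrt{-\Delta})}{\sqrt{-\Delta}}G(s)\,ds$ is a sum of terms $e^{\pm it\sqrt{-\Delta}}\tfrac{1}{\sqrt{-\Delta}}\int e^{\mp is\sqrt{-\Delta}}G(s)\,ds$. The homogeneous half-wave Strichartz bound $\|e^{\pm it\sqrt{-\Delta}}h\|_{L^{p}_{t}L^{q}_{x}}\lesssim\|h\|_{\dot{H}^{1}}$ (a consequence of Theorem \ref{thm:StrichF}) reduces matters to $\|\int e^{\mp is\sqrt{-\Delta}}G(s)\,ds\|_{L^{2}_{x}}$, which by duality with Theorem \ref{thm:local} and Corollary \ref{thm:local-1} (the latter with velocities $\pm v$) is bounded both by $\|\langle x\rangle^{\frac12+\epsilon}G\|_{L^{2}_{t,x}}$ and by $\|\langle x-vs\rangle^{\frac12+\epsilon}G\|_{L^{2}_{s,x}}$. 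Hence $\|\int_{\mathbb{R}}\frac{\sin((t-s)\sqrt{-\Delta})}{\sqrt{-\Delta}}G\,ds\|_{L^{p}_{t}L^{q}_{x}}$ is controlled by each of these two weighted norms, and since $p>2$ the Christ--Kiselev lemma (Lemma \ref{lem:Christ-Kiselev} with $\beta=2<\gamma=p$) upgrades this to the retarded operator $G\mapsto\int_{0}^{t}\frac{\sin((t-s)\sqrt{-\Delta})}{\sqrt{-\Delta}}G\,ds$; for the moving weight one first substitutes $G^{S}(y,s)=G(y+vs,s)$ so that the norm lives in a fixed space.

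Applying this to $u_{1}$ and $u_{2}$: since $\alpha>3$, the multipliers $\langle x\rangle^{\frac12+\epsilon}V_{1}(x)\langle x\rangle^{\alpha/2}$ and $\langle y\rangle^{\frac12+\epsilon}V_{2}(y)\langle y\rangle^{\alpha/2}$ are bounded, so
\begin{equation}
\|\langle x\rangle^{\frac12+\epsilon}V_{1}u\|_{L^{2}_{t,x}}\lesssim\|\langle x\rangle^{-\alpha/2}u\|_{L^{2}_{t,x}},\qquad\|\langle x-vs\rangle^{\frac12+\epsilon}V_{2}(\cdot-vs)u\|_{L^{2}_{s,x}}\lesssim\|\langle x-vs\rangle^{-\alpha/2}u\|_{L^{2}_{s,x}},
\end{equation}
and the right-hand sides are $\lesssim\|f\|_{L^{2}}+\|g\|_{\dot{H}^{1}}$ by \eqref{eq:WeightCharWOB1} and \eqref{eq:WeightCharWOB2} respectively. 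Adding the contributions of the free part, $u_{1}$, and $u_{2}$ yields $\|u\|_{L^{p}_{t}L^{q}_{x}}\lesssim\|f\|_{L^{2}}+\|g\|_{\dot{H}^{1}}$.

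The step I expect to be the real obstacle is the treatment of $u_{2}$, the moving-potential term: the naive origin-centered weight $\langle x\rangle^{\frac12+\epsilon}$ is useless there because $\langle x\rangle^{\frac12+\epsilon}V_{2}(x-vt)$ is not bounded uniformly in $t$, so one has to match the dual local-decay weight to the trajectory of $V_{2}$, i.e.\ invoke the moving-frame local energy decay of Corollary \ref{thm:local-1} together with the slanted-line weighted estimate \eqref{eq:WeightCharWOB2} instead of their stationary counterparts. A secondary bookkeeping point is that the Christ--Kiselev lemma in the moving case is applied with a time-dependent ``spatial'' norm $\|G(s)\|$; this is harmless because the proof of Lemma \ref{lem:Christ-Kiselev} only sees the scalar function $s\mapsto\|G(s)\|$.
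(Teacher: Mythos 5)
Your proposal is correct and follows essentially the same route as the paper's proof: Duhamel around the free propagator, free Strichartz for the homogeneous part, Christ--Kiselev to pass from the retarded to the untruncated integral, a $TT^*$-type duality reducing the inhomogeneous term to a half-wave Strichartz bound times a smoothing estimate, and then the local energy decay estimates (Theorem \ref{thm:local} and its moving-frame version Corollary \ref{thm:local-1}) together with the weighted bounds of Corollary \ref{cor:weightChargeWOB}. The only differences are cosmetic bookkeeping: the paper packages the half-wave reduction by setting $A=\sqrt{-\Delta}$ and $U=Au+i\partial_t u$ (following Lawrie--Schlag) rather than expanding $\sin$ into $e^{\pm it\sqrt{-\Delta}}$, and it factorizes $V_j=V_{2j+1}V_{2j+2}$ so that the local-decay weight $V_{2j+1}$ (or its translate $V_5(\cdot-vs)$) is absorbed into the kernel of the global operator $K$ (resp. $\widetilde{K}$), leaving the Christ--Kiselev inputs living in the fixed unweighted space $L^2_{t,x}$ --- this sidesteps the time-dependent-norm issue you flag in your final sentence, which your change of variables $G\mapsto G^S$ resolves just as well. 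Both routes lean on exactly the same two moving/stationary weighted $L^2_{t,x}$ bounds \eqref{eq:WeightCharWOB1} and \eqref{eq:WeightCharWOB2}, which, as you correctly anticipate, are the essential input for the moving-potential term $u_2$.
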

\begin{proof}
Following \cite{LSch}, we set $A=\sqrt{-\Delta}$ and notice that
\begin{equation}
\left\Vert Af\right\Vert _{L^{2}}\simeq\left\Vert f\right\Vert _{\dot{H}^{1}},\,\,\forall f\in C^{\infty}\left(\mathbb{R}^{3}\right).\label{eq:nabla-1}
\end{equation}
For real-valued $u=\left(u_{1},u_{2}\right)\in\mathcal{H}=\dot{H}^{1}\left(\mathbb{R}^{3}\right)\times L^{2}\left(\mathbb{\mathbb{R}}^{3}\right)$,
we write 
\begin{equation}
U:=Au_{1}+iu_{2}.
\end{equation}
From \eqref{eq:nabla-1}, we know 
\begin{equation}
\left\Vert U\right\Vert _{L^{2}}\simeq\left\Vert \left(u_{1},u_{2}\right)\right\Vert _{\mathcal{H}}.
\end{equation}
We also notice that $u$ solves \eqref{eq:StriTwoE} if and only if
\begin{equation}
U:=Au+i\partial_{t}u
\end{equation}
satisfies 
\begin{equation}
i\partial_{t}U=AU+V_{1}u+V_{2}\left(x-\vec{v}t\right)u,
\end{equation}
\begin{equation}
U(0)=Ag+if\in L^{2}\left(\mathbb{R}^{3}\right).
\end{equation}
By Duhamel's formula, 
\begin{equation}
U(t)=e^{itA}U(0)-i\int_{0}^{t}e^{-i\left(t-s\right)A}\left(V_{1}u(s)+V_{2}\left(\cdot-vs\right)u(s)\right)\,ds.
\end{equation}
Let $P:=A^{-1}\Re$, then from Strichartz estimates for the free evolution,
\begin{equation}
\left\Vert Pe^{itA}U(0)\right\Vert _{L_{t}^{p}L_{x}^{q}}\lesssim\left\Vert U(0)\right\Vert _{L^{2}}.\label{eq:Sfirst-1}
\end{equation}
Writing $V_{1}=V_{3}V_{4}$, $V_{2}=V_{5}V_{6}$ , since $V_{1}$
and $V_{2}$ decay like $\left\langle x\right\rangle ^{-\alpha}$
with $\alpha>3$, we can make $V_{3}$ and $V_{5}$ satisfy the weight
condition in Theorem \ref{thm:local}. Also $V_{4}^{2},\,V_{6}^{2}$
decay with rate $\left\langle x\right\rangle ^{-\alpha}$. By the
Christ-Kiselev lemma, cf.~Lemma \ref{lem:Christ-Kiselev}, it suffices
to bound
\begin{equation}
\left\Vert P\int_{0}^{\infty}e^{-i\left(t-s\right)A}V_{3}V_{4}u(s)\,ds\right\Vert _{L_{t}^{p}L_{x}^{q}},
\end{equation}
and 
\begin{equation}
\left\Vert P\int_{0}^{\infty}e^{-i\left(t-s\right)A}V_{5}V_{6}(\cdot-vs)u(s)\,ds\right\Vert _{L_{t}^{p}L_{x}^{q}}.
\end{equation}
It is clear that 
\begin{equation}
\left\Vert P\int_{0}^{\infty}e^{-i\left(t-s\right)A}V_{3}V_{4}u(s)\,ds\right\Vert _{L_{t}^{p}L_{x}^{q}}\leq\left\Vert K\right\Vert _{L_{t,x}^{2}\rightarrow L_{t}^{p}L_{x}^{q}}\left\Vert V_{4}u\right\Vert _{L_{t,x}^{2}},\label{eq:KV-1}
\end{equation}
where 
\begin{equation}
\left(KF\right)(t):=P\int_{0}^{\infty}e^{-i\left(t-s\right)A}V_{3}F(s)\,ds.
\end{equation}
Similarly, 
\begin{equation}
\left\Vert P\int_{0}^{\infty}e^{-i\left(t-s\right)A}V_{5}V_{6}(\cdot-vs)u(s)\,ds\right\Vert _{L_{t}^{p}L_{x}^{q}}\leq\left\Vert \widetilde{K}\right\Vert _{L_{t,x}^{2}\rightarrow L_{t}^{p}L_{x}^{q}}\left\Vert V_{6}(x-\vec{v}t)u\right\Vert _{L_{t,x}^{2}},
\end{equation}
where 
\begin{equation}
\left(\widetilde{K}F\right)(t):=P\int_{0}^{\infty}e^{-i\left(t-s\right)A}V_{5}(\cdot-vs)F(s)\,ds.
\end{equation}
We need to estimate
\begin{equation}
\left\Vert K\right\Vert _{L_{t,x}^{2}\rightarrow L_{t}^{p}L_{x}^{q}},\,\left\Vert \widetilde{K}\right\Vert _{L_{t,x}^{2}\rightarrow L_{t}^{p}L_{x}^{q}}.
\end{equation}
Testing against $F\in L_{t,x}^{2}$, clearly, 
\begin{equation}
\left\Vert KF\right\Vert _{L_{t}^{p}L_{x}^{q}}\leq\left\Vert Pe^{-itA}\right\Vert _{L^{2}\rightarrow L_{t}^{p}L_{x}^{q}}\left\Vert \int_{0}^{\infty}e^{isA}V_{3}F(s)\,ds\right\Vert _{L^{2}}.\label{eq:KF-1}
\end{equation}
\begin{equation}
\left\Vert \widetilde{K}F\right\Vert _{L_{t}^{p}L_{x}^{q}}\leq\left\Vert Pe^{-itA}\right\Vert _{L^{2}\rightarrow L_{t}^{p}L_{x}^{q}}\left\Vert \int_{0}^{\infty}e^{isA}V_{5}(\cdot-vs)F(s)\,ds\right\Vert _{L^{2}}.\label{eq:TKF-1}
\end{equation}
The first factors on the right-hand side of \eqref{eq:KF-1} and \eqref{eq:TKF-1}
is bounded by Strichartz estimates for the free evolution. Consider
the second factors, by duality, it suffices to show 
\begin{equation}
\left\Vert V_{3}e^{-itA}\phi\right\Vert _{L_{t,x}^{2}}\lesssim\left\Vert \phi\right\Vert _{L^{2}},\,\forall\phi\in L^{2}\left(\mathbb{R}^{3}\right)
\end{equation}
\begin{equation}
\left\Vert V_{5}(x-\vec{v}t)e^{-itA}\phi\right\Vert _{L_{t,x}^{2}}\lesssim\left\Vert \phi\right\Vert _{L^{2}},\,\forall\phi\in L^{2}\left(\mathbb{R}^{3}\right).
\end{equation}
which holds by Theorem \ref{thm:local} and Corollary \ref{thm:local-1}.

Hence 
\begin{equation}
\left\Vert \int_{0}^{\infty}e^{isA}V_{3}F(s)\,ds\right\Vert _{L^{2}}\lesssim\left\Vert F\right\Vert _{L_{t,x}^{2}},
\end{equation}
\begin{equation}
\left\Vert \int_{0}^{\infty}e^{isA}V_{5}(\cdot-vs)F(s)\,ds\right\Vert _{L^{2}}\lesssim\left\Vert F\right\Vert _{L_{t,x}^{2}}.
\end{equation}
Therefore, indeed, we have 
\begin{equation}
\left\Vert K\right\Vert _{L_{t,x}^{2}\rightarrow L_{t}^{p}L_{x}^{q}}\leq C,\,\left\Vert \widetilde{K}\right\Vert _{L_{t,x}^{2}\rightarrow L_{t}^{p}L_{x}^{q}}\leq C
\end{equation}
and from \eqref{eq:KV-1}, it follows that 
\begin{equation}
\left\Vert P\int_{0}^{\infty}e^{-i\left(t-s\right)A}V_{3}V_{4}u(s)\,ds\right\Vert _{L_{t}^{p}L_{x}^{q}}\lesssim\left\Vert V_{4}u\right\Vert _{L_{t,x}^{2}},
\end{equation}
\begin{equation}
\left\Vert P\int_{0}^{\infty}e^{-i\left(t-s\right)A}V_{5}V_{6}(\cdot-vs)u(s)\,ds\right\Vert _{L_{t}^{p}L_{x}^{q}}\lesssim\left\Vert V_{6}(x-\vec{v}t)u\right\Vert _{L_{t,x}^{2}}.
\end{equation}
By estimates \eqref{eq:WeightCharWOB1} and \eqref{eq:WeightCharWOB2}
from Corollary \ref{cor:weightChargeWOB}, 
\begin{equation}
\left\Vert V_{4}u\right\Vert _{L_{t,x}^{2}}\lesssim\left(\int_{\mathbb{R}^{+}}\int_{\mathbb{R}^{3}}\frac{1}{\left\langle x\right\rangle ^{\alpha}}\left|u(x,t)\right|^{2}dxdt\right)^{\frac{1}{2}}\lesssim\|f\|_{L^{2}}+\|g\|_{\dot{H}^{1}},
\end{equation}
\begin{equation}
\left\Vert V_{6}(x-\vec{v}t)u\right\Vert _{L_{t,x}^{2}}\lesssim\left(\int_{\mathbb{R}^{+}}\int_{\mathbb{R}^{3}}\frac{1}{\left\langle x-\vec{v}t\right\rangle ^{\alpha}}\left|u(x,t)\right|^{2}dxdt\right)^{\frac{1}{2}}\lesssim\|f\|_{L^{2}}+\|g\|_{\dot{H}^{1}}.
\end{equation}
They follows that 
\begin{equation}
\left\Vert P\int_{0}^{\infty}e^{-i\left(t-s\right)A}V_{3}V_{4}u(s)\,ds\right\Vert _{L_{t}^{p}L_{x}^{q}}\lesssim\|f\|_{L^{2}}+\|g\|_{\dot{H}^{1}}.\label{eq:Ssecond-1}
\end{equation}
\begin{equation}
\left\Vert P\int_{0}^{\infty}e^{-i\left(t-s\right)A}V_{5}V_{6}(\cdot-vs)u(s)\,ds\right\Vert _{L_{t}^{p}L_{x}^{q}}\lesssim\|f\|_{L^{2}}+\|g\|_{\dot{H}^{1}}.\label{eq:Ssecond-2}
\end{equation}
Therefore, by estimates \eqref{eq:Sfirst-1}, \eqref{eq:Ssecond-1} and
\eqref{eq:Ssecond-2}, for $p>2$, and 
\[
\frac{1}{2}=\frac{1}{p}+\frac{3}{q}
\]
we have
\begin{equation}
\|u\|_{L_{t}^{p}\left([0,\infty),\,L_{x}^{q}\right)}\lesssim\|f\|_{L^{2}}+\|g\|_{\dot{H}^{1}}.
\end{equation}
as claimed.
\end{proof}
Taking the case $p=q$ in the regular Strichartz estimate \eqref{eq:StriCharWOB}
and interpolating it with the endpoint reversed Strichartz estimate
\eqref{eq:EndRSChWOB}, we obtain more reversed Strichartz estimates.
\begin{cor}
\label{cor: RevCharB}Let $\left|v\right|<1$. Suppose $u$ is a scattering
state in the sense of Definition \ref{AO} which solves\textup{
\begin{equation}
\partial_{tt}u-\Delta u+V_{1}(x)u+V_{2}(x-\vec{v}t)u=0
\end{equation}
}with initial data 
\begin{equation}
u(x,0)=g(x),\,u_{t}(x,0)=f(x).
\end{equation}
Then for $(p,q)$ satisfying
\begin{equation}
\frac{1}{2}=\frac{1}{p}+\frac{3}{q}
\end{equation}
with 
\begin{equation}
2\leq p\leq8,
\end{equation}
we have
\begin{equation}
\|u\|_{L_{x}^{q}\left(\mathbb{R}^{3},\,L_{t}^{p}[0,\infty)\right)}\lesssim\|f\|_{L^{2}}+\|g\|_{\dot{H}^{1}}.\label{eq:RevCharB}
\end{equation}
\end{cor}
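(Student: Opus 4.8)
The plan is to obtain \eqref{eq:RevCharB} purely by interpolation between two estimates that have already been established for the scattering state $u$: the diagonal ($p=q$) case of the regular Strichartz estimate of Theorem~\ref{thm:StrichaWOB}, and the endpoint reversed Strichartz estimate of Theorem~\ref{thm:EndRSChWOB}. First, note that the admissibility relation $\frac12=\frac1p+\frac3q$ forces $p=q=8$ in the diagonal case, and since $8>2$ the estimate \eqref{eq:StriCharWOB} applies and gives $\|u\|_{L_t^8L_x^8}\lesssim\|f\|_{L^2}+\|g\|_{\dot H^1}$. By Fubini, $L_t^8L_x^8=L_x^8L_t^8$, so this is a bound on the reversed mixed norm $\|u\|_{L_x^8L_t^8}$. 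On the other hand, \eqref{eq:EndRSChWOB} is exactly the bound $\|u\|_{L_x^\infty L_t^2}\lesssim\|f\|_{L^2}+\|g\|_{\dot H^1}$. Both pairs $(q,p)=(8,8)$ and $(q,p)=(\infty,2)$ satisfy $\frac12=\frac1p+\frac3q$, and they are the two endpoints of the range $2\le p\le 8$ asked for in \eqref{eq:RevCharB}.

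Then I would interpolate. For $\theta\in[0,1]$ define $q_\theta,p_\theta$ by $\frac1{q_\theta}=\frac\theta8$ and $\frac1{p_\theta}=\frac{1-\theta}2+\frac\theta8$. A direct check gives $\frac1{p_\theta}+\frac3{q_\theta}=\frac12$, and as $\theta$ ranges over $[0,1]$ the exponent $p_\theta$ ranges over $[2,8]$ while $q_\theta$ ranges over $[8,\infty]$, so this family is precisely the set of pairs in the statement (the value $\theta=0$ recovering Theorem~\ref{thm:EndRSChWOB} itself). The interpolation inequality needed is elementary: for each fixed $x$, log-convexity of the $L_t^p$ norm in $p$ (i.e.\ Hölder in $t$) gives
\begin{equation}
\|u(x,\cdot)\|_{L_t^{p_\theta}}\le\|u(x,\cdot)\|_{L_t^2}^{1-\theta}\,\|u(x,\cdot)\|_{L_t^8}^{\theta};
\end{equation}
raising to the power $q_\theta$, integrating in $x$, using $\theta q_\theta=8$, and pulling the $L_x^\infty$ norm out of the first factor yields
\begin{equation}
\|u\|_{L_x^{q_\theta}L_t^{p_\theta}}\le\|u\|_{L_x^\infty L_t^2}^{1-\theta}\,\|u\|_{L_x^8L_t^8}^{\theta}.
\end{equation}
Combining this with the two bounds from the previous paragraph gives $\|u\|_{L_x^{q_\theta}L_t^{p_\theta}}\lesssim\|f\|_{L^2}+\|g\|_{\dot H^1}$, which is exactly \eqref{eq:RevCharB}.

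I do not expect any real obstacle in this argument: it rests entirely on the two a priori estimates already proved, and the only minor points requiring care are the identification $L_t^8L_x^8=L_x^8L_t^8$ and the verification that the interpolated exponents sweep out precisely the admissible range $2\le p\le8$. One could alternatively phrase the last step as complex interpolation of the vector-valued spaces $L_x^q(L_t^p)$, but the hands-on Hölder computation above is self-contained and perhaps more transparent.
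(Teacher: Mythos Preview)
Your argument is correct and matches the paper's approach exactly: the paper states just before Corollary~\ref{cor: RevCharB} that the result follows by taking the diagonal case $p=q=8$ of \eqref{eq:StriCharWOB} and interpolating with the endpoint reversed estimate \eqref{eq:EndRSChWOB}. You have simply spelled out the H\"older interpolation step that the paper leaves implicit.
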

\begin{thm}
\label{thm:EnergyCharge}Let $\left|v\right|<1$. Suppose $u$ is
a scattering state in the sense of Definition \ref{AO} which solves
\begin{equation}
\partial_{tt}u-\Delta u+V_{1}(x)u+V_{2}(x-\vec{v}t)u=0\label{eq:StriTwoE-2}
\end{equation}
with initial data
\begin{equation}
u(x,0)=g(x),\,u_{t}(x,0)=f(x).
\end{equation}
Then we have
\begin{equation}
\sup_{t\geq0}\left(\|\nabla u(t)\|_{L^{2}}+\|u_{t}(t)\|_{L^{2}}\right)\lesssim\|f\|_{L^{2}}+\|g\|_{\dot{H}^{1}}.\label{eq:StriCharWOB-2}
\end{equation}
\end{thm}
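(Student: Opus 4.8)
The plan is to prove \eqref{eq:StriCharWOB-2} by a differential energy identity fed into the weighted bounds of Corollary \ref{cor:weightChargeWOB}, rather than by Duhamel's formula (the forcing $(V_{1}+V_{2}(\cdot-\vec v t))u$ only lies in $L^{2}_{t,x}$ with a good bound, which is not enough to run the inhomogeneous energy estimate at the $L^{\infty}_{t}L^{2}_{x}$ level). Assume first that the data are smooth and fast-decaying, so that the manipulations below are legitimate; the general case then follows by density, as in Section \ref{sec: Lorentz}. Differentiating $E(t)=\int|\nabla u|^{2}+|u_{t}|^{2}+V_{1}u^{2}+V_{2}(x-\vec v t)u^{2}\,dx$, integrating by parts in $\int\nabla u\cdot\nabla u_{t}$, and using that $u$ solves \eqref{eq:StriTwoE-2}, all the terms cancel except the one produced by the explicit time-dependence of the moving potential:
\[
\frac{d}{dt}E(t)=2\int u_{t}\bigl(u_{tt}-\Delta u+V_{1}u+V_{2}(x-\vec v t)u\bigr)dx-\int \vec v\cdot(\nabla V_{2})(x-\vec v t)\,u^{2}\,dx=-\int \vec v\cdot(\nabla V_{2})(x-\vec v t)\,u^{2}\,dx.
\]
Integrating in $t$ and using $|\nabla V_{2}(y)|\lesssim\langle y\rangle^{-\alpha}$ with $\alpha>3$, together with the weighted estimate \eqref{eq:WeightCharWOB2}, gives $E(t)\le E(0)+C\int_{0}^{\infty}\!\int_{\mathbb R^{3}}\langle x-\vec v t\rangle^{-\alpha}u^{2}\,dx\,dt\lesssim E(0)+(\|f\|_{L^{2}}+\|g\|_{\dot H^{1}})^{2}$. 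Since the potentials decay faster than $\langle x\rangle^{-2}$, Hardy's inequality yields $\int(|V_{1}|+|V_{2}|)g^{2}\lesssim\|\nabla g\|_{L^{2}}^{2}$, hence $E(0)\lesssim(\|f\|_{L^{2}}+\|g\|_{\dot H^{1}})^{2}$, and therefore $\sup_{t\ge0}E(t)\lesssim(\|f\|_{L^{2}}+\|g\|_{\dot H^{1}})^{2}$.

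The genuinely delicate step is to convert this into a bound on $\|\nabla u(t)\|_{L^{2}}^{2}+\|u_{t}(t)\|_{L^{2}}^{2}=E(t)-\int(V_{1}+V_{2}(x-\vec v t))u^{2}(t)$, because the potential term can be negative and the crude bound $\int|V_{j}|u^{2}\lesssim\|\nabla u\|_{L^{2}}^{2}$ from Hardy is circular. To break the circularity I would first establish the uniform-in-time local bound
\[
\sup_{t\ge0}\bigl(\|u(t)\|_{L^{2}(B_{R}(0))}+\|u(t)\|_{L^{2}(B_{R}(\vec v t))}\bigr)\lesssim_{R}\|f\|_{L^{2}}+\|g\|_{\dot H^{1}},
\]
by combining three facts already available: (i) Theorem \ref{thm:EndRSChWOB} (equivalently \eqref{eq:WeightCharWOB1}--\eqref{eq:WeightCharWOB2}) bounds $\int_{0}^{\infty}\|u(t)\|_{L^{2}(B_{R}(0))}^{2}dt$ and $\int_{0}^{\infty}\|u(t)\|_{L^{2}(B_{R}(\vec v t))}^{2}dt$ by $(\|f\|_{L^{2}}+\|g\|_{\dot H^{1}})^{2}$; (ii) the local energy decay of Theorem \ref{thm:LEnergyCharge-1} with $\mu=0$ and $\mu=v$ gives the same with $u_{t}$ in place of $u$; (iii) testing the equation against a fixed, resp.\ $\vec v t$-translated, smooth cutoff $\eta$ gives $\bigl|\tfrac{d}{dt}\!\int\eta^{2}u^{2}\bigr|\lesssim\int_{\mathrm{supp}\,\nabla\eta}u^{2}+\bigl(\int\eta^{2}u^{2}\bigr)^{1/2}\bigl(\int\eta^{2}u_{t}^{2}\bigr)^{1/2}$, so averaging this identity over a unit time window and inserting (i)--(ii) produces the stated uniform bound (the small-time range $t<1$ being handled by the a priori energy-growth bound \eqref{eq:enegrowth}).

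To finish, split each $|V_{j}|$ into its restriction to $\{|x|\le R\}$ and the complementary tail: since $\langle x\rangle^{2}|V_{j}(x)|\to0$, choosing $R$ large makes $\int|V_{j}|\mathbf{1}_{|x|>R}u^{2}\le\tfrac18\|\nabla u\|_{L^{2}}^{2}$ by Hardy, while $\int|V_{j}|\mathbf{1}_{|x|\le R}u^{2}\le\|V_{j}\|_{L^{\infty}}\|u(t)\|_{L^{2}(B_{R})}^{2}\lesssim(\|f\|_{L^{2}}+\|g\|_{\dot H^{1}})^{2}$ by the uniform local bound (and likewise for $V_{2}(\cdot-\vec v t)$ with the moving ball). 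Then $\|\nabla u(t)\|_{L^{2}}^{2}+\|u_{t}(t)\|_{L^{2}}^{2}\le E(t)+\tfrac14\|\nabla u(t)\|_{L^{2}}^{2}+C(\|f\|_{L^{2}}+\|g\|_{\dot H^{1}})^{2}$, so $\tfrac34\|\nabla u(t)\|_{L^{2}}^{2}+\|u_{t}(t)\|_{L^{2}}^{2}\lesssim(\|f\|_{L^{2}}+\|g\|_{\dot H^{1}})^{2}$ uniformly in $t\ge0$, which is \eqref{eq:StriCharWOB-2}. I expect the main obstacle to be exactly this conversion: the energy identity by itself only controls $E(t)$, and dominating the negative part of the potential energy really requires the uniform local $L^{2}$ control of $u$ itself, extracted from the reversed Strichartz estimate (Theorem \ref{thm:EndRSChWOB}) and the local energy decay (Theorem \ref{thm:LEnergyCharge-1}).
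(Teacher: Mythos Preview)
Your argument can be made to work, but it takes a substantially longer route than the paper's, and your opening premise is mistaken. The paper proves Theorem~\ref{thm:EnergyCharge} directly by Duhamel: with $A=\sqrt{-\Delta}$ and $U=Au+iu_t$, it factors $V_j=V_{2j+1}V_{2j+2}$ with each factor decaying like $\langle x\rangle^{-\alpha/2}$ and shows that the operator $F\mapsto\int_0^\infty e^{-i(t-s)A}V_{2j+1}(\cdot-vs\ \text{or}\ \cdot)\,F(s)\,ds$ is bounded $L^2_{t,x}\to L^\infty_tL^2_x$. This boundedness follows by freezing $t$, using unitarity of $e^{-itA}$, and dualizing against the \emph{free} local energy decay $\|V_{2j+1}(\cdot-vs)e^{-isA}\phi\|_{L^2_{s,x}}\lesssim\|\phi\|_{L^2}$ from Corollary~\ref{thm:local-1}. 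The remaining factor $\|V_{2j+2}u\|_{L^2_{t,x}}$ is then controlled by Corollary~\ref{cor:weightChargeWOB}. So ``$L^2_{t,x}$ forcing is not enough'' is false here: the factorization plus free smoothing is exactly what upgrades it.

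Your alternative---bound $E(t)$ by the energy identity, then recover $\|\nabla u\|_{L^2}^2+\|u_t\|_{L^2}^2$ via a uniform-in-time local $L^2$ bound on $u$---is correct in outline, and your averaging argument for the local bound is sound. But two points are worth flagging. First, bounding $\sup_tE(t)$ via $E'(t)$ needs decay of $\nabla V_2$ (you assume $|\nabla V_2|\lesssim\langle y\rangle^{-\alpha}$); this is an extra hypothesis not present in the statement of Theorem~\ref{thm:EnergyCharge} and is precisely why the paper separates out Corollary~\ref{cor:wholeEneWOB} (with the explicit assumption $\nabla V_2\in L^1$) from the energy estimate. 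Second, your uniform local bound invokes Theorem~\ref{thm:LEnergyCharge-1}, whose proof in the paper is the very Duhamel--factorization--free-smoothing scheme you set aside (just with the free $L^\infty_tL^2_x$ bound replaced by the free weighted $L^2_{t,x}$ bound). So you end up using that machinery one level removed, at the cost of an additional hypothesis and a longer argument.
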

\begin{proof}
The proof is similar to Theorem \ref{thm:StrichaWOB}. We still use
the notations from the above proof of Theorem \ref{thm:StrichaWOB}.

Set 
\begin{equation}
U:=Au+i\partial_{t}u,
\end{equation}
then by Duhamel's formula, 
\begin{equation}
U(t)=e^{itA}U(0)-i\int_{0}^{t}e^{-i\left(t-s\right)A}\left(V_{1}u(s)+V_{2}\left(\cdot-vs\right)u(s)\right)\,ds.
\end{equation}
It suffices to estimate the $L^{2}$ norm of $U(t)$.

From the energy estimate for the free evolution,
\begin{equation}
\sup_{t\geq0}\left\Vert e^{itA}U(0)\right\Vert _{L_{x}^{2}}\lesssim\left\Vert U(0)\right\Vert _{L^{2}}.\label{eq:Sfirst-1-2}
\end{equation}
Writing $V_{1}=V_{3}V_{4}$, $V_{2}=V_{5}V_{6}$ as above, it suffices
to bound
\begin{equation}
\sup_{t\geq0}\left\Vert \int_{0}^{\infty}e^{-i\left(t-s\right)A}V_{3}V_{4}u(s)\,ds\right\Vert _{L_{x}^{2}},
\end{equation}
and 
\begin{equation}
\sup_{t\geq0}\left\Vert \int_{0}^{\infty}e^{-i\left(t-s\right)A}V_{5}V_{6}(\cdot-vs)u(s)\,ds\right\Vert _{L_{x}^{2}}.
\end{equation}
It is clear that 
\begin{equation}
\sup_{t\geq0}\left\Vert \int_{0}^{\infty}e^{-i\left(t-s\right)A}V_{3}V_{4}u(s)\,ds\right\Vert _{L_{x}^{2}}\leq\left\Vert K\right\Vert _{L_{t}^{2}L_{x}^{2}\rightarrow L_{t}^{\infty}L_{x}^{2}}\left\Vert V_{4}u\right\Vert _{L_{t,x}^{2}},\label{eq:KV-1-2}
\end{equation}
where 
\begin{equation}
\left(KF\right)(t):=\int_{0}^{\infty}e^{-i\left(t-s\right)A}V_{3}F(s)\,ds.
\end{equation}
Similarly, 
\begin{equation}
\left\Vert \int_{0}^{\infty}e^{-i\left(t-s\right)A}V_{5}V_{6}(\cdot-vs)u(s)\,ds\right\Vert _{L_{t}^{\infty}L_{x}^{2}}\leq\left\Vert \widetilde{K}\right\Vert _{L_{t}^{2}L_{x}^{2}\rightarrow L_{t}^{\infty}L_{x}^{2}}\left\Vert V_{6}(x-\vec{v}t)u\right\Vert _{L_{t}^{2}L_{x}^{2}},
\end{equation}
where 
\begin{equation}
\left(\widetilde{K}F\right)(t):=\int_{0}^{\infty}e^{-i\left(t-s\right)A}V_{5}(\cdot-vs)F(s)\,ds.
\end{equation}
We need to estimate
\begin{equation}
\left\Vert K\right\Vert _{L_{t}^{2}L_{x}^{2}\rightarrow L_{t}^{\infty}L_{x}^{2}},\,\left\Vert \widetilde{K}\right\Vert _{L_{t}^{2}L_{x}^{2}\rightarrow L_{t}^{\infty}L_{x}^{2}}.
\end{equation}
Testing against $F\in L_{t}^{2}\left([0,\infty),\,L_{x}^{2}\right)$,
clearly, 
\begin{equation}
\left\Vert KF\right\Vert _{L_{t}^{\infty}L_{x}^{2}}\leq\left\Vert e^{-itA}\right\Vert _{L^{2}\rightarrow L_{t}^{\infty}L_{x}^{2}}\left\Vert \int_{0}^{\infty}e^{isA}V_{3}F(s)\,ds\right\Vert _{L^{2}}.\label{eq:KF-1-2}
\end{equation}
\begin{equation}
\left\Vert \widetilde{K}F\right\Vert _{L_{t}^{\infty}L_{x}^{2}}\leq\left\Vert e^{-itA}\right\Vert _{L^{2}\rightarrow L_{t}^{\infty}L_{x}^{2}}\left\Vert \int_{0}^{\infty}e^{isA}V_{5}(\cdot-vs)F(s)\,ds\right\Vert _{L^{2}}.\label{eq:TKF-1-2}
\end{equation}
The first factors on the right-hand side of \eqref{eq:KF-1-2} and \eqref{eq:TKF-1-2}
is bounded by the energy estimates for the free evolution. The second
factors are estimated in the same manner as for \eqref{eq:KF-1} and
\eqref{eq:TKF-1}. 

Therefore, we have 
\begin{equation}
\left\Vert K\right\Vert _{L_{t}^{2}L_{x}^{2}\rightarrow L_{t}^{\infty}L_{x}^{2}}\leq C,\,\left\Vert \widetilde{K}\right\Vert _{L_{t}^{2}L_{x}^{2}\rightarrow L_{t}^{\infty}L_{x}^{2}}\leq C
\end{equation}
and from \eqref{eq:KV-1-2}, 
\begin{equation}
\sup_{t\geq0}\left\Vert \int_{0}^{\infty}e^{-i\left(t-s\right)A}V_{3}V_{4}u(s)\,ds\right\Vert _{L_{x}^{2}}\lesssim\left\Vert V_{4}u\right\Vert _{L_{t,x}^{2}},
\end{equation}
\begin{equation}
\sup_{t\geq0}\left\Vert \int_{0}^{\infty}e^{-i\left(t-s\right)A}V_{5}V_{6}(\cdot-vs)u(s)\,ds\right\Vert _{L_{x}^{2}}\lesssim\left\Vert V_{6}(x-\vec{v}t)u\right\Vert _{L_{t,x}^{2}}.
\end{equation}
From Corollary \ref{cor:weightChargeWOB}, 
\begin{equation}
\left\Vert V_{4}u\right\Vert _{L_{t}^{2}L_{x}^{2}}\lesssim\left(\int_{\mathbb{R}^{+}}\int_{\mathbb{R}^{3}}\frac{1}{\left\langle x\right\rangle ^{\alpha}}\left|u(x,t)\right|^{2}dxdt\right)^{\frac{1}{2}}\lesssim\|f\|_{L^{2}}+\|g\|_{\dot{H}^{1}},
\end{equation}
\begin{equation}
\left\Vert V_{6}(x-\vec{v}t)u\right\Vert _{L_{t}^{2}L_{x}^{2}}\lesssim\left(\int_{\mathbb{R}^{+}}\int_{\mathbb{R}^{3}}\frac{1}{\left\langle x-\vec{v}t\right\rangle ^{\alpha}}\left|u(x,t)\right|^{2}dxdt\right)^{\frac{1}{2}}\lesssim\|f\|_{L^{2}}+\|g\|_{\dot{H}^{1}}.
\end{equation}
They imply 
\begin{equation}
\sup_{t\geq0}\left\Vert \int_{0}^{\infty}e^{-i\left(t-s\right)A}V_{3}V_{4}u(s)\,ds\right\Vert _{L_{x}^{2}}\lesssim\|f\|_{L^{2}}+\|g\|_{\dot{H}^{1}}.\label{eq:Ssecond-1-1}
\end{equation}
\begin{equation}
\sup_{t\geq0}\left\Vert \int_{0}^{\infty}e^{-i\left(t-s\right)A}V_{5}V_{6}(\cdot-vs)u(s)\,ds\right\Vert _{L_{x}^{2}}\lesssim\|f\|_{L^{2}}+\|g\|_{\dot{H}^{1}}.\label{eq:Ssecond-2-1}
\end{equation}
Therefore, with estimates \eqref{eq:Sfirst-1-2}, \eqref{eq:Ssecond-1-1}
and \eqref{eq:Ssecond-2-1}, we have 
\begin{equation}
\sup_{t\geq0}\left(\|\nabla u(t)\|_{L^{2}}+\|u_{t}(t)\|_{L^{2}}\right)\lesssim\|f\|_{L^{2}}+\|g\|_{\dot{H}^{1}}.
\end{equation}
as claimed.
\end{proof}
Similarly, one can also obtain the local energy decay estimate:
\begin{thm}
\label{thm:LEnergyCharge}Let $\left|v\right|<1$. Suppose $u$ is
a scattering state in the sense of Definition \ref{AO} which solves
\begin{equation}
\partial_{tt}u-\Delta u+V_{1}(x)u+V_{2}(x-\vec{v}t)u=0\label{eq:StriTwoE-1-2}
\end{equation}
with initial data
\begin{equation}
u(x,0)=g(x),\,u_{t}(x,0)=f(x).
\end{equation}
Then for $\forall\epsilon>0,\,\left|\mu\right|<1$, we have 
\begin{equation}
\left\Vert \left(1+\left|x-\mu t\right|\right)^{-\frac{1}{2}-\epsilon}\left(\left|\nabla u\right|+\left|u_{t}\right|\right)\right\Vert _{L^{2}\left([0,\infty),\,L_{x}^{2}\right)}\lesssim_{\mu,\epsilon}\|f\|_{L^{2}}+\|g\|_{\dot{H}^{1}}.
\end{equation}
\end{thm}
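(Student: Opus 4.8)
The plan is to mimic, almost verbatim, the proof of Theorem \ref{thm:EnergyCharge}, replacing the free energy estimate by the free local energy decay estimate of Corollary \ref{thm:local-1} and feeding in the weighted bounds of Corollary \ref{cor:weightChargeWOB}. Write $A=\sqrt{-\Delta}$ and $U:=Au+i\partial_t u$, so that $U$ solves $i\partial_t U = AU + V_1 u + V_2(\cdot-\vec v t)u$ with $U(0)=Ag+if\in L^2$, and by Duhamel
\begin{equation}
U(t)=e^{itA}U(0)-i\int_0^t e^{-i(t-s)A}\big(V_1 u(s)+V_2(\cdot-vs)u(s)\big)\,ds.
\end{equation}
Since $u_t=\Im U$ and $\nabla u=\nabla(-\Delta)^{-1/2}\,\Re U$, and the Riesz-type operator $\nabla(-\Delta)^{-1/2}$ is bounded on $L^2\big((1+|x-\mu t|)^{-1-2\epsilon}\,dx\big)$ with a constant uniform in $t$ (the weight is an $A_2$ weight, uniformly in the translation parameter $\mu t$), it suffices to prove $\big\|(1+|x-\mu t|)^{-1/2-\epsilon}U\big\|_{L^2_{t,x}}\lesssim\|f\|_{L^2}+\|g\|_{\dot H^1}$.

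For the homogeneous term $e^{itA}U(0)$ this is exactly Corollary \ref{thm:local-1}. For the Duhamel term, factor $V_1=V_3V_4$ and $V_2=V_5V_6$ as in the proof of Theorem \ref{thm:StrichaWOB}, with $V_3,V_5$ decaying like $\langle x\rangle^{-\alpha/2}$ (so that $(1+|x|)^{1/2+\epsilon}V_3$ is bounded, using $\alpha>3$) and $V_4,V_6$ likewise. By Corollary \ref{cor:weightChargeWOB} (its two halves, coming respectively from \eqref{eq:EndRSChWOB} and the slanted estimate \eqref{eq:EndRSChWOBSL}),
\begin{equation}
\|V_4 u\|_{L^2_{t,x}}+\|V_6(\cdot-\vec v t)u\|_{L^2_{t,x}}\lesssim\|f\|_{L^2}+\|g\|_{\dot H^1},
\end{equation}
so it remains to bound the retarded operators $F\mapsto(1+|x-\mu t|)^{-1/2-\epsilon}\int_0^t e^{-i(t-s)A}V_3F(s)\,ds$ and $F\mapsto(1+|x-\mu t|)^{-1/2-\epsilon}\int_0^t e^{-i(t-s)A}V_5(\cdot-vs)F(s)\,ds$ on $L^2_{t,x}$. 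Dropping the retardation, $\int_0^\infty e^{-i(t-s)A}V_3F(s)\,ds=e^{-itA}\psi$ with $\psi:=\int_0^\infty e^{isA}V_3F(s)\,ds$, and $\|\psi\|_{L^2}\lesssim\|F\|_{L^2_{t,x}}$ by the dual of Theorem \ref{thm:local} (using boundedness of $(1+|x|)^{1/2+\epsilon}V_3$), whence $\big\|(1+|x-\mu t|)^{-1/2-\epsilon}e^{-itA}\psi\big\|_{L^2_{t,x}}\lesssim\|\psi\|_{L^2}$ by Corollary \ref{thm:local-1}; for the moving potential $V_5(\cdot-vs)$ one argues identically, using Corollary \ref{thm:local-1} (with speed $v$) on both ends.

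The remaining point — and the main obstacle — is to pass from the non‑retarded to the retarded integral: since source and target both carry the time exponent $2$, the Christ–Kiselev lemma (Lemma \ref{lem:Christ-Kiselev}) does not apply directly, and interpolating to create a strict gap does not help because the source terms $V_4u,V_6(\cdot-\vec v t)u$ are not controlled in any $L^r_tL^2_x$ with $r<2$. Instead one invokes the retarded double‑endpoint inhomogeneous free local energy decay estimate $\big\|(1+|x-\mu t|)^{-1/2-\epsilon}\int_0^t e^{-i(t-s)A}H(s)\,ds\big\|_{L^2_{t,x}}\lesssim\big\|(1+|x|)^{1/2+\epsilon}H\big\|_{L^2_{t,x}}$, which holds for the wave half‑group (it follows from the uniform resolvent bound $\sup_{\lambda}\big\|(1+|x|)^{-1/2-\epsilon}(-\Delta-\lambda\mp i0)^{-1}(1+|x|)^{-1/2-\epsilon}\big\|_{L^2\to L^2}<\infty$ and is established alongside Corollary \ref{thm:local-1}, cf.~\cite{GC2}); applied with $H=V_3V_4u$, resp.\ $H=V_5V_6(\cdot-vs)u$, and using that $(1+|x|)^{1/2+\epsilon}V_3$, $(1+|x|)^{1/2+\epsilon}V_5$ are bounded, it yields the Duhamel bound $\lesssim\|V_4u\|_{L^2_{t,x}}+\|V_6(\cdot-\vec v t)u\|_{L^2_{t,x}}\lesssim\|f\|_{L^2}+\|g\|_{\dot H^1}$. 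Combined with the homogeneous term and the reduction to $U$, this proves the theorem. (Alternatively, one could rerun the channel‑decomposition and bootstrap of Section \ref{sec:EndRSChar} with the weighted energy density in place of $|u|^2$, but the route above via the already‑proven companion estimates is much shorter.)
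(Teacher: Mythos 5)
Your plan is the same as the paper's — mimic the proof of Theorem~\ref{thm:EnergyCharge}, replacing the free energy estimate by the free local decay estimate of Corollary~\ref{thm:local-1} and feeding in Corollary~\ref{cor:weightChargeWOB} — and the paper itself offers nothing beyond this one-line sketch. You correctly notice that this sketch cannot literally go through: the Duhamel term must be estimated in a weighted $L^2_{t,x}$ norm, so the source and target exponents in time are both $2$, and the Christ--Kiselev lemma (Lemma~\ref{lem:Christ-Kiselev}), which is what turns the untruncated integral $\int_0^\infty$ into the retarded $\int_0^t$ in the Strichartz and $L^\infty_t L^2_x$ proofs, simply does not apply. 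This is a real observation that the paper's ``the claim follows easily'' hides, and your proposed remedy — the \emph{retarded} Kato-smoothing estimate, which holds via the uniform resolvent bound without any appeal to Christ--Kiselev — is the right tool in spirit. Your weighted-Riesz-transform reduction from $(\nabla u, u_t)$ to $U=Au+iu_t$ is also a legitimate step that needs saying (the paper's proof of Theorem~\ref{thm:EnergyCharge} has no weight and so doesn't face it).

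However, the specific retarded estimate you write down,
\[
\Bigl\|(1+|x-\mu t|)^{-\frac12-\epsilon}\int_0^t e^{-i(t-s)A}H(s)\,ds\Bigr\|_{L^2_{t,x}}\lesssim \bigl\|(1+|x|)^{\frac12+\epsilon}H\bigr\|_{L^2_{t,x}},
\]
does not follow from the stationary uniform resolvent bound as you claim: that bound gives the retarded estimate with the \emph{same stationary} weight on both ends, not with a $\mu$-moving weight on the output and a stationary weight on the input. Moreover, even your stated estimate is not what the application requires: for the $V_2(\cdot-vs)$ term, $(1+|x|)^{\frac12+\epsilon}V_5(\cdot-vs)$ is not uniformly bounded in $s$, so you would actually need the input weight centered at $vs$ as well. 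You do acknowledge this in a parenthetical (``using Corollary~\ref{thm:local-1} with speed $v$ on both ends''), but the retarded estimate with weights translating at two \emph{different} speeds $\mu$ and $v$ is a genuinely new statement — it does not reduce to Kato's lemma by a single conjugation, and the Lorentz-transform route used to prove Corollary~\ref{thm:local-1} (which concerns the full wave equation, not the half-wave group) does not obviously respect the half-wave retardation $\int_0^t$. None of this is established in the paper or in \cite{GC2} as cited, so as written your proof has a gap precisely at the step that distinguishes this theorem from Theorem~\ref{thm:EnergyCharge}. To be fair, the paper's own ``proof'' glosses over exactly the same gap; a complete argument would need to state and prove the retarded, double-moving-weight local smoothing estimate (or circumvent the issue by re-running the channel decomposition of Section~\ref{sec:EndRSChar} with the weighted energy density, as you suggest in your last sentence).
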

\begin{proof}
The proof is the same as above with the energy estimate for the free
wave equation replaced by the local energy decay estimate for the
free wave equation. 
\begin{equation}
\left\Vert \left(1+\left|x-\mu t\right|\right)^{-\frac{1}{2}-\epsilon}e^{it\sqrt{-\Delta}}f\right\Vert _{L^{2}\left([0,\infty),\,L_{x}^{2}\right)}\lesssim_{\mu,\epsilon}\left\Vert f\right\Vert _{L_{x}^{2}}.
\end{equation}
The claim follows easily.
\end{proof}
Finally, we consider the boundedness of the total energy. We denote
the total energy by
\begin{equation}
E(t)=\int\left|\nabla_{x}u\right|^{2}+\left|\partial_{t}u\right|^{2}+V_{1}\left|u\right|^{2}+V_{2}(x-\vec{v}t)\left|u\right|^{2}dx.\label{eq:Energy}
\end{equation}

\begin{cor}
\label{cor:wholeEneWOB}Let $\left|v\right|<1$ and suppose $u$ is
a scattering state in the sense of Definition \ref{AO} and solves
\begin{equation}
\partial_{tt}u-\Delta u+V_{1}(x)u+V_{2}(x-\vec{v}t)u=0
\end{equation}
with initial data
\begin{equation}
u(x,0)=g(x),\,u_{t}(x,0)=f(x).
\end{equation}
Assume 
\begin{equation}
\left\Vert \nabla V_{2}\right\Vert _{L^{1}}<\infty,
\end{equation}
then $E(t)$ is bounded by the initial energy independently of $t$,
\[
\sup_{t\geq0}E(t)\lesssim\left\Vert \left(g,f\right)\right\Vert _{\dot{H}^{1}\times L^{2}}^{2}.
\]
\end{cor}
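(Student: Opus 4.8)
The plan is to derive and exploit the energy\textendash flux identity for $E(t)$. First I would differentiate \eqref{eq:Energy} in $t$, substitute the equation $\partial_{tt}u=\Delta u-V_1u-V_2(\cdot-\vec v t)u$ into the term $\int u_t\,\partial_{tt}u\,dx$, and integrate by parts in $x$; all contributions cancel except the one produced by the time dependence of the moving potential, so that
\[
\frac{d}{dt}E(t)=\int_{\mathbb{R}^3}\bigl(\partial_t V_2(x-\vec v t)\bigr)\,|u(x,t)|^2\,dx=-\int_{\mathbb{R}^3}\bigl(\vec v\cdot\nabla V_2\bigr)(x-\vec v t)\,|u(x,t)|^2\,dx .
\]
To make this rigorous I would first prove it for smooth, rapidly decaying data, for which all the manipulations above are legitimate, and then pass to a general scattering state $(g,f)\in\dot H^1\times L^2$ by mollification, using the energy bound of Theorem~\ref{thm:EnergyCharge} to control $\nabla u$ and $u_t$ in $L^\infty_tL^2_x$ and the reversed Strichartz estimate of Theorem~\ref{thm:EndRSChWOB} to pass to the limit in the flux term.

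Next I would integrate the identity from $0$ to $t$ and estimate, obtaining
\[
|E(t)-E(0)|\le |\vec v|\int_0^t\!\!\int_{\mathbb{R}^3}\bigl|\nabla V_2(x-\vec v s)\bigr|\,|u(x,s)|^2\,dx\,ds ,
\]
and the change of variables $x=y+\vec v s$ turns the right\textendash hand side into
\[
|\vec v|\int_{\mathbb{R}^3}\bigl|\nabla V_2(y)\bigr|\Bigl(\int_0^t|u^S(y,s)|^2\,ds\Bigr)\,dy\le |\vec v|\,\|\nabla V_2\|_{L^1}\,\sup_{y\in\mathbb{R}^3}\int_0^\infty|u^S(y,s)|^2\,ds ,
\]
which by \eqref{eq:EndRSChWOBSL} of Theorem~\ref{thm:EndRSChWOB} (equivalently the slanted\textendash line estimate of Theorem~\ref{thm:EndRStriCB-1}) is $\lesssim\|\nabla V_2\|_{L^1}\bigl(\|f\|_{L^2}+\|g\|_{\dot H^1}\bigr)^2$. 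This is exactly the point where the hypothesis $\|\nabla V_2\|_{L^1}<\infty$ enters, and it is the reason the corollary carries that assumption.

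Finally I would bound the initial energy $E(0)=\int|\nabla g|^2+|f|^2+V_1|g|^2+V_2|g|^2\,dx$: the first two terms are $\|(g,f)\|_{\dot H^1\times L^2}^2$, and since $V_1,V_2$ decay like $\langle x\rangle^{-\alpha}$ with $\alpha>3$ they lie in $L^{3/2}$, so $\bigl|\int V_i|g|^2\,dx\bigr|\le\|V_i\|_{L^{3/2}}\|g\|_{L^6}^2\lesssim\|g\|_{\dot H^1}^2$ by the Sobolev embedding $\dot H^1(\mathbb{R}^3)\hookrightarrow L^6$ (or by Hardy's inequality). Combining the three displays gives $\sup_{t\ge0}E(t)\le\sup_{t\ge0}|E(t)|\lesssim\|(g,f)\|_{\dot H^1\times L^2}^2$. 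The only genuine difficulty is the justification of the energy identity for rough scattering\textendash state data; once the slanted\textendash line reversed Strichartz estimate is available, the estimate of the flux term is a one\textendash line computation.
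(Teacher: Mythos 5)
Your proposal is correct and takes essentially the same route as the paper: compute $\partial_t E(t)$, recognize that only the time dependence of $V_2(\cdot-\vec v t)$ contributes a flux term, change variables to rewrite it in terms of $u^S$, and invoke the slanted-line endpoint reversed Strichartz estimate \eqref{eq:EndRSChWOBSL} to bound $\int_0^\infty|\partial_t E(t)|\,dt$ by $\|\nabla V_2\|_{L^1}\|(g,f)\|_{\dot H^1\times L^2}^2$. You add two useful details that the paper states only implicitly — the density argument to justify the energy identity for rough data, and the Sobolev/Hardy bound showing $E(0)\lesssim\|(g,f)\|_{\dot H^1\times L^2}^2$ — but the core argument is the same.
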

\begin{proof}
We might assume $u$ is smooth. Taking time derivative of $E(t)$,
with the fact $u$ solves 
\begin{equation}
\partial_{tt}u-\Delta u+V_{1}(x)u+V_{2}(x-\vec{v}t)u=0,
\end{equation}
one obtains

\begin{equation}
\partial_{t}E(t)=\int_{\mathbb{R}^{3}}\partial_{t}V_{2}(x-\vec{v}t)\left|u\right|^{2}(x,t)dx=-v\int_{\mathbb{R}^{3}}\partial_{x}V_{2}(x)\left|u^{S}(x,t)\right|^{2}dx
\end{equation}
by a simple change of variable. 

Note that 
\begin{eqnarray}
\int_{0}^{\infty}\left|\partial_{t}E(t)\right|dt & \lesssim & \int_{0}^{\infty}\int_{\mathbb{R}^{3}}\left|\partial_{x}V_{2}(x)\right|\left|u^{S}(x,t)\right|^{2}dxdt\nonumber \\
 & = & \left\Vert \partial_{x}V_{2}\right\Vert _{L_{x}^{1}}\left\Vert u^{S}\right\Vert _{L_{x}^{\infty}L_{t}^{2}[0,\infty)}^{2}\\
 & \lesssim & \left\Vert \left(g,f\right)\right\Vert _{\dot{H}^{1}\times L^{2}}^{2}.\nonumber 
\end{eqnarray}
For arbitrary $t\in\mathbb{R}$, we have
\begin{equation}
E(t)-E(0)\leq\int_{\mathbb{R}^{+}}\left|\partial_{t}E(t)\right|dt\lesssim\left\Vert \left(g,f\right)\right\Vert _{\dot{H}^{1}\times L^{2}}^{2}
\end{equation}
which implies 
\begin{equation}
\sup_{t\geq0}E(t)\lesssim\left\Vert \left(g,f\right)\right\Vert _{\dot{H}^{1}\times L^{2}}^{2}
\end{equation}
as claimed.

To finish this section, we prove a version of endpoint Strichartz
estimate which is inhomogeneous with respect to angular and radial
variables. 
\end{proof}
\begin{thm}
\label{thm:EndStri}Let $\left|v\right|<1$. Suppose $u$ is a scattering
state in the sense of Definition \ref{AO} which solves 
\begin{equation}
\partial_{tt}u-\Delta u+V_{1}(x)u+V_{2}(x-\vec{v}t)u=0\label{eq:StriTwoE-3}
\end{equation}
with initial data
\begin{equation}
u(x,0)=g(x),\,u_{t}(x,0)=f(x).
\end{equation}
Then for $1\leq p<\infty$, 
\begin{equation}
\|u\|_{L_{t}^{2}\left([0,\infty),\,L_{r}^{\infty}L_{\omega}^{p}\right)}\lesssim\|f\|_{L^{2}}+\|g\|_{\dot{H}^{1}}\label{eq:EndStri}
\end{equation}
\end{thm}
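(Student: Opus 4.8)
The plan is to run exactly the argument used for Theorems \ref{thm:StrichaWOB}, \ref{thm:EnergyCharge} and \ref{thm:LEnergyCharge}, feeding in the angularly regularized endpoint estimate for the free wave equation in place of the ordinary Strichartz or energy estimate. First I would pass to the first-order formulation: writing $A=\sqrt{-\Delta}$ and $U:=Au+i\partial_t u$, one has $i\partial_t U=AU+V_1u+V_2(\cdot-\vec v t)u$ with $U(0)=Ag+if$, $\|U(0)\|_{L^2}\simeq\|f\|_{L^2}+\|g\|_{\dot H^1}$, and $\|u\|_{L_t^2L_r^\infty L_\omega^p}=\|A^{-1}\Re U\|_{L_t^2L_r^\infty L_\omega^p}$, so that by Duhamel it suffices to control the free evolution $e^{itA}U(0)$ and the forced term $\int_0^t e^{-i(t-s)A}\bigl(V_1u(s)+V_2(\cdot-vs)u(s)\bigr)\,ds$ in $L_t^2L_r^\infty L_\omega^p$ after applying $A^{-1}\Re$. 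The homogeneous piece is immediately bounded by $\|U(0)\|_{L^2}$ via Theorem \ref{thm:inhomAR}.

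For the forced term I would split the potentials $V_1=V_3V_4$ and $V_2=V_5V_6$ as in the proof of Theorem \ref{thm:StrichaWOB}, arranging that $V_3,V_5$ satisfy the weight hypothesis of Theorem \ref{thm:local} and Corollary \ref{thm:local-1} while $V_4^2,V_6^2\lesssim\langle\cdot\rangle^{-\alpha}$. It then reduces, just as before, to (i) the operator bounds $\|A^{-1}\Re\int_0^t e^{-i(t-s)A}V_3F(s)\,ds\|_{L_t^2L_r^\infty L_\omega^p}\lesssim\|F\|_{L_{t,x}^2}$ and the analogous bound with $V_5(\cdot-vs)$, followed by (ii) the weighted estimates of Corollary \ref{cor:weightChargeWOB}, which give $\|V_4u\|_{L_{t,x}^2}+\|V_6(\cdot-\vec v t)u\|_{L_{t,x}^2}\lesssim\|f\|_{L^2}+\|g\|_{\dot H^1}$ for a scattering state. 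The moving potential $V_2(\cdot-\vec v t)$ is handled, exactly as in Theorem \ref{thm:LEnergyCharge}, by the time-dependent weighted local energy decay of Corollary \ref{thm:local-1}.

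The delicate point, and the one that distinguishes this theorem from Theorem \ref{thm:StrichaWOB}, is step (i): because the target norm already has $L_t^2$ in the outer time slot, the Christ--Kiselev Lemma \ref{lem:Christ-Kiselev} is no longer applicable to pass from the full (untruncated) propagator to the retarded one. Instead I would invoke the \emph{inhomogeneous} form of the angular endpoint estimate for the free wave equation from Klainerman--Machedon \cite{KM}, Jia--Liu--Schlag--Xu \cite{JLSX} and Machihara--Nakamura--Nakanishi--Ozawa \cite{MNNO}: precisely because relaxing the angular integrability removes the obstruction to the $L_t^2$ endpoint, the retarded Duhamel operator is bounded from the appropriate dual space into $L_t^2L_r^\infty L_\omega^p$ with no Christ--Kiselev step. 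Combining this with the free local smoothing bound $\|V_3e^{-itA}\phi\|_{L_{t,x}^2}\lesssim\|\phi\|_{L^2}$ of Theorem \ref{thm:local} (and Corollary \ref{thm:local-1} for the moving weight), together with a H\"older estimate in the angular variable and the $V=V_3V_4$ factorization so that $V_3F$ lands in the required source norm whenever $F\in L_{t,x}^2$, yields the bounds in (i). I expect the main obstacle to be exactly this verification: checking that the inhomogeneous angular endpoint estimate holds at the $L_t^2$ level for the retarded propagator across the full range $1\le p<\infty$, which for $p$ near the extremes may call for a slightly different splitting of $V_1,V_2$ or an interpolation against the regular Strichartz estimate of Theorem \ref{thm:StrichaWOB}. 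Assembling the homogeneous and forced contributions then gives $\|u\|_{L_t^2([0,\infty),L_r^\infty L_\omega^p)}\lesssim\|f\|_{L^2}+\|g\|_{\dot H^1}$ as claimed.
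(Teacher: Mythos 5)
Your framework matches the paper's at every structural level (first-order formulation with $A=\sqrt{-\Delta}$ and $U=Au+i\partial_t u$, Duhamel, the factorization $V_1=V_3V_4$, $V_2=V_5V_6$, Theorem~\ref{thm:inhomAR} for the homogeneous piece, and the weighted estimates from Corollary~\ref{cor:weightChargeWOB} to close). You also correctly identify the crux: the Christ--Kiselev Lemma~\ref{lem:Christ-Kiselev} requires $\beta<\gamma$, so with target exponent $\gamma=2$ and source exponent $\beta=2$ from $L_{t,x}^2$, the lemma is unavailable, and one cannot simply replace $\int_0^t$ by $\int_0^\infty$ as in the proof of Theorem~\ref{thm:StrichaWOB}.

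However, your proposed resolution of this obstruction is a genuine gap. You suggest invoking an inhomogeneous/retarded angular-endpoint estimate from \cite{KM}, \cite{JLSX}, \cite{MNNO} so that ``the retarded Duhamel operator is bounded \ldots with no Christ--Kiselev step,'' but no such retarded bound from $L_{t,x}^2$ into $L_t^2 L_r^\infty L_\omega^p$ is provided by those references in the form your $TT^*$ step requires, and you yourself flag this as the step you would still need to verify. In other words, you locate the obstruction correctly but do not supply an argument that removes it; the ``interpolation against Theorem~\ref{thm:StrichaWOB}'' fallback is likewise speculative. The paper uses a different and more elementary device: since the propagator $\frac{\sin(t\sqrt{-\Delta})}{\sqrt{-\Delta}}$ in $\mathbb{R}^3$ has a \emph{positive} kernel (it is averaging over a sphere with a positive weight), the truncated Duhamel term is bounded pointwise by the untruncated Duhamel integral applied to $|V_1 u|+|V_2(\cdot-\vec v t)u|$. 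That is, one introduces the auxiliary function
\begin{equation}
v(x,t)=\frac{\sin\left(t\sqrt{-\Delta}\right)}{\sqrt{-\Delta}}f+\cos\left(t\sqrt{-\Delta}\right)g
+\int_{0}^{\infty}\frac{\sin\left(\left(t-s\right)\sqrt{-\Delta}\right)}{\sqrt{-\Delta}}\left(\left|V_{1}u(s)\right|+\left|V_{2}\left(\cdot-vs\right)u(s)\right|\right)ds
\end{equation}
and observes $|u|\lesssim |v|$, after which the $\int_0^\infty$ operator can be handled by the unretarded $TT^*$ decomposition exactly as in Theorems~\ref{thm:EnergyCharge} and \ref{thm:LEnergyCharge}, with no Christ--Kiselev step at all. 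This positivity trick is specific to three-dimensional waves and is the missing ingredient in your proposal; without it (or some concretely established retarded estimate), the bound (i) in your outline is not justified.
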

\begin{proof}
First of all, we consider a auxiliary function given by 
\begin{align}
v(x,t) & =\frac{\sin\left(t\sqrt{-\Delta}\right)}{\sqrt{-\Delta}}f+\cos\left(t\sqrt{-\Delta}\right)\\
 & +\int_{0}^{\infty}\frac{\sin\left(\left(t-s\right)\sqrt{-\Delta}\right)}{\sqrt{-\Delta}}\left(\left|V_{1}u(s)\right|+\left|V_{2}\left(\cdot-vs\right)u(s)\right|\right)ds.\nonumber 
\end{align}
Since in $\mathbb{R}^{3}$ $\frac{\sin\left(t\sqrt{-\Delta}\right)}{\sqrt{-\Delta}}$
is a positive operator, we know 
\begin{align}
\left|\int_{0}^{t}\frac{\sin\left(\left(t-s\right)\sqrt{-\Delta}\right)}{\sqrt{-\Delta}}\left(V_{1}u(s)+V_{2}\left(\cdot-vs\right)u(s)\right)ds\right|\\
\lesssim\int_{0}^{\infty}\frac{\sin\left(\left(t-s\right)\sqrt{-\Delta}\right)}{\sqrt{-\Delta}}\left(\left|V_{1}u(s)\right|+\left|V_{2}\left(\cdot-vs\right)u(s)\right|\right)ds\nonumber 
\end{align}
and it follows 
\begin{equation}
u(x,t)\leq v(x,t).
\end{equation}
We need to estimate the Strichartz norm of 
\begin{equation}
\int_{0}^{\infty}\frac{\sin\left(\left(t-s\right)\sqrt{-\Delta}\right)}{\sqrt{-\Delta}}\left(\left|V_{1}u(s)\right|+\left|V_{2}\left(\cdot-vs\right)u(s)\right|\right)ds.
\end{equation}
Clearly, 
\begin{align}
\int_{0}^{\infty}\frac{\sin\left(\left(t-s\right)\sqrt{-\Delta}\right)}{\sqrt{-\Delta}}\left(\left|V_{1}u(s)\right|+\left|V_{2}\left(\cdot-vs\right)u(s)\right|\right)ds\\
=P\int_{0}^{\infty}e^{-i\left(t-s\right)A}\left(\left|V_{1}u(s)\right|+\left|V_{2}\left(\cdot-vs\right)u(s)\right|\right)ds.\nonumber 
\end{align}
So now we can follow the same scheme as before to consider the following
two estimates 

\begin{equation}
\left\Vert P\int_{0}^{\infty}e^{-i\left(t-s\right)A}\left|V_{1}u(s)\right|\,ds\right\Vert _{L_{t}^{2}L_{r}^{\infty}L_{\omega}^{p}},
\end{equation}
\begin{equation}
\left\Vert P\int_{0}^{\infty}e^{-i\left(t-s\right)A}\left|V_{2}\left(\cdot-vs\right)u(s)\right|\,ds\right\Vert _{L_{t}^{2}L_{r}^{\infty}L_{\omega}^{p}}.
\end{equation}
As above, we write $V_{1}=V_{3}V_{4}$, $V_{2}=V_{5}V_{6}$ . It is
clear that 
\begin{equation}
\left\Vert P\int_{0}^{\infty}e^{-i\left(t-s\right)A}\left|V_{1}u(s)\right|\,ds\right\Vert _{L_{t}^{2}L_{r}^{\infty}L_{\omega}^{p}}\leq\left\Vert K\right\Vert _{L_{t,x}^{2}\rightarrow L_{t}^{2}L_{r}^{\infty}L_{\omega}^{p}}\left\Vert V_{4}u\right\Vert _{L_{t,x}^{2}},\label{eq:KV-1-1}
\end{equation}
where 
\begin{equation}
\left(KF\right)(t):=P\int_{0}^{\infty}e^{-i\left(t-s\right)A}\left|V_{3}\right|F(s)\,ds.
\end{equation}
Similarly, 
\begin{equation}
\left\Vert P\int_{0}^{\infty}e^{-i\left(t-s\right)A}\left|V_{2}\left(\cdot-vs\right)u(s)\right|\,ds\right\Vert _{L_{t}^{2}L_{r}^{\infty}L_{\omega}^{p}}.\leq\left\Vert \widetilde{K}\right\Vert _{L_{t,x}^{2}\rightarrow L_{t}^{2}L_{r}^{\infty}L_{\omega}^{p}}\left\Vert V_{6}(x-\vec{v}t)u\right\Vert _{L_{t,x}^{2}},
\end{equation}
where 
\begin{equation}
\left(\widetilde{K}F\right)(t):=P\int_{0}^{\infty}e^{-i\left(t-s\right)A}V_{5}(\cdot-vs)F(s)\,ds.
\end{equation}
We need to estimate
\begin{equation}
\left\Vert K\right\Vert _{L_{t,x}^{2}\rightarrow L_{t}^{2}L_{r}^{\infty}L_{\omega}^{p}},\,\left\Vert \widetilde{K}\right\Vert _{L_{t,x}^{2}\rightarrow L_{t}^{2}L_{r}^{\infty}L_{\omega}^{p}}.
\end{equation}
Testing against $F\in L_{t,x}^{2}$, clearly, 
\begin{equation}
\left\Vert KF\right\Vert _{L_{t}^{p}L_{x}^{q}}\leq\left\Vert Pe^{-itA}\right\Vert _{L^{2}\rightarrow L_{t}^{2}L_{r}^{\infty}L_{\omega}^{p}}\left\Vert \int_{0}^{\infty}e^{isA}V_{3}F(s)\,ds\right\Vert _{L^{2}}.\label{eq:KF-1-1}
\end{equation}
\begin{equation}
\left\Vert \widetilde{K}F\right\Vert _{L_{t}^{p}L_{x}^{q}}\leq\left\Vert Pe^{-itA}\right\Vert _{L^{2}\rightarrow L_{t}^{2}L_{r}^{\infty}L_{\omega}^{p}}\left\Vert \int_{0}^{\infty}e^{isA}V_{5}(\cdot-vs)F(s)\,ds\right\Vert _{L^{2}}.\label{eq:TKF-1-1}
\end{equation}
The first factors on the right-hand side of \eqref{eq:KF-1-1} and \eqref{eq:TKF-1-1}
is bounded by the endpoint Strichartz estimates for the free evolution.
For the second factors, we can bound them as previous proofs. 

Therefore, indeed, we have 
\begin{equation}
\left\Vert K\right\Vert _{L_{t,x}^{2}\rightarrow L_{t}^{2}L_{r}^{\infty}L_{\omega}^{p}}\leq C,\,\left\Vert \widetilde{K}\right\Vert _{L_{t,x}^{2}\rightarrow L_{t}^{2}L_{r}^{\infty}L_{\omega}^{p}}\leq C
\end{equation}
and from \eqref{eq:KV-1-1}, it follows that 
\begin{equation}
\left\Vert P\int_{0}^{\infty}e^{-i\left(t-s\right)A}\left|V_{1}u(s)\right|\,ds\right\Vert _{L_{t}^{2}L_{r}^{\infty}L_{\omega}^{p}}\lesssim\left\Vert V_{4}u\right\Vert _{L_{t,x}^{2}},
\end{equation}
\begin{equation}
\left\Vert P\int_{0}^{\infty}e^{-i\left(t-s\right)A}\left|V_{2}\left(\cdot-vs\right)u(s)\right|\,ds\right\Vert _{L_{t}^{2}L_{r}^{\infty}L_{\omega}^{p}}\lesssim\left\Vert V_{6}(x-\vec{v}t)u\right\Vert _{L_{t,x}^{2}}.
\end{equation}
They follows that 
\begin{equation}
\left\Vert P\int_{0}^{\infty}e^{-i\left(t-s\right)A}\left|V_{1}u(s)\right|\,ds\right\Vert _{L_{t}^{2}L_{r}^{\infty}L_{\omega}^{p}}\lesssim\|f\|_{L^{2}}+\|g\|_{\dot{H}^{1}}.\label{eq:Ssecond-1-2}
\end{equation}
\begin{equation}
\left\Vert P\int_{0}^{\infty}e^{-i\left(t-s\right)A}\left|V_{2}\left(\cdot-vs\right)u(s)\right|\,ds\right\Vert _{L_{t}^{2}L_{r}^{\infty}L_{\omega}^{p}}\lesssim\|f\|_{L^{2}}+\|g\|_{\dot{H}^{1}}.\label{eq:Ssecond-2-2}
\end{equation}
Therefore,
\begin{align}
\left\Vert \int_{0}^{t}\frac{\sin\left(\left(t-s\right)\sqrt{-\Delta}\right)}{\sqrt{-\Delta}}\left(V_{1}u(s)+V_{2}\left(\cdot-vs\right)u(s)\right)ds\right\Vert _{L_{t}^{2}L_{r}^{\infty}L_{\omega}^{p}}\\
\lesssim\left\Vert \int_{0}^{\infty}\frac{\sin\left(\left(t-s\right)\sqrt{-\Delta}\right)}{\sqrt{-\Delta}}\left(\left|V_{1}u(s)\right|+\left|V_{2}\left(\cdot-vs\right)u(s)\right|\right)ds\right\Vert _{L_{t}^{2}L_{r}^{\infty}L_{\omega}^{p}}\nonumber \\
\lesssim\|f\|_{L^{2}}+\|g\|_{\dot{H}^{1}}.\nonumber 
\end{align}
And hence
\begin{equation}
\|u\|_{L_{t}^{2}\left([0,\infty),\,L_{r}^{\infty}L_{\omega}^{p}\right)}\lesssim\|f\|_{L^{2}}+\|g\|_{\dot{H}^{1}}
\end{equation}
as claimed.
\end{proof}

\section{Inhomogeneous Estimates\label{sec:Inhom}}

When we consider nonlinear applications, it is useful to have estimates
for inhomogeneous equations. Again, for simplicity we consider the
case of two potentials. 

\subsection{Scattering states}

We start with revisiting scattering states. 

Recall that if $u$ solves 

\begin{equation}
\partial_{tt}u-\Delta u+V_{1}(x)u+V_{2}(x-\vec{v}t)u=0
\end{equation}
and $u$ satisfies 
\begin{equation}
\left\Vert P_{b}\left(H_{1}\right)u(t)\right\Vert _{L_{x}^{2}}\rightarrow0,\,\,\left\Vert P_{b}\left(H_{2}\right)u_{L}(t')\right\Vert _{L_{x'}^{2}}\rightarrow0\,\,\,t,t'\rightarrow\infty,
\end{equation}
then we call it a scattering state.

Clearly, the set of $\left(g,\,f\right)\in H^{1}\left(\mathbb{R}^{3}\right)\times L^{2}\left(\mathbb{R}^{3}\right)$
which produce a scattering state forms a subspace of $H^{1}\left(\mathbb{R}^{3}\right)\times L^{2}\left(\mathbb{R}^{3}\right)$.
In order to study this more precisely, we reformulate the wave equation
as a Hamiltonian system, 
\begin{equation}
\partial_{t}\left(\begin{array}{c}
u\\
\partial_{t}u
\end{array}\right)-\left(\begin{array}{cc}
0 & 1\\
-1 & 0
\end{array}\right)\left(\begin{array}{cc}
-\Delta+V_{1}(x)+V_{2}(x-\vec{v}t) & 0\\
0 & 1
\end{array}\right)\left(\begin{array}{c}
u\\
\partial_{t}u
\end{array}\right)=0.
\end{equation}
Setting
\begin{equation}
U:=\left(\begin{array}{c}
u\\
\partial_{t}u
\end{array}\right),\,J:=\left(\begin{array}{cc}
0 & 1\\
-1 & 0
\end{array}\right),
\end{equation}
\begin{equation}
H(t):=\left(\begin{array}{cc}
-\Delta+V_{1}(x)+V_{2}(x-\vec{v}t) & 0\\
0 & 1
\end{array}\right),
\end{equation}
and defining
\begin{equation}
P_{1}(U):=u,\label{eq:Pfirst}
\end{equation}
we can rewrite the wave equation with charge transfer Hamiltonian
as 
\begin{equation}
\dot{U}-JH(t)U=0,
\end{equation}
\begin{equation}
U(0)=\left(\begin{array}{c}
g\\
f
\end{array}\right).
\end{equation}
With the above notations, we define the solution operator starting
from $\tau$ to $t$ as $S(t,\tau)$. In particular, one can write
\begin{equation}
U(t)=S(t,0)U(0).
\end{equation}

As pointed out above, the set of $\left(g,\,f\right)\in H^{1}\left(\mathbb{R}^{3}\right)\times L^{2}\left(\mathbb{R}^{3}\right)$
which produce a scattering state in the sense of Definition \ref{AO}
forms a subspace 
\[
\mathcal{H}_{s}(0)\mbox{\ensuremath{\subset}}H^{1}\left(\mathbb{R}^{3}\right)\times L^{2}\left(\mathbb{R}^{3}\right).
\]
We can do a more general time-dependent construction. One considers
the evolution from $\tau$ to $t$, i.e., $S(t,\tau)$. Similar as
our original construction there is a subspace 
\[
\mathcal{H}_{s}(\tau)\mbox{\ensuremath{\subset}}H^{1}\left(\mathbb{R}^{3}\right)\times L^{2}\left(\mathbb{R}^{3}\right)
\]
 such that for $\Phi\in\mathcal{H}_{s}(\tau)$,
\begin{equation}
\left\Vert P_{b}\left(H_{1}\right)S(t,\tau)\Phi\right\Vert _{L_{x}^{2}}\rightarrow0,\,\,\left\Vert P_{b}\left(H_{2}\right)\left(S(\cdot,\tau)\Phi\right)_{L_{\tau}}(t')\right\Vert _{L_{x'}^{2}}\rightarrow0\,\,\,t,t'\rightarrow\infty.
\end{equation}
It is important to notice a fundamental property of $\mathcal{H}_{s}\mbox{(\ensuremath{\tau})}$. 
\begin{lem}
\label{lem:projS}Denote $P_{s}(\tau)$ as the projection onto $\mathcal{H}_{s}(\tau)$.
Then $\forall s,\,\tau\in\mathbb{R}$, 
\begin{equation}
P_{s}(s)S(s,\tau)=S(s,\tau)P_{s}(\tau).\label{eq:commute}
\end{equation}
\end{lem}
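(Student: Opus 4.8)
The plan is to obtain \eqref{eq:commute} from two ingredients only: the group law for the solution operator, and the observation that membership in $\mathcal{H}_{s}(\tau)$ is a statement about the behaviour of a solution as time tends to $+\infty$, so that it does not depend on which instant is singled out as the initial one.

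First I would record that, by global well-posedness of the linear equation (see \cite{GC2}), one has $S(t,s)S(s,\tau)=S(t,\tau)$ and $S(\tau,s)=S(s,\tau)^{-1}$ for all $s,t,\tau\in\mathbb{R}$. Fix $\Phi\in H^{1}\times L^{2}$ and set $\Psi=S(s,\tau)\Phi$, so that $S(t,s)\Psi=S(t,\tau)\Phi$ for every $t$: the solution generated by $\Psi$ from the initial time $s$ coincides with the one generated by $\Phi$ from the initial time $\tau$. The two conditions defining a scattering state — that $\left\Vert P_{b}(H_{1})S(t,\tau)\Phi\right\Vert_{L_{x}^{2}}\to0$ and that the Lorentz-transformed quantity $\left\Vert P_{b}(H_{2})\left(S(\cdot,\tau)\Phi\right)_{L_{\tau}}(t')\right\Vert_{L_{x'}^{2}}\to0$ — involve only the limits $t,t'\to\infty$ and the solution itself, not the label $\tau$. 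For the second one I would note that the Lorentz transformation intertwines $S(\cdot,\tau)$ with the solution operator of the equation in the primed frame (in which $V_{2}$ is stationary), which again enjoys the group property, so passing from $\tau$ to $s$ merely translates the primed initial time and leaves the $t'\to\infty$ limit unchanged. Consequently $\Phi\in\mathcal{H}_{s}(\tau)\iff\Psi\in\mathcal{H}_{s}(s)$; that is, $S(s,\tau)$ restricts to a linear isomorphism $\mathcal{H}_{s}(\tau)\to\mathcal{H}_{s}(s)$, with inverse the restriction of $S(\tau,s)$.

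The identity \eqref{eq:commute} then follows once the complementary subspace $\ker P_{s}(\tau)$ is also shown to be carried by $S(s,\tau)$ onto $\ker P_{s}(s)$. This complement is finite-dimensional and is cut out by the conditions on the bound-state coefficients $a_{i}(t),b_{j}(t)$ from the decomposition of \S\ref{subsec:Boundstates} that are complementary to the stability conditions — once more, conditions on the $t\to+\infty$ asymptotics — so the same restart-invariance argument applies. Granting this, write $\Phi=P_{s}(\tau)\Phi+(I-P_{s}(\tau))\Phi$; applying $S(s,\tau)$, the first summand lands in $\mathcal{H}_{s}(s)$, on which $P_{s}(s)$ acts as the identity, while the second lands in $\ker P_{s}(s)$, which $P_{s}(s)$ annihilates, giving $P_{s}(s)S(s,\tau)\Phi=S(s,\tau)P_{s}(\tau)\Phi$.

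I expect the only genuine difficulty to be the treatment of $\ker P_{s}(\tau)$: one must check that the non-scattering directions form a flow-covariant family of subspaces complementing $\mathcal{H}_{s}(\tau)$ for every $\tau$, i.e.\ that the stable/unstable splitting of the ODEs governing $a_{i},b_{j}$ is compatible with restarting the clock. This is where the exponential instability of wave bound states — emphasised in the introduction — enters; the continuous-spectrum remainder $r$ causes no trouble, since $P_{c}(H_{1})$ and $P_{c}(H_{2})$ (in their respective frames) are preserved along the flow by the very construction of the decomposition. Everything else is bookkeeping with the group law.
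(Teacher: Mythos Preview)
Your approach matches the paper's: both derive \eqref{eq:commute} from the flow-invariance $\mathcal{H}_{s}(s)=S(s,\tau)\mathcal{H}_{s}(\tau)$, established via the group law $S(t,s)S(s,\tau)=S(t,\tau)$ together with the observation that the scattering conditions refer only to the $t\to\infty$ behaviour of the solution and are therefore insensitive to which instant is taken as initial time. The paper's proof is terser than yours: once flow-invariance is in hand it writes $S(s,\tau)P_{s}(\tau)\Phi=P_{s}(s)S(s,\tau)P_{s}(\tau)\Phi$ (since the left side lies in $\mathcal{H}_{s}(s)$), then asserts ``similarly'' that $P_{s}(s)S(s,\tau)\Phi=P_{s}(s)S(s,\tau)P_{s}(\tau)\Phi$, and combines the two.

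What you add is an explicit account of what that ``similarly'' demands --- namely that the complementary subspace $\ker P_{s}(\tau)$ also be carried by the flow to $\ker P_{s}(s)$ --- and a sketch of why this should hold, via the bound-state decomposition of \S\ref{subsec:Boundstates} (the non-scattering directions being cut out by the unstable branches of the $a_{i},b_{j}$ ODEs, whose $t\to+\infty$ asymptotics are again restart-invariant). The paper does not spell this out; note in particular that flow-invariance of $\mathcal{H}_{s}(\tau)$ alone would not suffice for \eqref{eq:commute} if $P_{s}(\tau)$ were taken to be orthogonal, since $S(s,\tau)$ is not unitary on $\dot H^{1}\times L^{2}$. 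So your treatment is the same method carried out with more care, and your identification of the complement as the one nontrivial point is on target rather than a different route.
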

\begin{proof}
Notice that for $\Phi\in\mathcal{H}_{s}(\tau)$, then $S(s,\tau)\Phi\in\mathcal{H}_{s}(s)$.
Since
\begin{equation}
\left\Vert P_{b}\left(H_{1}\right)S(t,s)S(s,\tau)\Phi\right\Vert _{L^{2}}=\left\Vert P_{b}\left(H_{1}\right)S(t,\tau)\Phi\right\Vert _{L^{2}}\rightarrow0,
\end{equation}
\begin{equation}
\left\Vert P_{b}\left(H_{2}\right)\left(S(\cdot,s)S(s,\tau)\Phi\right)_{L_{s}}(t')\right\Vert _{L_{x'}^{2}}=\left\Vert P_{b}\left(H_{2}\right)\left(S(\cdot,\tau)\Phi\right)_{L_{\tau}}(t')\right\Vert _{L_{x'}^{2}}\rightarrow0
\end{equation}
as $t,\,t'\rightarrow\infty$ by the definition of $\mathcal{H}_{s}(\tau).$
Then again by the definition of $\mathcal{H}_{s}(s)$, it is clear
$S(s,\tau)\Phi\in\mathcal{H}_{s}(s)$. Conversely, by symmetry, for
$\Phi\in\mathcal{H}_{s}(s)$, then $S(\tau,s)\Phi\in\mathcal{H}_{s}(\tau)$.
Therefore, we have that the scattering spaces are invariant under
the flow $S(s,\tau)$, 
\begin{equation}
\mathcal{H}_{s}(s)=S(s,\tau)\mathcal{H}_{s}(\tau).
\end{equation}
Let $\Phi\in H^{1}\left(\mathbb{R}^{3}\right)\times L^{2}\left(\mathbb{R}^{3}\right)$,
then $S(s,\tau)P_{s}(\tau)\Phi\in\mathcal{H}_{s}(s)$ by construction.
So 
\begin{eqnarray}
S(s,\tau)P_{s}(\tau)\Phi & = & \left(1-P_{s}(s)\right)S(s,\tau)P_{s}(\tau)\Phi+P_{s}(s)S(s,\tau)P_{s}(\tau)\Phi\nonumber \\
 & = & P_{s}(s)S(s,\tau)P_{s}(\tau)\Phi.
\end{eqnarray}
Similarly, 
\begin{equation}
P_{s}(s)S(s,\tau)\Phi=P_{s}(s)S(s,\tau)P_{s}(\tau)\Phi.
\end{equation}
Hence 
\begin{equation}
P_{s}(s)S(s,\tau)=S(s,\tau)P_{s}(\tau),
\end{equation}
as claimed.
\end{proof}
For wave equations, it is always necessary to exchange the scalar
formulation and the Hamiltonian formulation. Here we introduce some
notations which are useful in our later analysis. We define $P_{s}(\tau)$
via the Hamiltonian formulation above. Now consider a scalar function
$v(x,t)\in C\left(\mathbb{R},\,H^{1}\left(\mathbb{R}^{3}\right)\right)\cap C^{1}\left(\mathbb{R},\,L^{2}\left(\mathbb{R}^{3}\right)\right)$,
it can give the data $\left(v,\,v_{t}\right)$ for the charge transfer
model. We define 
\begin{equation}
P_{s}^{\mathbf{S}}\left(\tau\right)v:=P_{1}P_{s}(\tau)\left(v,\,v_{t}\right),\label{eq:ProjScalar}
\end{equation}
where $P_{1}$ is the projection onto the first component as in \eqref{eq:Pfirst}.
For a vector-valued function $V=\left(\begin{array}{c}
v\\
v_{t}
\end{array}\right)\in H^{1}\left(\mathbb{R}^{3}\right)\times L^{2}\left(\mathbb{R}^{3}\right)$, 
\begin{equation}
P_{s}^{\mathbf{V}}\left(\tau\right)V:=P_{1}P_{s}\left(\tau\right)V,\label{eq:ProjVector}
\end{equation}

Given data $\left(g,\,f\right)\in H^{1}\left(\mathbb{R}^{3}\right)\times L^{2}\left(\mathbb{R}^{3}\right)$,
formally, we can define the evolution from $\tau$ to $t$ associated
with $f$ as 
\begin{equation}
U(t,\tau)f
\end{equation}
 and the evolution associated with $g$ as 
\begin{equation}
\dot{U}(t,\tau)g.
\end{equation}
Here $\dot{U}$ is just a formal notation.

Finally, we consider two special cases. 

Setting $g=0$, then the set of $f\in L^{2}\left(\mathbb{R}^{3}\right)$
such that $\left(0,\,f\right)\in\mathcal{H}_{s}(\tau)$ forms a subspace
of $L^{2}\left(\mathbb{R}^{3}\right)$. We use $L_{s}^{2}\left(\tau\right)$
to denote this subspace and let $P_{s}^{L}(\tau)$ to be the associated
projection. 

Setting $f=0$, then the set of $g\in H^{1}\left(\mathbb{R}^{3}\right)$
such that $\left(g,\,0\right)\in\mathcal{H}_{s}(\tau)$ forms a subspace
of $H^{1}\left(\mathbb{R}^{3}\right)$. We use $H_{s}^{1}\left(\tau\right)$
to denote this subspace and let $P_{s}^{H}(\tau)$ to be the associated
projection.

\subsection{Inhomogeneous local decay estimate and Strichartz estimates}

Throughout this subsection, we will use the short-hand notation 
\begin{equation}
L_{t}^{p}L_{x}^{q}:=L_{t}^{p}\left([0,\infty),\,L_{x}^{q}\right).\label{eq:notation}
\end{equation}

Let $u$ solve 
\begin{equation}
\partial_{tt}u-\Delta u+V_{1}(x)u+V_{2}(x-\vec{v}t)u=0
\end{equation}
with initial data
\begin{equation}
u(x,0)=g(x),\,u_{t}(x,0)=f(x).
\end{equation}
Denote the evolution as
\begin{equation}
u(x,t)=U(t,0)f+\dot{U}(t,0)g.
\end{equation}

From the endpoint reversed Strichartz estimate, Theorem \ref{thm:EndRSChWOB},
with the notations introduced above, we know 
\begin{equation}
\sup_{x}\int_{0}^{\infty}\left|P_{s}^{\mathbf{S}}\left(t\right)u(x,t)\right|^{2}dt\lesssim\left(\|f\|_{L^{2}}+\|g\|_{\dot{H}^{1}}\right)^{2}
\end{equation}
and 
\begin{equation}
\sup_{x}\int_{0}^{\infty}\left|\left(P_{s}^{\mathbf{S}}\left(t\right)u(x,t)\right)^{S}\right|^{2}dt\lesssim\left(\|f\|_{L^{2}}+\|g\|_{\dot{H}^{1}}\right)^{2}.
\end{equation}
Furthermore, we have the following corollary as particular situations:
\begin{cor}
\label{cor:SineEvo} For the evolution $U(t,\tau)$ and the projections
$P_{s}^{\mathbf{S}}\left(t\right),\,P_{s}^{L}(\tau)$  defined above,
one has 
\begin{equation}
\sup_{x}\int_{0}^{\infty}\left|P_{s}^{\mathbf{S}}\left(t\right)U(t,\tau)f\right|^{2}dt=\sup_{x}\int_{0}^{\infty}\left|U(t,\tau)P_{s}^{L}(\tau)f\right|^{2}dt\lesssim\|f\|_{L^{2}}^{2},\label{eq:SineEvo1}
\end{equation}
\begin{equation}
\sup_{x}\int_{0}^{\infty}\left|\left(P_{s}^{\mathbf{S}}\left(t\right)U(t,\tau)f\right)^{S}\right|^{2}dt=\sup_{x}\int_{0}^{\infty}\left|U^{S}(t,\tau)P_{s}^{L}(\tau)f\right|^{2}dt\lesssim\|f\|_{L^{2}}^{2},\label{eq:SineEvo2}
\end{equation}
where $U^{S}$ denotes the integration along the slanted line $(x+vt,t)$. 
\end{cor}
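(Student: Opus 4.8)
The plan is to deduce the Corollary from two facts already established: the commutation of the scattering projections with the flow, Lemma \ref{lem:projS}, which yields the two equalities, and the endpoint reversed Strichartz estimate for scattering states, Theorem \ref{thm:EndRSChWOB} together with its slanted-line companion \eqref{eq:EndRSChWOBSL}, which yields the quantitative bound. In short, the only genuine analytic input has already been proved; what remains is bookkeeping between the Hamiltonian and scalar formulations plus a harmless time translation.

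For the equalities I would unwind the notation. By \eqref{eq:ProjScalar}--\eqref{eq:ProjVector}, the quantity $P_s^{\mathbf S}(t)U(t,\tau)f$ is the first component $P_1$ of $P_s(t)S(t,\tau)(0,f)$, where $(0,f)$ is viewed as an element of $H^{1}\times L^{2}$; Lemma \ref{lem:projS} rewrites this as $P_1 S(t,\tau)P_s(\tau)(0,f)$, and by the very definitions of $L_s^{2}(\tau)$ and of the projection $P_s^L(\tau)$ onto it, the latter is $U(t,\tau)P_s^L(\tau)f$. Composing with the slanted-line sampling map $w\mapsto w(\cdot+vt,t)$ gives in the same way the identity carrying the superscript $S$. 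Thus both equalities in \eqref{eq:SineEvo1}--\eqref{eq:SineEvo2} are purely structural.

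For the bound, set $v(x,t):=U(t,\tau)P_s^L(\tau)f$, whose Cauchy data at time $\tau$ are $\bigl(0,P_s^L(\tau)f\bigr)\in\mathcal H_s(\tau)$; hence $v$ is a scattering state of the charge transfer equation with the evolution started at $\tau$. Translating time by $t\mapsto t+\tau$ reduces matters to a scattering state with data prescribed at $t=0$: the shifted equation still has a charge transfer Hamiltonian in the sense of Definition \ref{def: Charge}, because replacing $V_2(\cdot-\vec v t)$ by $V_2(\cdot-\vec v\tau-\vec v t)$ alters neither the decay of the potentials nor their spectral behaviour at zero, while $V_1$ is untouched. Theorem \ref{thm:EndRSChWOB}, i.e. \eqref{eq:EndRSChWOB} and \eqref{eq:EndRSChWOBSL}, then gives
\[
\sup_{x}\int_{0}^{\infty}\bigl|v(x,t)\bigr|^{2}\,dt+\sup_{x}\int_{0}^{\infty}\bigl|v^{S}(x,t)\bigr|^{2}\,dt\lesssim\bigl\|P_s^L(\tau)f\bigr\|_{L^{2}}^{2},
\]
and since $P_s^L(\tau)$ is a bounded projection on $L^{2}$ (onto a closed, finite-codimensional subspace) we have $\|P_s^L(\tau)f\|_{L^{2}}\lesssim\|f\|_{L^{2}}$; this is exactly the claimed estimate for $\tau\le 0$, and for $\tau>0$ the extra contribution over the compact interval $[0,\tau]$ is absorbed using the bounded-interval reversed Strichartz estimates \eqref{eq:EndRStrichFT}, \eqref{eq:PEndRSIT} together with the energy bound of Theorem \ref{thm:EnergyCharge-1}.

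I do not expect a serious obstacle. The substantive step is the reuse of Theorem \ref{thm:EndRSChWOB}, which is already available; the time translation and the boundedness of $P_s^L(\tau)$ are routine. The one point that requires a moment's care is checking that the definitions \eqref{eq:ProjScalar}, \eqref{eq:ProjVector} and that of $P_s^L(\tau)$ are aligned so that Lemma \ref{lem:projS} applies verbatim when passing from the Hamiltonian picture to the scalar one.
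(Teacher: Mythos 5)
Your proposal follows essentially the same route as the paper, whose own ``proof'' is the single sentence \emph{``This is just the particular cases of what we have discussed above.''}  The paper has already observed, right before the Corollary, that $P_{s}^{\mathbf{S}}(t)u$ is a scattering state to which Theorem~\ref{thm:EndRSChWOB} applies; you unpack the bookkeeping (Lemma~\ref{lem:projS} for the commutation, boundedness of $P_{s}^{L}(\tau)$, the time translation $t\mapsto t+\tau$) that the paper leaves implicit.  Your observation that the time-shifted equation is still a charge transfer model in the sense of Definition~\ref{def: Charge} is exactly the justification the paper suppresses, and it is correct: shifting $t\mapsto t+\tau$ replaces $V_{2}(\cdot-\vec v t)$ by $V_{2}(\cdot-\vec v\tau-\vec v t)$, which is a spatial translate of $V_{2}$ and hence has the same decay and the same spectral assumptions.

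One step you (and in fact the paper) pass over quickly is the middle equalities in \eqref{eq:SineEvo1}--\eqref{eq:SineEvo2}.  Unwinding \eqref{eq:ProjScalar} and Lemma~\ref{lem:projS} gives
\[
P_{s}^{\mathbf{S}}(t)\,U(t,\tau)f \;=\; P_{1}\,P_{s}(t)\,S(t,\tau)(0,f) \;=\; P_{1}\,S(t,\tau)\,P_{s}(\tau)(0,f),
\]
and to conclude $P_{1}\,S(t,\tau)\,P_{s}(\tau)(0,f)=U(t,\tau)\,P_{s}^{L}(\tau)f$ one needs $P_{s}(\tau)(0,f)=\bigl(0,\,P_{s}^{L}(\tau)f\bigr)$, i.e.\ that $P_{s}(\tau)$ preserves the slice $\{0\}\times L^{2}$.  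This does not follow from the mere definition of $L_{s}^{2}(\tau)$ as $\{f:(0,f)\in\mathcal H_{s}(\tau)\}$; it would require $\mathcal H_{s}(\tau)$ (or equivalently its complement) to factor as a product.  Since the paper states the same identity without comment and in the downstream applications (Lemma~\ref{lem:WeiInhom}) only the estimate for $U(t,\tau)P_{s}^{L}(\tau)f$ is actually used, this is a point where you are faithfully reproducing the paper's implicit step rather than introducing an error, but it deserves to be flagged as a place where either the equality should be read as an identification of the relevant quantity or as requiring the factored structure of $\mathcal H_{s}(\tau)$.

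A second, minor caveat concerns your treatment of $\tau>0$: invoking the energy bound of Theorem~\ref{thm:EnergyCharge-1} on $[0,\tau]$ gives control in $\dot H^{1}\times L^{2}$, which does not directly yield an $L_{x}^{\infty}L_{t}^{2}$ bound on a compact time interval.  The cleanest resolution is that in the only place the Corollary is used (the Duhamel integral in Lemma~\ref{lem:WeiInhom}), the integrand $U(t,\tau)P_{s}^{L}(\tau)H(\tau)$ only appears for $t\ge\tau$, so the inner $L_{t}^{2}$ norm should be taken over $[\tau,\infty)$ and the time translation alone suffices; no estimate on $[0,\tau]$ is actually needed.  With that reading, the rest of your argument is correct.
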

\begin{proof}
This is just the particular cases of what we have discussed above.
\end{proof}
By Corollary \ref{cor:SineEvo}, we have the weighted estimates for
the inhomogeneous evolution.
\begin{lem}
\label{lem:WeiInhom}For $\alpha>3$, with $U(t,\tau)$ and projections
$P_{s}^{\mathcal{S}}\left(t\right),\,P_{s}^{L}(\tau)$ defined above,
we have
\begin{equation}
\left\Vert \left\langle x\right\rangle ^{-\frac{\alpha}{2}}\int_{0}^{t}U(t,\tau)P_{s}^{L}(\tau)H(\tau)\,d\tau\right\Vert _{L_{t}^{2}L_{x}^{2}}=\left\Vert \left\langle x\right\rangle ^{-\frac{\alpha}{2}}\int_{0}^{t}P_{s}^{\mathbf{S}}\left(t\right)U(t,\tau)H(\tau)\,d\tau\right\Vert _{L_{t}^{2}L_{x}^{2}},\label{eq:equiwei1}
\end{equation}
\begin{equation}
\left\Vert \left\langle x\right\rangle ^{-\frac{\alpha}{2}}\int_{0}^{t}U(t,\tau)P_{s}^{L}(\tau)H(\tau)\,d\tau\right\Vert _{L_{t}^{2}L_{x}^{2}}\lesssim\left\Vert H(t)\right\Vert _{L_{t}^{1}L_{x}^{2}},\label{eq:WeiInh1}
\end{equation}
\begin{equation}
\left\Vert \left\langle x\right\rangle ^{-\frac{\alpha}{2}}\int_{0}^{t}U^{S}(t,\tau)P_{s}^{L}(\tau)H(\tau)\,d\tau\right\Vert _{L_{t}^{2}L_{x}^{2}}=\left\Vert \left\langle x\right\rangle ^{-\frac{\alpha}{2}}\int_{0}^{t}\left(P_{s}^{\mathbf{S}}\left(t\right)\left(t\right)U(t,\tau)\right)^{S}H(\tau)\,d\tau\right\Vert _{L_{t}^{2}L_{x}^{2}},\label{eq:equiwei2}
\end{equation}
\begin{equation}
\left\Vert \left\langle x-\vec{v}t\right\rangle ^{-\frac{\alpha}{2}}\int_{0}^{t}U(t,\tau)P_{s}^{L}(\tau)H(\tau)\,d\tau\right\Vert _{L_{t}^{2}L_{x}^{2}}\lesssim\left\Vert H(t)\right\Vert _{L_{t}^{1}L_{x}^{2}}.\label{eq:WeiInh2}
\end{equation}
\end{lem}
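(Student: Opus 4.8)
The plan is to deduce everything from Corollary \ref{cor:SineEvo}, the intertwining relation of Lemma \ref{lem:projS}, Minkowski's integral inequality, and the elementary observation that $\left\langle x\right\rangle ^{-\alpha}\in L^{1}\left(\mathbb{R}^{3}\right)$ precisely when $\alpha>3$. The first step is to notice that the two equalities \eqref{eq:equiwei1} and \eqref{eq:equiwei2} are not estimates but identities: restricting the commutation relation $P_{s}(s)S(s,\tau)=S(s,\tau)P_{s}(\tau)$ from Lemma \ref{lem:projS} to data of the form $(0,f)$ and reading off the first component — exactly the argument already used for Corollary \ref{cor:SineEvo} — yields the operator identity $P_{s}^{\mathbf{S}}(t)U(t,\tau)=U(t,\tau)P_{s}^{L}(\tau)$ on $L^{2}\left(\mathbb{R}^{3}\right)$, together with its slanted counterpart $\big(P_{s}^{\mathbf{S}}(t)U(t,\tau)\big)^{S}=U^{S}(t,\tau)P_{s}^{L}(\tau)$. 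Inserting $H(\tau)$ for $f$ and integrating in $\tau$ turns these into \eqref{eq:equiwei1} and \eqref{eq:equiwei2} verbatim, so the substantive content is the pair of bounds \eqref{eq:WeiInh1} and \eqref{eq:WeiInh2}.

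For \eqref{eq:WeiInh1}, set $v(x,t):=\int_{0}^{t}U(t,\tau)P_{s}^{L}(\tau)H(\tau)\,d\tau$. Since $\alpha>3$ we have $\int_{\mathbb{R}^{3}}\left\langle x\right\rangle ^{-\alpha}\,dx<\infty$, so $\left\Vert \left\langle x\right\rangle ^{-\alpha/2}v\right\Vert _{L_{t}^{2}L_{x}^{2}}^{2}=\int_{\mathbb{R}^{3}}\left\langle x\right\rangle ^{-\alpha}\big(\int_{0}^{\infty}|v(x,t)|^{2}\,dt\big)\,dx\lesssim\left\Vert v\right\Vert _{L_{x}^{\infty}L_{t}^{2}}^{2}$; i.e. it suffices to bound the endpoint reversed-Strichartz norm of $v$. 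Writing $v(x,t)=\int_{0}^{\infty}\mathbf{1}_{\tau<t}\,[U(t,\tau)P_{s}^{L}(\tau)H(\tau)](x)\,d\tau$ and applying Minkowski's integral inequality in $\tau$ to move the $L_{x}^{\infty}L_{t}^{2}$ norm inside, each slice is controlled by the ($\tau$-uniform) Corollary \ref{cor:SineEvo}, namely $\left\Vert U(\cdot,\tau)P_{s}^{L}(\tau)\phi\right\Vert _{L_{x}^{\infty}L_{t}^{2}}\lesssim\left\Vert \phi\right\Vert _{L^{2}}$; hence $\left\Vert v\right\Vert _{L_{x}^{\infty}L_{t}^{2}}\lesssim\int_{0}^{\infty}\left\Vert H(\tau)\right\Vert _{L_{x}^{2}}\,d\tau=\left\Vert H\right\Vert _{L_{t}^{1}L_{x}^{2}}$, which is \eqref{eq:WeiInh1}.

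The bound \eqref{eq:WeiInh2} is the same computation after the change of variables $y=x-\vec{v}t$: this replaces $\left\langle x-\vec{v}t\right\rangle ^{-\alpha/2}$ by $\left\langle y\right\rangle ^{-\alpha/2}$ and $v$ by $v^{S}$, reducing matters to $\left\Vert v^{S}\right\Vert _{L_{x}^{\infty}L_{t}^{2}}\lesssim\left\Vert H\right\Vert _{L_{t}^{1}L_{x}^{2}}$, which follows from the slanted estimate \eqref{eq:SineEvo2} by the identical Minkowski argument (equivalently, combine \eqref{eq:equiwei2} with the reduction used for \eqref{eq:WeiInh1}). The only point needing a little care — and the main, though mild, obstacle — is the uniformity in the base time $\tau$ of the constants in Corollary \ref{cor:SineEvo}: this is not automatic because the Hamiltonian is time dependent, but restarting the evolution at time $\tau$ merely translates the moving potential, replacing $V_{2}(\cdot-\vec{v}t)$ by $V_{2}(\cdot-\vec{v}\tau-\vec{v}t)$, i.e. by a spatial translate of $V_{2}$; every norm and the entire channel/bootstrap mechanism behind Theorem \ref{thm:EndRSChWOB} is insensitive to such a translation, so the constants are uniform in $\tau$. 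Granting this, together with the intertwining identities inherited from Lemma \ref{lem:projS}, the lemma is just a repackaging of Corollary \ref{cor:SineEvo} through Minkowski's inequality and the integrability $\left\langle x\right\rangle ^{-\alpha}\in L^{1}\left(\mathbb{R}^{3}\right)$ for $\alpha>3$.
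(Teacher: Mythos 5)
Your proof is correct and follows the same route as the paper: the two equalities are immediate from the intertwining $P_{s}(s)S(s,\tau)=S(s,\tau)P_{s}(\tau)$ of Lemma \ref{lem:projS} restricted to the first component, and the two bounds follow by Minkowski's inequality in $\tau$ together with Corollary \ref{cor:SineEvo} and the integrability of $\left\langle x\right\rangle^{-\alpha}$ for $\alpha>3$, exactly as in the paper's short proof. The one thing worth tightening is your justification of the $\tau$-uniformity of the constants in Corollary \ref{cor:SineEvo}: restarting the evolution at time $\tau$ translates only $V_{2}$ while $V_{1}$ stays fixed, so the pair of potentials is not a spatial translate of the original pair — the uniformity is instead a consequence of the fact that increasing $\tau$ only increases the initial separation of the two potentials, which is the favorable regime for the channel decomposition and bootstrap behind Theorem \ref{thm:EndRSChWOB} (a point the paper itself leaves implicit).
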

\begin{proof}
By the definition of projections, we have 

\[
\left\Vert \left\langle x\right\rangle ^{-\frac{\alpha}{2}}\int_{0}^{t}U(t,\tau)P_{s}^{L}(\tau)H(\tau)\,d\tau\right\Vert _{L_{t}^{2}L_{x}^{2}}=\left\Vert \left\langle x\right\rangle ^{-\frac{\alpha}{2}}\int_{0}^{t}P_{s}^{\mathbf{S}}\left(t\right)U(t,\tau)H(\tau)\,d\tau\right\Vert _{L_{t}^{2}L_{x}^{2}},
\]
\[
\left\Vert \left\langle x\right\rangle ^{-\frac{\alpha}{2}}\int_{0}^{t}U^{S}(t,\tau)P_{s}^{L}(\tau)H(\tau)\,d\tau\right\Vert _{L_{t}^{2}L_{x}^{2}}=\left\Vert \left\langle x\right\rangle ^{-\frac{\alpha}{2}}\int_{0}^{t}\left(P_{s}^{\mathbf{S}}\left(t\right)U(t,\tau)\right)^{S}H(\tau)\,d\tau\right\Vert _{L_{t}^{2}L_{x}^{2}}.
\]
Applying Minkowski's inequality and Corollary \ref{cor:SineEvo},
we have 
\begin{eqnarray*}
\left\Vert \left\langle x\right\rangle ^{-\alpha}\int_{0}^{t}U(t,\tau)P_{s}^{L}(\tau)H(\tau)\,d\tau\right\Vert _{L_{t}^{2}L_{x}^{2}} & \lesssim & \left\Vert \left\langle x\right\rangle ^{-\alpha}\int_{0}^{t}\left|U(t,\tau)P_{s}^{L}(\tau)H(\tau)\right|\,d\tau\right\Vert _{L_{t}^{2}L_{x}^{2}}\\
 & \lesssim & \left\Vert \left\langle x\right\rangle ^{-\alpha}\int_{0}^{\infty}\left|U(t,\tau)P_{s}^{L}(\tau)H(\tau)\right|\,d\tau\right\Vert _{L_{t}^{2}L_{x}^{2}}\\
 & \lesssim & \int_{0}^{\infty}\left\Vert U(t,\tau)P_{s}^{L}(\tau)H(\tau)\right\Vert _{L_{x}^{\infty}L_{t}^{2}}d\tau\\
 & \lesssim & \left\Vert H(t)\right\Vert _{L_{t}^{1}L_{x}^{2}}
\end{eqnarray*}
and similarly, 
\begin{eqnarray*}
\left\Vert \left\langle x-\vec{v}t\right\rangle ^{-\alpha}\int_{0}^{t}U(t,\tau)P_{s}^{L}(\tau)H(\tau)\,d\tau\right\Vert _{L_{t}^{2}L_{x}^{2}} & \lesssim & \int_{0}^{\infty}\left\Vert U^{S}(t,\tau)P_{s}^{L}(\tau)H(\tau)\right\Vert _{L_{x}^{\infty}L_{t}^{2}}d\tau\\
 & \lesssim & \left\Vert H(t)\right\Vert _{L_{t}^{1}L_{x}^{2}}.
\end{eqnarray*}
The lemma is proved.
\end{proof}
With the preparations above, we are ready to proceed to the analysis
of inhomogeneous Strichartz estimates. As one can observe from previous
sections on the homogeneous Strichartz estimates that it suffices
to establish certain local decay estimates. 

Now we set 
\begin{equation}
\partial_{tt}u-\Delta u+V_{1}(x)u+V_{2}(x-\vec{v}t)u=F
\end{equation}
with initial data
\begin{equation}
u(x,0)=g(x),\,u_{t}(x,0)=f(x)
\end{equation}

\begin{lem}
\label{lem:LDF}Suppose $u$ solves\textup{
\begin{equation}
\partial_{tt}u-\Delta u+V_{1}(x)u+V_{2}(x-\vec{v}t)u=F
\end{equation}
}with initial data
\begin{equation}
u(x,0)=g(x),\,u_{t}(x,0)=f(x).
\end{equation}
Then for $\alpha>3$ $\left|v\right|<1$, we have
\begin{equation}
\left\Vert \left\langle x\right\rangle ^{-\frac{\alpha}{2}}P_{s}^{\mathbf{S}}\left(t\right)u\right\Vert _{L_{t}^{2}\left([0,\infty),\,L_{x}^{2}\right)}\lesssim\left\Vert \nabla g\right\Vert _{L_{x}^{2}}+\left\Vert f\right\Vert _{L_{x}^{2}}+\left\Vert F\right\Vert _{L_{t}^{1}\left([0,\infty),\,L_{x}^{2}\right)},\label{eq:inhomweight1}
\end{equation}
and 
\begin{equation}
\left\Vert \left\langle x-\vec{v}t\right\rangle ^{-\frac{\alpha}{2}}P_{s}^{\mathbf{S}}\left(t\right)u\right\Vert _{L_{t}^{2}\left([0,\infty),\,L_{x}^{2}\right)}\lesssim\left\Vert \nabla g\right\Vert _{L_{x}^{2}}+\left\Vert f\right\Vert _{L_{x}^{2}}+\left\Vert F\right\Vert _{L_{t}^{1}\left([0,\infty),\,L_{x}^{2}\right)}.\label{eq:inhomweight2}
\end{equation}
\end{lem}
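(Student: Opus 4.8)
The plan is to reduce everything to the homogeneous theory already established for scattering states, using Duhamel's formula in the Hamiltonian formulation of the previous subsection. Writing $U=(u,\partial_t u)^{T}$, the equation becomes $\dot U-JH(t)U=(0,F)^{T}$ with $U(0)=(g,f)^{T}$, and Duhamel gives $U(t)=S(t,0)U(0)+\int_0^t S(t,\tau)(0,F(\tau))^{T}\,d\tau$. First I would apply the scattering projection $P_s(t)$ and push it through the flow using the intertwining identity $P_s(t)S(t,\tau)=S(t,\tau)P_s(\tau)$ from Lemma \ref{lem:projS} (this is linear, so it passes inside the $\tau$-integral). Taking first components, $P_s^{\mathbf{S}}(t)u$ then splits as $P_1S(t,0)P_s(0)(g,f)$ plus $\int_0^t P_1S(t,\tau)P_s(\tau)(0,F(\tau))^{T}\,d\tau$.

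For the homogeneous piece, since $P_s(0)(g,f)\in\mathcal H_s(0)$ the function $P_1S(\cdot,0)P_s(0)(g,f)$ is a scattering state in the sense of Definition \ref{AO} whose data has norm $\lesssim\|\nabla g\|_{L^2}+\|f\|_{L^2}$; hence Corollary \ref{cor:weightChargeWOB} directly bounds both its $\langle x\rangle^{-\alpha/2}$- and its $\langle x-\vec v t\rangle^{-\alpha/2}$-weighted $L^2_{t,x}$-norms by the right-hand side of \eqref{eq:inhomweight1}, \eqref{eq:inhomweight2}. For the forced piece, the key observation is that for each fixed $\tau$ the curve $s\mapsto S(s,\tau)P_s(\tau)(0,F(\tau))^{T}$ is again a scattering-state evolution (its data at time $\tau$ lies in the flow-invariant subspace $\mathcal H_s(\tau)$, cf.\ Lemma \ref{lem:projS}) with data norm $\lesssim\|F(\tau)\|_{L^2}$, so Theorem \ref{thm:EndRSChWOB} gives both $\|P_1S(\cdot,\tau)P_s(\tau)(0,F(\tau))^{T}\|_{L^\infty_xL^2_t}\lesssim\|F(\tau)\|_{L^2}$ and the analogous bound along the slanted line $(x+vt,t)$.

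From here the argument is exactly the Minkowski/Hölder manipulation already carried out for Corollary \ref{cor:SineEvo} and Lemma \ref{lem:WeiInhom}: I would bring the $L^2_t$-norm inside the $\tau$-integral by Minkowski, use $\|\langle x\rangle^{-\alpha/2}h\|_{L^2_x}\le\|\langle x\rangle^{-\alpha/2}\|_{L^2_x}\|h\|_{L^\infty_x}$ together with the fact that $\langle x\rangle^{-\alpha/2}\in L^2(\mathbb R^3)$ precisely because $\alpha>3$ (and the analogous fact for the moving weight after the substitution $x\mapsto x+vt$), and apply Minkowski once more in $\tau$ to land on $\int_0^\infty\|F(\tau)\|_{L^2}\,d\tau=\|F\|_{L^1_tL^2_x}$. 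Summing the homogeneous and forced contributions then yields \eqref{eq:inhomweight1} and \eqref{eq:inhomweight2}.

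The step I expect to require the most care is the projection bookkeeping: one must justify commuting $P_s(t)$ past $S(t,\tau)$ inside the Duhamel integral and, more importantly, check that the projected data $P_s(\tau)(0,F(\tau))^{T}$ retains $\dot H^1\times L^2$-norm $\lesssim\|F(\tau)\|_{L^2}$ uniformly in $\tau$, so that Theorem \ref{thm:EndRSChWOB} applies slice-by-slice with a constant independent of $\tau$. Once the uniform boundedness of the family $P_s(\tau)$ is in hand, no new estimate is needed beyond what was proved in the previous sections.
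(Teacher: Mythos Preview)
Your proposal is correct and follows essentially the same route as the paper: the paper also writes Duhamel in the form $P_s^{\mathbf{S}}(t)u = P_s^{\mathbf{S}}(t)\bigl(U(t,0)f+\dot U(t,0)g\bigr)+\int_0^t U(t,s)P_s^{L}(s)F(s)\,ds$ via the intertwining relation, bounds the homogeneous part by the weighted estimates of Corollary~\ref{cor:weightChargeWOB}, and bounds the forced part exactly by the Minkowski/H\"older argument you describe (which the paper has packaged as Lemma~\ref{lem:WeiInhom} using Corollary~\ref{cor:SineEvo}). Your flagged concern about the uniform-in-$\tau$ bound for $P_s(\tau)$ is the right caveat; the paper invokes it implicitly through Corollary~\ref{cor:SineEvo} without further comment.
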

\begin{proof}
By Duhamel's formula, we write 
\begin{equation}
u(x,t)=U(t,0)f+\dot{U}(t,0)g+\int_{0}^{t}U(t,s)F(s)\,ds.
\end{equation}
\begin{eqnarray}
P_{s}^{\mathbf{S}}\left(t\right)u(x,t) & = & P_{s}^{\mathbf{S}}\left(t\right)\left(U(t,0)f+\dot{U}(t,0)g\right)+\int_{0}^{t}P_{s}^{\mathbf{S}}\left(t\right)U(t,s)F(s)\,ds\nonumber \\
 & = & P_{s}^{\mathbf{S}}\left(t\right)\left(U(t,0)f+\dot{U}(t,0)g\right)+\int_{0}^{t}U(t,s)P_{s}^{L}(s)F(s)\,ds.
\end{eqnarray}
Applying the weighted norms, for the homogeneous part, we know 
\begin{equation}
\left\Vert \left\langle x\right\rangle ^{-\alpha}P_{s}^{\mathbf{S}}\left(t\right)\left(U(t,0)f+\dot{U}(t,0)g\right)\right\Vert _{L_{t}^{2}L_{x}^{2}}\lesssim\left\Vert \nabla g\right\Vert _{L_{x}^{2}}+\left\Vert f\right\Vert _{L_{x}^{2}}
\end{equation}
and 
\begin{equation}
\left\Vert \left\langle x-\vec{v}t\right\rangle ^{-\alpha}P_{s}^{\mathbf{S}}\left(t\right)\left(U(t,0)f+\dot{U}(t,0)g\right)\right\Vert _{L_{t}^{2}L_{x}^{2}}\lesssim\left\Vert \nabla g\right\Vert _{L_{x}^{2}}+\left\Vert f\right\Vert _{L_{x}^{2}}.
\end{equation}
For the inhomogeneous part, by our discussion above, one has 
\begin{equation}
\left\Vert \left\langle x\right\rangle ^{-\alpha}\int_{0}^{t}U(t,s)P_{s}^{L}(s)F(s)\,ds\right\Vert _{L_{t}^{2}L_{x}^{2}}\lesssim\left\Vert F\right\Vert _{L_{t}^{1}L_{x}^{2}},
\end{equation}
and 
\begin{equation}
\left\Vert \left\langle x-\vec{v}t\right\rangle ^{-\alpha}\int_{0}^{t}U(t,s)P_{s}^{L}(s)F(s)\,ds\right\Vert _{L_{t}^{2}L_{x}^{2}}\lesssim\left\Vert F\right\Vert _{L_{t}^{1}L_{x}^{2}}.
\end{equation}
Therefore, one can conclude that 
\begin{equation}
\left\Vert \left\langle x\right\rangle ^{-\frac{\alpha}{2}}P_{s}^{\mathbf{S}}\left(t\right)u\right\Vert _{L_{t}^{2}\left([0,\infty),\,L_{x}^{2}\right)}\lesssim\left\Vert \nabla g\right\Vert _{L_{x}^{2}}+\left\Vert f\right\Vert _{L_{x}^{2}}+\left\Vert F\right\Vert _{L_{t}^{1}\left([0,\infty),\,L_{x}^{2}\right)},
\end{equation}
\begin{equation}
\left\Vert \left\langle x-\vec{v}t\right\rangle ^{-\frac{\alpha}{2}}P_{s}^{\mathbf{S}}\left(t\right)u\right\Vert _{L_{t}^{2}\left([0,\infty),\,L_{x}^{2}\right)}\lesssim\left\Vert \nabla g\right\Vert _{L_{x}^{2}}+\left\Vert f\right\Vert _{L_{x}^{2}}+\left\Vert F\right\Vert _{L_{t}^{1}\left([0,\infty),\,L_{x}^{2}\right)}.
\end{equation}
The lemma is proved.
\end{proof}
With the decay estimate Lemma \ref{lem:LDF}, we can establish Strichartz
estimates using almost identical procedures as for the homogeneous
Strichartz estimates.
\begin{thm}
\label{thm:inhomStric}Let $\left|v\right|<1$ and suppose $u$ solves
\begin{equation}
\partial_{tt}u-\Delta u+V_{1}(x)u+V_{2}(x-\vec{v}t)u=F
\end{equation}
with initial data 
\begin{equation}
u(x,0)=g(x),\,u_{t}(x,0)=f(x).
\end{equation}
Then for $p,\,\tilde{p}>2$, and 
\begin{equation}
\frac{1}{2}=\frac{1}{p}+\frac{3}{q},\,\frac{1}{2}=\frac{1}{\tilde{p}}+\frac{3}{\tilde{q}}
\end{equation}
we have
\begin{equation}
\|P_{s}^{\mathbf{S}}\left(t\right)u\|_{L_{t}^{p}\left([0,\infty),\,L_{x}^{q}\right)}\lesssim\|f\|_{L^{2}}+\|g\|_{\dot{H}^{1}}+\left\Vert F\right\Vert _{L_{t}^{\tilde{p}'}\left([0,\,\infty),\,L_{x}^{\tilde{q}'}\right)\cap L_{t}^{1}\left([0,\infty),\,L_{x}^{2}\right)}\label{eq:inhomst}
\end{equation}
where $\tilde{p}',\,\tilde{q}'$ are H\"older conjugate of $\tilde{p},\,\tilde{q}$.
\end{thm}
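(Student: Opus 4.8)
The plan is to mirror the proof of Theorem~\ref{thm:StrichaWOB}, replacing the homogeneous weighted bound of Corollary~\ref{cor:weightChargeWOB} by the inhomogeneous local decay estimate Lemma~\ref{lem:LDF}, and inserting the free inhomogeneous Strichartz estimate to absorb the source $F$. First I would pass from $u$ to $\tilde u:=P_s^{\mathbf S}(t)u$. Using the commutation relation $P_s(t)S(t,\tau)=S(t,\tau)P_s(\tau)$ of Lemma~\ref{lem:projS} to project the Duhamel formula for the Hamiltonian system, one checks that $\tilde u$ again solves a charge transfer wave equation
\[
\partial_{tt}\tilde u-\Delta\tilde u+V_1(x)\tilde u+V_2(x-\vec v t)\tilde u=\tilde F,
\]
where $\tilde F$ is assembled from $P_s(t)(0,F(t))$ (concretely, from the second component of the projected source together with the time-derivative of its first component), and the initial data is $P_s(0)$ applied to $(g,f)$ (plus a boundary contribution of $F$). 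The point to verify here is that $\|\tilde F\|_{L_t^{\tilde p'}L_x^{\tilde q'}}+\|\tilde F\|_{L_t^1L_x^2}\lesssim\|F\|_{L_t^{\tilde p'}L_x^{\tilde q'}}+\|F\|_{L_t^1L_x^2}$ and that the $\dot H^1\times L^2$ norm of the data of $\tilde u$ is $\lesssim\|f\|_{L^2}+\|g\|_{\dot H^1}$ (modulo the same $F$-norms), which rests on the boundedness and regularity in $t$ of the scattering projections set up in Section~\ref{sec:Inhom}.

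Next, following \cite{LSch} exactly as in Theorem~\ref{thm:StrichaWOB}, set $A=\sqrt{-\Delta}$, $\tilde U:=A\tilde u+i\partial_t\tilde u$, $P:=A^{-1}\Re$, so that $\tilde u=P\tilde U$ and, by Duhamel's formula,
\[
\tilde U(t)=e^{itA}\tilde U(0)-i\int_0^t e^{-i(t-s)A}\bigl(V_1\tilde u(s)+V_2(\cdot-vs)\tilde u(s)-\tilde F(s)\bigr)\,ds.
\]
The term $Pe^{itA}\tilde U(0)$ is bounded in $L_t^pL_x^q$ by the free Strichartz estimate (Theorem~\ref{thm:StrichF}); the retarded integral of $\tilde F$ is bounded by the free inhomogeneous Strichartz estimate together with the Christ--Kiselev lemma (Lemma~\ref{lem:Christ-Kiselev}), which applies since $\tilde p'<2<p$. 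For the potential terms I would factor $V_1=V_3V_4$ and $V_2=V_5V_6$ with $V_3,V_5$ satisfying the weight hypothesis of Theorem~\ref{thm:local} and $V_4^2,V_6^2$ decaying like $\langle x\rangle^{-\alpha}$, and estimate
\[
\Bigl\|P\int_0^\infty e^{-i(t-s)A}V_3V_4\tilde u(s)\,ds\Bigr\|_{L_t^pL_x^q}\le\|K\|_{L_{t,x}^2\to L_t^pL_x^q}\,\|V_4\tilde u\|_{L_{t,x}^2},
\]
where $KF:=P\int_0^\infty e^{-i(t-s)A}V_3F(s)\,ds$; the operator norm $\|K\|$ is finite by the free Strichartz estimate together with the free local energy decay (Theorem~\ref{thm:local}), exactly as in the proof of Theorem~\ref{thm:StrichaWOB}, and Christ--Kiselev again replaces $\int_0^\infty$ by $\int_0^t$. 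The analogous bound for the moving potential uses the moving-frame local decay Corollary~\ref{thm:local-1}.

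The decisive step is then that $\|V_4\tilde u\|_{L_{t,x}^2}\lesssim\|\langle x\rangle^{-\alpha/2}P_s^{\mathbf S}(t)u\|_{L_t^2L_x^2}$ and $\|V_6(x-\vec vt)\tilde u\|_{L_{t,x}^2}\lesssim\|\langle x-\vec vt\rangle^{-\alpha/2}P_s^{\mathbf S}(t)u\|_{L_t^2L_x^2}$, and both right-hand sides are $\lesssim\|f\|_{L^2}+\|g\|_{\dot H^1}+\|F\|_{L_t^1L_x^2}$ by Lemma~\ref{lem:LDF}; this is precisely why the source norm appearing in the statement is the intersection $L_t^{\tilde p'}L_x^{\tilde q'}\cap L_t^1L_x^2$ rather than a single dual Strichartz space. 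Collecting the four contributions gives the claim. I expect the main obstacle to be the bookkeeping in the first paragraph: showing that after the scattering projection $\tilde u=P_s^{\mathbf S}(t)u$ genuinely satisfies a charge transfer equation whose data and forcing are controlled by the stated quantities — equivalently, that nothing is lost when the time-dependent, merely oblique projection $P_s(t)$ is commuted through the inhomogeneous Duhamel formula. Once that is in place, the remainder is the verbatim scheme of Theorem~\ref{thm:StrichaWOB}.
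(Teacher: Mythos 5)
Your proposal is essentially the paper's proof: use the commutation $P_s(s)S(s,\tau)=S(s,\tau)P_s(\tau)$ from Lemma~\ref{lem:projS}, show that $P_s^{\mathbf S}(t)u$ solves a charge-transfer wave equation with projected source, and then run the Rodnianski--Schlag/Lawrie--Schlag scheme of Theorem~\ref{thm:StrichaWOB}, replacing Corollary~\ref{cor:weightChargeWOB} with Lemma~\ref{lem:LDF} so that the weighted $L^2_{t,x}$ bounds on $V_4 P_s^{\mathbf S}(t)u$ and $V_6(\cdot-\vec vt)P_s^{\mathbf S}(t)u$ now carry the extra $\|F\|_{L_t^1L_x^2}$ term, which is why that norm appears in the statement. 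The paper resolves the bookkeeping you flag by differentiating the commutation relation to get $\dot P_s(t)=JH(t)P_s(t)-P_s(t)JH(t)$, which yields the clean projected equation $\partial_{tt}\left(P_s^{\mathbf S}(t)u\right)+\left(-\Delta+V_1+V_2(\cdot-\vec vt)\right)P_s^{\mathbf S}(t)u=P_s^{\mathbf S}(t)F$ with no boundary contribution of $F$ to the initial data.
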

\begin{proof}
The proof is almost identical to the one for Theorem \ref{thm:StrichaWOB}.
But we need some preliminary calculations. By Lemma \ref{lem:projS},
we know 
\begin{equation}
P_{s}(s)S(s,\tau)=S(s,\tau)P_{s}(\tau).\label{eq:commt}
\end{equation}
Differentiating \eqref{eq:commt} with respect to $s$ and then setting
both $\tau=s=t$, we have 
\begin{equation}
\dot{P}_{s}(t)=-JH(t)P_{s}(t)+P_{s}(t)JH(t).\label{eq:diffP}
\end{equation}
Just as we discussed about projections, we write 
\begin{equation}
\partial_{tt}u-\Delta u+V_{1}(x)u+V_{2}(x-\vec{v}t)u=F
\end{equation}
as a system:
\begin{equation}
\partial_{t}\left(\begin{array}{c}
u\\
\partial_{t}u
\end{array}\right)-\left(\begin{array}{cc}
0 & 1\\
-1 & 0
\end{array}\right)\left(\begin{array}{cc}
-\Delta+V_{1}(x)+V_{2}(x-\vec{v}t) & 0\\
0 & 1
\end{array}\right)\left(\begin{array}{c}
u\\
\partial_{t}u
\end{array}\right)=\left(\begin{array}{c}
0\\
F(t)
\end{array}\right).
\end{equation}
Then 
\begin{equation}
P_{s}(t)\partial_{t}\left(\begin{array}{c}
u\\
\partial_{t}u
\end{array}\right)-P_{s}(t)\left(\begin{array}{cc}
0 & 1\\
-1 & 0
\end{array}\right)\left(\begin{array}{cc}
-\Delta+V_{1}(x)+V_{2}(x-\vec{v}t) & 0\\
0 & 1
\end{array}\right)=P_{s}(t)\left(\begin{array}{c}
0\\
F(t)
\end{array}\right)
\end{equation}
which is 
\begin{equation}
P_{s}(t)\dot{U}(t)-P_{s}(t)JH(t)=P_{s}(t)F(t).\label{eq:projeq}
\end{equation}
By equations \eqref{eq:commt} and \eqref{eq:projeq}, one has 
\begin{equation}
\frac{d}{dt}\left(P_{s}(t)U(t)\right)-JH(t)P_{s}(t)U(t)=P_{s}(t)F(t).
\end{equation}
Hence returning to our scalar setting, we have 
\begin{equation}
\partial_{tt}\left(P_{s}^{\mathbf{S}}\left(t\right)u\right)+\left(-\Delta+V_{1}(x)+V_{2}(x-\vec{v}t)\right)P_{s}^{\mathbf{S}}\left(t\right)u=P_{s}^{\mathbf{S}}\left(t\right)F(t).
\end{equation}
Now we are ready to proceed to the Strichartz estimates argument similar
to the case in Theorem \ref{thm:StrichaWOB}.

Again, following \cite{LSch}, setting $A=\sqrt{-\Delta}$ and taking
\[
U(t)=AP_{s}^{\mathbf{S}}\left(t\right)u(t)+i\partial_{t}\left(P_{s}^{\mathbf{S}}\left(t\right)u(t)\right),
\]
then $U$ satisfies 
\begin{equation}
i\partial_{t}U=AU+V_{1}P_{s}^{\mathbf{S}}\left(t\right)u(t)+V_{2}\left(x-\vec{v}t\right)P_{s}^{\mathbf{S}}\left(t\right)u(t)+P_{s}^{\mathbf{S}}\left(t\right)F,
\end{equation}
By Duhamel's formula, 
\begin{equation}
U(t)=e^{itA}U(0)-i\int_{0}^{t}e^{-i\left(t-s\right)A}\left(V_{1}P_{s}^{\mathbf{S}}\left(s\right)u(s)+V_{2}\left(\cdot-vs\right)P_{s}^{\mathbf{S}}\left(s\right)u(s)+P_{s}^{\mathbf{S}}\left(s\right)F(s)\right)\,ds.
\end{equation}
Let $P:=A^{-1}\Re$, then from Strichartz estimates for the free evolution,
\begin{equation}
\left\Vert Pe^{itA}U(0)\right\Vert \lesssim\left\Vert U(0)\right\Vert _{L^{2}},\label{eq:Sfirst-1-1}
\end{equation}
and 
\begin{equation}
\left\Vert \int_{0}^{t}e^{-i\left(t-s\right)A}P_{s}^{\mathbf{S}}\left(s\right)F(s)\,ds\right\Vert _{L_{t}^{p}L_{x}^{q}}\lesssim\left\Vert F\right\Vert _{L_{t}^{\tilde{p}'}L_{x}^{\tilde{q}'}}.\label{eq:secondIn}
\end{equation}
As in the proof of Theorem \ref{thm:StrichaWOB}, writing $V_{1}=V_{3}V_{4}$,
$V_{2}=V_{5}V_{6}$ , it suffices to bound
\begin{equation}
\left\Vert P\int_{0}^{\infty}e^{-i\left(t-s\right)A}V_{3}V_{4}P_{s}^{\mathbf{S}}\left(s\right)u(s)\,ds\right\Vert _{L_{t}^{p}L_{x}^{q}},
\end{equation}
and 
\begin{equation}
\left\Vert P\int_{0}^{\infty}e^{-i\left(t-s\right)A}V_{5}V_{6}(\cdot-vs)P_{s}^{\mathbf{S}}\left(s\right)u(s)\,ds\right\Vert _{L_{t}^{p}L_{x}^{q}}.
\end{equation}
In the same manner as we did in the proof of Theorem \ref{thm:StrichaWOB},
one has

\begin{equation}
\left\Vert P\int_{0}^{\infty}e^{-i\left(t-s\right)A}V_{3}V_{4}P_{s}^{\mathbf{S}}\left(s\right)u(s)\,ds\right\Vert _{L_{t}^{p}L_{x}^{q}}\lesssim\left\Vert V_{4}P_{s}^{\mathbf{S}}\left(t\right)u\right\Vert _{L_{t}^{2}L_{x}^{2}},
\end{equation}
\begin{equation}
\left\Vert P\int_{0}^{\infty}e^{-i\left(t-s\right)A}V_{5}V_{6}(\cdot-vs)P_{s}^{\mathbf{S}}\left(s\right)u(s)\,ds\right\Vert _{L_{t}^{p}L_{x}^{q}}\lesssim\left\Vert V_{6}(x-\vec{v}t)P_{s}^{\mathbf{S}}\left(t\right)u\right\Vert _{L_{t}^{2}L_{x}^{2}}.
\end{equation}
By estimates \eqref{eq:inhomweight1} and \eqref{eq:inhomweight2} from
Lemma \ref{lem:LDF}, 
\begin{equation}
\left\Vert V_{4}P_{s}^{\mathbf{S}}\left(t\right)u\right\Vert _{L_{t}^{2}L_{x}^{2}}\lesssim\left\Vert \nabla g\right\Vert _{L_{x}^{2}}+\left\Vert f\right\Vert _{L_{x}^{2}}+\left\Vert F\right\Vert _{L_{t}^{1}L_{x}^{2}},
\end{equation}
\begin{equation}
\left\Vert V_{6}(x-\vec{v}t)P_{s}^{\mathbf{S}}\left(t\right)u\right\Vert _{L_{t}^{2}L_{x}^{2}}\lesssim\left\Vert \nabla g\right\Vert _{L_{x}^{2}}+\left\Vert f\right\Vert _{L_{x}^{2}}+\left\Vert F\right\Vert _{L_{t}^{1}L_{x}^{2}}.
\end{equation}
Therefore, by the same argument as for the homogeneous Strichartz
estimates, we have
\begin{equation}
\|P_{s}^{\mathbf{S}}\left(t\right)u\|_{L_{t}^{p}\left([0,\infty),\,L_{x}^{q}\right)}\lesssim\|f\|_{L^{2}}+\|g\|_{\dot{H}^{1}}+\left\Vert F\right\Vert _{L_{t}^{\tilde{p}'}\left([0,\,\infty),\,L_{x}^{\tilde{q}'}\right)\cap L_{t}^{1}\left([0,\infty),\,L_{x}^{2}\right)}.
\end{equation}
as claimed.
\end{proof}
From the discussions above, we can also conclude the endpoint reversed
Strichartz estimate.
\begin{thm}
\label{thm:endpointInhom}Let $\left|v\right|<1$ and suppose $u$
solves
\begin{equation}
\partial_{tt}u-\Delta u+V_{1}(x)u+V_{2}(x-\vec{v}t)u=F
\end{equation}
with initial data 
\begin{equation}
u(x,0)=g(x),\,u_{t}(x,0)=f(x).
\end{equation}
Then we have
\begin{equation}
\sup_{x}\int_{0}^{\infty}\left|P_{s}^{\mathbf{S}}\left(t\right)u\right|^{2}dt\lesssim\left(\|f\|_{L^{2}}+\|g\|_{\dot{H}^{1}}+\left\Vert F\right\Vert _{L_{t}^{1}\left([0,\infty),\,L_{x}^{2}\right)}.\right)^{2}\label{eq:inhomreverse}
\end{equation}
\end{thm}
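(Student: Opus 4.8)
The plan is to follow the template of Lemma~\ref{lem:LDF}, replacing the weighted norm $\left\Vert\left\langle x\right\rangle^{-\alpha/2}\cdot\right\Vert_{L^2_{t,x}}$ by the reversed norm $\sup_x\left(\int_0^\infty|\cdot|^2\,dt\right)^{1/2}$, and invoking the homogeneous endpoint reversed Strichartz estimate of Theorem~\ref{thm:EndRSChWOB} together with Corollary~\ref{cor:SineEvo} in place of the weighted bounds used there. First I would write $u$ via Duhamel's formula in the Hamiltonian form,
\[
u(x,t)=U(t,0)f+\dot U(t,0)g+\int_0^t U(t,s)F(s)\,ds,
\]
apply the scalar projection $P_s^{\mathbf S}(t)$, and use the commutation identity of Lemma~\ref{lem:projS} (transported to the scalar operators via \eqref{eq:ProjScalar}, exactly as in the proof of Theorem~\ref{thm:inhomStric}) to obtain
\[
P_s^{\mathbf S}(t)u(x,t)=P_s^{\mathbf S}(t)\bigl(U(t,0)f+\dot U(t,0)g\bigr)+\int_0^t U(t,s)P_s^L(s)F(s)\,ds.
\]

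For the homogeneous term, note that $P_s^{\mathbf S}(t)\bigl(U(t,0)f+\dot U(t,0)g\bigr)$ is the first component of $S(t,0)P_s(0)(g,f)$; since $P_s(0)(g,f)\in\mathcal H_s(0)$ and $P_s(0)$ is bounded on $\dot H^1\times L^2$, this is itself (the evolution of) a scattering state in the sense of Definition~\ref{AO} with data of size $\lesssim\|f\|_{L^2}+\|g\|_{\dot H^1}$, so Theorem~\ref{thm:EndRSChWOB} gives directly
\[
\sup_x\int_0^\infty\bigl|P_s^{\mathbf S}(t)\bigl(U(t,0)f+\dot U(t,0)g\bigr)\bigr|^2\,dt\lesssim\bigl(\|f\|_{L^2}+\|g\|_{\dot H^1}\bigr)^2.
\]
For the inhomogeneous term I would write $\int_0^t=\int_0^\infty\mathbf 1_{\{s<t\}}$, apply Minkowski's inequality to pull the $L^\infty_xL^2_t$ norm inside the $s$-integral, and bound each slice by Corollary~\ref{cor:SineEvo}, equation~\eqref{eq:SineEvo1}:
\[
\left\Vert\int_0^t U(t,s)P_s^L(s)F(s)\,ds\right\Vert_{L^\infty_xL^2_t}\lesssim\int_0^\infty\bigl\Vert U(t,s)P_s^L(s)F(s)\bigr\Vert_{L^\infty_xL^2_t}\,ds\lesssim\int_0^\infty\|F(s)\|_{L^2}\,ds=\|F\|_{L^1_tL^2_x}.
\]
Adding the two contributions yields \eqref{eq:inhomreverse}.

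The calculations are routine given the earlier results, and the density argument reducing to smooth $F$ is standard. The only point requiring care is the bookkeeping for the projections: one must check that the commutation of Lemma~\ref{lem:projS} descends to $P_s^{\mathbf S}(t)$ and $P_s^L(s)$ so that $P_s^{\mathbf S}(t)\int_0^t U(t,s)F(s)\,ds=\int_0^t U(t,s)P_s^L(s)F(s)\,ds$, and that the projected homogeneous piece really is the scattering part of the free-data evolution, so that Theorem~\ref{thm:EndRSChWOB} applies with the stated right-hand side. This is the main, though mild, obstacle; everything else follows the scheme already established for the weighted inhomogeneous estimates.
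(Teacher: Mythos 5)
Your proof is correct and takes essentially the same route the paper has in mind: Duhamel expansion in the Hamiltonian formulation, the commutation identity of Lemma~\ref{lem:projS} to move $P_{s}^{\mathbf{S}}(t)$ past $U(t,s)$, the homogeneous projected estimate stated just before Corollary~\ref{cor:SineEvo}, and Minkowski plus Corollary~\ref{cor:SineEvo} for the Duhamel integral, which is exactly the computation the paper carries out inside the proof of Lemma~\ref{lem:WeiInhom}. The paper discharges this theorem with the single phrase ``From the discussions above,'' and your argument is a faithful unpacking of that implicit reasoning.
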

Taking the case $p=q$ in the regular Strichartz estimate and interpolating
it with the endpoint reversed Strichartz estimate \eqref{eq:inhomreverse},
we obtain more reversed Strichartz estimates.
\begin{cor}
\label{cor:moreStrichInhom}Let $\left|v\right|<1$ and suppose $u$
solves
\begin{equation}
\partial_{tt}u-\Delta u+V_{1}(x)u+V_{2}(x-\vec{v}t)u=F
\end{equation}
with initial data 
\[
u(x,0)=g(x),\,u_{t}(x,0)=f(x).
\]
Then for 
\begin{equation}
2\leq p,\,\tilde{p}\leq8
\end{equation}
and
\begin{equation}
\frac{1}{2}=\frac{1}{p}+\frac{3}{q},\,\frac{1}{2}=\frac{1}{\tilde{p}}+\frac{3}{\tilde{q}}
\end{equation}
we have
\begin{equation}
\left\Vert P_{s}^{\mathbf{S}}\left(t\right)u\right\Vert _{L_{x}^{q}\left(\mathbb{R}^{3},\,L_{t}^{p}[0,\infty)\right)}\lesssim\|f\|_{L^{2}}+\|g\|_{\dot{H}^{1}}+\left\Vert F\right\Vert _{L_{x}^{\tilde{q}'}\left(\mathbb{R}^{3},\,L_{t}^{\tilde{p}'}[0,\infty)\right)\cap L_{t}^{1}\left([0,\infty),\,L_{x}^{2}\right)}.\label{eq:inhomorevese}
\end{equation}
where $\tilde{p}',\,\tilde{q}'$ are H\"older conjugate of $\tilde{p},\,\tilde{q}$.
\end{cor}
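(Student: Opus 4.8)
The plan is to repeat the derivation of Corollary~\ref{cor: RevCharB} --- interpolating the regular space--time inhomogeneous Strichartz estimate at the diagonal exponent $p=q$ against the endpoint reversed estimate --- while this time carrying the inhomogeneity $F$ through the argument. As a first step I would split
\[
P_s^{\mathbf S}(t)u=P_s^{\mathbf S}(t)\bigl(U(t,0)f+\dot U(t,0)g\bigr)+\int_0^t U(t,s)P_s^{L}(s)F(s)\,ds,
\]
as in the proof of Theorem~\ref{thm:inhomStric}, using the intertwining identity of Lemma~\ref{lem:projS}. Because the scattering subspaces are invariant under the flow, the data term $P_s^{\mathbf S}(t)\bigl(U(t,0)f+\dot U(t,0)g\bigr)$ is a scattering state in the sense of Definition~\ref{AO} with Cauchy data bounded by $\|g\|_{\dot H^1}+\|f\|_{L^2}$, so Corollary~\ref{cor: RevCharB} applied to it directly produces the $\|f\|_{L^2}+\|g\|_{\dot H^1}$ contribution for every admissible pair with $2\le p\le 8$. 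It then remains to bound the linear forcing map $T\colon F\mapsto\int_0^t U(t,s)P_s^{L}(s)F(s)\,ds$.

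For $T$ I would record the two endpoint estimates to be interpolated. The first is Theorem~\ref{thm:inhomStric} taken at the only diagonal admissible exponent, $p=q=\tilde p=\tilde q=8$: since equal-exponent mixed norms coincide with the plain Lebesgue norm on $\mathbb R^{3+1}$, it reads
\[
\|TF\|_{L^{8}_{x}L^{8}_{t}}\lesssim\|F\|_{L^{8/7}_{x}L^{8/7}_{t}\,\cap\,L^{1}_{t}L^{2}_{x}},
\]
which is simultaneously a regular and a reversed Strichartz bound. The second is the endpoint reversed estimate of Theorem~\ref{thm:endpointInhom}, specialized to zero data,
\[
\|TF\|_{L^{\infty}_{x}L^{2}_{t}}\lesssim\|F\|_{L^{1}_{t}L^{2}_{x}}.
\]
Complex interpolation of $T$ between these two source--target couples gives, for each intermediate parameter $\theta$, a bound into $L^{q}_{x}L^{p}_{t}$ with $\frac1q=\frac{1-\theta}{8}$ and $\frac1p=\frac{1-\theta}{8}+\frac\theta2$, which is exactly the family of admissible pairs with $2\le p\le 8$; the source is the corresponding interpolation space of the couple $\bigl(L^{8/7}_{x,t}\cap L^{1}_{t}L^{2}_{x},\ L^{1}_{t}L^{2}_{x}\bigr)$. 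Together with the freedom to choose the source pair $(\tilde p,\tilde q)$ already present in Theorem~\ref{thm:inhomStric} and with Minkowski's inequality relating the two orders of integration (available here since admissible pairs satisfy $\tilde q'\le\tilde p'$), this source space should be identifiable with $L^{\tilde q'}_{x}L^{\tilde p'}_{t}\cap L^{1}_{t}L^{2}_{x}$ for the corresponding admissible $(\tilde p,\tilde q)$ with $2\le\tilde p\le 8$; adding the data contribution yields the claim, with the extreme cases $p=2$ or $\tilde p=2$ read off directly from Theorems~\ref{thm:endpointInhom} and \ref{thm:inhomStric}.

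I expect the structural part of the argument to be routine --- it is a verbatim rerun of the proofs of Theorem~\ref{thm:StrichaWOB} and Theorem~\ref{thm:inhomStric}, now with $F$ retained in the Duhamel term. The genuine difficulty is the mixed-norm bookkeeping in the interpolation step: one has to check that the $L^{1}_{t}L^{2}_{x}$ factor survives uniformly over the whole admissible range --- this is why it is kept explicitly in both endpoint bounds above --- and that complex interpolation of a couple whose two members are written with different orders of integration nevertheless delivers the reversed source norm $L^{\tilde q'}_{x}L^{\tilde p'}_{t}$ rather than the weaker time-ordered one. This is precisely where one uses that at the $L^{8}$ corner the two orders of integration agree, and where Minkowski's integral inequality enters to compare them off the diagonal.
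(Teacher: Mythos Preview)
Your proposal is correct and follows the same strategy the paper indicates: interpolate the diagonal case $p=q=8$ of the regular inhomogeneous Strichartz estimate (Theorem~\ref{thm:inhomStric}) against the endpoint reversed estimate (Theorem~\ref{thm:endpointInhom}), exactly as Corollary~\ref{cor: RevCharB} was obtained in the homogeneous case. The paper states this in one line without further detail; your splitting into the homogeneous piece (handled by Corollary~\ref{cor: RevCharB}) and the Duhamel piece, together with the Minkowski observation that $L_x^{\tilde q'}L_t^{\tilde p'}\hookrightarrow L_t^{\tilde p'}L_x^{\tilde q'}$ since $\tilde p'\ge\tilde q'$, simply makes the bookkeeping explicit. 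One minor simplification: rather than interpolating the source couple, you can fix the source space $X=L_x^{\tilde q'}L_t^{\tilde p'}\cap L_t^1L_x^2$ once and for all, note that $T\colon X\to L_{x,t}^{8}$ (by Theorem~\ref{thm:inhomStric} plus Minkowski) and $T\colon X\to L_x^{\infty}L_t^{2}$ (by Theorem~\ref{thm:endpointInhom}, since $X\subset L_t^1L_x^2$), and interpolate only on the target side; this immediately gives the full independent range of $(p,q)$ and $(\tilde p,\tilde q)$ without worrying about intersection-space interpolation.
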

We also have the endpoint Strichartz estimate with norm inhomogeneous
with respect to radial and angular variables
\begin{thm}
\label{thm:inhomStricE}Let $\left|v\right|<1$ and suppose $u$ solves
\begin{equation}
\partial_{tt}u-\Delta u+V_{1}(x)u+V_{2}(x-\vec{v}t)u=F
\end{equation}
with initial data 
\begin{equation}
u(x,0)=g(x),\,u_{t}(x,0)=f(x).
\end{equation}
Then for $1\leq p<\infty$, we have
\begin{equation}
\|P_{s}^{\mathbf{S}}\left(t\right)u\|_{L_{t}^{p}\left([0,\infty),\,L_{r}^{\infty}L_{\omega}^{p}\right)}\lesssim\|f\|_{L^{2}}+\|g\|_{\dot{H}^{1}}+\left\Vert F\right\Vert _{L_{t}^{1}\left([0,\infty),\,L_{x}^{2}\right)}.\label{eq:inhomst-1}
\end{equation}
\end{thm}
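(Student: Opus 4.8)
The plan is to merge the two devices already used in this section: the projection-reduction from the proof of Theorem~\ref{thm:inhomStric}, which replaces $u$ by the genuine wave solution $w:=P_{s}^{\mathbf{S}}(t)u$ driven by the source $P_{s}^{\mathbf{S}}(t)F$, and the positivity/majorant device from the proof of Theorem~\ref{thm:EndStri}, which reaches the endpoint norm $L_{t}^{2}L_{r}^{\infty}L_{\omega}^{p}$ without the Christ--Kiselev lemma (the latter being unavailable at the endpoint because the time exponents on the two sides coincide). First I would, exactly as in the proof of Theorem~\ref{thm:inhomStric}, use Lemma~\ref{lem:projS} and differentiation of the intertwining identity $P_{s}(s)S(s,\tau)=S(s,\tau)P_{s}(\tau)$ to show that $w=P_{s}^{\mathbf{S}}(t)u$ solves $\partial_{tt}w-\Delta w+V_{1}(x)w+V_{2}(x-\vec{v}t)w=P_{s}^{\mathbf{S}}(t)F$, with initial data controlled by $\|g\|_{\dot{H}^{1}}+\|f\|_{L^{2}}$ thanks to the boundedness of the projection; since $\|P_{s}^{\mathbf{S}}(t)F\|_{L_{t}^{1}L_{x}^{2}}\lesssim\|F\|_{L_{t}^{1}L_{x}^{2}}$, it then suffices to bound $\|w\|_{L_{t}^{2}L_{r}^{\infty}L_{\omega}^{p}}$ in terms of the data and the $L_{t}^{1}L_{x}^{2}$ norm of its source.

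Next I would write Duhamel's formula for $w$ and, using that $\frac{\sin(t\sqrt{-\Delta})}{\sqrt{-\Delta}}$ is positivity-preserving on $\mathbb{R}^{3}$, dominate the Duhamel integral pointwise by the corresponding integral over $[0,\infty)$ of $\frac{\sin((t-s)\sqrt{-\Delta})}{\sqrt{-\Delta}}\big(|V_{1}w(s)|+|V_{2}(\cdot-vs)w(s)|+|P_{s}^{\mathbf{S}}(s)F(s)|\big)$, precisely as in the proof of Theorem~\ref{thm:EndStri}. For the two data terms I would invoke the free endpoint estimate Theorem~\ref{thm:inhomAR}. For the new source term $P\int_{0}^{\infty}e^{-i(t-s)A}|P_{s}^{\mathbf{S}}(s)F(s)|\,ds$, I would factor $e^{-i(t-s)A}=e^{-itA}e^{isA}$, bound the outer evolution by Theorem~\ref{thm:inhomAR} and the inner integral by Minkowski's inequality and the unitarity of $e^{isA}$, obtaining a bound by $\|P_{s}^{\mathbf{S}}(t)F\|_{L_{t}^{1}L_{x}^{2}}\lesssim\|F\|_{L_{t}^{1}L_{x}^{2}}$; this is why the statement needs only an $L_{t}^{1}L_{x}^{2}$ hypothesis on $F$ and no mixed $L_{t}^{\tilde{p}'}L_{x}^{\tilde{q}'}$ term, in contrast to Theorem~\ref{thm:inhomStric}. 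For the two potential terms I would factor $V_{1}=V_{3}V_{4}$, $V_{2}=V_{5}V_{6}$ with $V_{3},V_{5}$ satisfying the weight hypothesis of Theorem~\ref{thm:local} and $V_{4}^{2},V_{6}^{2}$ decaying like $\langle x\rangle^{-\alpha}$, introduce the operators $K,\widetilde{K}$ as in the proofs of Theorems~\ref{thm:StrichaWOB} and~\ref{thm:EndStri}, and check $\|K\|_{L_{t,x}^{2}\to L_{t}^{2}L_{r}^{\infty}L_{\omega}^{p}}\lesssim 1$ (and likewise for $\widetilde{K}$) using Theorem~\ref{thm:inhomAR} together with the local decay estimates Theorem~\ref{thm:local} and Corollary~\ref{thm:local-1}. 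This reduces everything to $\|V_{4}w\|_{L_{t,x}^{2}}+\|V_{6}(x-\vec{v}t)w\|_{L_{t,x}^{2}}$, which is $\lesssim\|f\|_{L^{2}}+\|g\|_{\dot{H}^{1}}+\|F\|_{L_{t}^{1}L_{x}^{2}}$ by the inhomogeneous local decay estimate Lemma~\ref{lem:LDF}; summing the pieces gives the claimed bound.

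The main obstacle is not a genuinely new estimate but rather the bookkeeping of the projection: one must be sure that the positivity/majorant argument of Theorem~\ref{thm:EndStri} is legitimately applied to $w=P_{s}^{\mathbf{S}}(t)u$ rather than to $u$ itself. This is fine precisely because Step~1 shows $w$ satisfies a bona fide wave equation with short-range, time-dependent potentials and source $P_{s}^{\mathbf{S}}(t)F$, so the Duhamel representation, the positivity of the sine propagator, and all the subsequent weighted and endpoint estimates apply verbatim. The only other point requiring care is confirming that the source contributes only through $\|F\|_{L_{t}^{1}L_{x}^{2}}$; this is exactly what the unitarity step in the second paragraph delivers.
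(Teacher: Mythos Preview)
Your proposal is correct and is exactly the argument the paper has in mind: Theorem~\ref{thm:inhomStricE} is stated without proof because it follows by combining the projection-reduction of Theorem~\ref{thm:inhomStric} (which turns $P_{s}^{\mathbf{S}}(t)u$ into a genuine solution of the charge transfer equation with source $P_{s}^{\mathbf{S}}(t)F$) with the positivity/majorant scheme of Theorem~\ref{thm:EndStri}, closing via Lemma~\ref{lem:LDF}. You also correctly noticed that the time exponent in the statement should be $L_{t}^{2}$ rather than $L_{t}^{p}$, matching Theorems~\ref{thm:inhomAR} and~\ref{thm:EndStri}.
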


\subsection{Reversed endpoint Strichartz estimates with inhomogeneous terms in
revered norms}

In some nonlinear applications, the interactions among potentials
and solitons are strong which cause the inhomogeneous terms is not
in $L_{t}^{1}L_{x}^{2}$. So to finish this section, we discuss the
reversed endpoint Strichartz estimates with inhomogeneous terms in
revered norms. We need a slightly different formulation. As we did
in the homogeneous, we recall the definition of scattering states
in inhomogeneous setting.
\begin{defn}
\label{AO-1}Let 
\begin{equation}
\partial_{tt}u-\Delta u+V_{1}(x)u+V_{2}(x-\vec{v}t)u=F,\label{eq:eqBSsec-1-1}
\end{equation}
with initial data
\begin{equation}
u(x,0)=g(x),\,u_{t}(x,0)=f(x).
\end{equation}
If $u$ also satisfies 
\begin{equation}
\left\Vert P_{b}\left(H_{1}\right)u(t)\right\Vert _{L_{x}^{2}}\rightarrow0,\,\,\left\Vert P_{b}\left(H_{2}\right)u_{L}(t')\right\Vert _{L_{x'}^{2}}\rightarrow0\,\,\,t,t'\rightarrow\infty,\label{eq:ao2-1-1}
\end{equation}
we call it a scattering state. 
\end{defn}
Set the space $I$
\begin{equation}
I=\left\{ G(x,t)\in L_{x}^{\frac{3}{2},1}L_{t}^{2}\bigcap L_{x_{1}}^{1}L_{\widehat{x_{1}}}^{2,1}L_{t}^{2}\bigcap L_{t,x}^{2}\right\} \label{eq:Ispace}
\end{equation}
for the strong interactions terms.
\begin{thm}
\label{thm:EndRStriInhomo}Let $\left|v\right|<1$. Suppose $u$ is
a scattering state in the sense of Definition \ref{AO-1} which solves
\begin{equation}
\partial_{tt}u-\Delta u+V_{1}(x)u+V_{2}(x-\vec{v}t)u=F
\end{equation}
with initial data
\begin{equation}
u(x,0)=g(x),\,u_{t}(x,0)=f(x).
\end{equation}
Then
\begin{align}
\left(\sup_{x\in\mathbb{R}^{3}}\int_{0}^{\infty}\left|u(x,t)\right|^{2}dt\right)^{\frac{1}{2}} & \lesssim\|f\|_{L^{2}}+\|g\|_{\dot{H}^{1}}+\left\Vert F\right\Vert _{I}
\end{align}
and 
\begin{align}
\left(\sup_{x\in\mathbb{R}^{3}}\int_{0}^{\infty}\left|u(x+vt,t)\right|^{2}dt\right)^{\frac{1}{2}} & \lesssim\|f\|_{L^{2}}+\|g\|_{\dot{H}^{1}}+\left\Vert F\right\Vert _{I}.
\end{align}
One can replace $F$ in the above estimates by $F^{S}$.
\end{thm}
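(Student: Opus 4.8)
The plan is to re-run the truncation-and-bootstrap scheme of Section \ref{sec:EndRSChar}, now carrying the forcing term $F$ through every step, so that the proof of Theorem \ref{thm:EndRSChWOB} becomes the special case $F\equiv 0$. As there, the stability (scattering) conditions of Definition \ref{AO-1} are global in time, so I would first freeze a large $T>0$ and build a truncated evolution $u_T$ on $[0,T]$, decomposed as $u_T(x,t)=a_T(t)w(x)+b_T(\gamma(t-vx_1))m_v(x,t)+r_T(x,t)$, where now the ODE for $a_T$ reads $\ddot a_T-\lambda^2 a_T+a_T c(t)+h(t)=\langle F(t),w\rangle$ and the truncated stability condition is imposed with the extra non-local term $\frac1\lambda\int_0^T e^{-\lambda s}\langle F(s),w\rangle\,ds$ included; likewise for $b_T$ after the Lorentz transformation. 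Since $w$ and $m$ are exponentially localized by Agmon's estimate, $|\langle F(t),w\rangle|\lesssim\|\langle x\rangle^{-N}F(t)\|_{L^2}$ for every $N$, hence $\int_0^T e^{-\lambda s}|\langle F(s),w\rangle|\,ds\lesssim\|e^{-\lambda\cdot}\|_{L_t^2}\,\|F\|_{L_{t,x}^2}\lesssim\|F\|_I$ by Cauchy--Schwarz. This is precisely where the $L_{t,x}^2$ component of the space $I$ in \eqref{eq:Ispace} is used, and it yields $\|a_T\|_{L^\infty\cap L^1[0,T]}\lesssim\bigl(C(A,\lambda)+\tfrac1{\lambda A}C_1(T)\bigr)\bigl(\|f\|_{L^2}+\|g\|_{\dot H^1}+\|F\|_I\bigr)$, with the analogous bound for $b_T$.

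Next I would set up the bootstrap with constants $C_1(T),C_2(T)$ so that $\sup_x\int_0^T|u_T(x,t)|^2\,dt\le C_1(T)K^2$ and $\sup_x\int_0^T|u_T^S(x,t)|^2\,dt\le C_2(T)K^2$, where $K:=\|f\|_{L^2}+\|g\|_{\dot H^1}+\|F\|_I$; these hold trivially for $T=0$ and, by the energy-growth estimate together with Theorem \ref{thm:EndRStrichF}, for all finite $T$ with some a priori $T$-dependent constants. One then uses the same partition of unity $\chi_1,\chi_2,\chi_3$ subordinate to $B_{\delta t}(0)$, $B_{\delta t}(tv)$ and the complement, and in Duhamel's formula against the dominating Hamiltonian on each channel an extra forcing term $\chi_j\int_0^t W_j(t-s)P_c F(s)\,ds$ appears, where $W_1,W_2,W_0$ are the sine propagators of $H_1$, of $-\Delta+V_2(x-\vec v t)$ (equivalently $H_2$ after the Lorentz transformation), and of $-\Delta$. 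The cross terms $V_2 u_T$, $V_1 u_T$, $(V_1+V_2)u_T$ are absorbed exactly as in Section \ref{sec:EndRSChar} via the truncated slanted-line estimates, Corollary \ref{cor:perinhomA} and Theorem \ref{thm:Prot}, gaining a factor $\tfrac1A$; for the genuine forcing term, instead, I would use the \emph{untruncated} inhomogeneous endpoint reversed Strichartz estimates — Theorem \ref{thm:PStriRStrich} and Theorem \ref{thm:perinhomsl} on the $\chi_1$-channel, their Lorentz-transported versions on the $\chi_2$-channel, and Theorem \ref{thm:EndRStrichF} together with Corollary \ref{cor:inhomcom} on the free $\chi_3$-channel. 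Each produces a bound by $\|F\|_{L_x^{\frac32,1}L_t^2}+\|F\|_{L_{x_1}^1 L_{\widehat{x_1}}^{2,1}L_t^2}\le\|F\|_I$ (or by the corresponding norm of $F^S$, which is what accounts for the final sentence of the statement). Combining the three channels with the bound-state estimates gives $C_1(T)\lesssim C_0+\tfrac1A(C_1(T)+C_2(T))+C(A,B)$ and the same inequality for $C_2(T)$ with $C_0$ absolute; choosing $A$ large absorbs the $\tfrac1A$ terms, so $C_1(T),C_2(T)$ are bounded independently of $T$. Letting $T\to\infty$ recovers the genuine stability condition of Definition \ref{AO-1} (the truncated conditions converge to it and $u_T\to u$ on compact time intervals), which yields $\sup_x\int_0^\infty|u(x,t)|^2\,dt\lesssim K^2$ and $\sup_x\int_0^\infty|u(x+vt,t)|^2\,dt\lesssim K^2$.

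The main obstacle is the $\chi_2$-channel together with the $F$-versus-$F^S$ bookkeeping. One must check that under the boost $L$ the natural spacetime $L^2$ norm is literally invariant ($dx\,dt$ has unit Jacobian), so $\|F\|_{L_{t,x}^2}=\|F^S\|_{L_{t,x}^2}$, while the reversed norms $L_x^{\frac32,1}L_t^2$ and $L_{x_1}^1 L_{\widehat{x_1}}^{2,1}L_t^2$ turn into the corresponding norms computed along the slanted lines $(x+vt,t)$ with constants depending only on $v$ (since $L^{3/2,1}$ and $L^{2,1}$ are scaling-covariant, the $\gamma$-dilation of the $x_1$-coordinate costs only a constant, and the $t$-reparametrization has Jacobian between $1-|v|$ and $1+|v|$). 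This is exactly the compatibility needed to conclude that the $\chi_2$-contribution, which in the stationary frame for $V_2$ is governed by Theorem \ref{thm:perinhomsl} and Theorem \ref{thm:PStriRStrich}, is controlled by $\|F^S\|_I$ — and hence, combined over all channels, by $\|F\|_I$ after noting $\|F^S\|_I\lesssim_v\|F\|_I$ (the two are comparable because $I$ already contains both a ``vertical'' and a ``mixed'' reversed norm, squeezing the slanted one). One also has to verify, just as for the term $R$ in the proof of Theorem \ref{thm:perinhomsl}, that the nested Duhamel term arising in the $\chi_2$-channel remains bounded by $\|F\|_I$ after transporting. Everything else — the three-channel decomposition, the $\tfrac1A$-absorption, and the passage $T\to\infty$ — is a routine repetition of Section \ref{sec:EndRSChar} with one additional, already-estimated, term in each Duhamel expansion, after which the interpolated and inhomogeneous-norm variants follow exactly as in Corollary \ref{cor:moreStrichInhom} and Theorem \ref{thm:inhomStricE}.
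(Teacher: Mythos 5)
Your overall plan is the same as the paper's: truncate the evolution, impose a truncated stability condition on $a_T, b_T$ (now carrying the extra term $\langle F, w\rangle$), bound the bound-state coefficients via Agmon's estimate plus Cauchy--Schwarz against $\|F\|_{L^2_{t,x}}$, and run the three-channel bootstrap with the $1/A$ absorption for the potential cross-terms and the untruncated inhomogeneous reversed estimates for the genuine forcing. That is indeed the paper's strategy, and most of your details (including where each piece of the space $I$ enters) are consistent with it. One small bookkeeping discrepancy: the paper's inhomogeneous bound-state lemma (Lemma \ref{lem:boundInhom}) controls $\|a_T\|_{L^\infty[0,T]}$ and $\|a_T\|_{L^2[0,T]}$, not $\|a_T\|_{L^1[0,T]}$; this is minor.

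The real issue is your claim $\|F^S\|_I\lesssim_v\|F\|_I$, offered to close the $\chi_2$-channel. This is false, and the heuristic ``$I$ already contains a vertical and a mixed reversed norm, squeezing the slanted one'' does not hold up. Take $F(x,t)=a(x_1)\,b(x_2,x_3)\,h(t)$ with $a,b$ fixed compactly supported bumps of width $1$ and $h$ an indicator of an interval of length $\tau$. Then $\|F\|_{L^{3/2,1}_x L^2_t}\simeq\|F\|_{L^1_{x_1}L^{2,1}_{\hat x_1}L^2_t}\simeq\|F\|_{L^2_{t,x}}\simeq\sqrt\tau$, so $\|F\|_I\simeq\sqrt\tau$. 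But for $F^S(x,t)=a(x_1+vt)b(\hat x)h(t)$ the inner $L^2_t$ norm at fixed $x_1$ is only of size $\simeq 1/\sqrt{v}$ while nonzero on an $x_1$-interval of length $\simeq v\tau$, so $\|F^S\|_{L^{3/2,1}_x L^2_t}\simeq v^{1/6}\tau^{2/3}$, which outgrows $\sqrt\tau$ as $\tau\to\infty$. Hence $\|F^S\|_I\not\lesssim\|F\|_I$ and the inequality cannot be used to reduce the $\chi_2$-channel bound to $\|F\|_I$. What is going on is that the slanted norms are genuinely new information not contained in the unslanted ones; the paper does not claim comparability and instead formulates the theorem as two separate statements (one with $\|F\|_I$, one with $\|F^S\|_I$), each to be proved by selecting the appropriate family of estimates from Section \ref{sec:Slanted}. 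To repair your argument, you should drop the comparability step and, in the $\chi_2$-channel, either keep track of the $\|F^S\|_I$ output and state that version of the theorem, or invoke the versions of the truncated/untruncated inhomogeneous reversed estimates (Corollary \ref{cor:perinhomA}, Theorem \ref{thm:perinhomsl}) that output unslanted norms of $F$, rather than passing through $F_L$ and back via a false equivalence.
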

These estimates can be proved by the same ideas as the homogeneous
case. Here we briefly sketch the arguments since many steps are identical
as the homogeneous case.

First of all, we need the energy comparison. By similar arguments
as we did as the homogeneous case, one has the following comparison
results.
\begin{thm}
\label{thm:generalC-1}Let $\left|v\right|<1$. Suppose 
\begin{equation}
\partial_{tt}u-\Delta u+V(x,t)u=F(x,t)
\end{equation}
and 
\begin{equation}
\left|V(x,\mu x_{1})\right|\lesssim\frac{1}{\left\langle x\right\rangle ^{2}}
\end{equation}
 for $0\leq\left|\mu\right|<1$. Then 

\begin{eqnarray}
\int\left|\nabla_{x}u\left(x_{1},x_{2},x_{3},vx_{1}\right)\right|^{2}+\left|\partial_{t}u\left(x_{1},x_{2},x_{3},vx_{1}\right)\right|^{2}dx\nonumber \\
\text{\ensuremath{\lesssim}}\int\left|\nabla_{x}u\left(x_{1},x_{2},x_{3},0\right)\right|^{2}+\left|\partial_{t}u\left(x_{1},x_{2},x_{3},0\right)\right|^{2}dx\label{eq:generalC-1}\\
+\int_{\mathbb{R}}\int_{\mathbb{R}^{3}}\left|F(x,t)\right|^{2}dxdt\nonumber 
\end{eqnarray}
and 
\begin{eqnarray}
\int\left|\nabla_{x}u\left(x_{1},x_{2},x_{3},0\right)\right|^{2}+\left|\partial_{t}u\left(x_{1},x_{2},x_{3},0\right)\right|^{2}dx\nonumber \\
\text{\ensuremath{\lesssim}}\int\left|\nabla_{x}u\left(x_{1},x_{2},x_{3},vx_{1}\right)\right|^{2}+\left|\partial_{t}u\left(x_{1},x_{2},x_{3},vx_{1}\right)\right|^{2}dx\\
+\int_{\mathbb{R}}\int_{\mathbb{R}^{3}}\left|F(x,t)\right|^{2}dxdt\nonumber 
\end{eqnarray}
where the implicit constant depends on $v$ and $V$.
\end{thm}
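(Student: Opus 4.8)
The plan is to reprove Theorem~\ref{thm:generalC} with the source term carried along, since the whole structure of that argument is robust. By the usual density reduction we may take $u$ smooth and rapidly decaying, so the quantities
\[
E_1(\mu)=\int\left|\nabla_x u(x_1,x_2,x_3,\mu x_1)\right|^2dx,\qquad E_2(\mu)=\int\left|\partial_t u(x_1,x_2,x_3,\mu x_1)\right|^2dx
\]
are finite and all integrations by parts are legitimate; the final constants will not see this. Differentiating in $\mu$ and integrating by parts exactly as in \eqref{eq:E11}--\eqref{eq:Err1}, the only change is that, since now $\partial_{tt}u=\Delta u-Vu+F$, the identity \eqref{eq:E222} for $dE_2/d\mu$ picks up the extra term $2\int x_1\,\partial_t u\,F\,dx$. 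Collecting terms as before with $E_3(\mu)=E_1(\mu)+(1-\tfrac{\mu^2}{2})E_2(\mu)$ gives
\[
E_3'(\mu)=\widetilde H(\mu)+\bigl(1-\tfrac{\mu^2}{2}\bigr)\cdot 2\int x_1\,\partial_t u\,F\,dx,
\]
where $\widetilde H(\mu)$ is the same combination as $H(\mu)$ in \eqref{eq:H}; the potential term $\int x_1\,\partial_t u\,Vu\,dx$ in $\widetilde H$ is still controlled by Cauchy--Schwarz and Hardy's inequality using only $|V(x,\mu x_1)|\lesssim\langle x\rangle^{-2}$ (indeed $|x_1V|^2\lesssim\langle x\rangle^{-2}$ and $\|u(\cdot,\mu x_1)\|_{\dot H^1}^2\lesssim E_1(\mu)+E_2(\mu)$), so $|\widetilde H(\mu)|\lesssim E_3(\mu)$ for $0\le\mu<1$.

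The one genuinely new point, and the main obstacle, is absorbing the source contribution. One cannot estimate the slice integral $\int x_1\,\partial_t u(x,\mu x_1)\,F(x,\mu x_1)\,dx$ pointwise in $\mu$ by $E_2(\mu)+\|F\|^2$, because $F$ carries no spatial decay and the $x_1$-weight cannot be thrown onto $F$. The remedy is to integrate in $\mu$ first: for $0<v<1$, the substitution $t=\mu x_1$ turns
\[
\int_0^v\!\!\int_{\mathbb{R}^3}x_1\,\partial_t u(x,\mu x_1)\,F(x,\mu x_1)\,dx\,d\mu
=\int_{\mathbb{R}^3}\!\int_0^{vx_1}\partial_t u(x,t)\,F(x,t)\,dt\,dx,
\]
an unweighted spacetime integral over the region $\Omega$ trapped between the hyperplanes $\{t=0\}$ and $\{t=vx_1\}$. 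To bound this by $\|F\|_{L^2_{t,x}}^2$ plus the initial energy one splits $u$ via Duhamel into the homogeneous evolution of its Cauchy data, whose contribution over $\Omega$ is handled by Theorem~\ref{thm:generalC} itself, and the forced part $\int_0^t U(t,s)F(s)\,ds$, whose energy/flux over the slanted hyperplane is estimated by $\|F\|_{L^2_{t,x}}$ through a Lorentz transformation together with the inhomogeneous reversed Strichartz estimates along slanted lines developed in Sections~\ref{sec:Prelim}--\ref{sec:Slanted}, applied on $\Omega$ first on the truncations $\{|x_1|\le R\}$, on which $t$ and $x_1$ are bounded and the boundary terms at $x_1=\pm R$ vanish along a suitable sequence $R_n\to\infty$, then passing to the limit. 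Feeding the resulting bound into Grönwall's inequality and unwinding the relation between $E_3$ and $E_1+E_2$ as in \eqref{eq:Gron3}--\eqref{eq:E1E2E3} gives
\[
E_1(v)+E_2(v)\lesssim E_1(0)+E_2(0)+\int_{\mathbb{R}}\!\int_{\mathbb{R}^3}|F(x,t)|^2\,dx\,dt,
\]
which is the first inequality; the second follows by running the same computation from the slanted hyperplane back to $\{t=0\}$, or equivalently by first performing the Lorentz transformation.

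An alternative, in the spirit of the remark after Theorem~\ref{thm:generalC}, is to argue from the local energy identity
\[
\partial_t\Bigl(\tfrac12|\partial_t u|^2+\tfrac12|\nabla u|^2+\tfrac12 V|u|^2\Bigr)-\nabla\!\cdot(\partial_t u\,\nabla u)=\partial_t u\,F+\tfrac12(\partial_t V)|u|^2,
\]
integrated over $\Omega$: since $|v|<1$ the flux through $\{t=vx_1\}$ is a positive-definite quadratic form in $(\partial_t u,\nabla u)$ plus a potential term absorbed by Hardy (which is why only $\langle x\rangle^{-2}$-decay is needed), while the two bulk terms $\int_\Omega\partial_t u\,F$ and $\int_\Omega(\partial_t V)|u|^2$ are exactly the objects treated above and by the Hardy/Grönwall bookkeeping. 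In either route the work is concentrated in the forced term, and the mechanism for taming it is the passage from the $\mu$-average of the $x_1$-weighted slice integral to an honest spacetime integral over the region between the two hyperplanes, where the inhomogeneous estimates already established in the paper apply.
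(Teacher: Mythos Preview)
Your diagnosis is right up to the key step, and then the argument breaks. You correctly observe that the new term $J(\mu)=\int x_1\,\partial_t u(x,\mu x_1)\,F(x,\mu x_1)\,dx$ cannot be bounded pointwise in $\mu$, and that the substitution $t=\mu x_1$ converts $\int_0^v J(\mu)\,d\mu$ into an unweighted integral $\int_{\Omega}\partial_t u\,F$ over the wedge between $\{t=0\}$ and $\{t=vx_1\}$. But the way you then propose to bound this integral does not work. Splitting $u=u_{\mathrm{hom}}+u_{\mathrm{forced}}$ and appealing to Theorem~\ref{thm:generalC} for the homogeneous piece only controls the energy of $u_{\mathrm{hom}}$ on each slanted leaf $\{t=\mu x_1\}$, not $\|\partial_t u_{\mathrm{hom}}\|_{L^2(\Omega)}$; and the latter quantity is genuinely \emph{not} bounded by the initial energy. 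Take $V=0$ and data supported in a unit ball centred at $(N,0,0)$: by strong Huygens the solution lives on its light cone, which stays inside $\Omega_v$ for times $|t|\lesssim N$, so $\|\partial_t u_{\mathrm{hom}}\|_{L^2(\Omega_v)}^2\sim N\,E(0)$. Thus Cauchy--Schwarz on $\int_\Omega \partial_t u\,F$ is far too crude, and nothing in Sections~\ref{sec:Prelim}--\ref{sec:Slanted} rescues it: the reversed Strichartz estimates there bound $u$ in $L^\infty_x L^2_t$, not $\partial_t u$ in $L^2$ of a spacetime wedge. The same gap recurs in your ``alternative'' flux sketch, since you reduce its bulk term to ``the objects treated above''.

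There is a second structural issue: once you have integrated $J$ in $\mu$, you have left the differential Gr\"onwall framework. To close you would need, for every $\mu\in[0,v]$, a bound on $\bigl|\int_0^\mu J\bigr|$ of the form $C\bigl(E_3(0)+\|F\|_{L^2_{t,x}}^2\bigr)+\tfrac12\sup_{[0,\mu]}E_3$; your proposal supplies no such bound. The paper itself does not prove Theorem~\ref{thm:generalC-1} but defers to~\cite{GC2}; the remark after Theorem~\ref{thm:generalC} indicates that the intended route is the energy--flux identity (which also explains the weaker $\langle x\rangle^{-2}$ hypothesis on $V$), but that argument must handle the bulk term $\int_\Omega\partial_t u\,F$ by something sharper than Cauchy--Schwarz over $\Omega$ --- for instance by exploiting the positivity of the flux together with a Gr\"onwall in the correct foliation, or by a direct duality/$TT^*$ computation --- and your sketch does not supply that missing ingredient.
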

From the theorem above, we know initial energy with respect to different
frames stays comparable up to $\left\Vert F\right\Vert _{L_{t,x}^{2}}$.
For a detailed proof, please see \cite{GC2}.

Next, from Section \ref{sec:Slanted}, we have all necessary reversed
type estimates. So one has all the basic tools to run the bootstrap
arguments as in the homogeneous case. We just need to understand the
evolution of bound states more carefully. Nothing changes substantially
but with one more inhomogeneous term in the ODE.

Let $u(x,t)$ be a scattering state. Following the notations from
\ref{subsec:Boundstates}, we decompose the evolution as following,
\begin{equation}
u(x,t)=a(t)w(x)+b\left(\gamma(t-vx_{1})\right)m_{v}\left(x,t\right)+r(x,t)\label{eq:evolution-1}
\end{equation}
where 
\[
m_{v}(x,t)=m\left(\gamma\left(x_{1}-vt\right),x_{2},x_{3}\right).
\]
With our decomposition, we know 
\begin{equation}
P_{c}\left(H_{1}\right)r=r
\end{equation}
 and 
\begin{equation}
P_{c}\left(H_{2}\right)r_{L}=r_{L}
\end{equation}
where the Lorentz transformation $L$ makes $V_{2}$ stationary. 

Plugging the evolution \eqref{eq:evolution-1} into the equation \eqref{eq:chargeeq}
and taking inner product with $w$, we get 
\begin{eqnarray}
\ddot{a}(t)-\lambda^{2}a(t)+a(t)\left\langle V_{2}\left(x-\vec{v}t\right)w,w\right\rangle \qquad\qquad\qquad\qquad\qquad\qquad\nonumber \\
\qquad\qquad\qquad+\left\langle V_{2}\left(x-\vec{v}t\right)\left(b\left(\gamma(t-vx_{1})\right)m_{v}\left(x,t\right)+r(x,t)\right),w\right\rangle =\left\langle F,w\right\rangle .
\end{eqnarray}
One can write 
\begin{equation}
\ddot{a}(t)-\lambda^{2}a(t)+a(t)c(t)+h(t)+h_{1}(t)=0,\label{eq:aode-1}
\end{equation}
where 
\begin{equation}
c(t):=\left\langle V_{2}\left(x-\vec{v}t\right)w,w\right\rangle ,
\end{equation}
\begin{equation}
h_{1}\left(t\right)=\left\langle F,w\right\rangle 
\end{equation}
and
\begin{equation}
h(t):=\left\langle V_{2}\left(x-\vec{v}t\right)\left(b\left(\gamma(t-vx_{1})\right)m_{v}\left(x,t\right)+r(x,t)\right),w\right\rangle .
\end{equation}
The existence of the solution to the ODE \eqref{eq:aode-1} is clear.
We study the long-time behavior of the solution. Write the equation
as 

\begin{equation}
\ddot{a}(t)-\lambda^{2}a(t)=-\left[a(t)c(t)+h(t)+h_{1}(t)\right],
\end{equation}
and denote 
\begin{equation}
N(t):=-\left[a(t)c(t)+h(t)+h_{1}(t)\right].
\end{equation}
Then
\begin{equation}
a(t)=\frac{e^{\lambda t}}{2}\left[a(0)+\frac{1}{\lambda}\dot{a}(0)+\frac{1}{\lambda}\int_{0}^{t}e^{-\lambda s}N(s)\,ds\right]+R(t)
\end{equation}
where 
\begin{equation}
\left|R(t)\right|\lesssim e^{-\beta t},
\end{equation}
for some positive constant $\beta>0$. Therefore, the stability condition
forces 
\begin{equation}
a(0)+\frac{1}{\lambda}\dot{a}(0)+\frac{1}{\lambda}\int_{0}^{\infty}e^{-\lambda s}N(s)\,ds=0.\label{eq:stability-1}
\end{equation}
Then under the stability condition \eqref{eq:stability-1}, 
\begin{equation}
a(t)=e^{-\lambda t}\left[a(0)+\frac{1}{2\lambda}\int_{0}^{\infty}e^{-\lambda s}N(s)ds\right]+\frac{1}{2\lambda}\int_{0}^{\infty}e^{-\lambda\left|t-s\right|}N(s)\,ds.
\end{equation}
As in the homogeneous case, we just need to estimate the non-local
term,
\begin{equation}
\int_{0}^{\infty}e^{-\lambda s}N(s)\,ds.
\end{equation}
The same idea as the homogeneous case, for $t\in[0,T]$, we construct
the following truncated version of the evolution: 
\begin{equation}
u_{T}(x,t)==a_{T}(t)w(x)+b_{T}\left(\gamma(t-vx_{1})\right)m_{v}\left(x,t\right)+r_{T}(x,t).
\end{equation}
For $a_{T}(t)$, we analyze the same ODE for $a(t)$ again but restricted
to $[0,T]$ and instead of the stability condition 
\begin{equation}
a(0)+\frac{1}{\lambda}\dot{a}(0)+\frac{1}{\lambda}\int_{0}^{\infty}e^{-\lambda s}N(s)\,ds=0
\end{equation}
we impose the condition that
\begin{equation}
a_{T}(0)+\frac{1}{\lambda}\dot{a}_{T}(0)+\frac{1}{\lambda}\int_{0}^{T}e^{-\lambda s}N(s)\,ds=0.
\end{equation}
The same construction can be applied to \textbf{$b_{T}$.}

In current setting, we only estimate the $L^{2}$ norms of $a_{T}$
and $b_{T}$. 
\begin{lem}
\label{lem:boundInhom}From the construction above, we have the following
estimates: for $0\ll A\ll T$,
\begin{equation}
\left\Vert a_{T}\right\Vert _{L^{\infty}[0,T]}\lesssim\left(C(A,\lambda)+\frac{1}{\lambda A}C_{1}(T)\right)\left(\|f\|_{L^{2}}+\|g\|_{\dot{H}^{1}}+\left\Vert F\right\Vert _{I}\right),\label{eq:abootL0-1}
\end{equation}
\begin{equation}
\left\Vert a_{T}\right\Vert _{L^{2}[0,T]}\lesssim\left(C(A,\lambda)+\frac{1}{\lambda A}C_{1}(T)\right)\left(\|f\|_{L^{2}}+\|g\|_{\dot{H}^{1}}+\left\Vert F\right\Vert _{I}\right),\label{eq:abootL1-1}
\end{equation}
\begin{equation}
\left\Vert b_{T}\right\Vert _{L^{\infty}[0,T]}\lesssim\left(C(A,\mu)+\frac{1}{\mu A}C_{1}(T)\right)\left(\|f\|_{L^{2}}+\|g\|_{\dot{H}^{1}}+\left\Vert F\right\Vert _{I}\right),\label{eq:bbootL0-1}
\end{equation}
and 
\begin{equation}
\left\Vert b_{T}\right\Vert _{L^{2}[0,T]}\lesssim\left(C(A,\mu)+\frac{1}{\mu A}C_{1}(T)\right)\left(\|f\|_{L^{2}}+\|g\|_{\dot{H}^{1}}+\left\Vert F\right\Vert _{I}\right).\label{eq:bbootL1-1}
\end{equation}
\end{lem}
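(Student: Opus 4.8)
The plan is to run the proof of the homogeneous lemma (\eqref{eq:abootL0}--\eqref{eq:bbootL1}) essentially verbatim, isolating the one structural change: the ODE for the coefficient $a_T$ now carries an extra source term $h_1(t)=\langle F(\cdot,t),w\rangle$ coming from pairing the inhomogeneity $F$ with the bound state $w$. Plugging \eqref{eq:evolution-1} into the equation and pairing with $w$ gives \eqref{eq:aode-1}; under the truncated stability condition the representation formula becomes
\begin{equation}
a_T(t)=e^{-\lambda t}\left[a_T(0)+\frac{1}{2\lambda}\int_0^T e^{-\lambda s}N(s)\,ds\right]+\frac{1}{2\lambda}\int_0^T e^{-\lambda|t-s|}N(s)\,ds,
\end{equation}
with $N(s)=-[a_T(s)c(s)+h(s)+h_1(s)]$, $|c(s)|\lesssim e^{-\alpha|s|}$, and $h(s)=\langle V_2(\cdot-\vec vs)(b_T(\gamma(s-vx_1))m_v(\cdot,s)+r_T(\cdot,s)),w\rangle$. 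Since $e^{-\lambda|\cdot|}\in L^1_t\cap L^2_t$ with norms $\lesssim\lambda^{-1}$ and $\lesssim\lambda^{-1/2}$, Young's and Cauchy--Schwarz inequalities reduce the whole task to: controlling the $a_Tc+h$ part of $N$ in $L^1[0,T]$ (which feeds both the $L^\infty$ and the $L^2$ bounds, via $L^\infty_t\ast L^1_t\subset L^\infty_t$ and $L^2_t\ast L^1_t\subset L^2_t$), and controlling $h_1$ in $L^2[0,T]$ (via $L^1_t\ast L^2_t\subset L^2_t$).

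First I would treat the two ``old'' contributions exactly as in the homogeneous lemma: fix $0\ll A\ll T$, bound $\|a_T\|_{L^\infty[0,A]}$ by Gr\"onwall, absorb $\int_A^T|a_Tc|\,ds\lesssim e^{-\alpha A}\|a_T\|_{L^\infty[A,T]}$ into the left-hand side for $A$ large, and bound $\int_A^T|h(s)|\,ds$ by Cauchy--Schwarz in $s$ together with the exponential localization of $w$ (Agmon), which yields $\big\langle(\int_A^T|V_2(\cdot-\vec vs)|^2\,ds)^{1/2},w\big\rangle\lesssim A^{-1}$ and hence $\int_A^T|h(s)|\,ds\lesssim A^{-1}C_1(T)\big(\|f\|_{L^2}+\|g\|_{\dot H^1}+\|F\|_I\big)$ by the inhomogeneous bootstrap assumption on $\|b_T m_v+r_T\|_{L_x^\infty L_t^2[0,T]}$. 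On $[0,A]$ the same Gr\"onwall estimate gives the $C(A,\lambda)$-type contribution, and one must check (as in the homogeneous case) that the choice of $A$ is uniform in $T$.

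The genuinely new point is that $h_1$ should be measured in $L^2_t$, not $L^1_t$: by Cauchy--Schwarz in $x$ and Fubini, using that $\|w\|_{L_x^2}=1$,
\begin{equation}
\|h_1\|_{L_t^2[0,T]}=\big\|\langle F(\cdot,t),w\rangle\big\|_{L_t^2}\le\big\|\,\|F(\cdot,t)\|_{L_x^2}\,\big\|_{L_t^2}=\|F\|_{L_{t,x}^2}\le\|F\|_I .
\end{equation}
Feeding this into the representation formula through $\|e^{-\lambda|\cdot|}\ast h_1\|_{L_t^2}\lesssim\lambda^{-1}\|h_1\|_{L_t^2}$, $\sup_t\int_0^T e^{-\lambda|t-s|}|h_1(s)|\,ds\lesssim\lambda^{-1/2}\|h_1\|_{L_t^2}$, and $\int_0^T e^{-\lambda s}|h_1(s)|\,ds\lesssim\lambda^{-1/2}\|h_1\|_{L_t^2}$ contributes $\lesssim C(\lambda)\|F\|_I$ to both $\|a_T\|_{L^\infty[0,T]}$ and $\|a_T\|_{L^2[0,T]}$. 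Combining with the previous step gives \eqref{eq:abootL0-1} and \eqref{eq:abootL1-1}. Finally, applying the Lorentz transformation $L$ that makes $V_2$ stationary and invoking Theorem \ref{thm:generalC-1} (whose error costs only an extra $\|F\|_{L_{t,x}^2}\le\|F\|_I$ in comparing the two frames), the identical argument for the coefficient $b_T$ of the bound state $m$ of $H_2$ produces \eqref{eq:bbootL0-1} and \eqref{eq:bbootL1-1}.

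I expect the main obstacle, modest though it is, to be precisely the bookkeeping around $h_1$: one must resist estimating $\int_0^T|\langle F,w\rangle|\,dt$ directly in $L^1_t$ (which would cost a factor $T^{1/2}$ and destroy the $T$-independence that is essential when passing $T\to\infty$), and instead exploit the $L^1$-integrability of the exponential kernel $e^{-\lambda|t-s|}$ so that only the $L^2_{t,x}$ component of $\|F\|_I$ is invoked for the ODE piece, while the $L_x^{3/2,1}L_t^2$ and $L_{x_1}^1L_{\widehat{x_1}}^{2,1}L_t^2$ components of $I$ are reserved, as in the homogeneous case, for the subsequent channel-decomposition/bootstrap step.
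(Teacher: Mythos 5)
Your proposal is correct and follows essentially the same route as the paper: isolate the new source term $h_1=\langle F,w\rangle$ in the ODE, estimate it in $L_t^2$ (yielding $\|F\|_{L^2_{t,x}}\le\|F\|_I$) and feed it through the $L^1$-integrable kernel $e^{-\lambda|\cdot|}$, while handling $a_Tc$ and $h$ exactly as in the homogeneous Lemma; then transfer to $b_T$ via a Lorentz transformation and the inhomogeneous energy comparison. Your explicit remark that $h_1$ must not be placed in $L^1_t$ (to avoid a $T^{1/2}$ loss) and your careful bookkeeping of the convolution estimates make the mechanism clearer than the paper's terse account, but the argument is the same.
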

\begin{proof}
First of all, as in the homogeneous case, by the bootstrap assumption,
\[
\left\Vert b_{T}\left(\gamma(t-vx_{1})\right)m_{v}\left(x,t\right)+r_{T}(x,t)\right\Vert _{L_{x}^{\infty}L_{t}^{2}[0,T]}\leq C_{1}(T)\left(\|f\|_{L^{2}}+\|g\|_{\dot{H}^{1}}+\left\Vert F\right\Vert _{I}\right).
\]
For $a_{T}(t)$, we know that 
\begin{eqnarray}
\ddot{a}_{T}(t)-\lambda^{2}a_{T}(t)+a_{T}(t)\left\langle V_{2}\left(x-\vec{v}t\right)w,w\right\rangle \qquad\qquad\qquad\qquad\qquad\qquad\nonumber \\
\qquad\qquad\qquad+\left\langle V_{2}\left(x-\vec{v}t\right)\left(b_{T}\left(\gamma(t-vx_{1})\right)m_{v}\left(x,t\right)+r_{T}(x,t)\right),w\right\rangle =\left\langle w,F\right\rangle .
\end{eqnarray}
We obtain
\begin{equation}
a_{T}(t)=\frac{e^{\lambda t}}{2}\left[a_{T}(0)+\frac{1}{\lambda}\dot{a}_{T}(0)+\frac{1}{\lambda}\int_{0}^{t}e^{-\lambda s}N(s)ds\right]+R(t)
\end{equation}
where 
\begin{equation}
\left|R(t)\right|\lesssim e^{-\beta t},
\end{equation}
With notations introduced above, we consider the truncated version
of the stability condition, 
\begin{equation}
a_{T}(0)+\frac{1}{\lambda}\dot{a}_{T}(0)+\frac{1}{\lambda}\int_{0}^{T}e^{-\lambda s}N(s)\,ds=0.
\end{equation}
So 
\begin{equation}
a_{T}(t)=e^{-\lambda t}\left[a_{T}(0)+\frac{1}{2\lambda}\int_{0}^{T}e^{-\lambda s}N(s)ds\right]+\frac{1}{2\lambda}\int_{0}^{T}e^{-\lambda\left|t-s\right|}N(s)\,ds.
\end{equation}
where 
\begin{equation}
N(t)=-\left[a_{T}(t)c(t)+h(t)+h_{1}(t)\right]
\end{equation}
with 
\begin{equation}
\left|c(t)\right|\lesssim e^{-\alpha\left|t\right|},\:h_{1}(t)=\left\langle w,F\right\rangle 
\end{equation}
\begin{equation}
h(t):=\left\langle V_{2}\left(x-\vec{v}t\right)\left[b_{T}(t-vx_{1})m_{v}\left(x,t\right)+r_{T}(x,t)\right],w\right\rangle .
\end{equation}
For $0\ll A\ll T$ fixed, we can always bound the $L^{\infty}$ norm
of $a_{T}$ on the interval $[0,A]$ by Gr\"onwall's inequality.
Therefore, it suffices to estimate the $L^{\infty}$ norm of $a_{T}$
from $A$ to $T$. Note that $\left|c(t)\right|\lesssim e^{-\alpha\left|t\right|}$,
for $A$ large, one can always absorb the effects from $\int_{A}^{T}$$a_{T}(t)c(t)\,dt$
into the left-hand side. Hence it reduces to estimate the $L_{t}^{1}$
norm of $h(t)$ restricted to $[A,T]$ and the $L_{t}^{2}$ norm of
$h_{1}(t)$. 

From the computations in \ref{subsec:Boundstates}, we have

\begin{align}
\int_{A}^{T}\left|h(t)\right|\,dt & \lesssim C(A,\lambda)\left(\|f\|_{L^{2}}+\|g\|_{\dot{H}^{1}}+\left\Vert F\right\Vert _{I}\right)\\
 & +\frac{1}{\lambda A}\left(\int_{A}^{T}\left|\left(b_{T}\left(\gamma(t-vx_{1})\right)m_{v}\left(x,t\right)+r_{T}(x,t)\right)\right|^{2}dt\right)^{\frac{1}{2}}.\nonumber 
\end{align}
Clearly,
\begin{equation}
\int_{0}^{T}\left|h_{1}(t)\right|^{2}\lesssim\int_{0}^{T}\left\Vert F(t)\right\Vert _{L_{x}^{2}}dt.
\end{equation}
We can estimate the $L^{\infty}$ norm of $a_{T}(t)$, 
\begin{eqnarray}
\left\Vert a_{T}\right\Vert _{L^{\infty}[0,T]} & \lesssim & C(A,\lambda)\left(\|f\|_{L^{2}}+\|g\|_{\dot{H}^{1}}+\left\Vert F\right\Vert _{I}\right)\nonumber \\
 &  & ++\frac{1}{\lambda}\int_{A}^{T}\left|h(t)\right|dt+\frac{1}{\lambda}\left(\int_{0}^{T}\left|h_{1}(t)\right|^{2}\right)^{\frac{1}{2}}\\
 & \lesssim & C(A,\lambda)\left(\|f\|_{L^{2}}+\|g\|_{\dot{H}^{1}}+\left\Vert F\right\Vert _{I}\right)\nonumber \\
 &  & +\frac{1}{\lambda A}\left(\int_{A}^{T}\left|\left(b_{T}\left(\gamma(t-vx_{1})\right)m_{v}\left(x,t\right)+r_{T}(x,t)\right)\right|^{2}dt\right)^{\frac{1}{2}}\nonumber \\
 & \lesssim & \left(C(A,\lambda)+\frac{1}{\lambda A}C_{1}(T)\right)\left(\|f\|_{L^{2}}+\|g\|_{\dot{H}^{1}}+\left\Vert F\right\Vert _{I}\right).\nonumber 
\end{eqnarray}
Similarly, for the $L^{2}$ norm of $a_{T}(t)$,  
\begin{equation}
\left\Vert a_{T}\right\Vert _{L^{2}[0,T]}\lesssim\left(C(A,\lambda)+\frac{1}{\lambda A}C_{1}(T)\right)\left(\|f\|_{L^{2}}+\|g\|_{\dot{H}^{1}}+\left\Vert F\right\Vert _{I}\right).
\end{equation}
After applying a Lorentz transformation, we have analogous estimates
for $b_{T}(t)$:
\begin{equation}
\left\Vert b_{T}\right\Vert _{L^{\infty}[0,T]}\lesssim\left(C(A,\mu)+\frac{1}{\mu A}C_{1}(T)\right)\left(\|f\|_{L^{2}}+\|g\|_{\dot{H}^{1}}+\left\Vert F\right\Vert _{I}\right),
\end{equation}
\begin{equation}
\left\Vert b_{T}\right\Vert _{L^{2}[0,T]}\lesssim\left(C(A,\mu)+\frac{1}{\mu A}C_{1}(T)\right)\left(\|f\|_{L^{2}}+\|g\|_{\dot{H}^{1}}+\left\Vert F\right\Vert _{I}\right).
\end{equation}
The lemma is proved.
\end{proof}
With the preparations above, we can run the bootstrap argument and
channel decomposition as in the homogeneous case.
\begin{proof}[Proof of Theorem \ref{thm:EndRStriInhomo}]
As in the homogeneous
case, let $\chi_{1}(x,t)$ be a smooth cutoff function such that 
\begin{equation}
\chi_{1}(x,t)=1,\;\forall x\in B_{\delta t}(0),\qquad\chi_{1}(x,t)=0,\;\forall x\in\mathbb{R}^{3}\backslash B_{2\delta t}(0).
\end{equation}
One might assume $t\geq t_{0}$ for some large $t_{0}$. We also define
\begin{equation}
\chi_{2}(x,t)=\chi_{1}(x-\vec{v}t,t),\qquad\chi_{3}=1-\chi_{1}-\chi_{2}.
\end{equation}
Note that we only consider the estimates for large $t$, so one might
also assume the support of $\chi_{1}(x,t)$ contains the support of
$V_{1}\left(x\right)$ and support of $\chi_{2}(x,t)$ contains the
support of $V_{2}\left(\cdot-vt\right)$.

With the partition above, we rewrite the evolution as
\begin{equation}
u_{T}(x,t)=\chi_{1}(x,t)u_{T}(x,t)+\chi_{2}(x,t)u_{T}(x,t)+\chi_{3}(x,t)u_{T}(x,t).
\end{equation}
We will discuss $\chi_{i}(x,t)u_{T}(x,t),\,i=1,2,3$, separately.
We only analyze $\chi_{1}(x,t)u_{T}(x,t)$ here since other pieces
are can be done in the same manner in the homogeneous case. T
\begin{eqnarray}
\chi_{1}(x,t)u_{T}(x,t) & = & \chi_{1}(x,t)W_{1}(t)f+\chi_{1}(x,t)\dot{W}_{1}(t)g\nonumber \\
 &  & -\chi_{1}(x,t)\int_{0}^{t}W_{1}(t-s)V_{2}(\cdot-sv)u_{T}(s)\,ds\label{eq:first-1}\\
 &  & +\chi_{1}(x,t)\int_{0}^{t}W_{1}(t-s)F(s)\,ds.\nonumber 
\end{eqnarray}
We will use the notations 
\begin{eqnarray}
u_{T}(x,t) & = & a_{T}(t)w(x)+b_{T}\left(\gamma(t-vx_{1})\right)m_{v}\left(x,t\right)+r_{T}(x,t)\nonumber \\
 & =: & a_{T}(t)w(x)+u_{T,1}\left(x,t\right)\label{eq:decomposition-1}\\
 & =: & b_{T}\left(\gamma(t-vx_{1})\right)m_{v}\left(x,t\right)+u_{T,2}(x,t).\nonumber 
\end{eqnarray}
Note that 
\begin{equation}
P_{c}\left(H_{1}\right)\left(u_{T,1}\right)=u_{T,1}
\end{equation}
and 
\begin{equation}
P_{c}\left(H_{2}\right)\left(u_{T,2}\right)_{L}=\left(u_{T,2}\right)_{L}.
\end{equation}
As in the homogeneous case, we can further reduce to 
\begin{eqnarray}
\chi_{1}(x,t)u_{T}(x,t) & = & \chi_{1}(x,t)W_{1}(t)f+\chi_{1}(x,t)\dot{W}_{1}(t)g\nonumber \\
 &  & -\chi_{1}(x,t)\int_{0}^{t-A}W_{1}(t-s)V_{2}(\cdot-sv)u_{T}(s)\,ds\\
 &  & +\chi_{1}(x,t)\int_{0}^{t}W_{1}(t-s)F(s)\,ds.\nonumber 
\end{eqnarray}
First, we consider the endpoint reversed Strichartz estimate, 
\begin{eqnarray*}
\int_{0}^{T}\left|\chi_{1}(x,t)u_{T,1}(x,t)\right|^{2}dt & \lesssim & \int_{0}^{T}\left|\chi_{1}(x,t)W_{1}(t)P_{c}\left(H_{1}\right)f+\chi_{1}(x,t)\dot{W}_{1}(t)P_{c}\left(H_{1}\right)g\right|^{2}dt\\
 &  & +\int_{0}^{T}\left|\chi_{1}(x,t)\int_{0}^{t-A}W_{1}(t-s)P_{c}\left(H_{1}\right)V_{2}(\cdot-sv)u_{T}(s)\,ds\right|^{2}dt\\
 &  & \int_{0}^{T}\left|\chi_{1}(x,t)\chi_{1}(x,t)\int_{0}^{t}W_{1}(t-s)P_{c}\left(H_{1}\right)F(s)\,ds\right|^{2}dt\\
 & \lesssim & \left(\|f\|_{L^{2}}+\|g\|_{\dot{H}^{1}}+\left\Vert F\right\Vert _{I}\right)^{2}\\
 &  & +\int_{0}^{B}\left|\chi_{1}(x,t)\int_{0}^{t}W_{1}(t-s)P_{c}\left(H_{1}\right)V_{2}(\cdot-sv)u_{T}(s)\,ds\right|^{2}dt\\
 &  & +\int_{B}^{T}\left|\chi_{1}(x,t)\int_{0}^{t-A}W_{1}(t-s)P_{c}\left(H_{1}\right)V_{2}(\cdot-sv)u_{T}(s)\,ds\right|^{2}\\
 & \lesssim & \left(\|f\|_{L^{2}}+\|g\|_{\dot{H}^{1}}+\left\Vert F\right\Vert _{I}\right)^{2}+C(B)\left(\|f\|_{L^{2}}+\|g\|_{\dot{H}^{1}}+\left\Vert F\right\Vert _{I}\right)^{2}\\
 &  & +\int_{B}^{T}\left|\chi_{1}(x,t)\int_{0}^{t-A}W_{1}(t-s)P_{c}\left(H_{1}\right)V_{2}(\cdot-sv)u_{T}(s)\,ds\right|^{2}dt\\
 & \lesssim & \left(\|f\|_{L^{2}}+\|g\|_{\dot{H}^{1}}+\left\Vert F\right\Vert _{I}\right)^{2}+C(B)\left(\|f\|_{L^{2}}+\|g\|_{\dot{H}^{1}}+\left\Vert F\right\Vert _{I}\right)^{2}\\
 &  & +\frac{1}{A}C_{2}(T)\left(\|f\|_{L^{2}}+\|g\|_{\dot{H}^{1}}+\left\Vert F\right\Vert _{I}\right)^{2}.
\end{eqnarray*}
Therefore, 
\begin{equation}
\int_{0}^{T}\left|\chi_{1}(x,t)u_{T,1}(x,t)\right|^{2}dt\lesssim\left(C_{0}+C(A)+\frac{1}{A}C_{2}(T)\right)\left(\|f\|_{L^{2}}+\|g\|_{\dot{H}^{1}}+\left\Vert F\right\Vert _{I}\right)^{2}.\label{eq:firstEnd-1}
\end{equation}
For the remaining piece, by Lemma \ref{lem:boundInhom}
\begin{equation}
\int_{0}^{T}\left|\chi_{1}(x,t)a_{T}(t)w(x)\right|^{2}dt\lesssim\left(C(A,\lambda)+\frac{1}{\lambda A}C_{1}(T)\right)\left(\|f\|_{L^{2}}+\|g\|_{\dot{H}^{1}}+\left\Vert F\right\Vert _{I}\right)^{2}.\label{eq:firstEndB-1}
\end{equation}
Therefore, with estimates \eqref{eq:firstEnd-1} and \eqref{eq:firstEndB-1},
for the endpoint reversed estimate, we obtain
\begin{equation}
C_{1}(T)\lesssim C_{0}+C(A,B)+\frac{1}{A}C_{2}(T)\label{eq:boot1first-1}
\end{equation}
in the first channel. So for $A$ large, in this channel, we have
the condition for the bootstrap argument.

Next we consider the estimate along the slanted line $(x+vt,t)$. 

Denoting 
\begin{equation}
u_{T,1}^{S}(x,t)=\chi_{1}(x+vt,t)u_{T,1}(x+vt,t),
\end{equation}
we want to estimate 
\begin{equation}
\int_{0}^{T}\left|\chi_{1}(x+vt,t)u_{T,1}(x+vt,t)\right|^{2}dt=\int_{0}^{T}\left|u_{T,1}^{S}(x,t)\right|^{2}dt.
\end{equation}
Furthermore, we introduce
\begin{equation}
D_{1}^{S}(x,t):=D_{1}\left(x+vt,t\right)
\end{equation}
where 
\begin{equation}
D_{1}(x,t):=\chi_{1}(x,t)W_{1}(t)P_{c}\left(H_{1}\right)f+\chi_{1}(x,t)\dot{W_{1}}(t)P_{c}\left(H_{1}\right)g;
\end{equation}
 
\begin{equation}
k_{1}^{S}(x,t):=k_{1}\left(x+vt,t\right)
\end{equation}
where 
\begin{equation}
k_{1}(x,t):=\chi_{1}(x,t)\int_{0}^{t}W_{1}(t-s)P_{c}\left(H_{1}\right)V_{2}(\cdot-sv)u_{T}(s)\,ds;
\end{equation}

\begin{equation}
E_{1}^{S}(x,t):=E_{1}\left(x+vt,t\right)
\end{equation}
where 
\begin{equation}
E_{1}(x,t):=\chi_{1}(x,t)\int_{0}^{t-A}W_{1}(t-s)P_{c}\left(H_{1}\right)V_{2}(\cdot-sv)u_{T}(s)\,ds.
\end{equation}
\[
E_{2}(x,t):=\chi_{1}(x,t)\int_{0}^{t}W_{1}(t-s)P_{c}\left(H_{1}\right)F(s)u_{T}(s)\,ds.
\]
Then we can conclude 
\begin{eqnarray}
\int_{0}^{T}\left|u_{T,1}^{S}\right|^{2}dt & \lesssim & \int_{0}^{T}\left|D_{1}^{S}\right|^{2}dt+\int_{0}^{B}\left|k_{1}^{S}\right|^{2}dt+\int_{B}^{T}\left|E_{1}^{S}\right|^{2}dt+\int_{0}^{T}\left|E_{2}^{S}\right|^{2}dt\nonumber \\
 & \lesssim & \left(\|f\|_{L^{2}}+\|g\|_{\dot{H}^{1}}+\left\Vert F\right\Vert _{I}\right)^{2}+C(B)\left(\|f\|_{L^{2}}+\|g\|_{\dot{H}^{1}}+\left\Vert F\right\Vert _{I}\right)^{2}\nonumber \\
 &  & +\frac{1}{A}C_{2}(T)\left(\|f\|_{L^{2}}+\|g\|_{\dot{H}^{1}}+\left\Vert F\right\Vert _{I}\right)^{2}.\label{eq:firstSE-1}
\end{eqnarray}
For the piece with bound states, by Lemma \ref{lem:boundInhom} and
Agmon's estimate, 
\begin{eqnarray}
\int_{0}^{T}\left|\chi_{1}(x+vt,t)a_{T}(t)w(x+vt)\right|^{2}dt\label{eq:firstSEB-1}\\
\lesssim\left(C(A,\lambda)+\frac{1}{\lambda A}C_{1}(T)\right)\left(\|f\|_{L^{2}}+\|g\|_{\dot{H}^{1}}+\left\Vert F\right\Vert _{I}\right)^{2}.\nonumber 
\end{eqnarray}
Therefore, with estimates \eqref{eq:firstSE-1} and \eqref{eq:firstSEB-1},
we obtain
\begin{equation}
C_{2}(T)\lesssim C_{0}+C(A,B)+\frac{1}{A}C_{2}(T)\label{eq:boot2first-1}
\end{equation}
in the first channel. So for $A$ large, in this channel, we obtain
the desired reduction for the bootstrap argument.

The other two channels can be analyzed by the same steps as above.
Therefore, after passing $T$ to $\infty,$ we can conclude that 
\begin{align}
\left(\sup_{x\in\mathbb{R}^{3}}\int_{0}^{\infty}\left|u(x,t)\right|^{2}dt\right)^{\frac{1}{2}} & \lesssim\|f\|_{L^{2}}+\|g\|_{\dot{H}^{1}}+\left\Vert F\right\Vert _{I}
\end{align}
and 
\begin{align}
\left(\sup_{x\in\mathbb{R}^{3}}\int_{0}^{\infty}\left|u(x+vt,t)\right|^{2}dt\right)^{\frac{1}{2}} & \lesssim\|f\|_{L^{2}}+\|g\|_{\dot{H}^{1}}+\left\Vert F\right\Vert _{I}.
\end{align}
The same arguments work for $F$ replaced by $F^{S}$.

We are done.
\end{proof}

\subsection{Reversed type local decay estimates\label{subsec:RevLocal}}

To handle the strong interactions of solitons and potentials, in this
subsection we establish some reversed type local decay estimates.
These estimates are also important to handle multisoliton structures
as in \cite{GC4}.
\begin{thm}
\label{thm:localreversed}Let $\left|v\right|<1$. Suppose $u$ is
a scattering state in the sense of Definition \ref{AO-1} which solves
\begin{equation}
\partial_{tt}u-\Delta u+V_{1}(x)u+V_{2}(x-\vec{v}t)u=F
\end{equation}
with initial data
\begin{equation}
u(x,0)=g(x),\,u_{t}(x,0)=f(x).
\end{equation}
Then
\begin{align}
\left\Vert \left\langle x\right\rangle ^{-\frac{3}{2}}u(x,t)\right\Vert _{L_{x}^{3,2}L_{t}^{\infty}\bigcap L_{x_{1}}^{2}L_{\widehat{x_{1}}}^{4,2}L_{t}^{\infty}} & \lesssim\|f\|_{L^{2}}+\|g\|_{\dot{H}^{1}}+\left\Vert F\right\Vert _{D}
\end{align}
\begin{align}
\left\Vert \left\langle x\right\rangle ^{-\frac{3}{2}}u(x+vt,t)\right\Vert _{L_{x}^{3,2}L_{t}^{\infty}\bigcap L_{x_{1}}^{2}L_{\widehat{x_{1}}}^{4,2}L_{t}^{\infty}} & \lesssim\|f\|_{L^{2}}+\|g\|_{\dot{H}^{1}}+\left\Vert F\right\Vert _{D}.
\end{align}
Here the space $D$ is 
\begin{equation}
D:=\left\{ G(x,t)\in L_{x}^{\frac{3}{2},1}L_{t}^{\infty}\bigcap L_{x_{1}}^{1}L_{\widehat{x_{1}}}^{2,1}L_{t}^{\infty}\bigcap L_{t}^{2}L_{x}^{2}\right\} .
\end{equation}
Again, one can replace $F$ by $F^{S}$ in the above estimates. 
\end{thm}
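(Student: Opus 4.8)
The plan is to follow the same architecture as the homogeneous endpoint reversed Strichartz estimate (Theorem \ref{thm:EndRSChWOB}) and its inhomogeneous counterpart (Theorem \ref{thm:EndRStriInhomo}), but with the roles of $L^2_t$ and $L^\infty_t$ swapped in the relevant norms, and with the weight $\left\langle x\right\rangle^{-3/2}$ inserted so that $L^2$-type local decay quantities become the mixed Lorentz norms $L^{3,2}_x L^\infty_t$ and $L^2_{x_1}L^{4,2}_{\widehat{x_1}}L^\infty_t$. First I would record the ``reversed-in-time'' analogues of the free and perturbed estimates from Section \ref{sec:Prelim} and Section \ref{sec:Slanted}: namely that $\left\Vert e^{it\sqrt{-\Delta}}f\right\Vert$ and the Duhamel term $\int_0^t \frac{\sin((t-s)\sqrt{-\Delta})}{\sqrt{-\Delta}}F\,ds$ are controlled, after multiplication by $\left\langle x\right\rangle^{-3/2}$, in $L^{3,2}_xL^\infty_t$ (and the slanted-line version in $L^2_{x_1}L^{4,2}_{\widehat{x_1}}L^\infty_t$), with the inhomogeneous term measured in the space $D$. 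These are obtained from the Kirchhoff representation exactly as in the proof of Theorem \ref{thm:EndRStrichF} and Theorem \ref{thm: persl}: for fixed $x$, one takes the $L^\infty_t$ norm instead of the $L^2_t$ norm of the spherical average, uses the decay of the fundamental solution on the light cone, and then applies Hölder in Lorentz spaces (Theorem 3.5 in O'Neil \cite{ON}) in the spatial variable together with $\left\Vert \left\langle x\right\rangle^{-3/2}\right\Vert_{L^{3,\infty}}<\infty$. The $L^2_tL^2_x$ component of $D$ absorbs the part of $F$ handled by the energy/Morawetz mechanism, just as in Theorem \ref{thm:generalC-1}.

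Next I would set up the bootstrap and channel decomposition for the truncated evolution $u_T$ precisely as in Section \ref{sec:EndRSChar} and the proof of Theorem \ref{thm:EndRStriInhomo}. The partition of unity $\chi_1,\chi_2,\chi_3$ localizes to a neighborhood of $V_1$, a neighborhood of $V_2(\cdot-\vec v t)$, and the free region; in each channel one writes the Duhamel formula relative to the dominating Hamiltonian ($H_1$, $H_2$ after a Lorentz transformation, and $-\Delta$ respectively), splits the time integral into $[0,B]$, $[B,t-A]$ and the tail, and uses finite speed of propagation to discard the cross terms where the far potential cannot reach the support of the cutoff. The truncated inhomogeneous pieces $\int_0^{t-A}(\cdots)\,ds$ are then estimated by the $\frac1A$-gain lemmas (the reversed-in-time analogues of Corollary \ref{cor:perinhomA}, Theorem \ref{thm:Prot} and Lemma \ref{lem:Frot}), which let one absorb $\frac1A C_i(T)$ into the left-hand side once $A$ is large. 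The ODE analysis of the bound-state coefficients $a_T,b_T$ is identical to Lemma \ref{lem:boundInhom}, except that the inhomogeneous source $h_1(t)=\left\langle F,w\right\rangle$ is now controlled using the $L^\infty_t$-type and $L^2_tL^2_x$ components of the norm $\left\Vert F\right\Vert_D$, together with Agmon's exponential localization of $w$ and $m$, so that $\left\Vert a_T\right\Vert_{L^\infty[0,T]}$ and $\left\Vert b_T\right\Vert_{L^\infty[0,T]}$ obey $\lesssim (C(A,\lambda)+\tfrac{1}{\lambda A}C_1(T))(\|f\|_{L^2}+\|g\|_{\dot H^1}+\|F\|_D)$.

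Combining the three channels yields $C_1(T),C_2(T)\lesssim C_0 + \tfrac1A(C_1(T)+C_2(T)) + C(A,B)$ with constants independent of $T$, so closing the bootstrap gives uniform bounds on $u_T$; letting $T\to\infty$ recovers the stability condition and hence the estimates for the scattering state $u$ itself, both along $(x,t)$ and along the slanted line $(x+vt,t)$. Finally, since the weight $\left\langle x\right\rangle^{-3/2}$ lies in $L^{3,\infty}_x$ and $L^2_{x_1}L^{4,\infty}_{\widehat{x_1}}$, the $L^{3,2}_xL^\infty_t$ and $L^2_{x_1}L^{4,2}_{\widehat{x_1}}L^\infty_t$ norms of $\left\langle x\right\rangle^{-3/2}u$ follow from the bounds already obtained in each channel by Hölder in Lorentz spaces, exactly as the weighted $L^2$ estimates in Corollary \ref{cor:weightChargeWOB} followed from Theorem \ref{thm:EndRSChWOB}. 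I expect the main obstacle to be the first step: verifying that the reversed-in-time mixed-norm free estimates with the space $D$ on the inhomogeneous side actually hold with the stated Lorentz exponents, in particular checking that taking $L^\infty_t$ of the Kirchhoff spherical average is compatible with the $L^{3,2}_x$ (rather than $L^{3,\infty}_x$) integrability and that the slanted-line change of variables preserves the $L^2_{x_1}L^{4,2}_{\widehat{x_1}}$ structure; once those building blocks are in place, the bootstrap and channel machinery is essentially a transcription of Section \ref{sec:EndRSChar}.
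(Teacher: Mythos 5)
Your bootstrap and channel-decomposition architecture correctly mirrors Sections \ref{sec:EndRSChar} and the proof of Theorem \ref{thm:EndRStriInhomo}, and you have identified the ODE analysis of $a_T,b_T$ and the $T\to\infty$ passage as routine transcriptions. But there is a genuine gap in the first step: the key new ingredient the paper needs is the maximal-function bound $\left\Vert u_{F}\right\Vert _{L_{x}^{6,2}L_{t}^{\infty}}\lesssim\|f\|_{L^{2}}+\|g\|_{\dot{H}^{1}}$ for the \emph{homogeneous} free evolution (estimate \eqref{eq:localrevS}), from which the weighted $L_{x}^{3,2}L_{t}^{\infty}$ and $L_{x_{1}}^{2}L_{\widehat{x_{1}}}^{4,2}L_{t}^{\infty}$ bounds follow by H\"older. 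Your proposal derives this by "taking the $L_{t}^{\infty}$ norm of the Kirchhoff spherical average for fixed $x$ and applying H\"older in Lorentz spaces," i.e.\ by substituting $L_{t}^{\infty}$ for $L_{t}^{2}$ in the argument of Theorem \ref{thm:EndRStrichF}. That substitution fails for the homogeneous terms: for fixed $x$, $\sup_{t}\left|\frac{1}{4\pi t}\int_{\left|x-y\right|=t}f(y)\,d\sigma(y)\right|$ is the spherical maximal function, which is \emph{not} controlled by $\|f\|_{L^{2}}$ at a single point, and no amount of H\"older in the spatial variable recovers the missing $L_{x}^{6,2}$ gain. The Kirchhoff-plus-Minkowski scheme you describe works for the \emph{inhomogeneous} Duhamel term, where the kernel $1/\left|x-y\right|$ provides genuine decay, but the homogeneous mean lives exactly on the light cone and has no such decay.

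The paper supplies the missing mechanism by a $TT^{*}$ argument: with $Tf=\frac{\sin(t\sqrt{-\Delta})}{\sqrt{-\Delta}}f$ it computes $TT^{*}$ explicitly in terms of $\frac{\cos(h\sqrt{-\Delta})}{-\Delta}$, observes that the kernel $K(x,y,h)$ of the latter is the truncation of $\frac{1}{\left|x-y\right|}$ to $\left|x-y\right|\geq h$ (hence dominated by $\frac{1}{\left|x-y\right|}\in L^{3,\infty}$ uniformly in $h$), and then Young's convolution inequality in Lorentz spaces gives $TT^{*}:L_{x}^{6/5,2}L_{t}^{1}\to L_{x}^{6,2}L_{t}^{\infty}$, hence $T:L^{2}\to L_{x}^{6,2}L_{t}^{\infty}$. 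This duality/kernel computation (or, equivalently, an appeal to Stein's spherical maximal theorem on $\mathbb{R}^{3}$) is the genuinely new step, and your proposal neither carries it out nor identifies it; you flag the right location as "the main obstacle" but leave it unresolved. A secondary exponent slip: to pass from $L_{x}^{6,2}$ to $L_{x}^{3,2}$ by O'Neil's H\"older you need the weight $\left\langle x\right\rangle^{-3/2}$ in $L_{x}^{6,\infty}$ (true, since $\left\langle x\right\rangle^{-3/2}\in L^{p,\infty}$ for every $p\geq2$), not $L_{x}^{3,\infty}$; as written the H\"older does not close.
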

This theorem can be proved by the same way as Theorem \ref{thm:EndRStriInhomo}
provided we have the energy comparison and the related reversed type
estimates for the free wave equations and perturbed equations.

We start with the free equation again. We set 
\begin{equation}
u_{F}(x,t)=\frac{\sin\left(t\sqrt{-\Delta}\right)}{\sqrt{-\Delta}}f+\cos\left(t\sqrt{-\Delta}\right)g,
\end{equation}
and
\begin{equation}
D(\cdot,t)=\int_{0}^{t}\frac{\sin\left((t-s)\sqrt{-\Delta}\right)}{\sqrt{-\Delta}}F(s)\,ds.
\end{equation}

\begin{thm}
Let $\left|v\right|<1$. Then first of all, for the standard case,
one has 
\begin{equation}
\left\Vert u_{F}\right\Vert _{L_{x}^{6,2}L_{t}^{\infty}}\lesssim\|f\|_{L^{2}}+\|g\|_{\dot{H}^{1}},\label{eq:localrevS}
\end{equation}
in particular, 
\begin{equation}
\left\Vert \left\langle x\right\rangle ^{-\frac{3}{2}}u_{F}\right\Vert _{L_{x}^{3,2}L_{t}^{\infty}\bigcap L_{x_{1}}^{2}L_{\widehat{x_{1}}}^{4,2}L_{t}^{\infty}}\lesssim\|f\|_{L^{2}}+\|g\|_{\dot{H}^{1}}.\label{eq:local2}
\end{equation}
Also for the inhomogeneous term, 
\begin{equation}
\left\Vert \left\langle x\right\rangle ^{-\frac{3}{2}}D\right\Vert _{L_{x}^{\frac{3}{2},1}L_{t}^{\infty}\bigcap L_{x_{1}}^{1}L_{\widehat{x_{1}}}^{2,1}L_{t}^{\infty}}\lesssim\left\Vert F\right\Vert _{L_{x}^{\frac{3}{2},1}L_{t}^{\infty}}.\label{eq:local3}
\end{equation}
 We can also estimate these pieces along slanted lines and obtain
\begin{equation}
\left\Vert u_{F}^{S}\right\Vert _{L_{x}^{6,2}L_{t}^{\infty}}\lesssim\|f\|_{L^{2}}+\|g\|_{\dot{H}^{1}},\label{eq:local33}
\end{equation}
\begin{equation}
\left\Vert \left\langle x\right\rangle ^{-\frac{3}{2}}u_{F}^{S}\right\Vert _{L_{x}^{3,2}L_{t}^{\infty}\bigcap L_{x_{1}}^{2}L_{\widehat{x_{1}}}^{4,2}L_{t}^{\infty}}\lesssim\|f\|_{L^{2}}+\|g\|_{\dot{H}^{1}},\label{eq:local4}
\end{equation}
and 
\begin{equation}
\left\Vert \left\langle x\right\rangle ^{-\frac{3}{2}}D^{S}\right\Vert _{L_{x}^{3,2}L_{t}^{\infty}\bigcap L_{x_{1}}^{2}L_{\widehat{x_{1}}}^{4,2}L_{t}^{\infty}}\lesssim\left\Vert F\right\Vert _{L_{x_{1}}^{1}L_{\widehat{x_{1}}}^{2,1}L_{t}^{\infty}}.\label{eq:local5}
\end{equation}
We can replace the $L_{t}^{\infty}$ norm of $F$ by the $L_{t}^{1}$
norm. Also one can replace $F$ in the above estimates by $F^{S}$. 
\end{thm}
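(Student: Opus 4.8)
The plan is to establish the free-wave estimates \eqref{eq:localrevS}--\eqref{eq:local5} by mirroring, term by term, the reversed Strichartz analysis already carried out in Section \ref{sec:Prelim} and Section \ref{sec:Slanted}, with the single modification that the inner time norm $L_t^2$ is replaced throughout by $L_t^\infty$ (or $L_t^1$ on the inhomogeneous side), and the outer spatial exponent is correspondingly shifted by Sobolev/Hardy considerations. First I would treat the homogeneous piece $u_F$. Writing $u_F = \frac{\sin(t\sqrt{-\Delta})}{\sqrt{-\Delta}}f + \cos(t\sqrt{-\Delta})g$ via Kirchhoff's formula, one has in polar coordinates $\bigl|\frac{\sin(t\sqrt{-\Delta})}{\sqrt{-\Delta}}f(x,t)\bigr| \lesssim t\,\fint_{\mathbb{S}^2} |f(x+t\omega)|\,d\omega$; taking $\sup_t$ and applying Cauchy--Schwarz in $\omega$ gives a pointwise bound by $\bigl(\int_0^\infty \int_{\mathbb{S}^2} |f(x+r\omega)|^2 r^2\,d\omega\,dr\bigr)^{1/2}$ localized near $x$, which after a standard argument (the one used for \eqref{eq:local2} in \cite{GC2}) yields $\|u_F\|_{L_x^{6,2}L_t^\infty}\lesssim \|f\|_{L^2}+\|g\|_{\dot H^1}$; the $\cos$ term is handled as in the proof of Theorem \ref{thm:EndRStrichF} using Hardy's inequality. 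Interpolating this Lorentz-space bound against the weight $\langle x\rangle^{-3/2}\in L_x^{6,\infty}$ via Hölder for Lorentz spaces (Theorem 3.5 in O'Neil \cite{ON}) produces \eqref{eq:local2}, and likewise the mixed-norm variant $L^2_{x_1}L^{4,2}_{\widehat{x_1}}L^\infty_t$ by distributing the weight.

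Next I would handle the inhomogeneous term $D$. Exactly as in the third-term estimate inside the proof of Theorem \ref{thm:EndRStrichF}, one writes $D(x,t) = \int_{|x-y|\le t} \frac{F(y,t-|x-y|)}{|x-y|}\,dy$, so that $\|D(x,\cdot)\|_{L_t^\infty}\le \int \frac{1}{|x-y|}\,\|F(y,\cdot)\|_{L_t^\infty}\,dy$, and then the Kato-norm bound \eqref{eq:KatoL} gives $\|D\|_{L_x^\infty L_t^\infty}\lesssim \|F\|_{L_x^{3/2,1}L_t^\infty}$; multiplying by $\langle x\rangle^{-3/2}$ and using Hölder for Lorentz spaces again yields \eqref{eq:local3}. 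For the slanted-line versions $u_F^S$ and $D^S$ I would follow Lemma \ref{lem:freesl} and Corollary \ref{cor:inhomcom}: a Lorentz transformation turns integration along $(x+vt,t)$ into integration in the new time variable $t'$, and Theorem \ref{thm:generalC} (with $V\equiv0$) guarantees the initial energy in the new frame is comparable, giving \eqref{eq:local33}--\eqref{eq:local4}; for $D^S$ one repeats the change-of-variables computation from the proof of Theorem \ref{thm: persl} that reduces $|y|$ to $\sqrt{y_2^2+y_3^2}$, landing the inhomogeneous term in the mixed Lorentz norm $L_{x_1}^1 L_{\widehat{x_1}}^{2,1}L_t^\infty$ and hence \eqref{eq:local5} after applying the weight. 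The replacement of $L_t^\infty$ by $L_t^1$ on $F$ is immediate since on each $t$-slice of the backward light cone the retarded time is a monotone function of the spatial variable, exactly as in the passage from \eqref{eq:EndRStrichF} to \eqref{eq:truEnd}; and replacing $F$ by $F^S$ throughout is the content of Lemma \ref{lem:Frot} adapted to the $L_t^\infty$ norm.

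The main obstacle I anticipate is bookkeeping the precise Lorentz exponents so that the interpolations are legitimate — one must check that $\langle x\rangle^{-3/2}$ lies in the correct weak-Lorentz space ($L^{6,\infty}(\mathbb{R}^3)$ for the homogeneous piece, $L^{3,\infty}$ in the mixed-norm slot, etc.) and that the Hölder pairing $L^{p_1,q_1}\cdot L^{p_2,q_2}\hookrightarrow L^{p,q}$ with $\frac1p=\frac1{p_1}+\frac1{p_2}$, $\frac1q\le \frac1{q_1}+\frac1{q_2}$ is applied with admissible indices; a careless choice collapses the second exponent. A secondary technical point is that $\frac{\sin(t\sqrt{-\Delta})}{\sqrt{-\Delta}}$ is a \emph{positive} operator in $\mathbb{R}^3$ (used already in the proof of Theorem \ref{thm:EndStri}), which is what lets one pass absolute values inside the Duhamel integral when transplanting these free estimates to the perturbed and charge-transfer settings; I would flag this explicitly. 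Once these free-wave building blocks are in place, Theorem \ref{thm:localreversed} follows by the identical bootstrap-plus-channel-decomposition machinery of Section \ref{sec:EndRSChar} and the proof of Theorem \ref{thm:EndRStriInhomo}, with the ODE analysis of the bound-state coefficients $a_T,b_T$ carried out in the $L_t^\infty$-flavored norms, so I would only indicate the modifications rather than repeat the argument.
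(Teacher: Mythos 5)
Your outline for the inhomogeneous estimate \eqref{eq:local3}, the slanted-line versions \eqref{eq:local33}--\eqref{eq:local5}, and the weighted estimates via H\"older in Lorentz spaces agrees with the paper's strategy. However, your proof of the \emph{pivotal} estimate \eqref{eq:localrevS} has a genuine gap, and the paper proves it by a different argument.

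The step where you claim that, after Cauchy--Schwarz on the sphere, taking $\sup_{t}$ of
$t\cdot\frac{1}{4\pi}\int_{\mathbb{S}^{2}}\left|f(x+t\omega)\right|d\omega$
yields a pointwise bound by
$\left(\int_{0}^{\infty}\int_{\mathbb{S}^{2}}\left|f(x+r\omega)\right|^{2}r^{2}\,d\omega\,dr\right)^{1/2}$
is not a valid inequality: after Cauchy--Schwarz one has $\sup_{r}r\bigl(\int_{\mathbb{S}^{2}}|f(x+r\omega)|^{2}d\omega\bigr)^{1/2}$, and there is no general bound of a $\sup_{r}$ by the corresponding $L_{r}^{2}$ average (the $L_{r}^{2}$ version is precisely the endpoint reversed Strichartz estimate of Theorem \ref{thm:EndRStrichF}, but the $L_{r}^{\infty}$ version is a different, strictly stronger assertion). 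Indeed, if such a pointwise bound held, it would imply
$\left\Vert \frac{\sin\left(t\sqrt{-\Delta}\right)}{\sqrt{-\Delta}}f\right\Vert _{L_{x}^{\infty}L_{t}^{\infty}}\lesssim\left\Vert f\right\Vert _{L^{2}}$,
which is false by scaling ($f_{\epsilon}(x)=\epsilon^{-3/2}f(x/\epsilon)$ keeps $\|f_{\epsilon}\|_{L^{2}}$ fixed while the left-hand side grows like $\epsilon^{-1/2}$). The vague appeal to ``a standard argument in \cite{GC2}'' does not repair this, and is in any case circular since \eqref{eq:local2} is a consequence of \eqref{eq:localrevS}, not a route to it.

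The paper instead proves \eqref{eq:localrevS} by a $TT^{*}$ argument. Setting $Tf=\frac{\sin\left(t\sqrt{-\Delta}\right)}{\sqrt{-\Delta}}f$, one computes
$TT^{*}F=\frac{1}{2}\int_{0}^{\infty}\left(\frac{\cos\left((t-s)\sqrt{-\Delta}\right)}{-\Delta}-\frac{\cos\left((t+s)\sqrt{-\Delta}\right)}{-\Delta}\right)F(s)\,ds$,
and identifies the kernel $K(x,y,h)$ of $\frac{\cos\left(h\sqrt{-\Delta}\right)}{-\Delta}$ explicitly via $\frac{\cos\left(h\sqrt{-\Delta}\right)}{-\Delta}=\int_{|h|}^{\infty}\frac{\sin\left(s\sqrt{-\Delta}\right)}{\sqrt{-\Delta}}\,ds$ as $\frac{1}{\left|x-y\right|}$ for $\left|x-y\right|\ge\left|h\right|$ and $0$ otherwise. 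Since this kernel is dominated by $\frac{1}{\left|x-y\right|}\in L_{x}^{3,\infty}$ uniformly in $h$, Young's inequality for convolution in Lorentz spaces gives
$\left\Vert TT^{*}F\right\Vert _{L_{x}^{6,2}L_{t}^{\infty}}\lesssim\left\Vert F\right\Vert _{L_{x}^{\frac{6}{5},2}L_{t}^{1}}$,
and hence $T:L^{2}\rightarrow L_{x}^{6,2}L_{t}^{\infty}$ by duality. This is a qualitatively different mechanism from the Kirchhoff/Cauchy--Schwarz route you propose: it sidesteps any spherical maximal function estimate by passing to the composed operator, where the $h$-dependence of the kernel is \emph{absorbed} into a single $h$-independent Kato-type kernel $\frac{1}{\left|x-y\right|}$. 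You should adopt this $TT^{*}$ argument (or supply some genuinely different proof of the $L_{x}^{6,2}L_{t}^{\infty}$ bound); once \eqref{eq:localrevS} is in hand, the remainder of your outline is sound and matches the paper.
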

\begin{proof}
We will only prove \eqref{eq:localrevS}. \eqref{eq:local2} is a consequence
of estimate \eqref{eq:localrevS} after applying H\"older's inequality.
\eqref{eq:local4} and \eqref{eq:local33} follow from estimate \eqref{eq:localrevS}
after performing a Lorentz transformation and energy comparison as
in Section \ref{sec:Slanted}. For the inhomogeneous estimates, we
do the same arguments as in Section \ref{sec:Slanted} with $L_{t}^{2}$
replaced by $L_{t}^{\infty}$. For example, 
\begin{eqnarray}
\left\Vert \int_{0}^{t}\frac{\sin\left((t-s)\sqrt{-\Delta}\right)}{\sqrt{-\Delta}}F(s)\,ds\right\Vert _{L_{t}^{\infty}} & = & \left\Vert \int_{0}^{t}\int_{\left|x-y\right|=t-s}\frac{1}{\left|x-y\right|}F(y,s)\,\sigma\left(dy\right)ds\right\Vert _{L_{t}^{\infty}}\nonumber \\
 & = & \left\Vert \int_{\left|x-y\right|\leq t}\frac{1}{\left|x-y\right|}F\left(y,t-\left|x-y\right|\right)\,dy\right\Vert _{L_{t}^{\infty}}\nonumber \\
 & \lesssim & \int\frac{1}{\left|x-y\right|}\left\Vert F\left(y,t-\left|x-y\right|\right)\right\Vert _{L_{t}^{2}}dy\nonumber \\
 & \lesssim & \sup_{x\in\mathbb{R}^{3}}\int\frac{1}{\left|x-y\right|}\left\Vert F\left(y,t\right)\right\Vert _{L_{t}^{\infty}}dy\\
 & \lesssim & \left\Vert F\right\Vert _{L_{x}^{\frac{3}{2},1}L_{t}^{\infty}}.\nonumber 
\end{eqnarray}
Therefore, 
\begin{equation}
\left\Vert D\right\Vert _{L_{x}^{\infty}L_{t}^{\infty}}\lesssim\left\Vert F\right\Vert _{L_{x}^{\frac{3}{2},1}L_{t}^{\infty}},
\end{equation}
and \eqref{eq:local3} follows after applying H\"older's inequality.

Now we prove \eqref{eq:localrevS}. Consider $t\ge0$ and define 
\begin{equation}
Tf=\frac{\sin\left(t\sqrt{-\Delta}\right)}{\sqrt{-\Delta}}f
\end{equation}
then 
\begin{equation}
T^{*}F=\int_{0}^{\infty}\frac{\sin\left(t\sqrt{-\Delta}\right)}{\sqrt{-\Delta}}F(t)\,dt,
\end{equation}
and
\begin{align}
TT^{*}F & =\int_{0}^{\infty}\frac{\sin\left(t\sqrt{-\Delta}\right)}{\sqrt{-\Delta}}\frac{\sin\left(s\sqrt{-\Delta}\right)}{\sqrt{-\Delta}}F(s)\,ds\\
 & =\frac{1}{2}\int_{0}^{\infty}\left(\frac{\cos\left(\left(t-s\right)\sqrt{-\Delta}\right)}{-\Delta}-\frac{\cos\left(\left(t+s\right)\sqrt{-\Delta}\right)}{-\Delta}\right)F(s)\,ds.\nonumber 
\end{align}
We compute the kernel of 
\begin{equation}
\frac{\cos\left(h\sqrt{-\Delta}\right)}{-\Delta}F=\int_{\mathbb{R}^{3}}K(x,y,h)F(y)\,dy.
\end{equation}
By straightforward computations, one has
\begin{equation}
\frac{\cos\left(h\sqrt{-\Delta}\right)}{-\Delta}=\frac{1}{-\Delta}-\int_{0}^{h}\frac{\sin\left(s\sqrt{-\Delta}\right)}{\sqrt{-\Delta}}\,ds=\int_{h}^{\infty}\frac{\sin\left(s\sqrt{-\Delta}\right)}{\sqrt{-\Delta}}\,ds.
\end{equation}
By the explicit kernel of $\frac{\sin\left(s\sqrt{-\Delta}\right)}{\sqrt{-\Delta}}$,
we know that 
\begin{equation}
K(x,y,h)=\begin{cases}
\frac{1}{\left|x-y\right|} & \left|x-y\right|\geq h\\
0 & \left|x-y\right|<h
\end{cases}.\label{eq:kernelC}
\end{equation}
Notice that in $\mathbb{R}^{3}$, $\frac{1}{\left|x\right|}\in L^{3,\infty}$,
so 
\begin{equation}
\left\Vert \int_{0}^{\infty}\left(\frac{\cos\left(\left(t-s\right)\sqrt{-\Delta}\right)}{-\Delta}\right)F(s)\,ds\right\Vert _{L_{x}^{6,2}L_{t}^{\infty}}\lesssim\left\Vert F\right\Vert _{L_{x}^{\frac{6}{5},2}L_{t}^{1}}
\end{equation}
by Young's inequality for convolution. It follows that 
\begin{equation}
\left\Vert Tf\right\Vert _{L_{x}^{6,2}L_{t}^{\infty}}=\left\Vert \frac{\sin\left(t\sqrt{-\Delta}\right)}{\sqrt{-\Delta}}f\right\Vert _{L_{x}^{6,2}L_{t}^{\infty}}\lesssim\left\Vert f\right\Vert _{L^{2}}.
\end{equation}
We are done.
\end{proof}
By the same arguments in Section \ref{sec:Slanted}, we can extend
all the above estimates to perturbed cases. Define 
\begin{equation}
u_{H}(x,t)=\frac{\sin\left(t\sqrt{H}\right)}{\sqrt{H}}P_{c}f+\cos\left(t\sqrt{H}\right)P_{c}g,
\end{equation}
and 
\begin{equation}
k(\cdot,t):=\int_{0}^{t}\frac{\sin\left((t-s)\sqrt{H}\right)}{\sqrt{H}}P_{c}F(s)\,ds.\label{eq:Pinhom-1}
\end{equation}

\begin{thm}
\label{thm:perinhomlocal}Let $\left|v\right|<1$ and suppose $H=-\Delta+V$
has neither resonances nor eigenfunctions at $0$. Then first of all,
for the standard case, one has 
\begin{equation}
\left\Vert u_{H}\right\Vert _{L_{x}^{6,2}L_{t}^{\infty}}\lesssim\|f\|_{L^{2}}+\|g\|_{\dot{H}^{1}},\label{eq:localrevS-1}
\end{equation}
in particular, 
\begin{equation}
\left\Vert \left\langle x\right\rangle ^{-\frac{3}{2}}u_{H}\right\Vert _{L_{x}^{3,2}L_{t}^{\infty}\bigcap L_{x_{1}}^{2}L_{\widehat{x_{1}}}^{4,2}L_{t}^{\infty}}\lesssim\|f\|_{L^{2}}+\|g\|_{\dot{H}^{1}}.\label{eq:local2-1}
\end{equation}
Also for the inhomogeneous term, 
\begin{equation}
\left\Vert \left\langle x\right\rangle ^{-\frac{3}{2}}k\right\Vert _{L_{x}^{3,2}L_{t}^{\infty}\bigcap L_{x_{1}}^{2}L_{\widehat{x_{1}}}^{4,2}L_{t}^{\infty}}\lesssim\left\Vert F\right\Vert _{L_{x}^{\frac{3}{2},1}L_{t}^{\infty}}.\label{eq:local3-1}
\end{equation}
 We can also estimate these pieces along slanted lines and obtain
\begin{equation}
\left\Vert u_{H}^{S}\right\Vert _{L_{x}^{6,2}L_{t}^{\infty}}\lesssim\|f\|_{L^{2}}+\|g\|_{\dot{H}^{1}},\label{eq:local33-1}
\end{equation}
\begin{equation}
\left\Vert \left\langle x\right\rangle ^{-\frac{3}{2}}u_{H}^{S}\right\Vert _{L_{x}^{3,2}L_{t}^{\infty}\bigcap L_{x_{1}}^{2}L_{\widehat{x_{1}}}^{4,2}L_{t}^{\infty}}\lesssim\|f\|_{L^{2}}+\|g\|_{\dot{H}^{1}},\label{eq:local4-1}
\end{equation}
and 
\begin{equation}
\left\Vert \left\langle x\right\rangle ^{-\frac{3}{2}}k^{S}\right\Vert _{L_{x}^{3,2}L_{t}^{\infty}\bigcap L_{x_{1}}^{2}L_{\widehat{x_{1}}}^{4,2}L_{t}^{\infty}}\lesssim\left\Vert F\right\Vert _{L_{x_{1}}^{1}L_{\widehat{x_{1}}}^{2,1}L_{t}^{\infty}}.\label{eq:local5-1}
\end{equation}
We can replace the $L_{t}^{\infty}$ norm of $F$ by the $L_{t}^{1}$
norm. Also one can replace $F$ in the above estimates by $F^{S}$. 
\end{thm}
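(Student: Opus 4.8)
The plan is to mirror exactly the scheme of Section~\ref{sec:Slanted}: Duhamel-expand the perturbed half-wave propagators around the free ones, insert the free building blocks \eqref{eq:localrevS}--\eqref{eq:local5} (with $L^{2}_{t}$ systematically replaced by $L^{\infty}_{t}$, as in the Kirchhoff-formula computation displayed just before the statement), and close the Duhamel remainders by pairing $V$ with the reversed-Strichartz bounds for $H$. Since $H=-\Delta+V$ has no eigenvalue or resonance at $0$, the projection $P_{c}$ absorbs the discrete spectrum and no bound states intervene here, so none of the truncation/bootstrap machinery of Section~\ref{sec:EndRSChar} is needed --- the argument is strictly simpler than the charge-transfer case.

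First I would record two \emph{coarse} perturbed inputs that are genuinely needed and are \emph{not} corollaries of the theorem. One is the Kato-type bound $\bigl\|\int_{0}^{t}\tfrac{\sin((t-s)\sqrt{H})}{\sqrt{H}}P_{c}F(s)\,ds\bigr\|_{L^{\infty}_{x}L^{\infty}_{t}}\lesssim\|F\|_{L^{3/2,1}_{x}L^{\infty}_{t}}$, the free version of which follows from the Kirchhoff formula and \eqref{eq:KatoL} exactly as displayed above. The other is $\bigl\|\tfrac{\sin(t\sqrt{H})}{\sqrt{H}}P_{c}f+\cos(t\sqrt{H})P_{c}g\bigr\|_{L^{6,2}_{x}L^{\infty}_{t}}\lesssim\|f\|_{L^{2}}+\|g\|_{\dot H^{1}}$, which I would obtain by the structure formula for wave operators from \cite{BecGo}: writing $u_{H}=W_{+}u_{F}[W_{+}^{*}f,W_{+}^{*}g]$ and using that $W_{+},W_{+}^{*}$ are bounded on $L^{2}$, on $\dot H^{1}$, and on $L^{6,2}_{x}$ (real interpolation from their $L^{p}$-boundedness, $1<p<\infty$), reduces this to the free estimate \eqref{eq:localrevS}. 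Either input could alternatively be produced self-consistently from a Duhamel iteration together with the $L^{\infty}_{x}L^{2}_{t}$ estimates of Theorem~\ref{thm:PStriRStrich}, but the wave-operator route is the cleanest.

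With these in hand the remaining estimates are assembled as in Section~\ref{sec:Slanted}. The weighted unslanted bounds \eqref{eq:local2-1} and \eqref{eq:local3-1} follow from the coarse $L^{6,2}_{x}L^{\infty}_{t}$ bound and from $\|k\|_{L^{\infty}_{x}L^{\infty}_{t}}$ by H\"older's inequality in Lorentz spaces, exactly as \eqref{eq:local2} is deduced from \eqref{eq:localrevS} and \eqref{eq:local3} from the $L^{\infty}_{x}L^{\infty}_{t}$ estimate for $D$. For the slanted estimates \eqref{eq:local33-1}--\eqref{eq:local5-1} I would Duhamel-expand $u_{H}=u_{F}[P_{c}f,P_{c}g]-\int_{0}^{t}\tfrac{\sin((t-s)\sqrt{-\Delta})}{\sqrt{-\Delta}}Vu_{H}(s)\,ds$ and evaluate along $(x+vt,t)$: the free term is controlled by \eqref{eq:local33}, and the remainder by the free slanted inhomogeneous estimate \eqref{eq:local5} with source $G=Vu_{H}$, where $\|Vu_{H}\|_{L^{1}_{x_{1}}L^{2,1}_{\widehat{x_{1}}}L^{\infty}_{t}}\lesssim\|\langle x\rangle^{3/2}V\|_{L^{2}_{x_{1}}L^{4,2}_{\widehat{x_{1}}}}\,\|\langle x\rangle^{-3/2}u_{H}\|_{L^{2}_{x_{1}}L^{4,2}_{\widehat{x_{1}}}L^{\infty}_{t}}$, the last factor being the already-established \eqref{eq:local2-1} and the weight factor lying in the required Lorentz spaces thanks to the rapid decay $\langle x\rangle^{-\alpha}$, $\alpha>3$, of $V$. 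Equivalently one performs the Lorentz transformation $L$ with respect to $v$ and invokes the energy comparison Theorem~\ref{thm:generalC} (applicable since $V$ decays like $\langle x\rangle^{-\alpha}$ with $\alpha>3$), as in Lemma~\ref{lem:freesl} and Theorem~\ref{thm: persl}. The perturbed inhomogeneous estimates \eqref{eq:local3-1} and \eqref{eq:local5-1} are handled precisely as in Theorem~\ref{thm:perinhomsl}: expand the propagator and swap the order of integration to write $k=D-R$ with $R=\int_{0}^{t}\tfrac{\sin((t-m)\sqrt{-\Delta})}{\sqrt{-\Delta}}Vk(m)\,dm$; the free estimates \eqref{eq:local3}, \eqref{eq:local5} bound the $D$-piece directly and bound $R$ through its source $Vk$, for which $\|Vk\|_{L^{3/2,1}_{x}L^{\infty}_{t}}\lesssim\|V\|_{L^{3/2,1}_{x}}\|k\|_{L^{\infty}_{x}L^{\infty}_{t}}$ by the coarse bound above (and likewise for the slanted and mixed-norm variants). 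The $L^{1}_{t}$-in-$F$ versions and the replacement of $F$ by $F^{S}$ require only the obvious change of time integrability or change of variable in the Kirchhoff-formula computations.

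The main obstacle is the potential circularity in proving the unweighted bound $\|u_{H}\|_{L^{6,2}_{x}L^{\infty}_{t}}\lesssim\|f\|_{L^{2}}+\|g\|_{\dot H^{1}}$: estimating the Duhamel correction $\int_{0}^{t}\tfrac{\sin((t-s)\sqrt{-\Delta})}{\sqrt{-\Delta}}Vu_{H}\,ds$ naively via \eqref{eq:local3} feeds $\|Vu_{H}\|_{L^{3/2,1}_{x}L^{\infty}_{t}}$ back, and the $L^{2}_{t}$-type bounds from Theorem~\ref{thm:PStriRStrich} cannot be converted into the $L^{\infty}_{t}$ control demanded here; so this estimate must be obtained non-perturbatively, through the wave-operator intertwining of \cite{BecGo}. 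Once it is in place, everything else is routine --- H\"older estimates in Lorentz and mixed-norm spaces and Duhamel expansions structurally identical to those of Section~\ref{sec:Slanted}. The only remaining nuisance is the bookkeeping of which Lorentz exponents survive the successive H\"older pairings with $V$ and with the weights $\langle x\rangle^{\pm 3/2}$ after the $x_{1}/\widehat{x_{1}}$ splitting; this costs nothing beyond the standing hypothesis $\alpha>3$.
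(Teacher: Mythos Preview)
Your proposal is correct and follows essentially the same route as the paper, which itself gives no detailed proof of this theorem beyond the sentence ``By the same arguments in Section~\ref{sec:Slanted}, we can extend all the above estimates to perturbed cases'' --- i.e., Duhamel-expand around the free evolution, use the free $L_{t}^{\infty}$ estimates \eqref{eq:localrevS}--\eqref{eq:local5} for the main terms, and close the remainders with the standard (unslanted) perturbed bounds. Your identification of the one nontrivial input --- that the standard perturbed estimate $\|u_{H}\|_{L^{6,2}_{x}L^{\infty}_{t}}$ (and likewise $\|k\|_{L^{\infty}_{x}L^{\infty}_{t}}$) cannot be obtained by naive Duhamel without circularity and must be imported via the wave-operator/structure formula of \cite{BecGo,GC2} --- is exactly the analog of how Section~\ref{sec:Slanted} imports Theorem~\ref{thm:PStriRStrich} from \cite{BecGo}, and the paper leaves this step implicit.
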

By identical discussions to Section \ref{sec:Slanted} with $L_{t}^{2}$
replaced by $L_{t}^{\infty}$, we have all the estimates for $D_{A}$
and $k_{A}$, the truncated version of $D$ and $k$, with factor
$\frac{1}{A}$. So we have all the necessary ingredients for our bootstrap
process. With the decomposition of channels and all the estimates above,
we can conclude Theorem \ref{thm:localreversed}. We omit the details
since they are identical as the proof for Theorem \ref{thm:EndRStriInhomo}.

\section{Scattering\label{sec:Scattering}}

In this section, we show some applications of the results in this
paper. We will study the long-time behaviors for a scattering state
in the sense of Definition \ref{AO}. 

Following the notations from above section, we will still use the
short-hand notation 
\begin{equation}
L_{t}^{p}L_{x}^{q}:=L_{t}^{p}\left([0,\infty),\,L_{x}^{q}\right).
\end{equation}

In general, we can write a general wave equation as
\begin{equation}
\partial_{tt}u-\Delta u=F(u,t)
\end{equation}
with initial data
\begin{equation}
u(x,0)=g(x),\,u_{t}(x,0)=f(x).
\end{equation}
Also consider the homogeneous free wave equation,
\begin{equation}
\partial_{tt}u_{0}-\Delta u_{0}=0
\end{equation}
with initial data
\begin{equation}
u_{0}(x,0)=g_{0}(x),\,\left(u_{0}\right)_{t}(x,0)=f_{0}(x).
\end{equation}
For scattering states, we consider the following question: given data
$\left(g,f\right)\in\dot{H}^{1}\times L^{2}$ and a corresponding
solution $u\in\dot{H}^{1}\times L^{2}$ to the perturbed problem $\square u=F(u,t)$
with initial data $\left(g,f\right)\in\dot{H}^{1}\times L^{2}$, can
we find data $\left(g_{0},f_{0}\right)\in\dot{H}^{1}\times L^{2}$
such that the solution $u_{0}\in\dot{H}^{1}\times L^{2}$ to the corresponding
homogeneous problem $\partial_{tt}u_{0}-\Delta u_{0}=0$, $\left(g_{0},f_{0}\right)\in\dot{H}^{1}\times L^{2}$
is such that 
\begin{equation}
\left\Vert u(t)-u_{0}(t)\right\Vert _{\dot{H}^{1}\times L^{2}}\rightarrow0,\,\,\,\,t\rightarrow\infty
\end{equation}
To do this, as we discuss about projections, we reformulate the wave
equation as a Hamiltonian system, 
\begin{equation}
\partial_{t}\left(\begin{array}{c}
u\\
\partial_{t}u
\end{array}\right)-\left(\begin{array}{cc}
0 & 1\\
-1 & 0
\end{array}\right)\left(\begin{array}{cc}
-\Delta & 0\\
0 & 1
\end{array}\right)\left(\begin{array}{c}
u\\
\partial_{t}u
\end{array}\right)=\left(\begin{array}{c}
0\\
F(u)
\end{array}\right).
\end{equation}
Setting 
\begin{equation}
U:=\left(\begin{array}{c}
u\\
\partial_{t}u
\end{array}\right),\,J:=\left(\begin{array}{cc}
0 & 1\\
-1 & 0
\end{array}\right),\,H_{F}:=\left(\begin{array}{cc}
-\Delta & 0\\
0 & 1
\end{array}\right)\,\text{and }F(U):=\left(\begin{array}{c}
0\\
F(u,t)
\end{array}\right),\label{eq:bigU}
\end{equation}
we can rewrite the free wave equation as 
\begin{equation}
\dot{U}_{0}-JH_{F}U_{0}=0,
\end{equation}
\begin{equation}
U_{0}[0]=\left(\begin{array}{c}
g_{0}\\
f_{0}
\end{array}\right)
\end{equation}
and the perturbed wave equation as 
\begin{equation}
\dot{U}-JH_{F}U=F(U),
\end{equation}
\begin{equation}
U[0]=\left(\begin{array}{c}
g\\
f
\end{array}\right).
\end{equation}
The solution of the free wave equation is given by 
\begin{equation}
U_{0}=e^{tJH_{F}}U_{0}[0],
\end{equation}
on the other hand, by Duhamel's formula, the solution to the perturbed
wave equation is given by 
\begin{equation}
U[t]=e^{tJH_{F}}U[0]+\int_{0}^{t}e^{(t-s)JH_{F}}F\left(U(s)\right)\,ds.
\end{equation}
We consider the charge transfer model, 
\begin{equation}
\partial_{tt}u-\Delta u+V_{1}(x)u+V_{2}(x-\vec{v}t)u=0
\end{equation}
for which 
\begin{equation}
F(u,t)=-\left(V_{1}(x)u+V_{2}(x-\vec{v}t)u\right)
\end{equation}

\begin{thm}
\label{thm:scattering}Suppose $u$ is a scattering state in the sense
of Definition \ref{AO} which solves 
\begin{equation}
\partial_{tt}u-\Delta u+V_{1}(x)u+V_{2}(x-\vec{v}t)u=0.\label{eq:scattE}
\end{equation}
Write 
\begin{equation}
U=\left(u,u_{t}\right)^{t}\in C^{0}\left([0,\infty);\,\dot{H}^{1}\right)\times C^{0}\left([0,\infty);\,L^{2}\right),
\end{equation}
with initial data \textup{$U[0]=\left(g,f\right)^{t}\in\dot{H}^{1}\times L^{2}$.
Then there exist free data 
\[
U_{0}[0]=\left(g_{0},f_{0}\right)^{t}\in\dot{H}^{1}\times L^{2}
\]
 such that 
\begin{equation}
\left\Vert U[t]-e^{tJH_{F}}U_{0}[0]\right\Vert _{\dot{H}^{1}\times L^{2}}\rightarrow0
\end{equation}
as $t\rightarrow\infty$.}
\end{thm}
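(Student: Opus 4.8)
The plan is to show that the Duhamel term converges in $\dot H^1\times L^2$ as $t\to\infty$, which will give the free data by a Cauchy criterion. Writing the perturbed evolution in the Hamiltonian form from the excerpt,
\[
U[t]=e^{tJH_F}U[0]+\int_0^t e^{(t-s)JH_F}F(U(s))\,ds,\qquad F(U(s))=\bigl(0,\,-(V_1 u(s)+V_2(\cdot-vs)u(s))\bigr)^t,
\]
I would set
\[
U_0[0]:=U[0]+\int_0^\infty e^{-sJH_F}F(U(s))\,ds
\]
and then estimate $\bigl\|U[t]-e^{tJH_F}U_0[0]\bigr\|_{\dot H^1\times L^2}=\bigl\|\int_t^\infty e^{(t-s)JH_F}F(U(s))\,ds\bigr\|_{\dot H^1\times L^2}$. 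Since $e^{(t-s)JH_F}$ is unitary on $\dot H^1\times L^2$ (free energy conservation), this is bounded by $\int_t^\infty \|V_1 u(s)+V_2(\cdot-vs)u(s)\|_{L^2}\,ds$, so it suffices to show $\int_0^\infty \|V_1 u(s)\|_{L^2}\,ds<\infty$ and $\int_0^\infty \|V_2(\cdot-vs)u(s)\|_{L^2}\,ds<\infty$; the tails then vanish. Well-definedness of $U_0[0]$ follows from the same integrability, and one checks it lies in $\dot H^1\times L^2$ using the unitarity of $e^{-sJH_F}$ together with the fact that the integrand has $L^2$ second component and zero first component (so one only needs the $\dot H^1$-piece to be handled, which is automatic since the first component is $0$ and $JH_F$ mixes the components in a bounded way over the finite... actually via the integral representation it is cleanest to note $e^{-sJH_F}F(U(s))$ has uniformly $\dot H^1\times L^2$-controlled norm equal to $\|F(U(s))\|$, which is in $L^1_s$).

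The key input is the decay of $\|V_j(\cdot-v_j s)u(s)\|_{L^2}$ in an integrable sense. I would obtain this from the weighted/reversed estimates already proved: split $V_1=V_3 V_4$ and $V_2=V_5 V_6$ with $V_4^2,V_6^2$ decaying like $\langle x\rangle^{-\alpha}$, $\alpha>3$, exactly as in the proof of Theorem \ref{thm:StrichaWOB}. Then
\[
\int_0^\infty \|V_1 u(s)\|_{L^2}\,ds\lesssim \int_0^\infty \|V_4 u(s)\|_{L^2}\,ds,
\]
and by Cauchy--Schwarz in $s$ this is controlled once $\|\langle x\rangle^{-\alpha/2}u\|_{L^2_{t,x}}$ is finite together with a gain that makes the $s$-integral converge — but a bare Cauchy--Schwarz only gives $L^2_s$, not $L^1_s$. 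The honest route is to use the $L^1_s L^2_x$ bound differently: use that $V_4 u\in L^2_{t,x}$ from Corollary \ref{cor:weightChargeWOB}, and additionally exploit the endpoint reversed Strichartz estimate Theorem \ref{thm:EndRSChWOB} in the form $\sup_x\int_0^\infty|u(x,t)|^2dt<\infty$, which by the compact (or rapidly decaying) support of $V_j$ gives $\int_0^\infty\|V_j(\cdot-v_j t)u(t)\|_{L^2_x}^2\,dt<\infty$ as well. To upgrade $L^2_t$ to $L^1_t$ I would instead prove convergence of the Duhamel integral directly in $L^2_t L^2_x$-type norms: show $\int_t^\infty e^{(t-s)JH_F}F(U(s))\,ds$ is Cauchy in $\dot H^1\times L^2$ by writing its norm, for $T_2>T_1$, as the $\dot H^1\times L^2$ norm of a free solution with data $\int_{T_1}^{T_2}e^{-sJH_F}F(U(s))\,ds$, and bounding the latter by duality against $\phi\in L^2$ using $\int_{T_1}^{T_2}|\langle e^{-isA}\phi, V_j(\cdot-v_j s)u(s)\rangle|\,ds\le \|V_{2j-1}e^{-isA}\phi\|_{L^2_{t,x}[T_1,T_2]}\,\|V_{2j}u\|_{L^2_{t,x}[T_1,T_2]}$, where the first factor is uniformly bounded by the local energy decay Theorem \ref{thm:local}/Corollary \ref{thm:local-1} and the second tends to $0$ as $T_1\to\infty$ because $V_{2j}u\in L^2_{t,x}$. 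This is the cleanest argument and avoids the spurious $L^1_t$ issue.

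So the steps, in order: (i) reformulate as the Hamiltonian system and recall the Duhamel formula and unitarity of $e^{tJH_F}$ on $\dot H^1\times L^2$; (ii) factor $V_1=V_3V_4$, $V_2=V_5V_6$ with the weight conditions of Theorem \ref{thm:local}, and invoke Corollary \ref{cor:weightChargeWOB} to get $\|V_4 u\|_{L^2_{t,x}}+\|V_6(\cdot-vt)u\|_{L^2_{t,x}}\lesssim\|f\|_{L^2}+\|g\|_{\dot H^1}$; (iii) show the operator $\phi\mapsto \int_{T_1}^{T_2}e^{-isA}V_3 F(s)\,ds$ (and its $V_5$ analogue) maps $L^2_{t,x}[T_1,T_2]\to L^2$ with norm $\lesssim 1$ uniformly in $T_1,T_2$, via duality and Theorem \ref{thm:local}/Corollary \ref{thm:local-1}; (iv) conclude that $W(T):=\int_0^T e^{-sJH_F}F(U(s))\,ds$ is Cauchy in $\dot H^1\times L^2$, so $U_0[0]:=U[0]+\lim_{T\to\infty}W(T)$ exists; (v) estimate $\|U[t]-e^{tJH_F}U_0[0]\|_{\dot H^1\times L^2}=\|W(\infty)-W(t)\|_{\dot H^1\times L^2}\to 0$. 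The main obstacle is step (iii)--(iv): making the tail of the Duhamel integral small requires precisely the combination of the local energy decay estimate for the free flow with the global space-time weighted bound on $u$ coming from the reversed Strichartz theory of the preceding sections, and one must be careful that the endpoint ($p=2$) nature of these estimates only yields $L^2_t$ control, so the convergence must be argued through the duality pairing rather than by absolute integrability in time.
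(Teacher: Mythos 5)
Your proposal takes essentially the same approach as the paper: define the asymptotic free data via the backward Duhamel integral, factor $V_1=V_3V_4$, $V_2=V_5V_6$ with $V_4^2,V_6^2$ of rate $\langle x\rangle^{-\alpha}$, bound the operator $F\mapsto\int_0^\infty e^{isA}V_3F(s)\,ds$ (and its moving analogue with $V_5(\cdot-vs)$) from $L^2_{t,x}$ to $L^2_x$ by duality against the local energy decay estimates (Theorem \ref{thm:local}, Corollary \ref{thm:local-1}), and close with the weighted space--time bounds of Corollary \ref{cor:weightChargeWOB}; the choice of the real $2\times 2$ group $e^{tJH_F}$ in place of the paper's complex half-wave reduction $U=Au+iu_t$ with $e^{itA}$ is purely notational, the scalar form being marginally lighter since the inhomogeneity then sits directly in $L^2$. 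Your care with the tail is the one substantive point of comparison: you correctly observe that absolute $L^1_t$ integrability is unavailable at this endpoint and instead argue by a Cauchy criterion on $[T_1,T_2]$, using that the duality bound is uniform over subintervals while $\|V_4 u\|_{L^2_{t,x}[T_1,\infty)}$ and $\|V_6(\cdot-vt)u\|_{L^2_{t,x}[T_1,\infty)}$ vanish as $T_1\to\infty$ by Corollary \ref{cor:weightChargeWOB} --- this spells out explicitly what the paper compresses into the assertion that convergence then follows ``automatically.''
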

\begin{proof}
We will still use the formulation in Theorem \ref{thm:StrichaWOB}.
We set $A=\sqrt{-\Delta}$ and notice that 
\begin{equation}
\left\Vert Af\right\Vert _{L^{2}}\simeq\left\Vert f\right\Vert _{\dot{H}^{1}},\,\,\forall f\in C^{\infty}\left(\mathbb{R}^{3}\right).
\end{equation}
For real-valued $u=\left(u_{1},u_{2}\right)\in\mathcal{H}=\dot{H}^{1}\left(\mathbb{R}^{3}\right)\times L^{2}\left(\mathbb{\mathbb{R}}^{3}\right)$,
we write 
\begin{equation}
U:=Au_{1}+iu_{2}.
\end{equation}
We know 
\begin{equation}
\left\Vert U\right\Vert _{L^{2}}\simeq\left\Vert \left(u_{1},u_{2}\right)\right\Vert _{\mathcal{H}}.
\end{equation}
We also notice that $u$ solves \eqref{eq:scattE} if and only if 
\begin{equation}
U:=Au+i\partial_{t}u
\end{equation}
satisfies 
\begin{equation}
i\partial_{t}U=AU+V_{1}u+V_{2}\left(x-\vec{v}t\right)u,
\end{equation}
\begin{equation}
U(0)=Ag+if\in L^{2}\left(\mathbb{R}^{3}\right).
\end{equation}
By Duhamel's formula, for fixed $T$ 
\begin{equation}
U(T)=e^{iTA}U(0)-i\int_{0}^{T}e^{-i\left(T-s\right)A}\left(V_{1}u(s)+V_{2}\left(\cdot-vs\right)u(s)\right)\,ds.
\end{equation}
Applying the free evolution backwards, we obtain
\begin{equation}
e^{-iTA}U(T)=U(0)-i\int_{0}^{T}e^{isA}\left(V_{1}u(s)+V_{2}\left(\cdot-vs\right)u(s)\right)\,ds.
\end{equation}
Letting $T$ go to $\infty$, we define 
\begin{equation}
U_{0}(0):=U(0)-i\int_{0}^{\infty}e^{isA}\left(V_{1}u(s)+V_{2}\left(\cdot-vs\right)u(s)\right)\,ds
\end{equation}
By construction, we just need to show $U_{0}[0]$ is well-defined
in $L^{2}$, then automatically, 
\begin{equation}
\left\Vert U(t)-e^{itA}U_{0}(0)\right\Vert _{L^{2}}\rightarrow0.
\end{equation}
 It suffices to show 
\begin{equation}
\int_{0}^{\infty}e^{isA}\left(V_{1}u(s)+V_{2}\left(\cdot-vs\right)u(s)\right)\,ds\in L^{2}.
\end{equation}
Then following the argument as in the proof of Theorem \ref{thm:StrichaWOB},
we write $V_{1}=V_{3}V_{4}$, $V_{2}=V_{5}V_{6}$. 

We consider 
\begin{equation}
\left\Vert \int_{0}^{\infty}e^{isA}V_{3}V_{4}u(s)\,ds\right\Vert _{L_{x}^{2}}\leq\left\Vert K_{1}\right\Vert _{L_{t,x}^{2}\rightarrow L_{x}^{2}}\left\Vert V_{4}u\right\Vert _{L_{t,x}^{2}},
\end{equation}
where 
\begin{equation}
\left(K_{1}F\right)(t):=\int_{0}^{\infty}e^{isA}V_{3}F(s)\,ds.
\end{equation}
Similarly, 
\begin{equation}
\left\Vert \int_{0}^{\infty}e^{isA}V_{5}V_{6}(\cdot-vs)u(s)\,ds\right\Vert _{L_{x}^{2}}\leq\left\Vert \widetilde{K}_{1}\right\Vert _{L_{t,x}^{2}\rightarrow L_{x}^{2}}\left\Vert V_{6}(x-\vec{v}t)u\right\Vert _{L_{t,x}^{2}},
\end{equation}
where 
\begin{equation}
\left(\widetilde{K}_{1}F\right)(t):=\int_{0}^{\infty}e^{isA}V_{3}(\cdot-vs)F(s)\,ds.
\end{equation}
By the same argument in the proof of Theorem \ref{thm:EnergyCharge},
one has 
\begin{equation}
\left\Vert K_{1}\right\Vert _{L_{t,x}^{2}\rightarrow L_{x}^{2}}\leq C_{1},\,\left\Vert \widetilde{K}_{1}\right\Vert _{L_{t,x}^{2}\rightarrow L_{x}^{2}}\leq C_{2}.
\end{equation}
Therefore

\begin{equation}
\left\Vert \int_{0}^{\infty}e^{isA}V_{3}V_{4}u(s)\,ds\right\Vert _{L_{x}^{2}}\lesssim\left\Vert V_{4}u\right\Vert _{L_{t,x}^{2}},
\end{equation}
\begin{equation}
\left\Vert \int_{0}^{\infty}e^{isA}V_{5}V_{6}(\cdot-vs)u(s)\,ds\right\Vert _{L_{x}^{2}}\lesssim\left\Vert V_{6}(x-\vec{v}t)u\right\Vert _{L_{t,x}^{2}}.
\end{equation}
By estimates \eqref{eq:WeightCharWOB1} and \eqref{eq:WeightCharWOB2}
from Corollary \ref{cor:weightChargeWOB}, 
\begin{equation}
\left\Vert V_{4}u\right\Vert _{L_{t,x}^{2}}\lesssim\left(\int_{\mathbb{R}^{+}}\int_{\mathbb{R}^{3}}\frac{1}{\left\langle x\right\rangle ^{\alpha}}\left|u(x,t)\right|^{2}dxdt\right)^{\frac{1}{2}}\lesssim\|f\|_{L^{2}}+\|g\|_{\dot{H}^{1}},
\end{equation}
\begin{equation}
\left\Vert V_{6}(x-\vec{v}t)u\right\Vert _{L_{t,x}^{2}}\lesssim\left(\int_{\mathbb{R}^{+}}\int_{\mathbb{R}^{3}}\frac{1}{\left\langle x-\vec{v}t\right\rangle ^{\alpha}}\left|u(x,t)\right|^{2}dxdt\right)^{\frac{1}{2}}\lesssim\|f\|_{L^{2}}+\|g\|_{\dot{H}^{1}}.
\end{equation}
We conclude 
\[
\int_{0}^{\infty}e^{isA}\left(V_{1}u(s)+V_{2}\left(\cdot-vs\right)u(s)\right)\,ds\in L^{2}
\]
with 
\begin{equation}
\left\Vert \int_{0}^{\infty}e^{isA}\left(V_{1}u(s)+V_{2}\left(\cdot-vs\right)u(s)\right)\,ds\right\Vert _{L^{2}}\lesssim\|f\|_{L^{2}}+\|g\|_{\dot{H}^{1}}.
\end{equation}
So 
\begin{equation}
U_{0}(0):=U(0)-i\int_{0}^{\infty}e^{isA}\left(V_{1}u(s)+V_{2}\left(\cdot-vs\right)u(s)\right)\,ds
\end{equation}
is well-defined in $L^{2}$ and 
\begin{equation}
\left\Vert U(t)-e^{itA}U_{0}(0)\right\Vert _{L^{2}}\rightarrow0.
\end{equation}
Define 
\begin{equation}
\left(g_{0},f_{0}\right):=\left(A^{-1}\Re U_{0}(0),\,\Im U_{0}(0)\right).
\end{equation}
By construction, notice that 
\begin{equation}
U[t]=\left(A^{-1}\Re U(t),\,\Im U(t)\right)
\end{equation}
and 
\begin{equation}
\left\Vert U[t]-e^{tJH_{F}}U_{0}[0]\right\Vert _{\dot{H}^{1}\times L^{2}}\rightarrow0.
\end{equation}
We are done.
\end{proof}
The above theorem confirms that scattering states indeed scatter to
free waves.

\end{document}